\documentclass{article}

\usepackage{amsmath, amssymb, amsthm,amsfonts}
\usepackage{mathrsfs}
\usepackage{color}
\usepackage{cite}
\usepackage{graphicx}
\usepackage[colorlinks,linkcolor=red,anchorcolor=blue,citecolor=blue]{hyperref}

\usepackage{enumerate}

\usepackage[all]{xy}

\newtheorem{definition}{Definition}[section]
\newtheorem{theorem}{Theorem}[section]

\newtheorem{lemma}{Lemma}[section]
\newtheorem{proposition}{Proposition}[section]
\newtheorem{corollary}{Corollary}[section]
\newtheorem{remark}{Remark}[section]

\author{Weiyuan Qiu and Lingrui Wang}
\date{March 9, 2026}

\title{Hyperbolic components of cosine family with a fixed critical point}

\begin{document}

\maketitle

\begin{abstract}
    We studied the parameter plane of the cosine functions with a fixed critical point. The hyperbolic components can be classified into three types: A, C and D. All the hyperbolic components are bounded and simply connected, except for the unique type-A component, which contains $0$ as an isolated boundary point. Using the method of para-puzzle, we constructed a phase-parameter transfer mapping and proved that the boundaries of hyperbolic components are Jordan curves. By a similar idea, the hyperbolic components of type C are quasidisks. 

    \textbf{Key words:} Cosine function, Hyperbolic components, Local connectivity, Para-puzzle.
\end{abstract}

\section{Introduction}\label{sec: Introduction}
In the study of parameter spaces of analytic functions, Fatou conjectured in \cite{Fat20} that hyperbolic rational maps are dense in the space of rational maps with given degree. It was also conjectured for the family of exponential functions and transcendental entire functions with finitely many singular values in \cite{EL92}. This conjecture is known as `The Density of Hyperbolicity'. It was proved in the family of real polynomials (\cite{Lyu97,GS98} for quadratic polynomials and \cite{KSS07} for polynomials of all degrees). 

In the family of quadratic polynomials, the density of hyperbolicity can be deduced from another conjecture: the boundary of the Mandelbrot set $\mathcal{M}$ is locally connected (\cite{DH84}). The local connectivity of the boundary of $\mathcal{M}$ is known as the MLC conjecture. According to Whyburn's characterization of local connectivity (\cite{Why42}), to study the MLC conjecture, it is necessary to research the local connectivity of the boundaries of hyperbolic components. 

The hyperbolic components of many families of rational functions and polynomials have been studied for decades (see \cite{BH88,BH92,QRWY15,Ree90,Ree92,Mil12,Roe07,Fau92, Wan21}). As for the parameter spaces of transcendental functions, the exponential family $\{f_{\kappa}(z)=e^z+\kappa: \kappa \in \mathbb{C}\}$ is well-understood. The hyperbolic components of the exponential family are classified by kneading sequences (\cite{DFJ02}), and the boundaries of hyperbolic components are all Jordan arcs tending to $\infty$ (\cite{RS09}). The parameter space of tangent family $\{T_{\lambda}(z)=\lambda \tan z: \lambda \neq 0\}$ is also studied in \cite{KK97, KK00, KY06, Yua12}. However, the research of other transcendental families is rare. 

In this paper, we consider the family $\{f_{a,b}(z)=ae^z+be^{-z}: a,b \neq 0\}$ with two parameters, which is called the cosine family following \cite{McM87}. The cosine function is one of the simplest transcendental entire function in the following sense: it has no finite asymptotic values and two critical values $\pm 2\sqrt{ab}$, and the critical points $\log \sqrt{b/a}+k\pi i$, $k \in \mathbb{Z}$, are of $\pi i$-period (with branch of the logarithm and the square root suitably chosen). As in Milnor's research (\cite{Mil09}) of the parameter space of cubic polynomials, we consider the sections $\mathcal{S}_p$, $p \ge 1$ first. Here $p$ is an integer, and 
$$\mathcal{S}_p=\{f_{a,b}: f_{a,b} \text{ has a superattracting cycle of periodic } p\}.$$
In particular, we consider the section $\mathcal{S}_1$, in which $f_{a,b}$ has a superattracting fixed point. Up to a conjugacy, maps in $\mathcal{S}_1$ have the following form: 
$$f_v(z)=v(\cos z-1), \quad v \neq 0.$$
Thus $\mathcal{S}_1$ can be identified with the punctured plane $\mathbb{C}^*$ (Figure \ref{fig: parameter space of f_v}). By abuse of notations, we do not distinguish the space of $f_v$ and the $v$-plane. 

\begin{figure}[h]\label{fig: parameter space of f_v}
    \centering
    \includegraphics[scale=0.45]{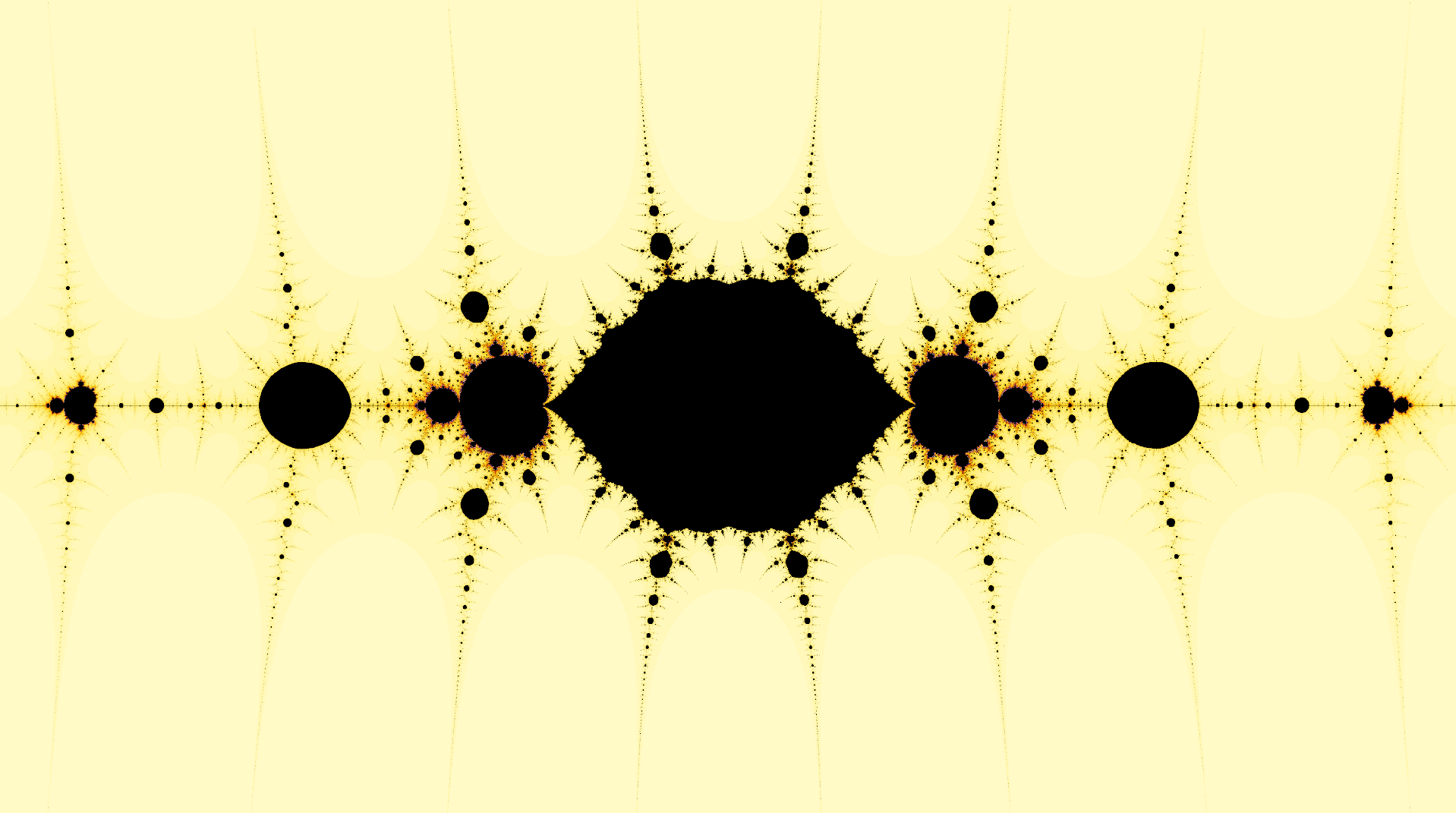}
    \caption{Parameter space of $f_v$, black regions are hyperbolic components.}
\end{figure}

It is easy to see that $f_v$ has critical points $k\pi$, $k \in \mathbb{Z}$, two critical values $0$ and $-2v$, and no finite asymptotic values. Moreover, $0$ is as superattracting fixed point of $f_v$. Denote by $B_v$ the Fatou component containing $0$. 

The hyperbolic maps in $\mathcal{S}_1$ can be defined by 
$$\mathcal{H}=\{f_v: -2v \text{ is attracted by an attracting cycle}\}.$$
Every component of $\mathcal{H}$ is called a hyperbolic component. It is easy to see that there is a hyperbolic component $\mathcal{H}_0$, of which $0$ is an isolated boundary point. And $\mathcal{H}_0$ is a hyperbolic component of type A. 

According to the cycle attracting $-2v$ and the location of $-2v$, we have the classification of hyperbolic components, which is similar to the classification in the cubic polynomials (\cite{Mil09}) and quadratic rational functions (\cite{Ree90}):
\begin{itemize}
    \item Type A (Adjecent): $-2v \in B_v$, so it is attracted by $0$;
    \item Type C (Capture): $-2v \notin B_v$, but there exists an integer $m \ge 1$ such that $f^m_v(-2v)$ belongs to $B_v$;
    \item Type D (Disjoint): $f^n_v(-2v) \notin B_v$ for all $n \ge 0$, so $-2v$ is attracted by another cycle $\{z_v, \ldots, f^{p-1}_v(z_v)\}$.
\end{itemize}

It is easy to see that for a hyperbolic component $\mathcal{U}$ of type C, let 
    \begin{equation*}
        m(v)=\min\{n \ge 1: f^n_v(-2v) \in B_v\}.
    \end{equation*}
Then $m(v)$ is constant on $\mathcal{U}$. Thus we can define $\mathcal{H}_m$ the union of hyperbolic components of type C, in which $m=\min\{j \ge 1: f^j_v(-2v) \in B_v\}$. 

For polynomials, the boundedness of hyperbolic components in the connected locus is obvious. But the result is non-trivial for other families of analytic functions (\cite{Eps00,NP22}). We proved that the hyperbolic components in $\mathcal{S}_1$ are all bounded, which are always unbounded in the exponential family.

\begin{theorem}\label{thm: boundedness of hyperbolic components}
    Every hyperbolic component is a bounded domain in $\mathbb{C}$. The hyperbolic component of type A is unique, which is homeomorphic to the punctured unit disk. Hyperbolic components of type C and D are all homeomorphic to the unit disk. 
\end{theorem}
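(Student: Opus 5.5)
We split the proof into three parts: boundedness, uniqueness of the type-A component, and the topology of the components. For the topology we use the standard multiplier and Riemann-map parametrizations.

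\emph{Boundedness.} We first show there is $R>0$ such that for $|v|>R$ the map $f_v$ has no attracting cycle other than $0$, and $-2v\notin B_v$. For large $|v|$ the immediate basin $B_v$ is contained in a small disk around $0$ of radius $\asymp |v|^{-1/2}$. Indeed, $f_v(z)=-\tfrac v2 z^2(1+O(z^2))$, and on the real line the second real fixed point is at distance $\asymp 1/|v|$. So $-2v\notin B_v$. Moreover, $|f_v'(z)|=|v\sin z|$ is large away from $\pi\mathbb{Z}$, and the critical value $-2v$ has modulus $2|v|$. We would use the standard expansion estimate for cosine-type maps: the preimage of the complement of a neighborhood of $\pi\mathbb Z$ under $f_v$ is mapped with derivative $\gtrsim |v|$ in a suitable hyperbolic metric. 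From this we show that the orbit of $-2v$ can neither land in the tiny basin $B_v$ (type C) nor be attracted to another cycle (type D). Concretely, an attracting cycle must attract a critical value. The only free one is $-2v$, whose orbit we show stays in the region where $|v\sin z|\ge 2$, or else lands near $\pi\mathbb Z$ only at odd multiples, where $f_v(\pi(2k+1))=-2v$, giving a repelling-type recurrence. This forces $\mathcal H\subset\{|v|\le R\}$.

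Obtaining this uniform estimate is the main obstacle. If the clean expansion fails, we would instead combine a Koebe/area argument on $B_v$ with a count of the escaping set. The worry is that orbits of $-2v$ passing close to even multiples $2k\pi$ enter preimages of $B_v$, which are near every $2k\pi$ since $f_v(2k\pi)=0$. So we need quantitative control: near $2k\pi$ the preimage of $B_v$ has size $\asymp |v|^{-3/4}$, and we need to show that the orbit of $-2v$ generically misses it. We expect to need to exclude a thin set, so some care is needed.

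\emph{Type A.} For $v\in\mathcal H_0$, the critical values $0$ and $-2v$ both lie in $B_v$, and there are no other singular values. We construct the Böttcher-type coordinate $\phi_v:B_v\to\mathbb D$, with $f_v|_{B_v}$ conjugate to a finite Blaschke product. We show the degree is $4$: the map has local degree $2$ at $0$, and $B_v$ contains the critical points $0$ and a point over $-2v$ in a symmetric arrangement, since $f_v$ is even and $B_v$ is symmetric. The map $\Phi(v)=\phi_v(-2v)$ then gives a proper holomorphic map $\mathcal H_0\to\mathbb D^*$. Properness comes from boundedness together with the fact that $|\phi_v(-2v)|\to 1$ at $\partial\mathcal H_0\setminus\{0\}$. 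As $v\to 0$, $f_v\to 0$ uniformly on compacts, so $-2v\in B_v$ and $\phi_v(-2v)\to 0$. Hence every type-A parameter near $0$ lies in one component, giving uniqueness. We then show $\Phi$ has degree one by computing its local behaviour at $v=0$, where $\phi_v(-2v)\sim c\,v^{\alpha}$, and checking the count (after the correct normalization of $\phi_v$) is one. This yields $\mathcal H_0\cong\mathbb D^*$.

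\emph{Types C and D.} For type C, define $\Phi(v)=\phi_v(f_v^{m}(-2v))\in\mathbb D$, using the Böttcher coordinate on $B_v$. By boundedness and the usual holomorphic-motion/quasiconformal surgery argument, $\Phi$ is a proper holomorphic map to $\mathbb D$. We show it is a covering of degree one by showing it has a unique zero: its centers are rigid parameters, and the surgery makes $\Phi$ locally injective. Hence $\mathcal U\cong\mathbb D$. For type D, the multiplier map $\lambda:\mathcal U\to\mathbb D$ of the attracting cycle is proper, again by boundedness. Quasiconformal surgery (Douady--Hubbard/Milnor style, available since $-2v$ is the only free critical value) shows $\lambda$ is a degree-one covering branched only over $0$, so $\mathcal U\cong\mathbb D$. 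Simple connectivity of the C and D components also follows from a maximum-principle argument: an escaping complementary region would have to contain $0$ or $\infty$, which is excluded by boundedness and by $0\in\partial\mathcal H_0$.
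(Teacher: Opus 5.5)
\textbf{There is a genuine gap: the boundedness step aims at a false statement.} Your plan is to show that some $R$ has $\mathcal{H}\subset\{|v|\le R\}$, but no such $R$ exists.

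Type C counterexample: for $v=k\pi$, $k\in\mathbb{Z}\setminus\{0\}$, one has $f_v(-2v)=k\pi(\cos 2k\pi-1)=0$. For $|k|$ large, $B_v$ is a tiny neighbourhood of $0$ (Lemma~\ref{lm: diameter of B_v tends to 0}), so $-2v\notin B_v$. These are centers of type C components of depth $1$, and they tend to $\infty$.

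Type D counterexample: by Rouch\'e, $v(\cos 2v-1)=\pi$ has solutions near $k\pi\pm i/\sqrt{2k}$ for all large $k$. For these, $-2v\mapsto\pi\mapsto-2v$ is a superattracting $2$-cycle, so they are centers of type D components tending to $\infty$. This is exactly your ``lands at an odd multiple of $\pi$'' case, which is superattracting, not repelling.

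So neither an expansion estimate nor removing a thin exceptional set can give the theorem. The claim is that \emph{each} component is bounded, and this must be proved component by component. The paper does this as follows.
\begin{itemize}
\item Type A: $B_v$ is unbounded for type-A parameters (Corollary~\ref{cor: boundedness of B_v}), while $\mathrm{diam}\,B_v\to0$ as $v\to\infty$.
\item Type C: the integer $k$ with $f^{m-1}_v(-2v)\in B_v+2k\pi$ is constant on a component (Lemma~\ref{lm: k_v is constant}). If $v_n\to\infty$ inside one component, the shrinking of $B_{v_n}$ forces $f^{m-1}_{v_n}(-2v_n)\to 2k\pi$ with $k$ fixed. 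Going backwards along the orbit, this keeps $-2v_n$ bounded, a contradiction.
\item Type D: an attracting cycle forces renormalizability (Lemma~\ref{lm: attracting periodic points give renormalization}). The component then lies in a copy $\mathcal{M}_{v_0}$ of the Mandelbrot set contained in a bounded para-puzzle piece $\tilde{\mathcal{P}}_{n_0}$.
\end{itemize}
Your proposal has no substitute for the constancy of combinatorics along a component, nor, above all, for the renormalization argument. The properness of your maps $\Phi$ and $\lambda$ inherits this gap. For the topological part alone, simple connectivity of the C and D components already gives a homeomorphism with $\mathbb{D}$ by the Riemann mapping theorem.

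\textbf{The type A part also contains errors.}
\begin{itemize}
\item For $v\in\mathcal{H}_0$, $B_v$ is the whole Fatou set, is unbounded, and contains every critical point $k\pi$ (Lemma~\ref{lm: position of critical points of cosine maps}). So $f_v|_{B_v}$ has infinite degree and is not conjugate to a finite Blaschke product. Also, $\phi_v$ is only defined on the maximal B\"ottcher domain, which has $-2v$ on its boundary.
\item Your uniqueness argument is a non sequitur. Knowing that the type A parameters near $0$ lie in one component says nothing about a type A component elsewhere. The paper argues instead that any other type A component $\mathcal{U}$ would be simply connected. Then $\Phi_0|_{\mathcal{U}}$ would be proper onto $\mathbb{D}^*$ with no critical points, and no zeros since $\Phi_0(v)=0$ only at $v=0$. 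That makes it a finite covering of $\mathbb{D}^*$ by a simply connected domain, which cannot exist.
\item $\Phi_0$ does not have degree one under any normalization. The B\"ottcher coordinate is uniquely determined, $\phi_v(z)=-\tfrac{v}{2}z(1+O(z))$, so $\Phi_0(v)=v^2(1+O(v^2))$. Equivalently, $\Phi_0(-v)=\Phi_0(v)$, because $z\mapsto -z$ conjugates $f_v$ to $f_{-v}$.
\end{itemize}
The conclusion $\mathcal{H}_0\cong\mathbb{D}^*$ does survive. It comes from $\Phi_0$ extending to a degree-two branched covering $\mathcal{H}_0\cup\{0\}\to\mathbb{D}$ branched only at $0$, not from injectivity.
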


As stated in the beginning, the local connectivity of the boundaries of hyperbolic components is related to Fatou's conjecture. The local connectivity of the boundaries of hyperbolic components were obtained for many families of polynomials and rational functions (for example, see \cite{Fau92,Roe07,QRWY15,Wan21}). 

In the dynamics of polynomials, a powerful method to study the local connectivity is the Branner-Hubbard-Yoccoz puzzle. For a cosine function $f_{a,b}$, the puzzles can be constructed in a similar way (\cite{QW25}) if the orbits of critical values are bounded. In the family $\{f_v(z)=v(\cos z -1): v \neq 0\}$, we constructed the para-puzzles by the puzzles in the dynamical plane and proved that the boundaries of hyperbolic components in $\mathcal{S}_1$ are all Jordan curves.

\begin{theorem}\label{thm: boundaries of hyperbolic components are Jordan curves}
    $\mathcal{H}_0 \cup \{0\}$ is a Jordan domain. Hyperbolic components of type C and D are Jordan domains. 
\end{theorem}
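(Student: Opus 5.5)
The plan follows the Roesch / Qiu--Roesch--Wang--Yin strategy for hyperbolic components: first parametrize each component, then prove that the parametrization extends continuously and injectively to the boundary.

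\textbf{Parametrization.} By Theorem \ref{thm: boundedness of hyperbolic components} each component $\mathcal{U}$ is bounded and simply connected, or punctured at $0$ in the type-A case. We fix a conformal or proper model map $\Phi_{\mathcal{U}}$ onto $\mathbb{D}$ as follows.
\begin{itemize}
\item For $\mathcal{H}_0$: use the B\"ottcher coordinate $\phi_v$ of $B_v$ and set $\Phi(v)=\phi_v(-2v)$. Near $v=0$ the critical value $-2v$ is close to the superattracting point, so $\Phi$ is a covering $\mathcal{H}_0\to\mathbb{D}^*$ that extends over $0$.
\item For type C: set $\Phi(v)=\phi_v(f_v^{m}(-2v))$.
\item For type D: use the multiplier of the attracting cycle.
\end{itemize}
Standard quasiconformal surgery shows these are proper holomorphic maps of finite degree. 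Since the components are simply connected, they are conformal isomorphisms, after a degree count for types A and C. Boundaries of components are then parametrized by internal rays $\Phi^{-1}(re^{2\pi i t})$. The goal is to show that every such ray lands and that distinct angles give distinct landing points. Together with the conformal parametrization, this yields a Jordan boundary by Carath\'eodory.

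\textbf{Para-puzzles.} For $v$ near $\partial\mathcal{U}$, both critical orbits are bounded in the relevant sense. So the dynamical puzzles of \cite{QW25} exist. They are built from periodic and preperiodic internal rays of $B_v$ together with external (hair-like) curves landing at the same points, and cut along the $\pi$-translates of the critical points.

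These graphs move holomorphically over a neighborhood of a boundary parameter, as long as the ray landing points persist. Hence the locus where $-2v$ lies in a given puzzle piece $P_n(-2v)$ defines a para-puzzle piece $\mathcal{P}_n$. The key tool is a phase--parameter transfer: a homeomorphism from $\mathcal{P}_n\setminus\mathcal{P}_{n+k}$ to the corresponding dynamical annulus $P_n(-2v_0)\setminus P_{n+k}(-2v_0)$, given by $v\mapsto$ the holomorphic-motion image of $-2v$. Its quasiconformality constant is controlled by the $\lambda$-lemma. Hence moduli of para-annuli are comparable to moduli of dynamical annuli.

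\textbf{Shrinking of pieces.} Fix a boundary parameter $v_0$.
\begin{itemize}
\item If $v_0$ is not renormalizable, Yoccoz/Kahn--Lyubich type divergence of the moduli sum in the dynamical plane of $f_{v_0}$ transfers to $\sum \mathrm{mod}(\mathcal{P}_n\setminus\mathcal{P}_{n+1})=\infty$. So the para-pieces shrink to $v_0$, which gives local connectivity of $\partial\mathcal{U}$ at $v_0$.
\item If $v_0$ is renormalizable, a straightening (Douady--Hubbard) argument applies. Renormalizable parameters on $\partial\mathcal{U}$ correspond to boundary points of the period-$p$ Mandelbrot-like copies, where only parabolic or root points can lie on $\partial\mathcal{U}$. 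There one shows directly that the ray lands, using parabolic implosion or Douady--Hubbard landing of parabolic rays.
\end{itemize}
Since every para-piece boundary meets $\overline{\mathcal{U}}$ in internal rays, shrinking of pieces gives landing of every internal ray of $\mathcal{U}$. Injectivity of the landing map comes from separation. Two rays landing at one point $v_0$ would bound a region containing parameters outside $\overline{\mathcal{U}}$ with $-2v$ in a fixed piece. This contradicts either the maximum principle applied to $\Phi$ or the fact that in the dynamical plane $\partial B_{v_0}$ (respectively the boundary of the relevant Fatou component) is a Jordan curve. The latter is proved in the dynamical plane via the puzzles as well.

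\textbf{Expected obstacle.} The main difficulty is the transcendental setting. Puzzle pieces are unbounded and there are infinitely many critical points. The holomorphic motion of the puzzle graph must therefore be uniform in a neighborhood, which requires controlling the tails near $\infty$ (the $\pi$-periodicity and the fact that $\mathrm{Re}$-large points escape). Establishing the moduli transfer with a uniform dilatation on these unbounded annuli is where most of the work will lie. I would first restrict to a fundamental strip $|\mathrm{Re}\, z|\le \pi$, modulo the $2\pi$-periodicity, to reduce to finitely many pieces at each depth.
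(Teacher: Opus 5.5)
Your handling of non-renormalizable boundary points matches the paper's argument: puzzles built from periodic internal rays of $B_v$ and the dynamic rays co-landing with them, a holomorphic motion of the para-graphs, a transfer map whose dilatation is bounded by the $\lambda$-lemma, and divergence of moduli. There is, however, a gap in the renormalizable case.

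\textbf{Renormalizable case (types A and C).} At a renormalizable $v_0\in\partial\mathcal U$, the para-pieces $\tilde{\mathcal P}_{n_k}$ shrink to the whole copy $\mathcal M_{v_0}$, not to $v_0$. Local connectivity at $v_0$ therefore amounts to showing $\mathcal M_{v_0}\cap\partial\mathcal U=\{v_0\}$. Showing that one internal ray lands at $v_0$ does not give this.

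Your global criterion, that all internal rays land and at distinct points, is also too weak for Carath\'eodory. For a domain bounded by a topologist's sine curve, radial limits exist at every angle and are pairwise distinct, yet the boundary is not locally connected. You need trivial impressions.

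The paper gets the required separation from the para-puzzle itself:
\begin{itemize}
\item The angles $\theta^\pm_{n_k}$ of the two internal rays of $\mathcal U$ on $\partial\mathcal P_{n_k}$ converge to a $p$-periodic angle $\theta$.
\item The addresses of the companion parameter rays converge to limit addresses $\underline s^\pm$.
\item The rays $\mathcal R_{\mathcal U}(\theta)$, $G_{\underline s^+}$ and $G_{\underline s^-}$ land at a common point.
\item The curve $\overline{G_{\underline s^+}\cup G_{\underline s^-}}$ separates $\mathcal M_{v_0}\setminus\{v_0\}$ from $\partial\mathcal U$.
\end{itemize}

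Your description of these points is also wrong for type C. For $\mathcal H_0$ the point is indeed the parabolic cusp $\chi^{-1}_{v_0}(1/4)$. For a type-C component, however, $\underline s^\pm$ are strictly preperiodic, $m$ is a multiple of $p$, and $v_0$ is post-critically finite. Here $f^m_{v_0}(-2v_0)$ is the repelling fixed point of $f^p_{v_0}$ at which the small filled Julia set meets $\partial B_{v_0}$. Such a point is neither parabolic nor a root, so parabolic implosion is not the relevant tool.

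\textbf{Type D is not covered.} A type-D component lies inside a copy $\mathcal M_{v_0}$; this is exactly how its boundedness is proved. Hence every point of $\partial\mathcal D$ is renormalizable. Moreover, the para-graphs never enter $\mathcal D$, since they consist of parameter rays and pieces of components of $\hat{\mathcal H}$. So neither branch of your shrinking argument says anything about $\partial\mathcal D$.

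You need a separate argument. Two options:
\begin{itemize}
\item The paper's route: extend the multiplier map $\lambda:\mathcal D\to\mathbb D$ to a homeomorphism $\overline{\mathcal D}\to\overline{\mathbb D}$, as in Carleson--Gamelin, Ch.~VIII, Thm.~2.1. This uses the boundedness of $\mathcal D$.
\item Transport through the straightening homeomorphism $\chi:\mathcal M_{v_0}\to\mathcal M$. By invariance of domain, $\chi$ maps $\mathcal D$ onto a hyperbolic component of $\mathcal M$, whose boundary is a Jordan curve.
\end{itemize}

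A minor point: $\Phi_0:\mathcal H_0\to\mathbb D^*$ is a degree-$2$ covering, not a conformal isomorphism. Each angle therefore gives two parameter internal rays exchanged by $v\mapsto -v$, and your injectivity statement must be taken modulo this symmetry.
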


Theorme \ref{thm: boundaries of hyperbolic components are Jordan curves} implies that all the hyperbolic components except $\mathcal{H}_0$ are Jordan domains. Moreover, the hyperbolic components of type C are of higher regularity.
\begin{theorem}\label{thm: hyperbolic components of type C are quasidisks}
    Hyperbolic components of type C are quasidisks. 
\end{theorem}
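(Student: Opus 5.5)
Fix a type C component $\mathcal{U}\subset\mathcal{H}_m$. The plan is to build a holomorphic motion of the boundary circle of the dynamical Böttcher disk, transport it to parameter space, and use the $\lambda$-lemma to obtain quasiconformality.

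First I would parametrize $\mathcal{U}$ by a Riemann map. Since $0$ is a superattracting fixed point with local degree $2$, there is a Böttcher coordinate $\phi_v: B_v\to\mathbb{D}$ conjugating $f_v|_{B_v}$ to $z\mapsto z^2$. By Theorem \ref{thm: boundedness of hyperbolic components}, $B_v$ contains only the critical value $0$ of $f_v|_{B_v}$, so $B_v$ is simply connected and $\phi_v$ is a conformal isomorphism. Set
$$\Phi(v)=\phi_v\bigl(f_v^m(-2v)\bigr).$$
The standard argument gives that $\Phi:\mathcal{U}\to\mathbb{D}$ is holomorphic and proper. To compute its degree, I would count zeros: these are the centers, where $f^m_v(-2v)=0$. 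Using the transversality supplied by the para-puzzle construction used for Theorem \ref{thm: boundaries of hyperbolic components are Jordan curves}, or a quasiconformal surgery argument, $\Phi$ should be a conformal isomorphism. In particular $\Phi$ extends to a homeomorphism $\partial\mathcal{U}\to\partial\mathbb{D}$, by Theorem \ref{thm: boundaries of hyperbolic components are Jordan curves}.

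Second, I would build a holomorphic motion of the dynamical boundary. For $v_0\in\partial\mathcal{U}$, the point $-2v_0$ is not in the Fatou set; more precisely $f^m_{v_0}(-2v_0)\in\partial B_{v_0}$. I would argue that for parameters $v$ in a neighbourhood $N$ of $\overline{\mathcal{U}}$, minus the parameters for which the free critical value lands in $\overline{B_v}$ earlier, the closure $\overline{B_v}$ moves holomorphically with $v$. The motion is given by extending the Böttcher conjugacy via $\phi_v^{-1}\circ\phi_{v_0}$ on $B_{v_0}$, then by the $\lambda$-lemma to the closure. This requires $\partial B_v$ to be a quasicircle that avoids the postcritical set. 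That holds because the free critical orbit meets $\partial B_v$ at most once, at $f^m_v(-2v)$, and after that lands on $\partial B_v$, which is forward invariant and bounded away from the critical points $k\pi$ with $k\neq0$; so $f_v|_{\partial B_v}$ is expanding. The parameter boundary $\partial\mathcal{U}$ is then identified with the set of $v$ for which the holomorphically moving point $f^m_v(-2v)$ coincides with the moving point $h_v(\zeta)$ for some $\zeta\in\partial B_{v_0}$. Here $h_v(\zeta)=\phi_v^{-1}(e^{2\pi i t})$ is the boundary extension.

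Third, I would transfer the regularity to parameter space. The map $v\mapsto (f_v^m(-2v), h_v)$ gives, for each angle $t$, a unique solution $v(t)$ of $f_v^m(-2v)=h_v(e^{2\pi i t})$. A standard transversality argument, as in Roesch's proof that capture components of cubic polynomials are quasidisks, shows that $t\mapsto v(t)$ is the restriction of a quasiconformal map. The ingredients are that $f^m_v(-2v)-h_v(\zeta)$ has a simple zero uniformly in $\zeta$, and that $h_v$ is quasiconformal with dilatation controlled by the $\lambda$-lemma in a fixed smaller disk. Concretely, I would compose the motion $h$ with the inverse of the holomorphic map $v\mapsto f^m_v(-2v)$, locally univalent near $\partial\mathcal{U}$, to obtain a holomorphic motion of $\partial\mathcal{U}$ parametrized by $\partial B_{v_0}$. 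The $\lambda$-lemma then shows that $\partial\mathcal{U}$ is the image of the quasicircle $\partial B_{v_0}$ under a quasiconformal map, so $\partial\mathcal{U}$ is a quasicircle.

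The main obstacle is the uniform transversality and local univalence step: showing that $v\mapsto f_v^m(-2v)-h_v(\zeta)$ has nonvanishing derivative along all of $\partial\mathcal{U}$. Equivalently, one needs the phase-parameter map to be bi-Lipschitz in the appropriate sense rather than merely a homeomorphism. My first attempt would use the para-puzzle estimates behind Theorem \ref{thm: boundaries of hyperbolic components are Jordan curves}, namely the shrinking of para-puzzle pieces together with the bounded-distortion transfer between corresponding puzzle pieces. From these I would deduce that the phase-parameter transfer map is quasisymmetric on the circle. A quasisymmetric boundary map extends, by Beurling–Ahlfors, to give the quasidisk conclusion.
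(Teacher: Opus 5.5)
Your first two steps are sound in outline: the Böttcher motion $\phi_v^{-1}\circ\phi_{v_0}$ of $B_{v_0}$ and its $\lambda$-lemma extension to $\overline{B_{v_0}}$ are exactly what is needed. But there is a genuine gap in your third step, and it carries the whole proof.

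\textbf{What fails in step three.} Composing the motion with ``the inverse of $v\mapsto f^m_v(-2v)$'' is unjustified. Nothing makes that map locally univalent near $\partial\mathcal{U}$. Even if it were, the result would not be a holomorphic motion, since a holomorphic motion needs a complex parameter domain, not the curve $\partial B_{v_0}$. Your fallback is to extract, from the para-puzzle estimates, either a uniform simple-zero condition for $f^m_v(-2v)-h_v(\zeta)$ or quasisymmetry of the phase--parameter correspondence. Section 4 does not support this. The constant in Lemma \ref{lm: non-degenerated para-annuli} depends on the base point and its starting puzzle. It only compares moduli of nested annuli around non-renormalizable points, and gives nothing at the renormalizable point that $\partial\mathcal{U}$ may contain. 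Modulus bounds at individual points do not give a uniform three-point condition along the curve. Moreover, in angle coordinates the correspondence is the identity: a boundary point of internal angle $\theta$ has $f^m_v(-2v)$ at dynamical angle $\theta$. So the quasisymmetry you need is that of $\partial B_{v_0}\to\partial\mathcal{U}$ as a map between planar curves, which is the whole difficulty, and Beurling--Ahlfors does not supply it.

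\textbf{A side error.} Your justification for the motion is off. For $v\in\partial\mathcal{U}$ the critical orbit lies on $\partial B_v$ from time $m$ onward, so $\partial B_v$ does not avoid the postcritical set. No such hypothesis is needed, though: the $\lambda$-lemma extends the motion to the closure automatically, and the quasicircle property is only needed at the base point.

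\textbf{The missing idea.} No transversality is required. Extend the motion $\xi$ of $\overline{B_{v_0}}$ to $\hat{\mathbb{C}}$ by Slodkowski and set $\Xi(v)=\xi_v^{-1}(f^m_v(-2v))$. By the standard Douady--Hubbard/Shishikura transfer lemma (the one already used in Lemma \ref{lm: non-degenerated para-annuli}), $\Xi$ is automatically $K$-quasiregular on any $V\Subset W$.
\begin{itemize}
\item On $\mathcal{U}$, $\Xi$ equals $\phi_{v_0}^{-1}\circ\Phi_{\mathcal{U}}$, a conformal isomorphism onto $B_{v_0}$.
\item $\Xi^{-1}(\overline{B_{v_0}})\cap W=\overline{\mathcal{U}}$, provided $W$ misses the closures of all other components of $\mathcal{H}_n$, $0\le n\le m$. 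On each of these $f^m_v(-2v)\in B_v$ as well, and avoiding $\mathcal{H}_0$ is also what makes $\phi_v$ defined on all of $B_v$.
\end{itemize}
Arranging the second point is where Lemmas \ref{lm: hyperbolic components of type C have disjoint closures} and \ref{lm: components of H_m are discrete} enter; your proposal does not address it. Openness plus a local-degree argument then shows $\Xi$ is injective on a neighbourhood of $\overline{\mathcal{U}}$, hence $K$-quasiconformal there. Finally, $\mathcal{U}=\Xi^{-1}(B_{v_0})$ is a quasidisk because $B_{v_0}$ is one: $f_{v_0}$ is hyperbolic with bounded Fatou components, so Bergweiler--Fagella--Rempe applies.
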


This article is structured as follows. In \S \ref{sec: basic properties}, we introduce some properties of the immediate basin $B_v$ of the superattracting fixed point $0$ of $f_v$. The diameter of $B_v$ tends to $0$ as $v$ tends to $\infty$, and the hyperbolic components are simply connected except $\mathcal{H}_0$. In \S \ref{sec: parameterization of hyperbolic components}, we parameterize the hyperbolic components and prove the boundedness of hyperbolic components of type A and C. In \S \ref{sec: Parameter rays}, we review parameter rays and the landing theorem for the rays, which serves as an analog to polynomials. 

Theorem \ref{thm: boundaries of hyperbolic components are Jordan curves} is proved in \S \ref{sec: para-puzzles} and \S \ref{sec: local connectivity of the boundaries of hyperbolic components}. In \S \ref{sec: para-puzzles}, we construct para-puzzles and prove that there is a sequence of non-degenerated annuli surrounding non-renormalizable parameters. In \ref{sec: local connectivity of the boundaries of hyperbolic components}, we treat with renormalizable parameters on the boundaries of hyperbolic components. We show that these parameters are always in a copy of the Mandelbrot set (see Definition \ref{def: copy of Mandelbrot set}). Moreover, we show the local connectivity of boundaries of hyperbolic components at these parameters. Meanwhile, this also induces the boundedness of hyperbolic components of type D and finishes the proof of Theorem \ref{thm: boundedness of hyperbolic components}. Finally, in \S \ref{sec: hyperbolic components of type C are quasidisks}, we prove Theorem \ref{thm: hyperbolic components of type C are quasidisks} by constructing a quasiconformal homeomorphism via a holomorphic motion.

\section{Basic Properties}\label{sec: basic properties}
In this section, we introduce some basic properties of the immediate basin $B_v$ of $0$ and the hyperbolic components. 

\begin{lemma}\label{lm: position of critical points of cosine maps}
    Let $v \in \mathbb{C}^*$. If $f_v$ belongs to a hyperbolic component of type A, then $B_v$ is the unique component of $F(f_v)$, and $k \pi \in B_v$, $k \in \mathbb{Z}$. Otherwise, every Fatou component of $f_v$ contains at most one critical point.
\end{lemma}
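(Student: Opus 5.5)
The proof rests on the symmetries of $f_v(z)=v(\cos z-1)$. The map is even and $2\pi$-periodic, so $f_v(-z)=f_v(z)$ and $f_v(z+2\pi)=f_v(z)$. The critical points split into the two classes $2k\pi$, with critical value $0$, and $(2k+1)\pi$, with critical value $-2v$. We treat the two cases of the statement separately.

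\emph{Type A.} Here $-2v\in B_v$, so both critical values lie in $B_v$. Since $B_v$ is the immediate basin of a superattracting fixed point and $f_v(B_v)\subset B_v$, the set $f_v^{-1}(B_v)$ has components, each mapped properly (or as a covering off asymptotic values) onto $B_v$. The strategy is to show that $B_v$ itself contains every critical point.

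First, $B_v$ is invariant under $z\mapsto -z$, because $f_v$ is even and $0$ is fixed. Next, $B_v$ contains a path from $0$ to $-2v$. Lifting this path under $f_v$ starting at $\pi$ (a preimage of $-2v$) ends at a preimage of $0$, namely some $2k\pi$. I would argue that the component of $f_v^{-1}(B_v)$ containing $0$ is $B_v$ itself. This component is unbounded and periodic-invariant: since $B_v$ contains the critical value $-2v$ and $f_v$ has no finite asymptotic values, I expect the component of $f_v^{-1}(B_v)$ containing $0$ to be invariant under translation by $2\pi$. Indeed, $f_v$ restricted to that component is a branched covering of infinite degree onto $B_v$ that contains all critical points. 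This follows by lifting loops in $B_v$ around $-2v$ and $0$, which generate the deck action $z\mapsto z+2\pi$, $z\mapsto -z$.

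Hence $B_v\supset\{k\pi\}$. To conclude that $B_v$ is the only Fatou component, I would use the following facts. Any other Fatou component $U$ is either wandering/Baker-domain-free (cosine maps of finite type have no wandering or Baker domains), or eventually periodic. A periodic cycle of components other than $B_v$ would need a singular value (attracting, parabolic) or a boundary-orbit condition (Siegel). None is available, since both singular values lie in $B_v$, whose orbits converge to $0$. Therefore every Fatou component is a preimage of $B_v$. Finally, $f_v^{-1}(B_v)$ is connected because $B_v$ contains all critical values and the covering argument above applies. So every preimage component equals $B_v$, and inductively $F(f_v)=B_v$.

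\emph{Other cases.} Now $-2v\notin B_v$. Suppose a Fatou component $U$ contains two critical points. If they are in the same class, say $2j\pi$ and $2k\pi$, then $f_v(U)$ contains $0$, so $f_v(U)\subset B_v$. If $U$ contains $2j\pi$ and $(2k+1)\pi$, then $f_v(U)$ contains both $0$ and $-2v$, so $-2v\in B_v$, a contradiction. For the same-class case with even points: $f_v|_U\colon U\to B_v$ is a proper map or covering. I would show $U=B_v$ is forced when $j=0$, and otherwise derive a contradiction from the fact that $B_v\setminus\{0\}$ contains no critical value. The latter gives $f_v|_{B_v}$ degree $2$ (a Böttcher-type argument: $B_v$ is simply connected, since its boundary is in the Julia set and $f_v$ is entire, so $B_v$ is simply connected by Baker's theorem for entire maps with bounded... ). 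A component $U$ containing $0$ and $2\pi$ would make $f_v|_U$ have degree at least $3$ over $0$, contradicting Riemann–Hurwitz for a map onto a disk with a single critical value. The odd case is symmetric: $f_v(U)\ni -2v$, and the component $V=f_v(U)$ contains the single critical value $-2v$ of $f_v|_U$. Riemann–Hurwitz for simply connected Fatou components (all Fatou components of this entire function are simply connected by Baker) then shows that a proper map of a disk with one critical value has exactly one critical point.

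The main obstacle I expect is the type A claim that $f_v^{-1}(B_v)$ is connected and equals $B_v$, since $f_v$ has infinite degree. I would handle it by the explicit lifting of generators of $\pi_1(\mathbb{C}\setminus\{0,-2v\})$.
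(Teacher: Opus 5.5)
Your proof is correct in substance, but it takes a different route from the paper, mainly in the non-type-A half.

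\emph{Type A.} The paper lifts a single arc $\gamma\subset B_v$ from $0$ to $-2v$. Then $f_v^{-1}(\gamma)$ is one connected curve through every $k\pi$, so it lies in $B_v$. Every component of $f_v^{-1}(B_v)$ meets it, so $f_v^{-1}(B_v)=B_v$, and no topological information about $B_v$ is needed. Your monodromy argument reaches the same conclusion, but it silently uses that $B_v$ is simply connected (Baker's theorem for an invariant component). Only then does $\pi_1(B_v\setminus\{0,-2v\})$ surject onto $\pi_1(\mathbb{C}\setminus\{0,-2v\})$, so that lifts of loops in $B_v$ act transitively on a fibre $\{\pm z_0+2k\pi\}$. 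Loops around $0$ and $-2v$ attached to the base point by arbitrary paths need not generate a transitive action. Also, what these loops realize is the monodromy action on the fibre, not the deck maps $z\mapsto z+2\pi$, $z\mapsto -z$ themselves. The uniqueness step, via the classification of periodic components, is the same as the paper's.

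\emph{Non-type-A.} The paper uses plane topology: evenness, periodicity and the maximum principle on $\gamma\cup(-\gamma)$ in Case 1, and an intersection argument for $\gamma$ and $\gamma-2\pi$ in Case 2. You instead give one covering argument for both parities:
\begin{itemize}
\item $0$ and $-2v$ lie in different Fatou components.
\item Every Fatou component $V$ is simply connected. This needs Baker's theorem together with the absence of wandering domains for finite-type maps, which you cite earlier; Baker alone does not handle a possibly wandering $V$.
\item If $c$ is the only critical value in $V$ and $U$ is a component of $f_v^{-1}(V)$, then $U\setminus f_v^{-1}(c)\to V\setminus\{c\}$ is a connected covering of a punctured disk. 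It is finite cyclic, because the universal-cover alternative would make $c$ an asymptotic value. Hence $U$ contains exactly one preimage of $c$.
\end{itemize}
Two small fixes. State this finite-degree step explicitly rather than just asserting that $f_v|_U$ is proper. Replace the ``degree at least $3$'' remark by the observation that two distinct preimages of the unique critical value contradict Riemann--Hurwitz.

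What your route buys is uniformity and robustness: it needs no control on the shape of curves in $B_v$. In particular, it avoids the step in the paper's Case 1 asserting that the region bounded by $\gamma\cup(-\gamma)$ contains the odd critical points between $\pm c$. That step is not automatic: a thin symmetric S-shaped Jordan region with $\pm2\pi$ on its boundary can contain $0$ but not $\pm\pi$. The price is reliance on Baker's theorem and on Eremenko--Lyubich/Goldberg--Keen, whereas the paper aims for elementary plane-topology arguments.
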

\begin{proof}
    Suppose that $f_v$belongs to a hyperbolic component of type A, then all Fatou components of $f_v$ are mapped to $B_v$ eventually. Since $f_v$ is of type A, $-2v \in B_v$. Thus there exists a curve $\gamma \subset B_v$ connecting $0$ and $-2v$. Its preimage $f^{-1}_v(\gamma)$ is a curve, passing through all $k\pi$, $k \in \mathbb{Z}$. And $f^{-1}_v(\gamma)$ is entirely contained in a Fatou component $U$. Since $0 \in f^{-1}_v(\gamma)$, $f^{-1}_v(\gamma) \subset B_v$ and $U=B_v$. Therefore, $f^{-1}_v(B_v)=B_v$. By induction, $B_v$ is the unique component of $F(f_v)$. 

    Now suppose that $v \in \mathbb{C}^*$ does not belong to any hyperbolic components of type A. Suppose by contradiction that $k_1 \pi$, $k_2 \pi$ lie in the same Fatou component. 
    
    \textbf{Case 1: } At least one of $|k_1|, |k_2|$ is even. Since $f_v$ is of $2\pi$-periodic, there exists a critical point $c$ belongs to $B_v$, i.e. $c$ and $0$ are in the same Fatou component. Since $f_v$ is an even function, $-c$ lies in the same Fatou component as $0$. If $c=(2k+1)\pi$ for some $k \in \mathbb{Z}$, then $-2v \in B_v$, contradicting that $f_v$ is not a hyperbolic parameter of type A. Thus $c=2k\pi$. Let $\gamma \subset B_v$ be a Jordan arc connecting $c,-c$, which is sufficiently smooth. We may assume that $(2l+1)\pi \notin \gamma$ for any $l \in \mathbb{Z}$, otherwise $-2v \in B_v$. Then by the Maximum Modulus Principle, the disk surrounded by $\gamma \cup (-\gamma)$ is contained in $B_v$. But $(2l+1)\pi$, $|l| <|k|$ are all contained in this disk. Thus $-2v \in B_v$, again a contradiction.

    \textbf{Case 2: } Both $|k_1|, |k_2|$ are odd. There exists $k \ge 1$ such that $(2k+1)\pi$ lies in the same Fatou component $U$ as $\pi$. We will show that $\pm \pi \in U$, so that $0 \in U$ by the Maximum Modulus Principle. Furthermore, $-2v \in B_v$, which leads a contradiction. If $k=1$, then it follows from the periodicity of $f_v$. Now suppose that $k \ge 2$. We may assume that $(2k-1)\pi \notin U$. Let $\gamma:[0,1] \to U$ be a Jordan arc connecting $(2k+1)\pi$ and $\pi$. Then $\gamma-2\pi$ is a Jordan arc connecting $(2k-1)\pi$ and $-\pi$, and $\gamma \cap (\gamma-2\pi) =\varnothing$. Let $\pi= \gamma(t_0), \gamma(t_1), \ldots, \gamma(t_n)=(2k+1) \pi$ be the intersection points of $\gamma$ and the image axis, where $|\gamma(t_j)|<|\gamma(t_{j+1})|$, $0 \le j \le n-1$. Let $0 \le j_0 \le n-1$ be such that $\mathrm{Re} \; \gamma(t_{j_0})<(2k-1)\pi$, and $\mathrm{Re} \; \gamma(t_{j_0+1})>(2k-1)\pi$. If $|\gamma(t_{j_0+1})-\gamma(t_{j_0})|>2\pi$, let $\epsilon>0$ be sufficiently small. Without loss of generality, we assume that $\mathrm{Im} \; \gamma(t)>0$, $t \in (t_{j_0}+\epsilon, t_{j_0+1}-\epsilon)$. Let $D$ be the domain surrounded by $[\gamma(t_{j_0}), \gamma(t_{j_0+1})] \cup \gamma([t_{j_0}, t_{j_0+1}])$. Then $\gamma([t_{j_0}+\epsilon, t_{j_0+1}-\epsilon])$ connects a point $\gamma(t_{j_0+1}-\epsilon)$ in $D$ and a point $\gamma(t_{j_0}+\epsilon)$ outside $D$, and $\gamma([t_{j_0}+\epsilon, t_{j_0+1}-\epsilon])-2\pi \cap [\gamma(t_{j_0}), \gamma(t_{j_0+1})] =\varnothing$. Therefore, $\gamma \cap (\gamma-2\pi) \neq \varnothing$, contradicting that $\gamma$ is disjoint from $\gamma-2\pi$. Similarly, we have $\gamma \cap (\gamma -2\pi) \neq \varnothing$ if $|\gamma(t_{j_0+1})-\gamma(t_{j_0})|<2\pi$. 

    Therefore, for a parameter $v$ not in any hyperbolic components of type A, every Fatou component of $f_v$ contains at most one critical point. 
\end{proof}

In \cite{QW25}, the authors proved the following two results: 
\begin{corollary}\label{cor: boundedness of B_v}
    Let $v \in \mathbb{C}^*$. If $f_v$ belongs to a hyperbolic component of type A, then $B_v$ is unbounded, and $\partial B_v$ is not locally connected anywhere. Otherwise, $B_v$ is bounded. 
\end{corollary}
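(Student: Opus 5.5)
I would split the statement along the dichotomy of Lemma \ref{lm: position of critical points of cosine maps}.

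\textbf{Type A.} By that lemma, $B_v$ is the whole Fatou set and contains every $k\pi$. Since $f_v$ is $2\pi$-periodic, $B_v+2\pi$ is also a Fatou component containing $0$, so $B_v+2\pi=B_v$. Hence $B_v$ contains the horizontal translates of a curve joining $0$ to $2\pi$, and is therefore unbounded.

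For the non-local-connectivity I would argue by contradiction. Suppose $\partial B_v$ is locally connected at some point $z_0$. Then I would build a Riemann map $\phi:\mathbb{D}\to B_v$, or rather work with the simply connected lift in the $2\pi$-periodic picture. By the standard criterion, local connectivity at $z_0$ gives radial accessibility and landing of a nondegenerate family of prime ends near $z_0$.

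Pulling back by $f_v$ and using $f_v^{-1}(B_v)=B_v$, these accessible points would propagate densely along $\partial B_v=J(f_v)$. Here I would use blow-up of neighbourhoods of Julia points under iteration: $f_v^n(U)$ eventually covers all of $J$ except possibly one point. This would give local connectivity everywhere on $\partial B_v$.

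I would then show that global local connectivity is impossible. If $\partial B_v$ were locally connected, the Riemann map would extend continuously to $\partial\mathbb{D}$. Conjugating, $f_v|_{B_v}$ would become an inner function of infinite degree whose boundary map extends continuously to the whole circle. But $f_v$ has an essential singularity at $\infty$, and $\infty$ is accessible from $B_v$ (for instance along $i\mathbb{R}$, where $f_v\to\infty$). So there is a prime end at which the inner function has a singularity. The boundary extension must therefore oscillate there, which contradicts continuity. This is the same mechanism as in Baker--Dominguez-type results on unbounded invariant components.

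I expect this step to be the main obstacle: converting the singularity of the associated inner function into a genuine failure of local connectivity at every point, not just at $\infty$. I would try first to combine a Baker--Dominguez theorem with the density of preimages of $\infty$-accessing prime ends.

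\textbf{Not type A.} Each Fatou component contains at most one critical point, so $B_v$ contains only the critical point $0$ and not the critical value $-2v$. Hence $f_v:B_v\to B_v$ is a proper map with only the critical value $0$, and so has finite degree. It is in fact degree $2$, being a branched cover with a single simple critical point.

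Assume $B_v$ were unbounded. Then $\infty$ would be accessible from $B_v$ along some curve $\gamma$. On $\gamma$, $f_v(\gamma)$ must stay in $B_v$ while $|\cos z|$ grows exponentially in $|\mathrm{Im} z|$. Moreover $f_v$ is finite-to-one on $B_v$, and a finite-degree proper self-map of a hyperbolic domain cannot contain neighbourhoods of $\infty$ on which $f_v$ has infinite valence. This would force a contradiction via the Iversen-type argument: a component of $f_v^{-1}(D)$ for a small disk $D$ around a point of $B_v$ is bounded because there are no asymptotic values.

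More concretely, every component of the preimage of a bounded simply connected domain avoiding the critical values is bounded, since $f_v$ has no finite asymptotic values. I would exhaust $B_v$ by the B\"ottcher disks $\{|\phi_v|<r\}$. Each is bounded, and its preimage component containing $0$ is bounded. Since $B_v$ contains no critical value other than $0$, these stay of degree $2$, and the limiting domain $B_v$ has a uniform bound.
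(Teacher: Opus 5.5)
The paper does not prove this corollary; it imports it from \cite{QW25}. Your argument therefore has to stand on its own, and it has a genuine gap in each half. The unboundedness of $B_v$ in type A (all $k\pi\in B_v$) is fine.

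\textbf{Type A: ``not locally connected anywhere''.} The reduction from the pointwise statement to the global one fails. Suppose $\partial B_v=J(f_v)$ is locally connected at $z_0$. Local branches of $f_v^{\pm 1}$, using $f_v^{-1}(B_v)=B_v$, transport this only to the grand orbit of $z_0$. That orbit is dense, but local connectivity at a dense set of points does not imply it everywhere. Blow-up cannot help either, because local connectivity at one point gives no control of $\partial B_v$ on any relatively open piece.

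The global step is also not an argument as written. Singularities of the inner function $\phi^{-1}\circ f_v\circ\phi$ do not contradict continuity of the extended Riemann map; they only force $\phi=\infty$ at those boundary points. What does work is the Baker--Dom\'{\i}nguez theorem that the set of radial limits equal to $\infty$ is dense in $\partial\mathbb{D}$: a continuous extension would make this set closed, hence all of $\partial\mathbb{D}$. But this only shows that $\partial B_v$ is not locally connected as a whole.

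For the pointwise statement you need structural information about $J(f_v)$. For example, for $v\in\mathcal{H}_0$ the map $f_v$ is of disjoint type and of finite order. Hence $J(f_v)$ is a Cantor bouquet (Bara\'nski--Jarque--Rempe), which is locally connected at none of its points.

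\textbf{Not type A: boundedness.} Your degree-$2$ claim is correct. Since $f_v$ has no finite asymptotic values and $B_v$ is simply connected, $f_v: B_v\setminus\{0\}\to B_v\setminus\{0\}$ is a covering of a punctured disk, so its degree is finite and equals the local degree $2$ at $0$.

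The final step, however, is circular. The Iversen-type argument shows only that each sublevel set $\{|\phi_v|<r\}$, $r<1$, is bounded. Nothing controls these sets as $r\to 1$, and ``the limiting domain has a uniform bound'' is exactly what has to be proved.

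No argument using only ``no finite asymptotic values, one simple critical point in the immediate basin, degree $2$'' can succeed. The entire Newton map $N(z)=z-1+e^{-z}$ has all these properties on the immediate basin of its superattracting fixed point $0$. Yet that basin contains $[0,+\infty)$, since $0<N(x)<x$ for $x>0$. Your remarks about exponential growth in $|\mathrm{Im}\, z|$ and infinite valence near $\infty$ do not close the gap. An unbounded $B_v$ need not contain a neighbourhood of $\infty$, and $f_v$ is bounded on horizontal strips. Some input specific to $f_v$ must enter, such as its two singular values, its periodicity and evenness, or the hair structure of $I(f_v)$; this is what \cite{QW25} is invoked for.
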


\begin{corollary}\label{cor: local connectivity of B_v}
    Let $v \in \mathbb{C}^*$, which is not in any hyperbolic components of type A. If $P(f_v)$ is bounded or the orbit of $-2v$ escapes to $\infty$, then $B_v$ is a Jordan domain.  
\end{corollary}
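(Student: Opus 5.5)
The plan is to build a Böttcher-type parametrization of $B_v$ and then read off the boundary of $B_v$ from the external-ray landing theory. Since $v$ is not of type A, Lemma \ref{lm: position of critical points of cosine maps} shows that $0$ is the only critical point in $B_v$. I will also use that $B_v$ is bounded (Corollary \ref{cor: boundedness of B_v}) and simply connected.

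First I will show that $f_v|_{B_v}: B_v \to B_v$ is a proper map of degree $2$. Properness holds because $B_v$ is a bounded Fatou component of an entire map, so $f_v$ maps it properly onto a Fatou component. By Riemann–Hurwitz with one simple critical point, the degree is $2$. The Böttcher coordinate $\phi_v$ near $0$ then extends to a conformal isomorphism $\phi_v: B_v \to \mathbb{D}$ conjugating $f_v$ to $z\mapsto z^2$, because there are no other critical points in $B_v$ and no critical values of $f_v|_{B_v}$ other than $0$. Internal rays $R_v(\theta)=\phi_v^{-1}(\{re^{2\pi i\theta}\})$ are then defined for all $\theta$.

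By Carathéodory, $B_v$ is a Jordan domain if and only if $\phi_v^{-1}$ extends to a homeomorphism $\overline{\mathbb{D}}\to\overline{B_v}$. This is equivalent to two facts: $\partial B_v$ is locally connected, and distinct internal rays land at distinct points.

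\emph{Case $P(f_v)$ bounded.} Here I will use the puzzle construction of \cite{QW25}. Choose a graph made of finitely many periodic internal rays of $B_v$, their landing points (repelling, since $f_v$ has no other parabolic cycles apart from possibly the one in question, which can be avoided by choosing the rays), external-type curves, and an equipotential-like truncation. The graph must be forward invariant, so that the puzzle pieces of depth $n$ are well defined and the critical value orbit avoids the graph. For each $x\in\partial B_v$, local connectivity at $x$ follows if the nested puzzle pieces containing $x$ shrink, i.e.\ if $\sum \bmod(P_n\setminus \overline{P_{n+1}})=\infty$. I will prove this in two subcases.
\begin{itemize}
\item If the critical point $-2v$'s preimages are non-recurrent, or the map is non-renormalizable around $x$, use Yoccoz's divergence of moduli (Kahn–Lyubich covering lemma style arguments).
\item If the map is renormalizable, the small Julia set meets $\partial B_v$ in at most one point (the $\beta$-fixed point of the renormalization). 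For points away from it, use pieces not containing the little Julia set; at that point, separate it using the periodic rays landing there.
\end{itemize}
Distinct rays landing together would enclose a region containing Julia points. This contradicts the fact that $B_v$ is bounded by the ray pair and that $f_v$ expands the angle-doubling combinatorics; the standard argument is that such a region maps forward until it would contain $B_v$ itself.

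\emph{Case escaping orbit.} Here $P(f_v)$ accumulates only at $\infty$. So $\overline{B_v}$ has a neighborhood $W$ avoiding the postcritical set except for $0$, and $f_v^{-1}$ branches contract the hyperbolic metric on $W\setminus P(f_v)$. Equipotential curves $\phi_v^{-1}(\{|z|=r^{1/2^n}\})$ then converge uniformly: consecutive ones differ by pulling back by a uniformly contracting branch, giving geometric convergence. Hence $\phi_v^{-1}$ extends continuously, and injectivity follows as above.

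The main obstacle is the renormalizable subcase of bounded $P(f_v)$. There I will mostly rely on the fact that $\partial B_v$ avoids the little Julia set except at one repelling point.
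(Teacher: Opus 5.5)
The paper gives no proof of this statement; it quotes it from \cite{QW25}. The surrounding text shows what that proof relies on. First, bounded modified pieces $\hat P^{v}_n$ carrying a non-degenerate annulus with covering relations (Lemma \ref{lm: non-degenerated annuli}). Second, for renormalizable maps, a pair of periodic dynamic rays landing where the little Julia set touches $\partial B_v$ and separating it from $B_v$ (the dynamical part of Proposition \ref{prop: renormalizable parameters on the boundary of H_0}).

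Your reduction (degree-two B\"ottcher coordinate on $B_v$, then Carath\'eodory) is sound, and so is your escaping case. There you should say why no $f^n_v(-2v)$ lies in $\overline{B_v}$: an escaping orbit cannot meet the bounded forward-invariant set $\partial B_v$. Hence $\overline{B_v}\setminus\phi_v^{-1}(D(0,r))$ is a compact subset of $\mathbb{C}\setminus P(f_v)$, where $f_v$ uniformly expands the hyperbolic metric. Your injectivity step (``maps forward until it contains $B_v$'') is muddled. The clean argument is the maximum principle: the bounded complementary component $W$ of $\overline{R^v_0(\theta_1)\cup R^v_0(\theta_2)}$ has $\partial W\subset\overline{B_v}$. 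So $\{f^n_v\}$ is uniformly bounded, hence normal, on $W$, yet $W$ contains points of $\partial B_v\subset J(f_v)$.

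The genuine gap is the case $P(f_v)$ bounded, which is the substance of the statement; there you list tools rather than give an argument.

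\emph{The initial annulus.} Yoccoz-type divergence needs a first non-degenerate annulus, and that is exactly the hard part here.
Pieces containing $x\in\partial B_v$ are bounded by internal rays landing on $\partial B_v$, so $P_{n+1}(x)$ and $P_n(x)$ can share boundary rays and landing points. All pieces are also unbounded, since dynamic rays go to $\infty$. Unlike the polynomial case, there is no equipotential curve in $I(f_v)$ to truncate them while keeping $f_v(P_{n+1})=P_n$.
You must build bounded pieces and prove that a non-degenerate annulus exists and pulls back by coverings. Kahn--Lyubich does not supply this, and it is not needed: each piece contains at most one critical point, of local degree two, so Yoccoz's quadratic argument suffices once the annulus exists.

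\emph{The renormalizable subcase.} ``The little Julia set meets $\partial B_v$ in at most one point'' is the claim to be proved, not a fact to rely on. You must take limits $\theta,\underline{s}_\pm$ of the internal angles and addresses on $\partial\hat P_{n_k}$ and show that $R^v_0(\theta)$ and $g^v_{\underline{s}_\pm}$ land at a common point $x$. Then use that the connected set $B_v$ misses $\overline{g^v_{\underline{s}_+}\cup g^v_{\underline{s}_-}}$ while $K\setminus\{x\}$ lies on the other side. You also need the tableau argument for points of $\partial B_v$ whose orbits accumulate on, or fall into, the cycle of little Julia sets or its preimages.
Finally, the touching point need not be repelling. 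At the cusps $v_0\in\partial\mathcal{H}_0$ of Corollary \ref{cor: every renormalizable parameter on the boundary of H_0 is a cusp}, $P(f_{v_0})$ is bounded and the little Julia set touches $\partial B_{v_0}$ at a parabolic point. So your assertion of a single repelling contact point is false in general.
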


We can estimate the diameter of $B_v$.
\begin{lemma}\label{lm: diameter of B_v tends to 0}
    $\mathrm{diam} \; B_v \to 0$ as $v \to \infty$. 
\end{lemma}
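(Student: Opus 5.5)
The plan is to show that for $|v|$ large, $f_v$ restricted to a small disk around $0$ is a quadratic-like map whose filled Julia set traps $B_v$ and has diameter $O(|v|^{-1/2})$.

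\textbf{Setup.} Fix a small $\delta\in(0,1)$ and set $D_\delta=\{|z|<\delta\}$. On $|z|\le 1$ we have $\cos z-1=-\tfrac{z^2}{2}(1+O(z^2))$. Hence there is an absolute constant $c_0>0$ with
$$c_0|z|^2\le |\cos z-1|\le |z|^2 \quad\text{for } |z|\le 1.$$
Moreover $|\cos z-1|$ is bounded below by a positive constant on $\{z : \mathrm{dist}(z,2\pi\mathbb{Z})\ge 1\}$. Let $W$ be the component of $f_v^{-1}(D_\delta)$ containing $0$. For $z\in W$ we need $|\cos z-1|<\delta/|v|$. Once $|v|$ is large, the set $\{|\cos z-1|<\delta/|v|\}$ is a disjoint union of small topological disks around the points $2k\pi$. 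By the lower bound above, $W$ is the one around $0$, and
$$W\subset D\bigl(0,\sqrt{\delta/(c_0|v|)}\bigr)\Subset D_\delta .$$

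\textbf{Quadratic-like structure.} The map $f_v:W\to D_\delta$ is proper and holomorphic. The only critical point in $W$ is $0$, since the other critical points $k\pi$, $k\ne 0$, are at distance at least $\pi$ from $0$. Near $0$ the map looks like $-\tfrac v2 z^2$, so the degree is $2$. Thus $(f_v,W,D_\delta)$ is a quadratic-like map. By the straightening theorem it is hybrid equivalent to a quadratic polynomial. Since its critical point $0$ is a fixed point, that polynomial is conjugate to $z^2$. Its filled Julia set $K'$ is therefore a closed quasidisk in $W$ containing $0$ in its interior, and $\partial K'$ consists of points whose $f_v$-orbits stay in $W$ and are not attracted to $0$. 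By normality these points lie in $J(f_v)$, so $\partial K'\subset J(f_v)$.

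\textbf{Trapping $B_v$.} The basin $B_v$ is a connected subset of the Fatou set containing $0\in\mathrm{int}\,K'$, and it does not meet $\partial K'\subset J(f_v)$. Hence $B_v\subset \mathrm{int}\,K'\subset W$. This gives
$$\mathrm{diam}\, B_v\le 2\sqrt{\delta/(c_0|v|)}\to 0 .$$
As a by-product, $-2v\notin B_v$ for $|v|$ large, so large $v$ are never of type A, consistent with Corollary~\ref{cor: boundedness of B_v}. The argument does not actually need that corollary, since the trapping is automatic.

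\textbf{Main obstacle.} The delicate step is the uniform control of $f_v^{-1}(D_\delta)$. One must check that the component of this preimage containing $0$ is exactly the small disk-like set near $0$, rather than being joined to the components near $\pm 2\pi, \pm4\pi,\dots$ or to unbounded pieces. I would handle this with the explicit two-sided bound above on $|z|\le 1$ together with the uniform lower bound for $|\cos z-1|$ away from $2\pi\mathbb{Z}$. That lower bound holds because $|\cos z|\ge|\sinh(\mathrm{Im}\, z)|$, which grows in the imaginary direction, and $\cos$ is $2\pi$-periodic. These two bounds show that $\{|\cos z-1|<\delta/|v|\}$ splits into disjoint small neighborhoods of the points of $2\pi\mathbb{Z}$ once $\delta/|v|$ is smaller than the separating constant.
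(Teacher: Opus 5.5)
Your proposal is correct and follows essentially the paper's route. For large $|v|$ you restrict $f_v$ near the superattracting fixed point $0$ to a quadratic-like map, which the paper calls a renormalization on the square $Q$ of side $8/|v|$, and you observe that $B_v$ is trapped in its domain.

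The one difference is scale. The paper takes the target domain of size about $1/|v|$ and so gets the sharper bound $\mathrm{diam}\, B_v \le 8\sqrt{2}/|v|$. Your fixed target disk $D_\delta$ only yields $O(|v|^{-1/2})$, but that is enough for the lemma. You also justify, more carefully than the paper, that the pullback component at $0$ is separated from those at $2k\pi$ and that $\partial K' \subset J(f_v)$ traps $B_v$.
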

\begin{proof}
    Let $h_1(z)=\cos z-1$, $h_2(z)=z/v$. Let $\epsilon>0$ sufficiently small and $Q=\{z \in \mathbb{C}: |\mathrm{Re} \; z| <\epsilon, |\mathrm{Im} \; z| <\epsilon\}$. Then $h_1(Q)$ is a bounded domain surrounded by an ellipse and a hyperbola, containing a focus $0$, and $h_2(Q)$ is a square obtained by a rotation and a rescaling of $Q$. It is clear that $h_2(Q) \subset D(0,\sqrt{2}\epsilon/|v|)$. $d(0,\partial h_1(Q))=1-\cos \epsilon>\frac{\epsilon^2}{2\sqrt{2}}$. Thus for $|v|$ sufficiently large, for $\epsilon=4/|v|$, $f_v: Q \to f_v(Q)$ is a renormalization (see \cite{McM94} for the definition of renormalization). This implies that $B_v \subset Q$, so that $\mathrm{diam} \; B_v \le \mathrm{diam } \; Q \le 8\sqrt{2}/|v|$. 
\end{proof}

Conversely, the parameters in a punctured disk of center $0$ and sufficiently small radius $r>0$ are hyperbolic and of type A.
\begin{lemma}\label{lm: H_0 contains a small disk}
    For $0<|v|<1/5$, $-2v \in B_v$, i.e. $f_v$ is hyperbolic and of type A.
\end{lemma}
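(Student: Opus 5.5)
The approach is to exhibit an explicit disk around the superattracting fixed point $0$ that $f_v$ maps into itself whenever $0<|v|<1/5$, and which also contains the critical value $-2v$. Since $B_v$ is the Fatou component containing $0$, any connected forward-invariant neighbourhood of $0$ on which the iterates converge to $0$ lies in $B_v$. It therefore suffices to find a radius $R$ with the following three properties:
\begin{enumerate}[(i)]
\item $f_v(\overline{D(0,R)})\subset D(0,R)$;
\item $|{-2v}|<R$;
\item the iterates converge to $0$ on $D(0,R)$.
\end{enumerate}
Property (iii) follows from (i) by Schwarz's lemma applied to the self-map of the disk, since the map fixes $0$ and is not a rotation. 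Combined with (ii), this gives $-2v\in B_v$, so $f_v$ is hyperbolic of type A. It also identifies the component as $\mathcal{H}_0$, since the punctured disk is connected.

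The key estimate is
\[
|\cos z-1|=\Bigl|\sum_{n\ge1}\frac{(-1)^n z^{2n}}{(2n)!}\Bigr|\le \cosh|z|-1 .
\]
So on $|z|\le R$ we have $|f_v(z)|\le |v|(\cosh R-1)$. Condition (i) then reduces to $|v|(\cosh R-1)<R$, and condition (ii) requires $R>2|v|$. With $|v|<1/5$, a natural choice is $R=2/5$, or $R=1/2$ for extra slack. For $R=1/2$ we get $\cosh(1/2)-1\approx 0.1276$, so $|v|(\cosh R-1)<0.0256<1/2$, and $2|v|<2/5<1/2$. Both conditions hold comfortably.

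I would also briefly check that the threshold $1/5$ is at least consistent with a sharper argument; for instance, the bound $|f_v(z)|\le |v|(\cosh R-1)$ with $R=2|v|+\delta$ works for $|v|$ well beyond $1/5$. The stated constant is therefore just a convenient choice, and no delicate step is needed.

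The main (mild) obstacle is the justification that the invariant disk lies inside $B_v$ and that $-2v$ is indeed attracted to $0$. I would handle this cleanly as follows. The family $\{f_v^n\}$ maps $D(0,R)$ into $D(0,R)$, so it is normal there, and $D(0,R)\subset F(f_v)$ is connected and contains $0$. Hence $D(0,R)\subset B_v$, and in particular $-2v\in B_v$.
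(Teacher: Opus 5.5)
Your proof is correct and is essentially the paper's argument: both exhibit an explicit forward-invariant neighbourhood of $0$ that contains $-2v$ and conclude that it lies in $B_v$. The only difference is the trapping region. The paper uses the rectangle $\{|\mathrm{Re}\,z|<\pi,\ |\mathrm{Im}\,z|<\log(1/|v|)\}$, whose image under $f_v$ is an ellipse with foci $0,-2v$ lying in $\mathbb{D}$. You use the disk $D(0,1/2)$ together with the bound $|\cos z-1|\le\cosh|z|-1$.

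Your side remark is also right, and in fact sharp. Letting $R\downarrow 2|v|$, the disk argument works precisely when $\cosh(2|v|)<3$, i.e.\ $|v|<\log(1+\sqrt2)$. This is exactly the threshold $y_0$ of Lemma~\ref{lm: maps on the imaginary axis}, because your bound is an equality on the imaginary axis.
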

\begin{proof}
    Consider a rectangle $R=\{z=x+iy: |x|<\pi, |y|<\log 1/|v|\}$. Then $f_v(R)$ is an ellipse with two foci $0$, $-2v$, and the length of the major axis is $1+|v|^2$. Then for $0<|v|<1$, $f_v(R) \subset \mathbb{D} \subset R$. Thus $R \subset B_v$. Moreover, $|-2v|<2/5$ so that $-2v \in R$. This implies that $-2v \in B_v$, i.e. $v$ is a hyperbolic parameter of type A.
\end{proof}

By Lemma \ref{lm: H_0 contains a small disk}, there is a hyperbolic component of type A which contains a sufficiently small punctured disk of center $0$. Denote this component by $\mathcal{H}_0$. Now we show that the hyperbolic components are simply connected, except for $\mathcal{H}_0$, which is doubly connected. 

\begin{lemma}\label{lm: hyperbolic components are simply connected}
    Let $\mathcal{U}$ be a hyperbolic component. If $\mathcal{U} \neq \mathcal{H}_0$, then $\mathcal{U}$ is simply connected. And $\mathcal{H}_0$ is doubly connected. 
\end{lemma}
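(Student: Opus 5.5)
The plan is the standard maximum-principle argument for hyperbolic components. The parameter $0$ and the point $\infty$ are the only obstructions, so the key step is to exclude holes in $\mathcal{U}$ that contain no puncture.

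Let $\mathcal{U}$ be a hyperbolic component and $\gamma \subset \mathcal{U}$ a Jordan curve. Let $D$ be the bounded component of $\mathbb{C}\setminus\gamma$. I will show that $D\setminus\{0\} \subset \mathcal{U}$ whenever $0 \notin D$, and that $D \setminus \{0\} \subset \mathcal{H}_0$ whenever $0 \in D$. This gives simple connectivity of $\mathcal{U} \ne \mathcal{H}_0$, because its complement in $\widehat{\mathbb{C}}$ will then be connected. For $\mathcal{H}_0$ it shows that the only bounded complementary component is $\{0\}$, so $\mathcal{H}_0$ is doubly connected. This holds once we know $\mathcal{H}_0 \ne \mathbb{C}^*$, which follows because there are non-hyperbolic parameters; for instance, Lemma~\ref{lm: diameter of B_v tends to 0} together with the existence of parameters with a repelling-fixed-point critical relation will do.

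The first step treats hyperbolic components of types A and C. Here the condition is $f_v^m(-2v) \in B_v$ for some fixed $m$. On $B_v$ we use the Böttcher coordinate at the superattracting fixed point $0$ (local degree $2$). Define $\Phi(v)$ as the Böttcher/Green-type quantity $G_v(f^m_v(-2v))$, where $G_v$ is the Green function of $B_v$ with pole behavior at $0$. This requires the Green function of the basin, $g_v(z)=\lim 2^{-n}\log|\phi_v(f^n_v z)|$, equal to $\log|\phi_v|$ locally. The better choice is $\Phi(v)=\phi_v(f^{m}_v(-2v))$ on $\mathcal{H}_m$ (or on type A with $m=0$ suitably extended). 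This is holomorphic in $v$ and has modulus $<1$.

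To run the maximum principle, I will use a family-wide function $u(v)=\lim_n |f^{n}_v(-2v)|^{1/2^n}$-type subharmonic expression near $0$. More concretely, set $F_n(v)=f^{n}_v(-2v)$, which is holomorphic on all of $\mathbb{C}$ and not just on $\mathcal{U}$. On $\gamma$, $F_{n}$ is uniformly attracted to $0$ for $n$ large, because $\gamma$ is compact in a hyperbolic component where the orbit enters $B_v$ and converges to $0$ uniformly on compacts. By the maximum principle, $|F_n| \le \epsilon$ on $D$. For small $\epsilon$ the disk $D(0,\epsilon)$ lies in $B_v$ uniformly for $v$ in compact subsets of $\mathbb{C}^*$: the Taylor expansion $f_v(z) = -vz^2/2 + O(vz^4)$ gives a uniform invariant disk $D(0, c/|v|)$. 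Hence, as long as $0\notin \overline D$, every $v \in D$ has $-2v$ attracted to $0$. Thus $D \subset$ the union of components of type A/C, and connectedness of $D\cup\gamma$ puts $D\subset \mathcal{U}$. When $0\in D$ the invariant-disk radius degenerates near $v=0$, but there Lemma~\ref{lm: H_0 contains a small disk} already gives type A. Combining these shows $D\setminus\{0\}\subset\mathcal{H}_0$, and in that case $\mathcal{U}=\mathcal{H}_0$.

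The second step treats type D. The $p$-periodic attracting cycle moves holomorphically over $\mathcal{U}$. Over $\gamma$, $F_{n}$ stays in a neighborhood of the cycle. The plan is to apply the maximum principle to $g_n(v)=F_{n+p}(v)-F_n(v)$, which tends uniformly to $0$ on $\gamma$, together with $(f^p_v)'$ along the orbit. The main obstacle is that the cycle is not defined a priori on $D$. I would first use $|(f^{p}_v)'(F_{n}(v))| \le \lambda<1$ on $\gamma$ for large $n$ and then extend by the maximum principle to $D$. The derivative is an entire function of $v$, and the bound on $F_{n+p}-F_n$ follows similarly. Then for every $v\in D$ the orbit of $-2v$ stays near points where $f^p_v$ contracts, so an attracting cycle exists and $v$ is hyperbolic. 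By connectedness, $D\subset\mathcal{U}$. Here $0\notin D$ is automatic, since otherwise $D$ would meet $\mathcal{H}_0$, which is disjoint from $\mathcal{U}$.
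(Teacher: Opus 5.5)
\textbf{Types A and C.} Here you are running the paper's argument: the maximum modulus principle for the entire function $F_N(v)=f^N_v(-2v)$ on the disk $D$ bounded by $\gamma$, together with a disk $D(0,\epsilon)\subset B_v$ that is uniform over $\overline{D}$. You also make the case $0\in D$ explicit and cover type-A components other than $\mathcal{H}_0$, which the paper leaves to ``similarly''.

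One correction. The uniform disk degenerates as $|v|\to\infty$, not as $v\to 0$. Moreover, $D(0,c/|v|)$ is not forward invariant for small $|v|$: $\cos z-1$ grows like $e^{|\mathrm{Im}\, z|}$, so the quadratic Taylor approximation says nothing on large disks. What does hold is this: for $0<|v|\le M:=\max_{\overline{D}}|v|$, a single disk $D(0,\epsilon)$ with $\epsilon$ depending only on $M$ is mapped into itself, and hence lies in $B_v$. Your case split via Lemma \ref{lm: H_0 contains a small disk} is therefore harmless but not needed. Nor do you need $\mathcal{H}_0\neq\mathbb{C}^*$, since $\mathbb{C}^*$ is itself doubly connected.

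\textbf{Type D.} The paper only refers to the quadratic case in \cite{CG93}. Your direct maximum-principle argument is a reasonable self-contained alternative, but its final sentence is a gap.

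For $v\in D$, the maximum principle gives $|F_{n+p}(v)-F_n(v)|\le\delta_n\to 0$ and $|(f^p_v)'(F_n(v))|\le\lambda<1$, but only at the orbit points themselves. This does not yield a disk mapped into itself by $f^p_v$. That would require control of $(f^p_v)'$ on a neighborhood of $F_n(v)$ of size independent of $n$, i.e.\ a bound on $F_n$ over $\overline{D}$ uniform in $n$. Nor does it by itself rule out that the orbit of $-2v$ is unbounded. So ``an attracting cycle exists'' does not yet follow.

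The missing step is one more use of the same principle. On $\gamma$ the critical orbits converge locally uniformly to the holomorphically moving attracting cycle, so $M':=\sup_n\max_{\gamma}|F_n|<\infty$. Hence $|F_n|\le M'$ on $\overline{D}$ for all $n$. Now fix $v\in D$ and let $\zeta$ be any accumulation point of $(F_{n_1+kp}(v))_{k}$. By continuity, $f^p_v(\zeta)=\zeta$ and $|(f^p_v)'(\zeta)|\le\lambda<1$. So $\zeta$ is attracting, the orbit eventually enters its immediate basin, and $v\in\mathcal{H}$. Alternatively, the convergence rate on $\gamma$ is geometric, so $\sum_n\max_{\gamma}|F_{n+p}-F_n|<\infty$ and $F_{n_1+kp}$ converges uniformly on $\overline{D}$.

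With this added, your connectedness argument goes through, including the exclusion of $0\in D$. It has the advantage over the paper's citation of using nothing beyond the entire dependence of $F_n$ and $(f^p_v)'$ on $v$.
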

\begin{proof}
    If $\mathcal{U}$ is a hyperbolic component of type D, then the proof is the same as the family of quadratic polynomials (see \cite{CG93}, Chapter VIII, Theorem 2.1). 

    Suppose that $\mathcal{U}$ is a hyperbolic component of type C. Let $\gamma \subset \mathcal{U}$ be an arbitrary bounded Jordan curve, and $\Omega$ be the domain surrounded by $\gamma$. It will be sufficient to show that $\Omega \subset \mathcal{U}$. Indeed, by the compactness of $\gamma$, for sufficiently small $\epsilon >0$, there exists $N \ge m$ such that $f^N_v(-2v) \in D(0,\epsilon)$ for all $v \in \gamma$. The function $f^N_v(-2v)$ is analytic on $\mathbb{C}^*$. By the Maximum Modulus Principle, $f^N_v(-2v) \in D(0,\epsilon)$ for all $v \in \overline{\Omega}$. Thus $\Omega$ cannot intersect with the complement of $\mathcal{U}$. In other words, $\Omega \subset \mathcal{U}$, i.e. $\mathcal{U}$ is simply connected. 

    Similarly, $\mathcal{H}_0 \cup \{0\}$ is simply connected so that $\mathcal{H}_0$ is doubly connected. 
\end{proof}

\section{Parameterization of hyperbolic components}\label{sec: parameterization of hyperbolic components}
As a result of Lemma \ref{lm: hyperbolic components are simply connected}, the hyperbolic components can be mapped to either the unit disk $\mathbb{D}$ or the punctured unit disk $\mathbb{D}^*$ conformally. The mappings can be chosen according to the dynamics of $f_v$. Moreover, we will show that all the hyperbolic components are bounded domains in $\mathbb{C}$, comparing with the exponential family, whose hyperbolic components are all unbounded (see \cite{RS09}). 

\subsection{Hyperbolic components of type A}\label{ss: hyperbolic components of type A}
Let $\mathcal{U}$ be a hyperbolic component of type A. Recall that $B_v$ is the immediate basin of $0$, $\phi_v: U_v \to D(0,r)$ is the B\"ottcher map defined on a neighborhood $U_v$ of $0$, satisfying $\phi_v \circ f_v(z)=\phi_v(z)^2$, $\phi_v(0)=0$, and the free critical value $-2v \in \partial U_v$. Since $f_v(z)=f_v(-z)$, $U_v$ is symmetric about $0$, and $\phi_v(-z)=-\phi_v(z)$. Define
\begin{equation}\label{eq: Phi_0}
    \begin{split}
        \Phi_0: \mathcal{U} & \to \mathbb{D} \\
        v & \mapsto \phi_v(-2v)
    \end{split}
\end{equation}

\begin{lemma}\label{lm: hyperbolic component of type A maps to the punctured unit disk}
    $\Phi_0(\mathcal{U}) \subset \mathbb{D}^*$. 
\end{lemma}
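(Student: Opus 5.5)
We need to show $\Phi_0(v)$ lies in $\mathbb{D}$ and is nonzero, and first that $\Phi_0$ is well defined. The plan has three steps.

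\textbf{Well-definedness.} For $v \in \mathcal{U}$, the Böttcher map $\phi_v$ is defined near $0$ and extends by the functional equation $\phi_v(z)=\sqrt{\phi_v(f_v(z))}$ along $B_v$. It continues until the first critical point of $f_v$ in $B_v$ other than $0$ is reached. For $f_v(z)=v(\cos z-1)$, the critical points other than $0$ are $k\pi$ with $k\neq 0$. The values $f_v(2k\pi)=0$, so these are preimages of $0$; the values $f_v((2k+1)\pi)=-2v$ give the free critical value. The largest domain $U_v$ on which $\phi_v$ is conformal onto a round disk $D(0,r_v)$ is therefore bounded by the level curve through the critical points $\pm\pi$, which lie in $B_v$ by Lemma \ref{lm: position of critical points of cosine maps}. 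Its image $f_v(U_v)$ is then the level set $\{|\phi_v|<r_v^2\}$, which contains $-2v$ in its closure. One also checks that $-2v$ lies in the domain of the (possibly boundary-)extension of $\phi_v$, so $\phi_v(-2v)$ makes sense and $|\phi_v(-2v)|=r_v^2$.

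\textbf{Modulus bound.} Since $r_v \le 1$, we get $|\Phi_0(v)|=r_v^2\le 1$. Strict inequality follows because $-2v$ lies in $B_v$, the immediate basin. Indeed, the Green function $G_v(z)=\lim 2^{-n}\log|f_v^n(z)|$ is negative on $B_v$ (its orbit converges to $0$), so $|\phi_v(-2v)|=e^{G_v(-2v)}<1$. Equivalently, if $r_v=1$ then $\phi_v$ would be a conformal map of a domain onto $\mathbb{D}$ conjugating $f_v$ to $z^2$, forcing that domain to be all of $B_v$ and giving no critical points in $B_v$ besides $0$. This contradicts $\pi\in B_v$.

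\textbf{Nonvanishing, the main point.} We must show $\phi_v(-2v)\neq 0$. Since $\phi_v$ is injective near $0$ with $\phi_v(0)=0$, and its extension to the relevant region is injective on $U_v$, vanishing would require $-2v=0$ in the extended domain, i.e.\ $v=0$, which is excluded. More carefully, $\phi_v(-2v)=0$ would mean $G_v(-2v)=-\infty$. The Green function equals $-\infty$ only on the grand orbit of $0$ that lands on $0$, so $f_v^n(-2v)=0$ for some $n$. Since $-2v$ lies on the boundary of the maximal Böttcher disk, $|\phi_v(-2v)|=r_v^2>0$; hence $\Phi_0(v)=0$ is impossible.

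The main obstacle is making the extension statement precise: that $-2v$ sits exactly on $\partial f_v(U_v)$, at positive radius. I would handle it by noting that $0$ has local degree $2$ and no other critical point lies in $\{|\phi_v|<\rho\}$ for small $\rho$. Hence $r_v>0$, and $|\Phi_0(v)|=r_v^2\in(0,1)$.
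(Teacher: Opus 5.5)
Your proposal is correct, and its core is exactly the paper's one-line argument: $\phi_v$ is injective on its domain with $\phi_v(0)=0$, so $\Phi_0(v)=\phi_v(-2v)=0$ would force $-2v=0$, i.e.\ $v=0$, which is excluded. The rest is extra justification that the paper takes for granted when it sets up $\Phi_0:\mathcal{U}\to\mathbb{D}$: well-definedness, the strict bound $|\Phi_0(v)|<1$ from $G_v<0$ on $B_v$, and the $|\phi_v(-2v)|=r_v^2>0$ variant. Note only that the $r_v^2$ variant tacitly assumes $\pm\pi$ lie on a finite level curve of $G_v$, i.e.\ $G_v(-2v)>-\infty$, so it adds nothing independent of the injectivity argument.
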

\begin{proof}
    If there exists a $v \in \mathcal{U}$ such that $\Phi_0(v)=0$, then $-2v$ is the fixed point $0$ of $f_v$. Thus $v=0$, a contradiction.
\end{proof}

The boundedness of $\mathcal{U}$ follows from Corollary \ref{cor: boundedness of B_v} and Lemma \ref{lm: diameter of B_v tends to 0}:
\begin{lemma}\label{lm: boundedness of H_0}
    Let $\mathcal{U}$ be a hyperbolic component of type A, then $\mathcal{U}$ is bounded.
\end{lemma}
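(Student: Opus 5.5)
The plan is to argue by contradiction, combining Corollary \ref{cor: boundedness of B_v} with Lemma \ref{lm: diameter of B_v tends to 0}. Suppose $\mathcal{U}$ is a hyperbolic component of type A that is unbounded. Then there is a sequence $v_n \in \mathcal{U}$ with $|v_n| \to \infty$. For each $n$, $f_{v_n}$ is hyperbolic of type A, so by Corollary \ref{cor: boundedness of B_v} the basin $B_{v_n}$ is unbounded and $\mathrm{diam}\, B_{v_n} = \infty$. On the other hand, Lemma \ref{lm: diameter of B_v tends to 0} gives $\mathrm{diam}\, B_{v_n} \to 0$. In particular, for $n$ large we have $\mathrm{diam}\, B_{v_n} \le 8\sqrt{2}/|v_n| < \infty$, which is a contradiction. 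Hence $\mathcal{U}$ is bounded.

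An even more direct route avoids Corollary \ref{cor: boundedness of B_v}. Type A means $-2v \in B_v$, and $0 \in B_v$. Therefore
\begin{equation*}
    \mathrm{diam}\, B_v \ge |-2v - 0| = 2|v|.
\end{equation*}
The proof of Lemma \ref{lm: diameter of B_v tends to 0} shows that for $|v|$ sufficiently large, $B_v \subset Q$ with $\mathrm{diam}\, B_v \le 8\sqrt{2}/|v|$. Combining the two bounds gives $2|v| \le 8\sqrt{2}/|v|$, which fails for large $|v|$. So there is some $R>0$ such that no $v$ with $|v|>R$ is of type A. Consequently every type-A component, not only $\mathcal{U}$, lies in $D(0,R)$. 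I would present this version, since it uses only the membership $-2v \in B_v$ from the definition of type A.

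The only step needing care is checking that Lemma \ref{lm: diameter of B_v tends to 0} really holds for all large $|v|$, including type-A parameters. In its proof, the renormalization $f_v : Q \to f_v(Q)$ forces $B_v \subset Q$ regardless of type. I expect the main obstacle, if any, to lie there: one must confirm that the polynomial-like restriction traps the immediate basin of $0$. This holds because $0$ is a superattracting fixed point lying in $Q$, so its immediate basin lies in the filled Julia set of the renormalization, which is contained in $Q$. Once this is granted, the argument is immediate.
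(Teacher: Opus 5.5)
Your first paragraph is the paper's proof. The paper takes $R$ from Lemma \ref{lm: diameter of B_v tends to 0} so that $B_v$ is bounded for $|v|>R$, and uses Corollary \ref{cor: boundedness of B_v} (type A forces $B_v$ unbounded) to get $\mathcal{U}\subset D(0,R)$.

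Your preferred second version is a genuinely lighter variant and is correct. It replaces the imported fact that type-A basins are unbounded with the definition of type A alone. Since $0,-2v\in B_v$, you get $\mathrm{diam}\,B_v\ge 2|v|$. This is incompatible with $B_v\subset Q\subset D(0,4\sqrt{2}/|v|)$ for large $|v|$. In other words, the free critical value simply lies outside the small square that traps $B_v$.

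Both routes rest on the same diameter lemma, and both give a radius $R$ independent of the component. The paper's route needs the extra input that a type-A basin is unbounded. That input is not deep: it already follows from Lemma \ref{lm: position of critical points of cosine maps}, since $B_v$ then contains every $k\pi$. Still, it is more than the statement requires, and your version makes it visible that nothing beyond $-2v\in B_v$ is used.

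On the step you flag: Lemma \ref{lm: diameter of B_v tends to 0} is stated for all $v\to\infty$ with no restriction on type, so you may simply cite it. If you do re-justify $B_v\subset Q$, note that "$0$ is superattracting in $Q$" is not by itself the reason. The reason is that the filled Julia set $K\subset Q$ of the quadratic-like restriction has $\partial K\subset J(f_v)$, because $\partial K$ is the closure of repelling periodic points of $f_v$. Hence the connected Fatou component $B_v$, which meets $\mathrm{int}\,K$ at $0$, cannot leave $K$.
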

\begin{proof}
    By Lemma \ref{lm: diameter of B_v tends to 0}, there exists $R>0$ such that $B_v$ is bounded for all $|v|>R$. By Corollary \ref{cor: boundedness of B_v}, $B_v$ is unbounded for all $v \in \mathcal{U}$. Thus $\mathcal{U} \subset D(0,R)$. 
\end{proof}

In fact, the map $\Phi_0$ defined by \eqref{eq: Phi_0} gives an analytic covering from $\mathcal{U}$ to $\mathbb{D}^*$. Moreover, the hyperbolic component of type A is unique. 
\begin{proposition}\label{prop: Phi_0 is a covering map of degree 2}
    $\mathcal{H}_0$ is the unique hyperbolic component of type A, and $\Phi_0: \mathcal{H}_0 \to \mathbb{D}^*$ given by \eqref{eq: Phi_0} is an analytic covering of degree $2$. 
\end{proposition}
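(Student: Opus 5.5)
We prove that for any type-A hyperbolic component $\mathcal{U}$, the map $\Phi_0:\mathcal{U}\to\mathbb{D}^*$ is a proper holomorphic map of degree $2$. By Lemma \ref{lm: hyperbolic components are simply connected}, $\mathcal{U}\cup\{0\}$ is then simply connected and $0$ is the only possible puncture, which forces $\mathcal{U}=\mathcal{H}_0$.

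\textbf{Holomorphy.} First we check that $\Phi_0$ is holomorphic on $\mathcal{U}$. The B\"ottcher coordinate $\phi_v$ near $0$ depends holomorphically on $v$. For $v\in\mathcal{U}$, the point $-2v$ lies in $B_v$, so some iterate $f_v^n(-2v)$ lies in the domain where $\phi_v$ is defined, uniformly on compact sets. Since $f_v^{n}(-2v)\neq 0$ by Lemma \ref{lm: hyperbolic component of type A maps to the punctured unit disk}, we can set $\phi_v(-2v)=\phi_v(f_v^{n}(-2v))^{1/2^{n}}$, with the branch fixed by continuation along the critical orbit. The critical value $-2v$ sits on $\partial U_v$, the maximal B\"ottcher domain, so $|\Phi_0(v)|<1$. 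Alternatively one can work with $\Phi_0^{2^n}$ and note that it is holomorphic.

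\textbf{Properness.} This is the main step. Take $v_k\to\partial\mathcal{U}$ inside $\mathcal{U}$. By Lemma \ref{lm: boundedness of H_0}, $\mathcal{U}$ is bounded, so either $v_k\to v^*\in\mathbb{C}^*\setminus\mathcal{U}$ or $v_k\to 0$.
\begin{itemize}
\item If $v_k\to 0$, then $-2v_k\to 0$, the superattracting fixed point, so $\phi_{v_k}(-2v_k)\to 0$. Here we use that $\phi_v$ is uniformly defined on a fixed disk for small $v$, by the proof of Lemma \ref{lm: H_0 contains a small disk}.
\item If $v_k\to v^*\neq 0$, suppose some subsequence has $|\Phi_0(v_k)|\le r<1$. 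The Green function $G_v(-2v)=-\log|\phi_v(-2v)|$ is bounded below by $-\log r$ along it. Using the B\"ottcher parametrization of a fixed level set $\{|\phi_v|<r'\}$ with $r<r'<1$, which moves holomorphically (as a holomorphic motion) for $v$ near $v^*$, we get that $-2v^*$ lies in the basin of $0$ for $f_{v^*}$. Then $v^*$ would be a type-A hyperbolic parameter, since attraction to a superattracting point is an open condition. This contradicts $v^*\in\partial\mathcal{U}$.
\end{itemize}
The delicate point is that for $v$ near $v^*$ the B\"ottcher domain $U_v$ is bounded by the critical level. One must check that the sublevel $\{|\phi_v|<r'\}$ with $r<r'<1$ still depends continuously on $v$ in a neighbourhood of $v^*$. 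I would argue this via the pullback $\{|\phi_v|<r'^{2}\}\subset D(0,\delta)$, which varies continuously, and then lift once by $f_v$. Near $\partial\mathcal{H}_0$ this lift is a disk component containing $0$ whose boundary avoids the critical values.

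\textbf{Degree.} Since $\Phi_0$ is proper and holomorphic, it is a branched covering $\mathcal{U}\to\mathbb{D}^*$ of some finite degree $d$. To compute $d$, consider $\mathcal{H}_0$ near $v=0$. There $f_v(z)=v(\cos z-1)\approx -vz^2/2$, so conjugating by $z\mapsto -vz/2$ gives the map $w\mapsto w^2(1+O(w^2))$. Hence $\phi_v(z)\approx -vz/2$, and $\Phi_0(v)=\phi_v(-2v)\approx v^2$. Thus $\Phi_0$ extends holomorphically to $0$ with a zero of order $2$, and by properness $d=2$.

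It remains to show $\Phi_0$ has no critical points in $\mathcal{H}_0$. The extended map $\mathcal{H}_0\cup\{0\}\to\mathbb{D}$ is proper of degree $2$ on a simply connected domain. By Riemann--Hurwitz it has exactly one critical point, which is $0$. So $\Phi_0:\mathcal{H}_0\to\mathbb{D}^*$ is an unbranched covering of degree $2$.

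\textbf{Uniqueness.} For any other type-A component $\mathcal{U}$, the map $\Phi_0:\mathcal{U}\to\mathbb{D}^*$ is a proper holomorphic map onto $\mathbb{D}^*$. The domain $\mathcal{U}$ is simply connected by Lemma \ref{lm: hyperbolic components are simply connected}, while $\mathbb{D}^*$ is not. Riemann--Hurwitz for a finite branched cover $\mathcal{U}\to\mathbb{D}^*$ gives $\chi(\mathcal{U})=d\cdot 0-(\text{branching})\le 0$, contradicting $\chi(\mathcal{U})=1$. Hence no such $\mathcal{U}$ exists, and $\mathcal{H}_0$ is the unique hyperbolic component of type A.
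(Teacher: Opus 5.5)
Your outline matches the paper on holomorphy, on properness (a limit of the free critical value must be attracted to $0$), and on $\Phi_0(v)\sim v^2$ at the puncture. You differ from the paper in how you handle branch points and uniqueness.

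\textbf{Comparison with the paper.} The paper asserts $\Phi_0'(v)\neq 0$ for $v\neq 0$ ``since the B\"ottcher map is conformal''. That only controls $\partial_z\phi_v$, not the $v$-dependence of $\phi_v(-2v)$. From this claim the paper deduces two things: that a second type-A component would be an unbranched finite cover of $\mathbb{D}^*$, and that $0$ is the only branch point on $\mathcal{H}_0\cup\{0\}$. You replace both with Riemann--Hurwitz. For uniqueness, $\chi(\mathcal{U})=d\,\chi(\mathbb{D}^*)-b\le 0$ excludes a simply connected $\mathcal{U}$ whatever its critical points are. For $\mathcal{H}_0$, the degree is read off from the double zero at $0$, which is the only zero by Lemma \ref{lm: hyperbolic component of type A maps to the punctured unit disk}. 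Then $1=2\cdot 1-b$ leaves no critical point besides $0$. This is a more robust argument than the paper's. You also treat $v_k\to 0$ explicitly, which the paper's properness argument ($|\Phi_0|\to 1$ at every boundary point) overlooks for $\mathcal{H}_0$. Your opening paragraph, however, announces degree $2$ for every type-A $\mathcal{U}$ and a ``puncture'' argument for uniqueness. Neither is what you prove, so replace it with your final Riemann--Hurwitz argument.

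\textbf{The properness step as written.} Your fix for the delicate point does not work. Along a subsequence with $|\Phi_0(v_k)|\le r$, the B\"ottcher coordinate of $f_{v_k}$ is univalent only up to the level of the free critical points $\pm\pi$, namely $|\Phi_0(v_k)|^{1/2}\le r^{1/2}$. So for larger $r'$ there is no disc $\{|\phi_{v_k}|<r'\}$ to move; note it is $\pm\pi$, not $-2v_k$, that lie on the boundary of the maximal B\"ottcher domain. A single lift does not help either. First, $\{|\phi_v|<r'^2\}\subset D(0,\delta)$ fails for $r'$ near $1$. Second, once that set contains $-2v_k$, its lift around $0$ contains $\pm\pi$ and is not a disc.

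\textbf{How to repair it.} The missing ingredient is to work from $v^*$. Since $v^*$ is not of type A, Lemma \ref{lm: position of critical points of cosine maps} makes $0$ the only critical point in $B_{v^*}$, so $\phi_{v^*}:B_{v^*}\to\mathbb{D}$ is conformal. Given $r'<1$, choose $N$ so that $r'^{2^N}$ lies below the uniform local B\"ottcher level for $v$ near $v^*$. Then $0$ is the only critical point of $f^N_{v^*}$ near $\phi_{v^*}^{-1}(\overline{D(0,r')})$, and by Hurwitz the same holds for $f^N_v$ with $v$ near $v^*$. Pulling back $N$ times shows that $\phi_v$ extends univalently to level $r'$ for all such $v$. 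For $v=v_k$, the level-$r'^2$ disc therefore contains no critical value other than $0$, so $|\Phi_0(v_k)|\ge r'^2$, and hence $|\Phi_0(v_k)|\to 1$.

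A simpler alternative is to push the orbit forward. The point $f^n_{v_k}(-2v_k)$ has B\"ottcher level $\le r^{2^n}$, so for large $n$ it lies in a fixed small closed disc about $0$. In the limit, $f^n_{v^*}(-2v^*)\in B_{v^*}$, which contradicts $v^*\in\partial\mathcal{U}$.
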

\begin{proof}
    Let $\mathcal{U}$ be a hyperbolic component of type A. We show that $\Phi_0: \mathcal{U} \to \mathbb{D}^*$ is an analytic covering of finite degree. 
    
    First of all, since $\phi_v(z)$ is analytic with respect to both $v$ and $z$, $\Phi_0$ is an analytic function. 

    Then we show that $\Phi_0: \mathcal{U} \to \mathbb{D}^*$ is a proper map. It will be sufficient to show that $\Phi_0(v_n) \to \partial \mathbb{D}$ as $v_n \to v_0 \in \partial \mathcal{U}$. Let $G_n(z)= \log |\phi_{v_n}(z)|$ be the Green function on $B_{v_n}$. For $z \notin \bigcup_{n \ge 0} f^{-n}(B_{v_n})$, define $G_{v_n}(z)=0$. Then $G_{v_n}$ can be extended to $\mathbb{C}^*$ continuously. Similarly, $G_{v_0}$ is continuous on $\mathbb{C}^*$. Since $v_0 \in \partial \mathcal{U}$, $-2v_0 \notin \bigcup_{n \ge 0} f^{-n}(B_{v_0})$, i.e. $G_{v_0}(-2v_0)=0$. Therefore, $|\Phi_0(v_n)|=|\phi_{v_n}(-2v_n)|=e^{G_n(-2v_n)} \to e^{G_0(-2v_0)}=1$ as $n \to \infty$. So $\Phi_0$ is a proper map. Furthermore, it is an analytic branched covering map. 
    
    Since the B\"ottcher map $\phi_v$ is conformal, $\Phi'_0(v) \neq 0$ if $v \neq 0$. Thus for $\mathcal{U} \neq \mathcal{H}_0$, $\Phi_0$ has no branch points. Thsi implies that $\Phi_0: \mathcal{U} \to \mathbb{D}^*$ is an analytic covering map of finite degree. However, such a covering map does not exist, since $\mathcal{U}$ is simply connected by Lemma \ref{lm: hyperbolic components are simply connected}. In other words, $\mathcal{H}_0$ is the unique hyperbolic component of type A.

    Finally we show that $\Phi_0: \mathcal{H}_0 \to \mathbb{D}^*$ is an analytic covering map of degree $2$. By Riemann's Removable Singularities Theorem, $\Phi_0$ can be extended to an analytic function on $\mathcal{H}_0 \cup \{0\}$. So $\Phi_0: \mathcal{H}_0 \cup \{0\} \to \mathbb{D}$ is an analytic branched covering of finite degree. Since $0$ is the unique zero of $\Phi_0$, $0$ is the unique branch point of $\Phi_0$. It will be sufficient to show that $0$ is the zero of $\Phi_0$ of degree $2$. Since $f_v(z)=v(\cos z-1)=\frac{vz^2}{2}(1+O(z^2))$, by induction on $n$, we have 
    \begin{equation*}
        f^n_v(z)=(\frac{v}{2})^{2^n-1}z^{2^n}(1+O(z^2)).
    \end{equation*}
    By the definition of $\phi_v(z)$,  
    \begin{equation*}
        \Phi_0(v)=\phi_v(-2v)=\lim_{n \to \infty} (f^n_v(-2v))^{1/2^n}=v^2(1+O(v^2)).
    \end{equation*}
    Thus $0$ is the zero of $\Phi_0$ of degree $2$. So $\Phi_0: \mathcal{H}_0 \to \mathbb{D}^*$ is an analytic covering map of degree $2$.  
\end{proof}

\subsection{Hyperbolic components of type C}\label{ss: hyperbolic components of type C}
Recall that for $m \ge 1$, $\mathcal{H}_m=\{v \in \mathbb{C}^*: f^m_v(-2v) \in B_v, f^{m-1}_v(-2v) \notin B_v\}$, and $\mathcal{H}_0$ is the unique hyperbolic component such that $-2v \in B_v$. Thus one can define $\hat{\mathcal{H}}=\bigcup_{m \ge 0} \mathcal{H}_m$.

Let $\mathcal{U}$ be a component of $\mathcal{H}_m$, $m \ge 1$. By Lemma \ref{lm: position of critical points of cosine maps}, $B_v$ contains only one critical point $0$. Thus $\phi_v$ can be extended to a conformal map $\phi_v:B_v \to \mathbb{D}$. Define 
\begin{equation}\label{eq: Phi_U}
    \begin{split}
        \Phi_{\mathcal{U}}: \mathcal{U} & \to \mathbb{D} \\
        v & \mapsto \phi_v(f^m_v(-2v))
    \end{split}.
\end{equation}

First we show that $\mathcal{U}$ is bounded. $f^m_v(-2v) \in B_v$, $v \in \mathcal{U}$. There exists an integer $k_v \in \mathbb{Z} \setminus \{0\}$, such that $f^{m-1}_v(-2v) \in B_v +2k_v\pi$. 
\begin{lemma}\label{lm: k_v is constant}
    The function $v \mapsto k_v$ is a constant on $\mathcal{U}$. In other words, there exists $k \in \mathbb{Z}$, $k \neq 0$, such that $f^{m-1}_v(-2v) \in B_v+2k\pi$ for all $v \in \mathcal{U}$.
\end{lemma}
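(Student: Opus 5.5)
Since $\mathcal{U}$ is connected, the plan is to show that $v\mapsto k_v$ is locally constant on $\mathcal{U}$; an integer-valued locally constant function on a connected set is constant. Before that, I would check that $k_v$ is well defined. Since $f_v^m(-2v)\in B_v$, the point $w_v:=f_v^{m-1}(-2v)$ lies in some component of $f_v^{-1}(B_v)$. Because $f_v$ is even and $2\pi$-periodic, and $B_v$ contains exactly one critical point $0$ (Lemma \ref{lm: position of critical points of cosine maps}, applicable because $v$ is not of type A), the preimage components are exactly the translates $B_v+2k\pi$, $k\in\mathbb{Z}$. To see this, note that $f_v|_{B_v}:B_v\to B_v$ is proper of degree $2$ and $B_v$ is bounded (Corollary \ref{cor: boundedness of B_v}). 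Any component $W$ of $f_v^{-1}(B_v)$ is simply connected. If $W$ contains no critical point, it maps conformally; if it contains one, it is a translate of $B_v$ or contains an odd multiple of $\pi$. I would rule out the latter, since an odd multiple of $\pi$ maps to $-2v$, which is not in $B_v$. Thus a component not of the form $B_v+2k\pi$ contains no critical point. In that case I simply let $W$ be the component containing $w_v$ and argue with $W$ directly rather than insisting on a translate. Under the lemma's assertion, $w_v$ lies in a translate, and $k_v\neq0$ because $w_v\notin B_v$ when $m-1\ge0$ is the minimal index.

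For local constancy, fix $v_0\in\mathcal{U}$ and write $w_0=w_{v_0}$ and $k_0=k_{v_0}$. Then $w_0-2k_0\pi\in B_{v_0}$, and this point is attracted to $0$. I would choose a small closed disk $\overline{D}$ around $0$ on which every $f_v$ with $v$ near $v_0$ maps $\overline{D}$ into $D$. Such a disk exists because $0$ is superattracting, uniformly in $v$ near $v_0\neq0$, and it gives $\overline D\subset B_v$. Next, take a compact arc $\gamma\subset B_{v_0}$ joining $w_0-2k_0\pi$ to $0$, together with $N$ such that $f_{v_0}^N(\gamma)\subset D$. By continuity of $(v,z)\mapsto f_v^N(z)$ and compactness of $\gamma$, we get $f_v^N(\gamma)\subset D$ for all $v$ close to $v_0$. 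Hence $\gamma\subset B_v$, since $\gamma$ is connected, contains $0$, and lies in the basin of $0$. Shrinking the neighbourhood so that $w_v-2k_0\pi$ stays within a small disk around $w_0-2k_0\pi$ that is attracted the same way, the same argument gives $w_v-2k_0\pi\in B_v$. So $w_v\in B_v+2k_0\pi$, i.e.\ $k_v=k_0$. This yields local constancy, and connectedness finishes the proof.

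The main obstacle I expect is the first step: making sure the component of $f_v^{-1}(B_v)$ containing $w_v$ is a translate $B_v+2k\pi$, so that $k_v$ is meaningful. If that fails, the same continuity argument still shows that the point of $\bigcup_k(B_v+2k\pi)$ structure is stable. The real content, namely that membership of a continuously moving point in a continuously moving basin translate persists, is handled by the uniform-attraction argument above. That argument is essentially the holomorphic-motion stability of attracting basins over $\mathcal{U}$, where $f_v^m(-2v)\in B_v$ throughout.
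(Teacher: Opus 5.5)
Your argument is correct. It has the same skeleton as the paper's (show $v\mapsto k_v$ is locally constant, then use connectedness of $\mathcal{U}$), but you justify local constancy differently. The paper chooses a neighbourhood $\mathcal{N}\subset\mathcal{U}$ on which $\partial B_v$ varies continuously in the Hausdorff topology; in a hyperbolic component this comes from the stability, i.e.\ holomorphic motion, of $\partial B_v$. It then reads off that $z(v')\in B_{v'}+2k\pi$ persists, leaving implicit why a point at positive distance from the moving boundary stays on the same side of it. You instead use lower semicontinuity of the immediate basin. You take a trapping disk at the superattracting point $0$ that works uniformly for $v$ near $v_0$, and apply continuity of $(v,z)\mapsto f_v^N(z)$ to the compact connected set formed by $\gamma$ and a small closed disk in $B_{v_0}$ around $w_0-2k_0\pi$. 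This keeps that set inside $B_v$, and continuity of $w_v$ then gives $w_v-2k_0\pi\in B_v$ directly. Your route is more elementary and self-contained, and it does not use hyperbolicity, only that $w_{v_0}$ lies in a translate of $B_{v_0}$. The paper's route is shorter once stability of $\partial B_v$ is granted.

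The hedge in your first paragraph is unnecessary. Since $f_v$ has no finite asymptotic values, every component $W$ of $f_v^{-1}(B_v)$ maps onto $B_v\ni 0$, and $f_v^{-1}(0)=2\pi\mathbb{Z}$. So $W$ contains some $2k\pi$ and hence equals $B_v+2k\pi$. These translates are pairwise distinct because $B_v$ contains no critical point other than $0$ (Lemma \ref{lm: position of critical points of cosine maps}). Thus $k_v$ is well defined, as the paper takes for granted.
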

\begin{proof}
    It will be sufficient to show that $k_v$ is a constant locally. Let $v \in \mathcal{U}$, $z(v)=f^{m-1}_v(-2v)$. $z(v)$ is an analytic function of $v$. Choose a neoghborhood $\mathcal{N} \subset \mathcal{U}$ of $v$, such that $\partial B_v$ is continuous on $\mathcal{N}$, with respect to the Hausdorff topology. Hence, if $z(v) \in B_v+2k\pi$, then $z(v') \in B_{v'}+2k\pi$ for all $v' \in \mathcal{N}$. 
\end{proof}

\begin{lemma}\label{lm: boundness of components of type C}
    Let $\mathcal{U}$ be a hyperbolic component of type C. Then $\mathcal{U}$ is bounded.
\end{lemma}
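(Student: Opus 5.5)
The plan is to combine Lemma \ref{lm: diameter of B_v tends to 0} with Lemma \ref{lm: k_v is constant}, as was done for type A in Lemma \ref{lm: boundedness of H_0}, but now pinning down the \emph{position} of a point of the orbit rather than just the size of $B_v$. Fix a component $\mathcal{U}$ of $\mathcal{H}_m$, $m\ge 1$. By Lemma \ref{lm: k_v is constant} there is a fixed $k\neq 0$ with $f^{m-1}_v(-2v)\in B_v+2k\pi$ for all $v\in\mathcal{U}$. The proof of Lemma \ref{lm: diameter of B_v tends to 0} gives $B_v\subset D(0,8\sqrt2/|v|)$ for $|v|>R_0$. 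Consider the entire function $F(v)=f^{m-1}_v(-2v)$ of $v$, and put $g=F-2k\pi$. Then
\[
|g(v)|\le 8\sqrt2/|v| \quad\text{for } v\in\overline{\mathcal{U}},\ |v|>R_0 ,
\]
and the same bound holds on $\partial\mathcal{U}$, since there $f^m_v(-2v)\in\partial B_v$.

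\textbf{The case $m=1$.} Here $F(v)=-2v$, so the estimate forces $|v+k\pi|\le 4\sqrt2/|v|$. Hence $\mathcal{U}\cap\{|v|>R_0\}$ lies in a tiny disk around $-k\pi$. If $|v|>R_0$ and $R_0>|k|\pi+1$, this is impossible, so $\mathcal{U}\subset D(0,R_0)$.

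\textbf{The case $m\ge 2$.} I would argue by contradiction. Suppose $\mathcal{U}$ is unbounded. Since $\mathcal{U}$ is simply connected (Lemma \ref{lm: hyperbolic components are simply connected}) and connected, some component $W$ of $\mathcal{U}\cap\{|v|>R_0\}$ is unbounded, and on $W$ we have $|v\,g(v)|\le 8\sqrt2$. To exploit this, consider the map $\Phi_{\mathcal{U}}(v)=\phi_v(f^m_v(-2v))$ from \eqref{eq: Phi_U}, which satisfies $|\Phi_{\mathcal{U}}|=1$ on $\partial\mathcal{U}$ (same argument as in Proposition \ref{prop: Phi_0 is a covering map of degree 2}). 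For large $|v|$ I want an explicit asymptotic for it. Near $0$ we have $f_v(z)=\tfrac v2 z^2(1+O(z^2))$, so the Böttcher map on the tiny basin is $\phi_v(w)\approx \tfrac v2 w$ up to a bounded distortion factor. Since $f^m_v(-2v)=f_v(2k\pi+g)\approx\tfrac v2 g^2$, this gives
\[
\Phi_{\mathcal{U}}(v)\approx \left(\tfrac{v\,g(v)}{2}\right)^2
\]
with uniformly controlled error on $W$.

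The remaining step, which I expect to be the main obstacle, is to turn the unbounded region $W$ into a contradiction. The first thing I would try is to show directly that for $|v|$ large the orbit $-2v, f_v(-2v),\dots,f^{m-2}_v(-2v)$ cannot land within $8\sqrt2/|v|$ of the lattice $2\pi\mathbb{Z}\setminus\{0\}$ at step $m-1$ while $|v|$ stays large. The heuristic is that the landing set has area $O(|v|^{-2})$, whereas the derivative $\partial_v F$ grows at least like $|v|$. If this fails, the fallback is to use the structure of the entire function $F$. On $W$, $F$ tends to $2k\pi$, so $2k\pi$ would be an asymptotic value of $F$ along a path in $W$. I would then try to rule this out by showing $F$ has no finite asymptotic values, via an inductive description of $F$ as a composition of cosines whose only singular values are critical values. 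This would make $F$ of class $\mathcal{S}$ with critical values accumulating only at $\infty$, contradicting the existence of the unbounded tract $W$.
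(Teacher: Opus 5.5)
For $m=1$ your argument is correct and is the same as the paper's. For $m\ge 2$, which is the real content of the lemma, the proposal does not contain a proof.

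\textbf{What fails for $m\ge 2$.} The area-versus-derivative remark is only a heuristic. The asymptotics of $\Phi_{\mathcal U}$ are never used. The fallback is justified incorrectly: $F(v)=f^{m-1}_v(-2v)$ is not a composition of fixed entire maps, because $v$ re-enters at every stage. Already for $m=2$, $F(v)=-2v\sin^2 v$ has critical points at the roots of $\tan v=-2v$ (near $\pi/2+j\pi$). Its critical values there are $\approx -2v\to\infty$, so $F$ is not in the Speiser class.

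Moreover, to get an asymptotic value you need a curve in $\mathcal U\cap\{|v|>R_0\}$ tending to $\infty$, and you do not produce one. Simple connectivity does not force $\mathcal U\cap\{|v|>R_0\}$ to have an unbounded component: a disk with attached fingers of increasing length is a counterexample. Even an unbounded component need not contain a curve to $\infty$: take fingers attached to an annulus.

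\textbf{The missing idea.} The paper peels off one iterate at a time using
\[
f^{j}_v(-2v)=v\bigl(\cos f^{j-1}_v(-2v)-1\bigr).
\]
If the left side stays bounded while $|v|\to\infty$, then $\cos f^{j-1}_v(-2v)\to 1$, so $f^{j-1}_v(-2v)$ approaches $2\pi\mathbb{Z}$. Start at level $m-1$, where Lemma~\ref{lm: k_v is constant} and $\mathrm{diam}\,B_v\to 0$ fix the limit $2k\pi$, and descend to level $0$. One ends with $-2v_n$ converging to a lattice point, contradicting $v_n\to\infty$.

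This same induction is what actually proves your fallback claim. Write $F_j(v)=f^{j-1}_v(-2v)$, so that $F_{j+1}(v)=v(\cos F_j(v)-1)$. Suppose $F_{j+1}(\gamma(t))\to a\in\mathbb{C}$ along a curve $\gamma\to\infty$. Then $\cos F_j(\gamma(t))\to 1$, and continuity pins $F_j(\gamma(t))$ to a single point of $2\pi\mathbb{Z}$, which is then a finite asymptotic value of $F_j$. Since $F_1(v)=-2v$ has none, no $F_j$ has one.

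\textbf{Trade-off between the two versions.} Along a curve, the lattice index at each level is automatically constant, but you still owe the existence of the curve. Along a sequence, as in the paper, no curve is needed. However, one must then know that the lattice index at each lower level stays bounded along the sequence. The paper obtains this from Lemma~\ref{lm: k_v is constant} only at level $m-1$ and asserts it without proof below. It needs its own argument. For $m=2$, for example: with $k$ fixed and $R$ large, the set $\{|v|>R,\ |f_v(-2v)-2k\pi|\le C/|v|\}$ lies in small disjoint disks about the points $\pi j$. A connected $\mathcal U$ can therefore meet only finitely many of them.
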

\begin{proof}
    If $\mathcal{U}$ is unbounded, then exists $v_n \in \mathcal{U}$, $v_n \to \infty$. By Lemma \ref{lm: diameter of B_v tends to 0}, $\mathrm{diam} \; B_{v_n} \to 0$ as $n \to \infty$. Thus $f^m_{v_n}(-2v_n)=v_n(\cos f^{m-1}_{v_n}(-2v_n)-1)$ is bounded. So there exists $k_n \in \mathbb{Z}$, such that $|f^{m-1}_{v_n}(-2v_n)-2k_n \pi | \to 0$. By Lemma \ref{lm: k_v is constant}, $k_n$ is a constant $k$. So $f^{m-1}_{v_n}(-2v_n) \to 2k \pi$. If $m=1$, then $-2v_n \to 2k\pi$, a contradiction. If $m \ge 2$, then $f^{m-1}_{v_n}(-2v_n)=v_n (\cos f^{m-2}_{v_n}-1) \to 2k\pi$ as $n \to \infty$. So $f^{m-2}_{v_n}(-2v_n) \to 2k'\pi$ for some $k' \in \mathbb{Z}$. By induction on $m$, we have $-2v_n \to 2k'' \pi$ as $n \to \infty$, contradicting that $v_n \to \infty$.
\end{proof}

\begin{proposition}\label{prop: Phi_U is a conformal isomorphism}
    The map $\Phi_{\mathcal{U}}:\mathcal{U} \to \mathbb{D}$ defined by \eqref{eq: Phi_U} is a conformal isomorphism. 
\end{proposition}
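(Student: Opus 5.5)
The plan is to show that $\Phi_{\mathcal{U}}$ is a proper holomorphic map $\mathcal{U}\to\mathbb{D}$, hence a branched covering of finite degree, and then that its degree is one. The degree count uses the simple connectivity of $\mathcal{U}$ (Lemma \ref{lm: hyperbolic components are simply connected}) together with a local injectivity or single-zero argument.

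\textbf{Holomorphy.} For $v\in\mathcal{U}$, Lemma \ref{lm: position of critical points of cosine maps} shows that $B_v$ contains only the critical point $0$. Hence $\phi_v:B_v\to\mathbb{D}$ is the full conformal B\"ottcher map. Locally, $\phi_v(z)=\bigl(\phi_v(f^N_v(z))\bigr)^{1/2^N}$ with $f^N_v(z)$ near $0$, and the B\"ottcher coordinate near $0$ depends holomorphically on $(v,z)$. Since $f^m_v(-2v)$ is holomorphic in $v$, the map $\Phi_{\mathcal{U}}$ is holomorphic.

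\textbf{Properness.} I would copy the argument of Proposition \ref{prop: Phi_0 is a covering map of degree 2}. Let $G_v=\log|\phi_v|$ on $B_v$, extended by $G_v\circ f^n_v$ to the preimages of $B_v$ and by $0$ elsewhere. I would check that $(v,z)\mapsto G_v(z)$ is continuous on $\mathbb{C}^*\times\mathbb{C}$ near the relevant points, using that $\partial B_v$ moves continuously, as in Lemma \ref{lm: k_v is constant}. Now take $v_n\to v_0\in\partial\mathcal{U}$. Note $v_0\neq 0$ and $v_0\ne\infty$, by Lemma \ref{lm: boundness of components of type C} and because a neighborhood of $0$ lies in $\mathcal{H}_0$ (Lemma \ref{lm: H_0 contains a small disk}). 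Then $f^m_{v_0}(-2v_0)\notin B_{v_0}$, since otherwise $v_0$ would lie in an open set of capture parameters with the same $m$ and $k$, contradicting $v_0\in\partial\mathcal{U}$. So $|\Phi_{\mathcal{U}}(v_n)|=e^{G_{v_n}(f^m_{v_n}(-2v_n))}\to 1$. This makes $\Phi_{\mathcal{U}}$ a proper holomorphic map, hence a branched covering of some degree $d$.

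\textbf{Degree one.} This is the main obstacle. I would count zeros of $\Phi_{\mathcal{U}}$, i.e. centers $v$ with $f^m_v(-2v)=0$, and show there is exactly one in $\mathcal{U}$ and that it is simple. Given that, Riemann--Hurwitz for $\mathcal{U}\simeq\mathbb{D}$ yields $d=1$. Alternatively, one can show that $\Phi_{\mathcal{U}}$ has no critical points, since a branched cover $\mathbb{D}\to\mathbb{D}$ without critical points has degree one.

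The first route I would try is quasiconformal surgery. Fix $v_0\in\mathcal{U}$ and $w\in\mathbb{D}$. Deform the complex structure on $B_{v_0}$ and its preimages, along the grand orbit of $B_{v_0}$ only, so that the B\"ottcher position of $f^m(-2v)$ is moved from $\Phi_{\mathcal{U}}(v_0)$ to $w$. Straightening yields a map in the family $f_v$, since $\mathcal{S}_1$ is characterized by a superattracting fixed critical point and conjugacy normalizes it. This gives a continuous local inverse of $\Phi_{\mathcal{U}}$, showing that $\Phi_{\mathcal{U}}$ is a local homeomorphism and hence a covering. A covering $\mathcal{U}\to\mathbb{D}$ with $\mathcal{U}$ connected is then a conformal isomorphism.

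The delicate point is uniqueness. Two parameters in $\mathcal{U}$ with the same $\Phi_{\mathcal{U}}$-value must be shown to be conformally conjugate, and hence equal, since $v$ is determined by the conjugacy class once $0$ is fixed; the symmetry $z\mapsto -z$ and translations by $2\pi$ have to be checked not to identify distinct $v$. To get uniqueness I would pull back a conformal conjugacy on $B_v$ to the grand orbit and use that the Julia set carries no invariant line field off the grand orbit of $B_v$. Since the free critical orbit lands in $B_v$, every point off that grand orbit is non-recurrent to critical points, so the pull-back argument works.
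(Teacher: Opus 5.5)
\textbf{Verdict: same route as the paper, but one step the proof depends on is not justified, and the final ``uniqueness'' paragraph is unnecessary and rests on a false claim.}

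Your outline matches the paper's proof. You show holomorphy and properness, the latter via the Green-function argument of Proposition~\ref{prop: Phi_0 is a covering map of degree 2}. A quasiconformal surgery moving the B\"ottcher position of $f^m_v(-2v)$ then gives a continuous local section $z\mapsto v(z)$ with $v(\Phi_{\mathcal{U}}(v_0))=v_0$. This rules out branch points, so $\Phi_{\mathcal{U}}$ is a proper local homeomorphism onto the simply connected disk, hence a conformal isomorphism.

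The gap is exactly where the paper does its work: showing that the straightened entire map is again some $f_{v(z)}$. You argue that $\mathcal{S}_1$ is characterized by a superattracting fixed critical point. That only helps once you already know the new map lies in the family $\{ae^w+be^{-w}\}$; among entire functions in general, that property does not single out the cosine family. The paper closes this with Hadamard factorization. The surgered map $h_z$ is $2\pi$-periodic, has order one, has no asymptotic values, and has double zeros exactly at $2\pi\mathbb{Z}$. This forces $h_z=Ce^{aw}(\cos w-1)$, and then $a=0$. Alternatively, write the modified map as $\tau\circ f_{v_0}$ with $\tau$ quasiconformal. The straightened map is then topologically equivalent to $f_{v_0}$, and lifting through the covering $\cos\colon\mathbb{C}\setminus\pi\mathbb{Z}\to\mathbb{C}\setminus\{\pm1\}$ shows it equals $\alpha\cos(\beta w+\gamma)+\delta$. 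You also need to normalize the integrating map, for example so that it fixes $0$ and commutes with $w\mapsto w+2\pi$. Only then is $z\mapsto v(z)$ single-valued and continuous with the correct value at $z_0$; the next paragraph explains why this is not automatic.

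Your ``uniqueness'' paragraph is not needed. A continuous local section through $v_0$ already excludes a critical point there, since there is no continuous branch of $\zeta^{1/d}$ near $0$ for $d\ge 2$. As written, it also rests on a false claim:
\begin{itemize}
\item Since $f_{-v}(z)=-f_v(-z)$, the map $z\mapsto -z$ conjugates $f_v$ to $f_{-v}$ while fixing $0$. So the conjugacy class determines $v$ only up to sign.
\item Indeed $\phi_{-v}(z)=\phi_v(-z)$, so $\Phi_{-\mathcal{U}}(-v)=\Phi_{\mathcal{U}}(v)$, where $-\mathcal{U}=\{-v: v\in\mathcal{U}\}$ is again a component of $\mathcal{H}_m$. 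This is exactly why $\Phi_0$ has degree $2$ on the symmetric component $\mathcal{H}_0$.
\item For type C your route would survive only because $\mathcal{U}\cap(-\mathcal{U})=\varnothing$. Otherwise, a path in $\mathcal{U}$ from $v$ to $-v$ followed by its image under $v\mapsto -v$ would be a loop in $\mathcal{U}$ winding an odd number of times around $0\notin\mathcal{U}$, contradicting simple connectivity.
\item Absence of invariant line fields on $J(f_v)$ is not a routine pull-back argument here. Julia sets of cosine maps have positive area (McMullen), so this needs a genuine theorem, not just non-recurrence of critical orbits.
\end{itemize}
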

\begin{proof}
    First of all, $\Phi_{\mathcal{U}}$ is an analytic function. By the boundedness of $\mathcal{U}$, $\Phi_{\mathcal{U}}: \mathcal{U} \to \mathbb{D}$ is a branched covering map similar to the proof of Proposition \ref{prop: Phi_0 is a covering map of degree 2}. 
    
    We claim that $\Phi_{\mathcal{U}}$ has no branch points. Let $v_0 \in \mathcal{U}$, $\Phi_{\mathcal{U}}(v_0)=z_0 \in \mathbb{D}$. Let $\epsilon >0$ be sufficiently small, $z \in D(z_0, \epsilon)$. As in the proof of Lemma 2.8 in \cite{Roe07}, we can conduct a quasiconformal surgery to obtain an entire function $h_z$, satisfying   
    \begin{enumerate}
        \item $h_z(w)=h_z(w+2\pi)$;
        \item $\rho(h_z)=1$, here $\rho$ is the order of a transcendental entirre function;
        \item $h_z$ has no asymptotic values;
        \item the zeros of $h_z$ are $2k\pi$, $k \in \mathbb{Z}$, and they are all of degree $2$.
    \end{enumerate}
    By Hadamard's Factorization Theorem, we have  
    \begin{equation*}
        \begin{split}
            h_z(w) & = w^2 e^{aw+b} \prod_{k=1}^{\infty} \left(1-\left(\frac{z}{2k\pi}\right)^2\right)^2 \\
            & =C e^{aw}(\cos w-1)
        \end{split} 
    \end{equation*}
    where $C \in \mathbb{C}$, $a \in \mathbb{C}$. Since $h_z(w+2\pi)=h_z(w)$, $a \in \{0,1,-1\}$. If $a \neq 0$, then $h_z$ has an asymptotic value $C/2$, a contradiction. Hence $a=0$, $h_z(w)=v(z) (\cos w-1)$, i.e. $h_z=f_{v(z)}$. 
    
    Again, similar to the cubic polynomials (\cite{Roe07}), one can easily verify that $\Phi_{\mathcal{U}}(v(z))=z$. This implies that $\Phi_{\mathcal{U}}:\mathcal{U} \to \mathbb{D}$ is a local homeomorphism, hence has no branch points. Since $\mathcal{U}$ and $\mathbb{D}$ are both simply connected, $\Phi_{\mathcal{U}}: \mathcal{U} \to \mathbb{D}$ is a homeomorphism, hence a conformal isomorphism. 
\end{proof}

By Proposition \ref{prop: Phi_U is a conformal isomorphism}, for every component $\mathcal{U}$ of $\mathcal{H}_m$, there is a unique $v_{\mathcal{U}} \in \mathcal{U}$ such that $\Phi_{\mathcal{U}}(v_{\mathcal{U}})=0$, so that $f^m_{v_{\mathcal{U}}}(-2v_{\mathcal{U}})=0$. We call $v_{\mathcal{U}}$ the center of $\mathcal{U}$.

\subsection{Hyperbolic components of type D}\label{ss: hyperbolic components of type D}
Let $\mathcal{D}$ be a hyperbolic component of type D. Every $v \in \mathcal{D}$, the free critical value $-2v$ of $f_v$ is attracted by an attracting cycle $\{z_v, f_v(z_v), \ldots, f^{p-1}_v(z_v)\}$. By the Implicit Function Theorem, $z_v$ is an analytic function with respect to $v$, whose period $p$ is a constant. Let $\lambda(v)=(f^p_v)'(z_v)$. Then $\lambda: \mathcal{D} \to \mathbb{D}$ is an analytic function. 

\begin{lemma}\label{lm: boundedness of hyperbolic components of type D}
    $\mathcal{D}$ is a bounded domain in $\mathbb{C}$. 
\end{lemma}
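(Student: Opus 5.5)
The plan is to argue by contradiction, in the same spirit as Lemma \ref{lm: boundness of components of type C}. Suppose $\mathcal{D}$ is unbounded and choose $v_n \in \mathcal{D}$ with $v_n \to \infty$. Let $z_n = z_{v_n}$ be the attracting periodic point of constant period $p$. Since the cycle attracts the critical value $-2v_n$, the immediate basin of the cycle contains a critical point. Up to relabelling the cycle and translating by $2\pi$, we may assume that the component $V_n$ of the immediate basin containing $z_n$ contains a critical point $k_n\pi$. The idea is to exploit the very strong expansion of $f_v$ away from the critical points when $|v|$ is large. This expansion should force every attracting cycle to be trapped in tiny neighbourhoods of critical points, and the resulting combinatorics should contradict either the type D assumption or the constancy of the combinatorics on $\mathcal{D}$.

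\textbf{Step 1 (localization).} I will show that for $|v|$ large, any attracting cycle, together with the critical point in its immediate basin, lies in the union of the squares $Q+k\pi$. Here $Q$ is the square of Lemma \ref{lm: diameter of B_v tends to 0} with side $\epsilon \asymp 1/|v|$. Outside $\bigcup_k (Q+k\pi)$ we have $|f_v'(z)| = |v|\,|\sin z| \gtrsim |v|\epsilon \ge 4$, and the orbit of a point of an attracting cycle cannot stay in this region. The renormalization argument of Lemma \ref{lm: diameter of B_v tends to 0}, applied around $k\pi$, shows that $f_v$ maps $Q+2k\pi$ properly onto a set containing it only when $k$ is such that $f_v(Q + 2k\pi) \ni 2k\pi$. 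Since $f_v(2k\pi) = 0$, this happens only for $k = 0$. So the quadratic-like pieces are exactly around $0$, where the dynamics is that of $B_v$, and around odd multiples of $\pi$, whose image $-2v+O(1/|v|)$ is far away.

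\textbf{Step 2 (combinatorics).} Consider the cycle of Fatou components $V_n, f(V_n), \ldots$. Each component that contains a critical point is contained in some $Q+k\pi$, by the same expansion and hyperbolic-metric argument as in Step 1. A component containing $(2l+1)\pi$ maps near $-2v_n$. A component containing $2l\pi$ with $l\ne 0$ maps into $B_{v_n}$, which contradicts type D. It follows that the cycle must visit a neighbourhood of $-2v_n$, so $-2v_n$ must itself lie in a small neighbourhood of some critical point $j_n\pi$. Iterating, the whole critical orbit $f^i_{v_n}(-2v_n)$, $0\le i\le p$, stays within $O(1/|v_n|)$ of $\pi\mathbb{Z}$.

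\textbf{Step 3 (contradiction).} Following the finite induction of Lemma \ref{lm: boundness of components of type C}, each $f^i_{v_n}(-2v_n)$ is close to some multiple $a_i\pi$. The sequence $(a_i)$ is locally constant on $\mathcal{D}$ by the argument of Lemma \ref{lm: k_v is constant}, using the Hausdorff continuity of the attracting basins on a hyperbolic component. Then $-2v_n \to a_0\pi$, which is bounded and contradicts $v_n\to\infty$.

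I expect Step 1/2 to be the main obstacle: making rigorous that the Fatou components of the attracting cycle shrink into the tiny squares uniformly, for an arbitrary fixed period $p$. If the direct expansion estimate is delicate, I would first try the following. Use $\lambda(v)$ on $\mathcal{D}$ together with the multiplier estimate $|(f^p_v)'(z_v)| = \prod |v\sin f^i_v(z_v)| < 1$. This forces some point of the cycle to lie within $O(|v|^{-1})$ of $\pi\mathbb{Z}$, and propagating through $f_v$ gives the same conclusion as Step 2.
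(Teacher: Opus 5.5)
\textbf{There is a genuine gap.} The localization in Steps 1--2 is false for type D maps of large modulus, so Step 3 has nothing to work with.

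The bound $|f_v'|\ge 8/\pi$ off $\bigcup_k(Q+k\pi)$ forces only \emph{one} point of the cycle into a square: a single tiny factor $|v\sin z_0|$ in the multiplier absorbs all the large ones. Nothing forces the Fatou component containing $-2v$ to contain a critical point; it is only the image of a critical component. Two explicit families show this.
\begin{enumerate}
\item Solving $v(\cos 2v-1)=(2k+1)\pi$ near $v=j\pi$ gives $v=j\pi+u$ with $u^2\approx-(2k+1)/(2j)$. Then $\{-2v,(2k+1)\pi\}$ is a superattracting $2$-cycle. These are type D parameters of arbitrarily large modulus with $\mathrm{dist}(-2v,\pi\mathbb{Z})\asymp|v|^{-1/2}\gg 1/|v|$, so $-2v$ lies outside every square.
\item Worse, at $v_*=-\pi/4-m\pi/2$ one has $f_{v_*}(-2v_*)=-v_*$. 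The derivative of $v\mapsto f_v^2(-2v)$ at $v_*$ has modulus about $\sqrt2\,|v_*|^2$. So Rouch\'e gives solutions of $f_v^3(-2v)=(2k+1)\pi$ with $|v-v_*|=O(|v_*|^{-2})$. These are superattracting $4$-cycles with $-2v$ at distance about $\pi/2$ from $\pi\mathbb{Z}$.
\end{enumerate}
Hence neither ``$-2v_n$ lies in a small neighbourhood of a critical point'' nor ``the critical orbit stays within $O(1/|v_n|)$ of $\pi\mathbb{Z}$'' holds. Your multiplier fallback fails the same way: it yields one point of the cycle near $\pi\mathbb{Z}$, its image is near $-2v$, and $-2v$ need not be near anything. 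Separately, Step 3 would need an integer label defined on all of $\mathcal{D}$, not only where $|v|$ is large, before connectedness can be used.

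\textbf{How the paper argues.} The paper does not use large-$|v|$ asymptotics at all. For $v_0\in\mathcal{D}$, the non-zero attracting cycle makes $f_{v_0}$ renormalizable (Lemma \ref{lm: attracting periodic points give renormalization}). Hence $v_0$ lies in a Mandelbrot copy $\mathcal{M}_{v_0}=\bigcap_k\overline{\tilde{\mathcal{P}}_{n_k}}$ built from para-puzzle pieces (Lemma \ref{lm: copy of Mandelbrot set}). The component $\mathcal{D}$ is then contained in $\mathcal{M}_{v_0}\subset\tilde{\mathcal{P}}_{n_0}$, a bounded para-puzzle piece.

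What your argument is missing is a global confinement of the whole component of this kind. Any asymptotic route would have to rest on an invariant of $\mathcal{D}$, for instance the combinatorics carried by the holomorphically moving Julia set. It cannot rest on a pointwise estimate of where the critical orbit of a large-$|v|$ type D map lies, since the examples above show that orbit can sit far from $\pi\mathbb{Z}$.
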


Lemma \ref{lm: boundedness of hyperbolic components of type D} will be proved in \S \ref{ss: renormalizable parameters on the boundaries of hyperbolic components}. Now Theorem \ref{thm: boundedness of hyperbolic components} follows from Lemma \ref{lm: hyperbolic components are simply connected}, \ref{lm: boundedness of H_0}, \ref{lm: boundness of components of type C} and \ref{lm: boundedness of hyperbolic components of type D}. 

Using Lemma \ref{lm: boundedness of hyperbolic components of type D}, we can show that $\lambda: \mathcal{D} \to \mathbb{D}$ is a conformal isomorphism, and can be extended to $\overline{\mathcal{D}} \to \overline{\mathbb{D}}$ homeomorphically, so that $\mathcal{D}$ is a Jordan domain. 

\begin{proposition}\label{prop: parameterization of hyperbolic components of type D}
    $\lambda: \mathcal{D} \to \mathbb{D}$ is a conformal isomorphism, and can be extended to $\lambda: \overline{\mathcal{D}} \to \overline{\mathbb{D}}$ homeomorphically. In particular, $\mathcal{D}$ is a Jordan domain.
\end{proposition}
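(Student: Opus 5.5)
We follow the classical Douady–Hubbard/Milnor argument for quadratic polynomials, adapted as in Proposition \ref{prop: Phi_U is a conformal isomorphism}, and we use Lemma \ref{lm: boundedness of hyperbolic components of type D} throughout.

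\textbf{Step 1: $\lambda$ is proper.} Take $v_n \in \mathcal{D}$ with $v_n \to v_0 \in \partial\mathcal{D}$. Since $\mathcal{D}$ is bounded and $0 \notin \overline{\mathcal{D}}$ (a punctured neighbourhood of $0$ lies in $\mathcal{H}_0$ by Lemma \ref{lm: H_0 contains a small disk}), we have $v_0 \in \mathbb{C}^*$. Pass to a subsequence along which the attracting points $z_{v_n}$ converge. This is possible because the cycle lies in the postcritical set and is bounded: it attracts $-2v_n$ with bounded multiplier, and the orbit of $-2v_n$ is controlled for $v$ in a compact subset of $\mathbb{C}^*$. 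The limit is a periodic point of $f_{v_0}$ of period dividing $p$, with multiplier of modulus at most $1$. If $|\lambda(v_n)|$ stayed bounded away from $1$, this limit cycle would be attracting. It would then persist on a neighbourhood of $v_0$ and attract $-2v$ there, since an attracting cycle must attract a singular value and $0$ is already fixed. That would put $v_0$ in $\mathcal{D}$, a contradiction. Hence $|\lambda(v_n)| \to 1$, so $\lambda$ is proper and therefore a branched covering of finite degree.

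\textbf{Step 2: no critical points, so $\lambda$ is a conformal isomorphism.} For $v_0 \in \mathcal{D}$ and $\mu$ near $\lambda(v_0)$, perform quasiconformal surgery: replace the dynamics of $f_{v_0}^p$ near the attracting cycle by a model map with multiplier $\mu$, and pull back the resulting invariant Beltrami coefficient. By the Measurable Riemann Mapping Theorem we obtain an entire map $h_\mu$. Exactly as in the proof of Proposition \ref{prop: Phi_U is a conformal isomorphism}, $h_\mu$ is $2\pi$-periodic, has order $1$, has no asymptotic values, and has double zeros at $2k\pi$. Hadamard factorization then gives $h_\mu = f_{v(\mu)}$. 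By construction $\lambda(v(\mu)) = \mu$ and $v(\mu)$ depends continuously on $\mu$, so $\lambda$ is a local homeomorphism. A proper local homeomorphism onto the simply connected $\mathbb{D}$ is a covering, and $\mathcal{D}$ is connected, so $\lambda$ is a conformal isomorphism. Simple connectivity of $\mathcal{D}$ is also given by Lemma \ref{lm: hyperbolic components are simply connected}.

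\textbf{Step 3: boundary extension, the main obstacle.} The multiplier $\lambda$ extends continuously to $\overline{\mathcal{D}}$, since the relevant cycle of period dividing $p$ varies continuously. What must be shown is that $\lambda^{-1}$ extends continuously to $\overline{\mathbb{D}}$, so that $\partial\mathcal{D}$ is a Jordan curve.

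First show that for each $\theta$ the impression of $\lambda^{-1}$ along the radius at $e^{2\pi i\theta}$ is a single point. Every parameter $v$ in this impression has a cycle of period dividing $p$ with multiplier exactly $e^{2\pi i\theta}$. This gives an analytic equation in $v$ (a resultant of $f_v^p(z) = z$ and $(f_v^p)'(z) = e^{2\pi i\theta}$), whose solution set in the bounded region $\overline{\mathcal{D}}$ is discrete, hence finite. A connected impression contained in a finite set is a point.

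Then we need injectivity of the extension. The danger is an analytic arc of parabolic parameters on which $\lambda$ is constant, which we must rule out by showing the resultant function is not identically zero. We would try to do this using the transversality supplied by the surgery of Step 2, or alternatively the local connectivity at parabolic parameters obtained later from the para-puzzle and Mandelbrot-copy arguments of \S\ref{sec: local connectivity of the boundaries of hyperbolic components}.

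With the extension continuous and injective, $\lambda: \overline{\mathcal{D}} \to \overline{\mathbb{D}}$ is a homeomorphism, and in particular $\mathcal{D}$ is a Jordan domain.
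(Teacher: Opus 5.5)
Your Steps 1 and 2 follow the paper's route: properness as for $\Phi_0$, no branch points via the surgery of Carleson--Gamelin and Roesch, then simple connectivity. The paper itself handles the boundary extension only by ``similar to CG93''.

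\textbf{The gap: Step 3 is unfinished, and the difficulty you name is in the wrong place.} A set of parameters on which the multiplier is constant would spoil the \emph{continuity} of the extension $\Lambda$ of $\lambda^{-1}$ (whether the impression is a point), not its injectivity. Your discreteness claim for the impression already presupposes the non-degeneracy (``resultant not identically zero'') that you later say still has to be shown. There is also no literal resultant for a transcendental family. What works is to confine the cycles to a compact set first, then argue locally.
\begin{itemize}
\item Compactness: if one point of the cycle stays bounded, all do. If all escape, then $|\mathrm{Im}\,z_j|\to\infty$ and $|f_v'(z_j)|\sim|z_{j+1}|\to\infty$, contradicting $|\lambda|\le 1$. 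This also replaces your vague justification in Step 1.
\item Case $\mu\neq 1$: near a limit pair $(v,C)$, the implicit function theorem continues $C$ analytically. The continued multiplier is non-constant, because it lies in $\mathbb{D}$ at the points of $\mathcal{D}$ whose attracting cycles converge to $C$. So limit pairs are locally finite, hence finite, and the impression is a point.
\item Case $\mu=1$: use the discreteness of $\mathrm{Par}_p(1)$ invoked in the proof of Lemma~\ref{lm: holomorphic motion of parameter rays}.
\end{itemize}

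\textbf{Injectivity needs a different input.} It is automatic once $\lambda$ is single-valued and continuous on $\overline{\mathcal{D}}$. Then $\bar\lambda\circ\Lambda=\mathrm{id}$ on $\mathbb{D}$, hence on $\overline{\mathbb{D}}$, and a continuous bijection $\overline{\mathbb{D}}\to\overline{\mathcal{D}}$ is a homeomorphism. But your reason for that continuity (``the relevant cycle varies continuously'') is exactly the missing point. If $\Lambda(\mu_1)=\Lambda(\mu_2)=v_0$ with $\mu_1\neq\mu_2$, then two sequences in $\mathcal{D}$ tending to $v_0$ produce two distinct indifferent cycles of $f_{v_0}$ besides $0$. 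You must exclude this using that $-2v_0$ is the only free singular value, i.e.\ a Fatou--Shishikura-type bound for finite-type entire maps. Parabolic cycles are easy here; irrationally indifferent ones are not.

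\textbf{Alternative using the paper's own tools.} The two radii ending at $v_0$ form a Jordan curve $\gamma\subset\mathcal{D}\cup\{v_0\}$. It encloses the image of a sector whose boundary arc we call $\alpha$.
\begin{itemize}
\item $\Lambda\equiv v_0$ on $\alpha$ is impossible: $f_{v_0}$ would then have uncountably many cycles of period dividing $p$ in a compact set.
\item Otherwise some root of unity $\mu\in\alpha$ has $\Lambda(\mu)\neq v_0$, and $\Lambda(\mu)$ is a parabolic parameter inside $\gamma$. By Theorem~\ref{thm: landing theorem} and Proposition~\ref{prop: converse landing theorem for parabolic parameters}, some parameter ray lands there.
\item That ray is unbounded and lies in $\mathcal{I}$, so it would have to cross $\gamma$. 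This is impossible, because $\mathcal{I}\cap\mathcal{D}=\varnothing$ and $v_0\notin\mathcal{I}$ ($f_{v_0}$ has an indifferent cycle).
\end{itemize}
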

\begin{proof}
    By Lemma \ref{lm: boundedness of hyperbolic components of type D}, $\mathcal{D}$ is bounded. As in the proof of Lemma \ref{prop: Phi_0 is a covering map of degree 2}, $\lambda: \mathcal{D} \to \mathbb{D}$ is a proper map. This implies that $\lambda$ is a branched covering map. Similar to the family of quadratic polynomials (\cite{CG93}, Chapter VIII, Theorem 2.1), $\lambda$ is a local homeomorphism, thus it has no branch points. Since $\mathcal{D}$ and $\mathbb{D}$ are both simply connected, $\lambda$ is a conformal isomorphism. 

    Again, similar to \cite{CG93}, Chapter VIII, Theorem 2.1, $\lambda$ can be extended to the boundary of $\mathcal{D}$ homeomorphically. In other words, $\mathcal{D}$ is a Jordan domain. 
\end{proof}

\section{Parameter rays}\label{sec: Parameter rays}
From now on, we will prove Theorem \ref{thm: boundaries of hyperbolic components are Jordan curves}. In this section, we review the parameter rays for the cosine family. 

\subsection{Dynamic rays}\label{ss: dynamic rays}
Recall that for the cosine function $f_v(z)=v(\cos z-1)$, the escaping set of $f_v$ is defined as 
$$I(f_v)=\{z \in \mathbb{C}: f^n_v(z) \to \infty \text{ as } n \to \infty\}.$$

As shown in \cite{RS08}, $I(f_v)$ consists of uncountably many curves, called dynamic rays, which serves as an analog to the external rays of polynomials. We introduce the dynamic rays briefly. 

Let $\Gamma=[-2v,0] \cup \mathbb{R}_{\ge 0}$. Then $f^{-1}_v(\mathbb{C} \setminus \Gamma)$ are countably many half strips $P_{j,k}$, $j=0,1$, $k \in \mathbb{Z}$. Here $P_{0,k}$ are contained in the upper half plane $\{z \in \mathbb{C}: \mathrm{Im} \; z>0\}$, while $P_{1,k}$ are contained in the lower half plane, and $P_{j,k+1}=P_{j,k}+2\pi$. The strips are labeled such that the segment $[2k\pi, 2(k+1)\pi]$ is contained in the boundary of both $P_{0,k}$ and $P_{1,k}$. Then $f_v: P_{j,k} \to \mathbb{C} \setminus \Gamma$ is a conformal isomorphism, and its inverse is denoted by $L_{j,k}$. 

Let $\Sigma=(\{0,1\} \times \mathbb{Z})^{\mathbb{N}}$, $\sigma: \Sigma \to \Sigma$ be the shift. A sequence $\underline{s}=((j_n, k_n))_{n \ge 0}$ is called exponentially bounded, if there exists $x>0$, such that $|k_n| \le F^{n}(x)$ for all $n \ge 0$. Let $F(x)=e^x-1$. The sequence of functions 
\begin{equation}\label{eq: g^v_n,s}
    g^v_{\underline{s},n}(t)=L_{j_0, k_0} \circ \cdots \circ L_{j_n, k_n} \circ F^n
\end{equation}
converges to a function $g^v_{\underline{s}}$ uniformly for sufficiently large $t>0$. $g^v_{\underline{s}}$ is defined on an interval $(t_{\underline{s}}, +\infty)$, called a dynamic ray with the external address $\underline{s}$. The definition of $t_{\underline{s}}$ can be found in \cite{RS08}, Definition 2.6. From \eqref{eq: g^v_n,s}, it is easy to see that $f_v(g^v_{\underline{s}}(t))=g^v_{\sigma(\underline{s})}(F(t))$ as long as $g^v_{\underline{s}}(t)$ and $g^v_{\sigma(\underline{s})}(F(t))$ are both defined. The ray $g^v_{\underline{s}}$ is said to land at some point $z \in \mathbb{C}$, if 
$$\lim_{t \to t_{\underline{s}}+} g^v_{\underline{s}}(t)=z. $$
The dynamic ray $g^v_{\underline{s}}$ is contained in the escaping set $I(f_v)$. On the contrary, every $z \in I(f_v)$ is either on some dynamic ray, or the landing point of some ray (\cite{RS08}, Theorem 6.4). 

We end Section \ref{ss: dynamic rays} with a landing theorem of the dynamic rays (see \cite{BR20}, Theorem 1.4).  
\begin{theorem}\label{thm: landing theorem}
    Let $f_v$ be a cosine function, $P(f_v)$ be its post-critical set. Suppose that $P(f_v)$ is bounded. Then 
    \begin{enumerate}
        \item every dynamic ray $g^v_{\underline{s}}$ with $\underline{s}$ periodic under $\sigma$ lands at a repelling or parabolic periodic point;
        \item for every repelling or parabolic periodic point, there is at least one and at most finitely many dynamic rays landing at it.
    \end{enumerate}
\end{theorem}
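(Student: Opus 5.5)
Since $f_v$ has no finite asymptotic values and its only critical values are $0$ and $-2v$, the hypothesis that $P(f_v)$ is bounded puts $f_v$ in the Eremenko--Lyubich class $\mathcal{B}$ with bounded postsingular set. I would adapt the standard polynomial arguments (Douady--Hubbard for part 1, Douady--Eremenko--Levin for part 2) to this transcendental setting. The key tool is the hyperbolic metric on $W=\mathbb{C}\setminus\overline{P(f_v)}$. I may assume $P(f_v)$ contains at least two points, which it does since $0$ is fixed and $-2v\neq 0$. Then $f_v: f^{-1}_v(W)\to W$ is a covering map and $f_v^{-1}(W)\subset W$. Hence every inverse branch of $f_v$ is a local contraction of the hyperbolic metric $\rho_W$, and the contraction is uniform on compact subsets of $W$ away from $\overline{P(f_v)}$.

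For part 1, let $\underline{s}$ have period $p$, so $f^p_v$ maps $g^v_{\underline{s}}$ into itself via $t\mapsto F^p(t)$. I would fix a large $t_0$ and consider the arcs $\gamma_n=g^v_{\underline{s}}([F^{-(n+1)p}(t_0),F^{-np}(t_0)])$. Each $\gamma_{n+1}$ is the pullback of $\gamma_n$ under the fixed inverse branch of $f^p_v$ along the ray, i.e.\ the composition $L_{j_0,k_0}\circ\cdots\circ L_{j_{p-1},k_{p-1}}$. Since $P(f_v)$ is bounded and the ray tail lies near $\infty$, the ray eventually stays in $W$, so the $\rho_W$-lengths of $\gamma_n$ are non-increasing.

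The next step is to show that the ray accumulates only on a compact set. For this I would use that the ray is contained in the escaping set while $t_{\underline{s}}>0$ bounds the ``potential'' from below, following the estimates of \cite{RS08}. Knowing $\rho_W$-length is bounded and the Euclidean metric is comparable to $\rho_W$ near points of $W$, one concludes that the accumulation set $X$ is either a single point or contained in $\partial W\subset \overline{P(f_v)}$. The standard argument then shows that $X$ is a connected set invariant under $f^p_v$ consisting of fixed points of $f^p_v$, hence a single point $z$. Finally, the Snail Lemma rules out attracting and irrationally indifferent cases, since rays lie in $J(f_v)$ and a landing invariant curve at an indifferent point forces multiplier $1$. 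Thus $z$ is repelling or parabolic.

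For part 2, I would take a repelling periodic point $z_0$ of period $q$ and a linearizing neighborhood, and pull back a small curve near $z_0$ along the backward orbit of $\infty$. More precisely, I would adapt the Eremenko--Levin argument: choose a curve from a neighborhood of $z_0$ to $\infty$ avoiding $P(f_v)$, which is possible since $P(f_v)$ is bounded. Pulling it back by the branch of $f^{-q}_v$ fixing $z_0$ gives a curve that converges to $z_0$ by contraction and whose tail near $\infty$ is homotopic to a dynamic ray tail with periodic address. The landing point of that ray is then $z_0$ by the uniqueness part of the argument in part 1, which gives existence.

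For finiteness, the rays landing at $z_0$ are permuted by $f^q_v$ preserving cyclic order, so they share a common rotation number and all have periodic addresses of bounded period. What remains is to bound how many addresses can occur. I expect this to be the main obstacle, because the address space $(\{0,1\}\times\mathbb{Z})^{\mathbb{N}}$ is infinite even at fixed period. I would try to show that the integer entries $k_n$ are bounded in terms of the position of the orbit of $z_0$: two rays landing at $z_0$ with very different first entries would bound a sector containing a translate $P_{j,k}$ strip, and hence a critical point. Applying $f_v$ would then produce a sector containing the critical value, which contradicts boundedness of $P(f_v)$ together with the expansion near $z_0$. This reduces the count to finitely many addresses of bounded period.
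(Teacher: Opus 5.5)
The paper does not prove this statement; it quotes it from \cite{BR20}, Theorem 1.4. Your sketch therefore has to stand on its own, and it breaks down exactly where the polynomial arguments stop working.

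\textbf{Part 2 (main gap).} For existence, the claim that the tails of the pulled-back curves are homotopic to a dynamic ray tail with a \emph{periodic} address is the whole difficulty, and you only assert it. For polynomials the Douady/Eremenko--Levin argument closes by compactness: finite degree and the compact circle of angles leave finitely many combinatorial choices at each pullback. Here that fails in three ways. The branch of $f_v^{-q}$ obtained by continuation along each successive pulled-back curve is not controlled, and hence neither is the address of its tail. The symbol space $(\{0,1\}\times\mathbb{Z})^{\mathbb{N}}$ is not compact. Nothing in your argument stops the integer entries from becoming unbounded, in which case no limiting ray is produced.

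Even given a candidate ray, concluding that it lands at $z_0$ needs a homotopy of bounded $\rho_W$-length between the ray and the invariant curve. That is not automatic near $\infty$, and your part 1 only shows that a periodic ray lands somewhere, not where. The parabolic case is not addressed at all: there is no linearizing neighbourhood, and one has to work in repelling petals.

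For finiteness, the rotation-number step presupposes finiteness. $f_v^q$ induces an order-preserving bijection on the rays landing at $z_0$, and on an infinite set such a bijection need not have any periodic orbit. Moreover, even among rays of one fixed period there are infinitely many admissible addresses. The contradiction you propose does not exist. Sectors bounded by rays landing at a periodic point routinely contain critical points and the critical value: the sectors at the $\alpha$-fixed point of the rabbit contain $0$ and $c$. In Proposition~\ref{prop: renormalizable parameters on the boundary of H_0} of this paper, $\overline{g^{v_0}_{\underline{s}_+}\cup g^{v_0}_{\underline{s}_-}}$ separates $-2v_0$ from $B_{v_0}$. Both situations are fully compatible with $P(f_v)$ being bounded.

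\textbf{Part 1.} There is also a gap at the step ``the ray accumulates only on a compact set''. Your justification is inconsistent: a periodic address has bounded entries, so $t_{\underline{s}}=0$, and your own arcs $\gamma_n$ have potentials $F^{-np}(t_0)\to 0$. Nothing bounds the potential from below, and the potential does not control the Euclidean position of $g^v_{\underline{s}}(t)$ anyway. Since $\rho_W(z)\asymp 1/(|z|\log|z|)$ near $\infty$, a chain of arcs of bounded $\rho_W$-length can drift off to $\infty$. You must genuinely rule out that the ray end tends to $\infty$.

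Bounded length alone also does not give ``$X$ is a point or $X\subset\partial W$''. What the standard argument needs is $|z_{n+1}-z_n|\to 0$ for the endpoints $z_n$ of $\gamma_n$. This comes from three facts together:
\begin{enumerate}
\item $\ell_W(\gamma_n)$ is monotone and so has a limit;
\item the expansion of $f_v^p$ is bounded away from $1$ on compact subsets of $W$, which forces $\ell_W(\gamma_n)\to 0$ if the arcs keep returning to a compact part of $W$;
\item $\rho_W\to\infty$ at $\partial W$.
\end{enumerate}
That piece is repairable along standard lines. The exclusion of $\infty$ and, above all, part 2 are what the citation of \cite{BR20} actually supplies.
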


\subsection{Parameter rays}\label{ss: parameter rays}
The escaping parameter set is defined as 
$$\mathcal{I}=\{v \in \mathbb{C}^*: f^n_v(-2v) \to \infty \text{ as } n \to \infty\}. $$
It has been shown in \cite{Tian11} that $\mathcal{I}$ also has the structure of rays, which is the counterpart of the parameter external rays of the family of polynomials. In \cite{Tian11}, Tian considered the family $\{E_{\kappa}(z)=e^{\kappa} (e^z+e^{-z}): \kappa \in \mathbb{C}\}$. The method in \cite{Tian11} still works here. So we present the results and omit the proof. 
\begin{proposition}\label{prop: parameter rays}
    Let $\Sigma$ be the symbol space defined in Section \ref{ss: dynamic rays}, $\underline{s} \in \Sigma$. There is a unique injective curve $G_{\underline{s}}: (t_{\underline{s}}, +\infty) \to \mathcal{I}$ such that $v=G_{\underline{s}}(t)$ if and only if the critical value $-2v=g^v_{\underline{s}}(t)$ in the dynamical plane of $f_v$. 
\end{proposition}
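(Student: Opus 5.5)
The plan is to follow Tian's argument for the family $E_\kappa$ (and ultimately the Förster--Rempe--Schleicher construction of parameter rays for the exponential family). The key input is that the dynamic rays $g^v_{\underline{s}}(t)$ depend holomorphically on $v$ for $t$ large. The matching condition $-2v=g^v_{\underline{s}}(t)$ then becomes a fixed point problem in $v$.

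\textbf{Step 1: large $t$.} Fix $\underline{s}$. I would show that for $t\ge t_0$, with $t_0$ large, the sequence in \eqref{eq: g^v_n,s} converges uniformly in $(t,v)$ on sets of the form $\{|v|\le C e^{t}\}$. The inverse branches $L_{j,k}$ are holomorphic in $v$, since $L_{j,k}(w)=\arccos(1+w/v)$ with a fixed branch. So $v\mapsto g^v_{\underline{s}}(t)$ is holomorphic there, and it satisfies an asymptotic of the form
\[
g^v_{\underline{s}}(t)=L_{j_0,k_0}\bigl(g^v_{\sigma\underline{s}}(F(t))\bigr),\qquad |g^v_{\underline{s}}(t)|\asymp \log\frac{F(t)}{|v|}\ \text{or}\ t.
\]
More precisely, $\mathrm{Im}\,g$ is about $\pm\log(2|g_{\sigma\underline s}|/|v|)$ and $\mathrm{Re}\,g$ is about $2k_0\pi$, with errors that are small, uniformly in $v$.

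\textbf{Step 2: existence and uniqueness for large $t$.} Consider the map $v\mapsto -\tfrac12 g^v_{\underline{s}}(t)$ on a suitable disk or annulus $\mathcal{A}_t$ in the $v$-plane, chosen around the expected location of the solution. Using the estimates of Step 1, I would show that this map sends $\overline{\mathcal{A}_t}$ into a compact subset of itself. Then by Rouché's theorem, or by the Schwarz lemma for self-maps, it has exactly one fixed point $v=G_{\underline{s}}(t)$. I also need to rule out solutions outside $\mathcal{A}_t$. For this I would use the a priori bound that if $-2v=g^v_{\underline{s}}(t)$ then $|v|$ is comparable to $|g^v_{\underline{s}}(t)|$, which localizes $v$ inside $\mathcal{A}_t$. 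The implicit function theorem, with nonvanishing derivative coming from the contraction, gives analytic (hence continuous) dependence on $t$, and distinct $t$ give distinct points because $g^v_{\underline{s}}$ is injective.

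\textbf{Step 3: all $t>t_{\underline{s}}$.} For general $t$, pick $n$ with $F^n(t)\ge t_0$. The condition $-2v=g^v_{\underline{s}}(t)$ is equivalent to $f^n_v(-2v)=g^v_{\sigma^n\underline{s}}(F^n(t))$, subject to the itinerary of $-2v$ following $\underline{s}$. I would then pull the ray $G_{\sigma^n\underline{s}}$ back through the parameter-dependent branches. This is done by continuation in $t$ downward from $t_0$: as long as $g^v_{\underline{s}}(t)$ is defined, the function $v\mapsto -2v-g^v_{\underline{s}}(t)$ has a simple zero that moves continuously, and it cannot escape to $0$ or to $\infty$ by the estimates above. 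Injectivity of the whole curve $G_{\underline s}$ follows because $v$ determines $-2v$, and $-2v$ lies on at most one dynamic ray, at a unique potential.

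\textbf{Main obstacle.} The hard part is Step 3: making the continuation uniform near $t_{\underline{s}}$, and showing that the solution stays unique, with no bifurcation, when $t$ is close to $t_{\underline{s}}$. To handle this I would try to show that the zero is simple via a transversality argument. Concretely, I would compute $\frac{d}{dv}\bigl(-2v-g^v_{\underline{s}}(t)\bigr)\neq0$ using the functional equation and the expansion of $f_v$ along escaping orbits, as Tian does. That is the step I would import from \cite{Tian11}.
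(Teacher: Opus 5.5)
Your proposal follows essentially the same route as the paper. The paper gives no argument of its own: it only states that the method of \cite{Tian11} for $E_\kappa$ still works here and omits the proof. Your outline (holomorphic dependence of $g^v_{\underline{s}}(t)$ on $v$, a fixed-point argument for large $t$, then extension to all $t>t_{\underline{s}}$ with the uniqueness step near $t_{\underline{s}}$ imported from \cite{Tian11}) fits that, and like the paper it leaves the decisive step to \cite{Tian11}.

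One minor correction to Step 1 concerns the domain of holomorphy. The labels of the strips $P_{j,k}$ shift under monodromy around $v=0$: they jump when $v$ crosses $\mathbb{R}_{<0}$. So $v\mapsto g^v_{\underline{s}}(t)$ is single-valued and holomorphic at best on $\mathbb{C}\setminus\mathbb{R}_{\le 0}$, or on each half-plane, not on full disks $\{|v|\le Ce^t\}$. This does no harm, because real parameters have bounded critical orbits, so $\mathcal{I}\cap\mathbb{R}=\varnothing$.
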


A parameter ray $G_{\underline{s}}$ is said to land at $v_0 \in \mathbb{C}$, if 
$$\lim_{t \to t_{\underline{s}}+} G_{\underline{s}}(t)=v_0.$$

For polynomials, the landing theorem of parameter external rays with rational external angles is well-known (for example, see \cite{DH84, CG93}). Schleicher (\cite{Sch99}) established the landing theroem for the exponential family. For the cosine family, the landing theorem of parameter rays can be obtained similarly.
\begin{proposition}\label{prop: landing theorem for parameter rays}
    Let $\underline{s} \in \Sigma$ be eventually periodic under the shift $\sigma$. Then $G_{\underline{s}}$ lands at a parameter which is either parabolic or strictly eventually preperiodic. More precisely, we have the following results.
    \begin{enumerate}
        \item if $\underline{s}$ is periodic, then $G_{\underline{s}}$ lands at a parameter $v$ such that $f_v$ has a parabolic periodic point;
        \item if $\underline{s}$ is strictly preperiodic, then $G_{\underline{s}}$ lands at a parameter such that the critical value $-2v$ of $f_v$ is strictly preperiodic. 
    \end{enumerate}
\end{proposition}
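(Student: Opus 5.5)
The proof follows Schleicher's argument for the exponential family and the Douady–Hubbard/Goldberg–Milnor argument for polynomials, using Proposition \ref{prop: parameter rays}, the landing theorem (Theorem \ref{thm: landing theorem}), and the boundedness results of \S\ref{sec: parameterization of hyperbolic components}.

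\textbf{Step 1: the accumulation set.} Let $L$ be the accumulation set of $G_{\underline{s}}(t)$ as $t \to t_{\underline{s}}+$.
\begin{itemize}
\item I first show $L$ is a bounded subset of $\mathbb{C}$. This uses the uniform estimates from \cite{RS08} for $g^v_{\underline{s},n}$ near $t_{\underline{s}}$, which control $|{-2v}|=|g^v_{\underline{s}}(t)|$.
\item I then show $0\notin L$. Lemma \ref{lm: H_0 contains a small disk} gives that every $v$ with $0<|v|<1/5$ is of type A, so it is not escaping and cannot lie on the ray.
\item Hence $L$ is a nonempty compact connected subset of $\mathbb{C}^*$.
\end{itemize}

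\textbf{Step 2: behaviour at points of $L$.} Fix $v_0\in L$ and argue by continuity of dynamic rays in the parameter.
\begin{itemize}
\item \emph{Strictly preperiodic case, $\sigma^{l}(\underline{s})$ periodic of period $p$.} For $f_{v_0}$, suppose first the post-critical set is bounded (the escaping alternative is handled below). By Theorem \ref{thm: landing theorem}, the periodic dynamic ray $g^{v_0}_{\sigma^l(\underline{s})}$ lands at a repelling or parabolic point $w_0$. If $w_0$ is repelling, the landing persists under perturbation: $w(v)$ moves holomorphically and the ray still lands at it for nearby $v$. For $v=G_{\underline{s}}(t)$ close to $v_0$ we have $f^l_v(-2v)=g^v_{\sigma^l(\underline s)}(F^l(t))$. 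Letting $t\to t_{\underline{s}}$ forces $f^l_{v_0}(-2v_0)=w_0$, so $-2v_0$ is strictly preperiodic. If $w_0$ were parabolic, then $f_{v_0}$ would have a parabolic cycle, whose basin must contain the single free critical value $-2v_0$. This contradicts the fact that $v_0$ is a limit of escaping parameters with itinerary $\underline{s}$ (a Schleicher-type argument).
\item \emph{Periodic case.} Suppose $v_0\in L$ and $f_{v_0}$ has no parabolic cycle. Then the ray $g^{v_0}_{\underline{s}}$ either lands at a repelling point $w_0$, or contains (or nearly contains) the critical value. Using the same persistence argument, I show that the parameter ray $G_{\underline{s}}$ cannot accumulate at $v_0$: nearby $v$ would have $-2v$ bounded away from the ray $g^v_{\underline{s}}$, except near $w_0$. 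Near $w_0$, the relation $f^p_v(-2v)\approx -2v$ along the ray contradicts repulsion. Hence every $v_0\in L$ is parabolic.
\end{itemize}

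\textbf{Step 3: $L$ is a single point.} The parameters with a parabolic cycle of period dividing $p$ form a set cut out by the analytic equations $f^p_v(z)=z$, $(f^p_v)'(z)=1$. The parameters in the preperiodic case satisfy the analytic equation $f^{l+p}_v(-2v)=f^l_v(-2v)$. Restricted to a bounded region of $\mathbb{C}^*$, each set is discrete. Since $L$ is connected, it is a single point, so the ray lands. I must also rule out that these equations vanish identically; this holds because, e.g., $f^l_v(-2v)-f^{l+p}_v(-2v)$ is nonconstant since it is nonzero on escaping parameters.

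\textbf{Main obstacle.} The hardest part is the escaping alternative in Step 2 together with the periodic case. When $P(f_{v_0})$ is unbounded, Theorem \ref{thm: landing theorem} does not apply. Here I will use that $v_0\notin\mathcal I$: the escaping parameters are exactly the parameter rays and their endpoints (Proposition \ref{prop: parameter rays}), and a limit of one ray cannot lie on another. I will then adapt the argument of \cite{Sch99}, replacing the landing theorem by local hyperbolicity near the relevant periodic point. This is the step where I would copy the exponential-family proof most closely.
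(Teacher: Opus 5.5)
The paper gives no proof of this proposition. It only says that Schleicher's exponential-family argument \cite{Sch99} ``can be obtained similarly''. Your template (accumulation set $L$, stability of repelling landing, discreteness) is the intended one, but as written it skips the steps that carry the content. The main gap is that Step 3 needs \emph{every} $v_0\in L$ to lie in one of your two discrete sets, and Step 2 gets this only from Theorem~\ref{thm: landing theorem}, which requires $P(f_{v_0})$ to be bounded. Suppose $v_0\in L$ has $-2v_0$ non-escaping but $P(f_{v_0})$ unbounded. Then you do not know that $g^{v_0}_{\sigma^l(\underline{s})}$ or $g^{v_0}_{\underline{s}}$ lands at all. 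So there is no ``relevant periodic point'' near which to use local hyperbolicity, and nothing in your argument stops $L$ from being a non-degenerate continuum of such parameters. For exponentials this is exactly where an extra dynamical theorem enters: Rempe's result that periodic rays land whenever the singular value does not escape. You need a cosine analogue of it, or an independent reason why such parameters cannot lie in $L$. Two further steps are only asserted. Excluding $L\cap\mathcal{I}$ (``a limit of one ray cannot lie on another ray or on an escaping endpoint'') needs an escape-rate estimate that is uniform in $v$. The boundedness of $L$ in Step 1 also needs proof: the estimates of \cite{RS08} do not by themselves control $G_{\underline{s}}(t)$ as $t\to t_{\underline{s}}$, because there $v$ is not confined to a compact set.

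Your exclusion of a parabolic landing point in the strictly preperiodic case is also not a contradiction. Parabolic parameters do arise as limits of escaping parameters; item 1 produces them as landing points of periodic rays. Landing at a parabolic point is not stable, so the points $f^l_v(-2v)=g^v_{\sigma^l(\underline{s})}(F^l(t))$ may converge into the parabolic basin of $f_{v_0}$ as $v\to v_0$ along $G_{\underline{s}}$ (parabolic implosion). Step 3 therefore only shows that the landing point is parabolic or has preperiodic critical value. That is the first sentence of the proposition, not item 2. The missing ingredient is that a parameter ray landing at a parabolic parameter must have periodic address. This is essentially Proposition~\ref{prop: converse landing theorem for parabolic parameters}, and it needs its own orbit-portrait, wake or counting argument.

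Your periodic-case mechanism is also misstated. For $v\in G_{\underline{s}}$ the critical value lies \emph{on} $g^v_{\underline{s}}$, and $f^p_v(-2v)\approx -2v$ along a ray is fully compatible with repulsion. The correct contradiction is this: for $v\in G_{\underline{s}}$, the ray $g^v_{\sigma^{p-1}(\underline{s})}$ runs into a critical point $(2k+1)\pi$ at potential $F^{-1}(t)$, so the cycle of rays is not defined, contrary to Lemma~\ref{lm: stability of rays and the repelling landing points} applied at $v_0$. The same fact, that every preimage of $-2v$ is a critical point $(2k+1)\pi$, is what makes $-2v_0$ \emph{strictly} preperiodic in your repelling case.
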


\begin{proposition}\label{prop: converse landing theorem for eventually periodic parameters}
    Suppose that $v$ is a parameter such that $-2v$ is strictly preperiodic. Then $v$ is the landing point of $G_{\underline{s}}$ if and only if $g^v_{\underline{s}}$ lands at $-2v$ in the dynamical plane of $f_v$.  
\end{proposition}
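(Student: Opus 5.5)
The plan is to prove both directions through a holomorphic motion of the landing point of $g^v_{\underline{s}}$, in the spirit of the Douady--Hubbard and Schleicher arguments for polynomials and exponentials.

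\textbf{Setup.} Fix $v_0$ with $-2v_0$ strictly preperiodic. Then $P(f_{v_0})$ is finite, hence bounded. By the classification of \S\ref{sec: basic properties}, $-2v_0$ cannot lie in a Fatou component: it is not in $B_{v_0}$, and there is no other attracting cycle. So $-2v_0 \in J(f_{v_0})$, and its eventual cycle is repelling. By Theorem \ref{thm: landing theorem}, every point of that repelling cycle is the landing point of finitely many periodic dynamic rays. Pulling these rays back along the preperiodic orbit shows that $-2v_0$ is the landing point of finitely many dynamic rays $g^{v_0}_{\underline{s}_1},\dots,g^{v_0}_{\underline{s}_q}$. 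The addresses $\underline{s}_i$ are strictly preperiodic.

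\textbf{``If'' direction.} Suppose $g^{v_0}_{\underline{s}}$ lands at $-2v_0$.
\begin{enumerate}
\item Take a small neighborhood $\mathcal{N}$ of $v_0$. On $\mathcal{N}$ the repelling cycle moves holomorphically, and so do the periodic rays landing on it. The preimage of this cycle at $v_0$ that equals $-2v_0$ gives a holomorphically moving point $w(v)$ with $w(v_0)=-2v_0$.
\item Stability of ray landing at repelling points, as in \cite{Sch99}, shows that for $v \in \mathcal{N}$ the ray $g^v_{\underline{s}}$ still lands at $w(v)$. This requires $-2v$ not to lie on the relevant finitely many rays before they land, which holds for $v$ near $v_0$ outside the parameter ray $G_{\underline{s}}$ itself.
\item Proposition \ref{prop: landing theorem for parameter rays} gives that $G_{\underline{s}}$ lands at some $v_1$ with $-2v_1$ strictly preperiodic. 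Comparing landing points in the dynamical plane shows that $g^{v_1}_{\underline{s}}$ lands at $-2v_1$: as $v=G_{\underline{s}}(t)\to v_1$, the point $-2v=g^v_{\underline{s}}(t)$ tends to the landing point.
\item The solutions of $-2v=w(v)$ with the prescribed preperiodic combinatorics are isolated, by analyticity and nonconstancy. Together with the uniqueness argument below, this identifies $v_1=v_0$.
\end{enumerate}

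\textbf{``Only if'' direction.} If $G_{\underline{s}}$ lands at $v_0$, the same continuity argument as in step 3 shows that $g^{v_0}_{\underline{s}}$ lands at $-2v_0$. The point is that $-2v=g^v_{\underline{s}}(t)$ along the parameter ray, and the landing of the dynamic ray is stable near the preperiodic orbit.

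\textbf{Main obstacle: showing $v_1=v_0$ in the ``if'' direction.} A priori the rays $G_{\underline{s}_i}$ might land at other Misiurewicz parameters sharing the combinatorics. I would first try a counting argument. The parameters with $f^{\ell}_v(-2v)=f^{\ell+p}_v(-2v)$ and the given kneading combinatorics correspond to addresses via the ``only if'' direction. A wake or para-puzzle separation argument, using the parameter rays landing at parabolic parameters from part 1 of Proposition \ref{prop: landing theorem for parameter rays}, should show that different Misiurewicz parameters are separated by pairs of periodic parameter rays. Then the addresses landing at $v_0$ are exactly those whose dynamic rays land at $-2v_0$.

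The analytic part is controlling the escaping parameters near $v_0$ through uniform convergence of $g^v_{\underline{s},n}$. This follows Schleicher's exponential proof, with $F(x)=e^x-1$ replaced appropriately; I expect it to transfer.
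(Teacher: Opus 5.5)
\textbf{There is a genuine gap: the decisive step of the ``if'' direction is missing.} The paper states this proposition without proof; it is only asserted to follow as in Schleicher's exponential case. So your proposal has to stand on its own.

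In the ``if'' direction, step 3 only tells you that $G_{\underline{s}}$ lands at \emph{some} $v_1$ with $g^{v_1}_{\underline{s}}$ landing at $-2v_1$. Step 4 cannot identify $v_1$ with $v_0$. That the zero of $-2v-w(v)$ is isolated is a statement inside the small neighborhood $\mathcal{N}$ where $w$ is defined. It says nothing about $v_1$ unless you already know $v_1\in\mathcal{N}$, which is exactly what must be proved.

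The ``uniqueness argument below'' is only a plan. The claim that distinct Misiurewicz parameters are separated by pairs of periodic parameter rays landing together at parabolic parameters is a substantial theorem. You do not prove it, and the paper does not provide it for this family. Proposition~\ref{prop: converse landing theorem for parabolic parameters} is also stated without proof, and Proposition~\ref{prop: renormalizable parameters on the boundaries of components of type C} actually uses the present statement.

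\textbf{How to close it.} Your steps 1--2, made uniform, already finish the argument locally, with no global combinatorics.
\begin{enumerate}
\item Choose a closed disk $\overline{D}\subset\mathcal{N}$ about $v_0$ on which $\psi(v)=-2v-w(v)$ vanishes only at $v_0$. Here $\psi\not\equiv0$: otherwise $f^{\ell+p}_v(-2v)=f^{\ell}_v(-2v)$ on all of $\mathbb{C}^*$. Since $-2v\in B_v$ for $0<|v|<1/5$ by Lemma~\ref{lm: H_0 contains a small disk}, this forces $f^{\ell}_v(-2v)\equiv 0$, which is false at $v_0$.
\item Prove that $g^v_{\underline{s}}(t)\to w(v)$ as $t\to t_{\underline{s}}$, uniformly in $v\in\overline{D}$. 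Use linearization at the repelling cycle and univalent pull-back along the preperiodic orbit, which contains no critical point. The only possible obstruction is $-2v$ meeting one of the finitely many rays $g^v_{\sigma^j(\underline{s})}$ with $j\ge 1$. At $v_0$ their closures lie at positive distance from $-2v_0$.
\item Then $\Psi_t(v)=-2v-g^v_{\underline{s}}(t)$ is holomorphic on $D$ and converges uniformly to $\psi$. By Rouch\'e it has a zero $v_t\in D$ for all $t$ close to $t_{\underline{s}}$.
\item By the uniqueness in Proposition~\ref{prop: parameter rays}, $v_t=G_{\underline{s}}(t)$. Shrinking $D$ gives $G_{\underline{s}}(t)\to v_0$.
\end{enumerate}

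\textbf{Smaller points.}
\begin{itemize}
\item Your caveat in step 2 is backwards. $-2v\in g^v_{\underline{s}}$ does not obstruct $g^v_{\underline{s}}$ itself; only $-2v\in g^v_{\sigma^j(\underline{s})}$ with $j\ge1$ does. Your ``only if'' argument needs stability exactly along $G_{\underline{s}}$.
\item The same uniform landing, applied at the landing point of $g^{v_0}_{\underline{s}}$, turns your ``only if'' sketch into a proof. If that point's orbit meets a critical point, use a square-root estimate at that step.
\item Your setup claim $-2v_0\in J(f_{v_0})$ fails at centers of type C components, where $f^m_v(-2v)=0$. There the statement holds vacuously: no dynamic ray lands inside a Fatou component, and no parameter ray lands in the open component $\mathcal{U}\ni v_0$, which is disjoint from $\mathcal{I}$. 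This case must be split off.
\end{itemize}
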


\begin{definition}\label{def: characteristic point}
    Let $v$ be a parabolic parameter. $\{z_0(v), \ldots, z_{p-1}(v)\}$ is the parabolic cycle. The characteristic point of the cycle is the point $z_j(v)$ on the boundary of the Fatou component containing $-2v$.  
\end{definition}

\begin{proposition}\label{prop: converse landing theorem for parabolic parameters}
    Suppose that $v$ is a parabolic parameter. Let $z(v)$ be the characteristic point of the parabolic cycle of $f_v$. Then $G_{\underline{s}}$ lands at $v$ if and only if $g^v_{\underline{s}}$ lands at $z(v)$. 
\end{proposition}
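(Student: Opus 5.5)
We prove Proposition \ref{prop: converse landing theorem for parabolic parameters} the way the analogous statement is proved for polynomials (\cite{DH84}, \cite{Mil00b}-style orbit portrait arguments) and for the exponential family (\cite{Sch99}): first the direction "$G_{\underline{s}}$ lands at $v$ $\Rightarrow$ $g^v_{\underline{s}}$ lands at $z(v)$", then the converse by a counting argument.

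\textbf{Forward direction.} Suppose $G_{\underline{s}}$ lands at the parabolic parameter $v$. By Proposition \ref{prop: landing theorem for parameter rays}, if $\underline{s}$ were strictly preperiodic the landing point would have strictly preperiodic critical value, which is impossible for a parabolic parameter: $-2v$ lies in a parabolic basin. So $\underline{s}$ is periodic, say of period $q$. Since $P(f_v)$ is bounded (the free critical orbit converges to the parabolic cycle and $0$ is fixed), Theorem \ref{thm: landing theorem} says $g^v_{\underline{s}}$ lands at a repelling or parabolic periodic point $w$. The main step is to show that $w$ is the characteristic point $z(v)$.

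\begin{enumerate}
\item For parameters $v'=G_{\underline{s}}(t)$ on the ray near $v$, the critical value lies on $g^{v'}_{\underline{s}}$.
\item If $w$ were repelling, it would move holomorphically near $v$. By stability of landing of periodic rays at repelling points (a perturbation lemma in the spirit of \cite{RS08}, \cite{BR20}), $g^{v'}_{\underline{s}}$ would land at the continuation $w(v')$ for $v'$ near $v$, and the ray would depend continuously on the parameter on compact pieces.
\item The critical value $-2v'$ lies on $g^{v'}_{\underline{s}}$ at potential $t\to t_{\underline{s}}$, so it converges to a point of the closed ray. Hence $-2v$ would lie in $\overline{g^v_{\underline{s}}}\subset J(f_v)$, contradicting that $-2v$ is in a Fatou component.
\end{enumerate}

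So $w$ is parabolic, and it belongs to the unique parabolic cycle, since that cycle attracts the only free critical value. To see that $w$ is precisely the characteristic point, use the same limiting argument. The rays $g^{v'}_{\underline{s}}$ pass through $-2v'$, and their limiting configuration separates the plane so that the Fatou component containing $-2v$ touches $w$. Concretely, the two rays of the limiting orbit portrait bound the sector containing $-2v$, and the characteristic point is the landing point on the boundary of that component.

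\textbf{Converse.} Let $g^v_{\underline{s}}$ land at $z(v)$. The plan is a counting argument.

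\begin{enumerate}
\item Show that at most two periodic rays land at $z(v)$, namely the two rays bounding the characteristic sector. This requires an orbit-portrait analysis adapted to the $2\pi i$-periodic combinatorics of $\Sigma$.
\item Show that at least two parameter rays land at $v$. A parabolic parameter of ray period $q$ sits on the boundary of a type D component $\mathcal{D}$ as its root, with $\lambda(v)=1$ via Proposition \ref{prop: parameterization of hyperbolic components of type D}. The wake bounded by two parameter rays landing at the root is then obtained by a parameter-space counting argument as in \cite{Sch99}: the number of periodic addresses of period $q$ matches the number of parabolic parameters, counted with their two rays.
\item Combine with the forward direction. The forward direction forces every ray landing at $v$ to be one of the at most two dynamical rays landing at $z(v)$, and step 2 provides two parameter rays at $v$. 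So both dynamical addresses are realized, which gives the converse.
\end{enumerate}

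The main obstacle is the counting in steps 1 and 2 for a transcendental family, where there are infinitely many addresses of each period. I would first restrict to addresses with bounded entries $|k_n|\le K$, and count roots in a bounded region, which is legitimate because hyperbolic components are bounded.
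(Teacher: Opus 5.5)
\emph{The paper gives no proof of this proposition.} It is stated as something that ``can be obtained similarly'' to Schleicher's landing theorem for exponential maps \cite{Sch99}, whose quadratic counterpart is Milnor's orbit-portrait theory. Measured against those results, your converse rests on a false step: Step~1 claims that at most two periodic rays land at $z(v)$.

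Take a copy $\mathcal{M}_{v_0}$ from Lemma \ref{lm: copy of Mandelbrot set} and $v=\chi^{-1}_{v_0}(c)$, with $c$ the root of the $1/3$-limb of $\mathcal{M}$. Under the straightening of $f^p_v$, the parabolic cycle corresponds to the parabolic $\alpha$-fixed point of $z^2+c$, which has multiplier $e^{2\pi i/3}$. The finitely many rays landing at $z(v)$ (Theorem \ref{thm: landing theorem}) are therefore permuted by $f^p_v$ with combinatorial rotation number $1/3$, so there are at least three of them.

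The quadratic model shows what this does to the statement. At the root $c_0$ of the $1/3$-limb, the dynamic rays of angles $1/7$, $2/7$ and $4/7$ all land at the parabolic $\alpha$-point, which is the characteristic point. Yet only the parameter rays of angles $1/7$ and $2/7$ land at $c_0$; the one of angle $4/7$ lands at the root of the airplane component. So no counting argument can match the parameter rays at $v$ with all rays landing at $z(v)$. What the polynomial and exponential theorems actually give, and what you should aim for, is this: $G_{\underline{s}}$ lands at $v$ if and only if $g^v_{\underline{s}}$ lands at $z(v)$ \emph{and} is one of the two characteristic rays, i.e.\ the two rays at $z(v)$ bounding the sector that contains the Fatou component of $-2v$.

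Your forward direction has the matching gap. Its real content is that the parabolic landing point of $g^v_{\underline{s}}$ is $z(v)$ and that $g^v_{\underline{s}}$ is characteristic. Your sentence about the ``limiting configuration'' of the rays through $-2v'$ asserts this rather than proving it, and passing to the limit at a parabolic parameter is exactly where naive continuity of rays fails. In the polynomial setting this comes from the wake theorem: the parameters for which the two characteristic rays land together at a repelling point, with the critical value between them, form an open region bounded by exactly the two corresponding parameter rays, and one passes to their common landing point.

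You also exclude only strictly preperiodic $\underline{s}$, via Proposition \ref{prop: landing theorem for parameter rays}. The statement concerns arbitrary $\underline{s}\in\Sigma$, and rays landing at $z(v)$ are periodic, so you must also show that no parameter ray with a non-eventually-periodic address lands at $v$. That again needs the wake structure.

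Two smaller points:
\begin{itemize}
\item In the repelling case, use Lemma \ref{lm: stability of rays and the repelling landing points} directly. It gives a neighborhood of $v$ on which $g^{v'}_{\underline{s}}$ is well defined. That is incompatible with $G_{\underline{s}}$ accumulating at $v$, because on $G_{\underline{s}}$ the ray contains $-2v'$ and is broken. Your version instead tries to control $g^{v'}_{\underline{s}}(t)$ as $t\to t_{\underline{s}}$ along the parameter ray. Those potentials are not in a compact range, and those parameters are outside the region where the stability lemma applies.
\item Truncating to addresses with $|k_n|\le K$ comes with no reason why those addresses should correspond to the period-$q$ roots in a given bounded region. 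As it stands, the global count is not an argument.
\end{itemize}
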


\subsection{Parameter internal rays}\label{ss: parameter internal rays}
Suppose that $v \in \mathbb{C}^*$ such that $-2v \notin B_v$. Then the B\"ottcher mapping extends to a conformal isomorphism $\phi_v: B_v \to \mathbb{D}$, with $\phi_v(0)=0$, $\phi_v(f_v(z))=\phi^2_v(z)$. In this case, we can define internal rays in $B_v$. For $\theta \in \mathbb{R}/\mathbb{Z}$, define $R^v_0(\theta)=\phi^{-1}_v((0,1)e^{2\pi i \theta})$, which is called the internal ray of angle $\theta$. It is easy to see that $f_v(R^v_0(\theta))=R^v_0(2\theta)$.

$R^v_0(\theta)$ is said to land at $z$ if 
$$\lim_{r \to 1-} \phi^{-1}_v(re^{2\pi i \theta}) =z.$$
The following landing theorem is similar to polynomials.
\begin{proposition}\label{prop: landing theorem for internal rays of rational angles}
    Suppose that $v \in \mathbb{C}^*$ such that the internal rays are well-defined, and $\theta \in \mathbb{Q}/\mathbb{Z}$. Then $R^v_0(\theta)$ lands at a preperiodic point $z \in \partial B_v$. Conversely, every preperiodic point $z \in \partial B_v$ is the landing point of at least one and at most finitely many internal rays.
\end{proposition}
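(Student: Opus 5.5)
The plan is to follow the classical polynomial argument (Douady--Hubbard, Milnor) for landing of rational internal rays in a basin where the Böttcher map extends to all of $B_v$. Suppose the internal rays are well defined, so $\phi_v: B_v \to \mathbb{D}$ is a conformal isomorphism conjugating $f_v|_{B_v}$ to $w \mapsto w^2$. By Lemma \ref{lm: position of critical points of cosine maps}, $0$ is the only critical point in $B_v$, so $f_v: B_v \to B_v$ is a proper degree-two map. By Corollary \ref{cor: boundedness of B_v}, $B_v$ is bounded. Every $\theta \in \mathbb{Q}/\mathbb{Z}$ is preperiodic under doubling. Since $f_v$ maps $R^v_0(\theta)$ onto $R^v_0(2\theta)$ and is continuous on the bounded set $\overline{B_v}$, it is enough to prove landing for periodic $\theta$ and then pull back. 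A ray $R^v_0(\theta)$ with $2^j\theta$ periodic has image under $f^j_v$ a landing ray. Its accumulation set is connected and is mapped into a single point, so by discreteness of preimages it is a single point, which is then preperiodic.

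For periodic $\theta$ of period $q$, use the hyperbolic metric of $B_v$. The map $g=f^q_v$ fixes $R^v_0(\theta)$ and, in the coordinate $\phi_v$, acts as $w \mapsto w^{2^q}$ on the radius. Choose a fundamental segment $I_n=\phi_v^{-1}([r^{2^{qn}}, r^{2^{q(n+1)}}]e^{2\pi i\theta})$, so that $g$ maps $I_{n+1}$ onto $I_n$. The hyperbolic length of $I_n$ in $B_v$ is uniformly bounded, because $\phi_v$ is an isometry onto $\mathbb{D}$ and $(0,1)$ is a geodesic with bounded increments in $\log\log$ scale. Boundedness of $B_v$ gives a Euclidean estimate: a curve of bounded hyperbolic length near $\partial B_v$ has Euclidean diameter at most a constant times $d(\cdot,\partial B_v)$, which tends to $0$. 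Hence the accumulation set $L \subset \partial B_v$ of the ray is a continuum with $g(L)=L$, and every point of $L$ is fixed by $g$. Indeed, $g$ moves points of $I_{n+1}$ to $I_n$, which are at bounded hyperbolic, hence vanishing Euclidean, distance. The fixed points of the holomorphic map $g$ are isolated, since $g$ is entire and non-affine. So the connected set $L$ is a single point $z$ with $g(z)=z$, and $z \in \partial B_v$ is periodic.

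For the converse, let $z \in \partial B_v$ be preperiodic. By pulling back it is enough to treat $z$ periodic. Passing to an iterate, assume $z$ is a fixed point of $g=f^q_v$, and that $g$ maps some neighborhood of $z$ intersected with $B_v$ appropriately. Here I will use that $z$ is repelling or parabolic, since $z \in J(f_v)$. Following Milnor's Lemma 18.12 / Theorem 18.11 type argument, take a linearizing neighborhood, or a repelling petal in the parabolic case. Take a small arc in $B_v$ from a point near $z$ to its $g$-image, and pull it back along the local inverse branch of $g$ fixing $z$. This produces a $g$-invariant curve in $B_v$ converging to $z$. In $\phi_v$-coordinates this curve corresponds to a curve in $\mathbb{D}$ invariant under $w \mapsto w^{2^q}$, tending to $\partial\mathbb{D}$ with bounded hyperbolic steps, so it lies within bounded hyperbolic distance of some internal ray $R^v_0(\theta)$ with $2^q\theta=\theta$. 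That ray then lands at $z$. Finiteness follows because the cyclic order of the rays landing at $z$ is preserved by the local homeomorphism $g$. Each landing angle has exact period dividing a fixed multiple of $q$, determined by the combinatorial rotation number, so only finitely many such angles exist.

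The main obstacle I expect is the converse direction in the parabolic case, and more generally justifying that a periodic point of $\partial B_v$ is accessible from $B_v$. This needs a local argument at $z$, and $f_v$ is transcendental, so $z$ might a priori be a non-hyperbolic Cremer-type point. Non-repelling periodic points cannot lie on $\partial B_v$ except parabolic ones reached through $B_v$'s petal structure. To rule out Cremer points I would invoke the Snail Lemma, or alternatively use Corollary \ref{cor: local connectivity of B_v} when it applies to simplify accessibility.
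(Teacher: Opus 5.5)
The paper gives no argument for this proposition; it only says the statement is ``similar to polynomials''. So your proposal has to stand on its own.

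\textbf{Forward direction.} This part works. With $B_v$ bounded and simply connected (Corollary~\ref{cor: boundedness of B_v}), three ingredients give landing for periodic $\theta$: the fundamental-segment argument, the Koebe estimate, and the fact that the accumulation set is a continuum of fixed points of $f_v^q$. The reduction from preperiodic angles via $L\subset f_v^{-j}(z')$ is correct. There is one slip: since $r<1$, the segments should be $\phi_v^{-1}([r^{2^{-qn}},r^{2^{-q(n+1)}}]e^{2\pi i\theta})$; as written they move toward $0$.

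\textbf{Converse direction.} This has a genuine gap, in three parts.

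First, the pull-back does not stay in $B_v$. Douady's argument, which you imitate, uses the complete invariance of the basin of $\infty$: the local inverse branch at $z$ keeps you inside the basin. Here $f_v^{-1}(B_v)=\bigcup_k(B_v+2k\pi)$. So the branch $h$ of $g^{-1}$ fixing $z$ only sends $B_v\cap N$ into $f_v^{-q}(B_v)$, and an arc pulled back by $h$ may fall into a translate $B_v+2k\pi$ or a further preimage. Your reduction ``by pulling back it is enough to treat $z$ periodic'' has the same defect.

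Second, the missing input is an accessibility statement. What you actually need is a point $u_0\in B_v\cap h(N)$ such that $u_0$ and $g(u_0)$ lie in the \emph{same} component of $B_v\cap N$. Given that, let $\gamma_0$ be an arc joining them in $B_v\cap N$. Each $h^n(\gamma_0)$ is a connected subset of $f_v^{-q}(B_v)$ containing $h^{n-1}(u_0)\in B_v$, so $h^n(\gamma_0)\subset B_v$ for all $n$. Your bounded-step argument then finishes, measured in the hyperbolic metric of $B_v\setminus\{0\}$, where $g$ is a covering. Finding such a $u_0$ is essentially the accessibility of $z$ from $B_v$. This is not automatic when $\partial B_v$ is not locally connected: $B_v\cap N$ may have infinitely many components, and nothing forces $u_0$ and $g(u_0)$ into the same one. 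The proposal does not supply this.

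Third, the Cremer case is not handled. Knowing $z\in J(f_v)$ only tells you $z$ is repelling, parabolic \emph{or Cremer}. The Snail Lemma cannot exclude a Cremer point on $\partial B_v$, because it presupposes an invariant curve in $B_v$ landing at $z$, which is exactly what you are constructing.

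\textbf{Where the gap closes.} Both problems disappear whenever Corollary~\ref{cor: local connectivity of B_v} applies, i.e.\ $P(f_v)$ bounded or $-2v$ escaping. This covers, e.g., parameters on $\partial\mathcal{H}_0$ or on the boundary of a type~C component, by Lemmas~\ref{lm: maps on the boundary of H_0} and~\ref{lm: maps on the boundary of U}. Then $B_v$ is a Jordan domain, and $\phi_v^{-1}$ extends to a homeomorphism $\overline{\mathbb{D}}\to\overline{B_v}$ conjugating $w\mapsto w^2$ on $\partial\mathbb{D}$ to $f_v$ on $\partial B_v$. The preperiodic points of $\partial B_v$ are then exactly the landing points of rational internal rays, each of exactly one. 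For a general $v$ with $-2v\notin B_v$, an accessibility argument is still missing.
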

 
By the parameterization of hyperbolic components, we can define the parameter internal rays. Consider the hyperbolic component $\mathcal{H}_0$. By Proposition \ref{prop: Phi_0 is a covering map of degree 2}, $\Phi_0: \mathcal{H}_0 \to \mathbb{D}^*$ is a covering map of degree $2$. By the symmetry of $\mathcal{H}_0$, we consider $\mathcal{H}_0 \cap \mathbb{H}$. It is easy to see that $\Phi^{-1}_0((0,1))=(-v_1,0) \cup (0,v_1)$, where $v_1$ is the parameter in Lemma \ref{lm: maps on the real axis}. So define $\mathcal{R}_0(0)=(-v_1,0) \cup (0,v_1)$. For $\theta \in (0,1)$, the parameter internal ray $\mathcal{R}_0(\theta)$ of angle $\theta$ is the component of $\Phi^{-1}_0((0,1) e^{2\pi i \theta})$ in the upper half plane. 

Similarly, for a hyperbolic component $\mathcal{U}$ of type $C$, by Proposition \ref{prop: Phi_U is a conformal isomorphism}, $\Phi_{\mathcal{U}}: \mathcal{U} \to \mathbb{D}$ is a conformal isomorphism. The parameter internal ray $\mathcal{R}_{\mathcal{U}}(\theta)$ of angle $\theta \in \mathbb{R}/\mathbb{Z}$ is defined as $\Phi^{-1}_{\mathcal{U}}((0,1)e^{e\pi i \theta})$. 

A parameter internal ray $\mathcal{R}_0(\theta)$ (or $\mathcal{R}_{\mathcal{U}}(\theta)$) is said to land at $v$ if 
$$\lim_{t \to 1-} \Phi^{-1}_0(te^{2\pi i \theta})=v \quad (\text{or} \lim_{t \to 1-} \Phi^{-1}_{\mathcal{U}}(te^{2\pi i \theta})=v \text{ respectively}).$$

Here, $\Phi^{-1}_0$ is the branch of the inverse of $\Phi_0$ maps $\mathbb{D}^* \setminus (0,1)$ into the upper half plane. 

The following is similar to Proposition \ref{prop: landing theorem for parameter rays}. 
\begin{proposition}\label{prop: landing theorem for parameter internal rays}
    Let $\mathcal{U}$ be a component of $\mathcal{H}_m$, $m \ge 0$, and $\mathcal{R}_0(\theta)$ ($\mathcal{R}_{\mathcal{U}}(\theta)$ resp.) be a parameter internal ray of angle $\theta \in \mathbb{Q}/\mathbb{Z}$. Then the ray lands at some $v_0 \in \mathbb{C}^*$. Moreover, either $f_{v_0}$ has a parabolic cycle or the internal ray $R^{v_0}_0(\theta)$ in the dynamical plane of $f_{v_0}$ lands at $f^m_{v_0}(-2v_0)$, which is eventually repelling.
\end{proposition}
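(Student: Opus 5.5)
We argue as for the landing of parameter internal rays of hyperbolic components of cubic polynomials (\cite{Roe07}) and for parameter external rays (\cite{CG93,Sch99}). Fix $\theta=p/q \in \mathbb{Q}/\mathbb{Z}$. The first step is to show that the accumulation set $\mathcal{L}$ of $\mathcal{R}_{\mathcal{U}}(\theta)$ (resp. $\mathcal{R}_0(\theta)$) is a nonempty compact connected subset of $\partial\mathcal{U}\subset\mathbb{C}$. Boundedness of $\mathcal{U}$ comes from Lemma \ref{lm: boundedness of H_0} and Lemma \ref{lm: boundness of components of type C}. We must also show $0\notin\mathcal{L}$. For $\mathcal{H}_0$ this holds because $0$ is an interior point of $\mathcal{H}_0\cup\{0\}$, where $|\Phi_0|$ stays small. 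For $m\ge1$ it holds because a small punctured disk around $0$ lies in $\mathcal{H}_0$ by Lemma \ref{lm: H_0 contains a small disk}, so $0\notin\overline{\mathcal{U}}$. Thus $\mathcal{L}\subset\mathbb{C}^*$, and it suffices to prove that $\mathcal{L}$ is contained in a discrete set, hence is a single point $v_0$.

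Next I would characterize the points $v_0\in\mathcal{L}$. Since $v_0\in\partial\mathcal{U}$, the point $-2v_0$ is not in $\bigcup_n f^{-n}_{v_0}(B_{v_0})$, so the internal rays of $B_{v_0}$ are defined. By Corollary \ref{cor: boundedness of B_v}, $B_{v_0}$ is bounded. Suppose $f_{v_0}$ has no parabolic cycle. By Proposition \ref{prop: landing theorem for internal rays of rational angles}, the dynamic internal ray $R^{v_0}_0(\theta)$ lands at a preperiodic point $z_0\in\partial B_{v_0}$. This point is eventually repelling: it is not parabolic, and it cannot be attracting or Siegel because it lies on the Julia set as a landing point of a preperiodic ray. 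Repelling periodic points, together with the periodic internal rays landing at them, move holomorphically in a neighbourhood $\mathcal{N}$ of $v_0$. For this we use the stability of landing at repelling points (a standard hyperbolicity argument on a small disk around the periodic point, pulling back a fundamental segment of the ray). So for $v\in\mathcal{N}\cap\mathcal{U}$, the ray $R^v_0(\theta)$ lands at $z(v)$, which depends holomorphically on $v$.

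Now take $v_n\in\mathcal{R}_{\mathcal{U}}(\theta)$ with $v_n\to v_0$. Then $f^m_{v_n}(-2v_n)$ lies on $R^{v_n}_0(\theta)$ at potential $|\Phi_{\mathcal{U}}(v_n)|\to1$. The main point, and the expected obstacle, is to show that $f^m_{v_n}(-2v_n)\to z(v_0)$, i.e.\ that points of the ray at potential tending to $0$ converge uniformly to its landing point. I would prove this by the standard shrinking argument. Near the repelling point, pull back a fixed compact fundamental arc of the ray under the inverse branch of $f^{q'}_v$ fixing $z(v)$ (where $q'$ is the ray period). The branch is uniformly contracting for $v$ near $v_0$, so the tail of the ray lies in a neighbourhood of $z(v)$ whose size shrinks geometrically in the number of pullbacks, uniformly in $v$. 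The preperiodic case reduces to this by taking finitely many univalent preimages. Consequently $f^m_{v_0}(-2v_0)=z(v_0)$, so every such $v_0$ solves the analytic equation $f^m_v(-2v)=z(v)$. This equation is not identically zero, since it fails at the center $v_{\mathcal{U}}$. Its zeros are therefore isolated.

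In the remaining case $f_{v_0}$ has a parabolic cycle, and I expect the set of parabolic parameters in $\mathbb{C}^*$ to be discrete. Such parameters satisfy the analytic equations $f^k_v(z)=z$ and $(f^k_v)'(z)=e^{2\pi i r}$ with period $k$ and multiplier $e^{2\pi i r}$ a root of unity. The cycle must attract $-2v$, and by standard arguments the period is bounded in terms of $q$. The solutions form a countable set, which is discrete away from $0$. Hence $\mathcal{L}$ lies in a countable set. Since $\mathcal{L}$ is connected, it is a single point $v_0$, which is either parabolic or has $R^{v_0}_0(\theta)$ landing at the eventually repelling point $f^m_{v_0}(-2v_0)$. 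The case $\mathcal{H}_0$ with $m=0$ is identical, using $\Phi_0$ and the chosen branch.
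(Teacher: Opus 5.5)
Your outline is the standard Douady--Hubbard/Schleicher landing argument, which is also the argument the paper has in mind. The paper gives no proof here; it only says the statement is similar to Proposition \ref{prop: landing theorem for parameter rays}, which is itself deferred to Schleicher's exponential-family argument.

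The skeleton is sound. $\mathcal{L}$ is a continuum in $\partial\mathcal{U}\subset\mathbb{C}^*$. At an accumulation point where the dynamic ray lands at a repelling point, your uniform pull-back estimate forces $f^m_{v_0}(-2v_0)$ to be the landing point and makes $v_0$ an isolated point of $\mathcal{L}$. So $\mathcal{L}$ is a single point unless all its points are parabolic.

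\textbf{The main gap.} The step you only assert (``I expect \dots discrete'', ``the solutions form a countable set'') is precisely the countability of those parabolic parameters. For a transcendental family this is not automatic. The set $Y_{k,\omega}=\{(v,z)\in\mathbb{C}^*\times\mathbb{C}: f^k_v(z)=z,\ (f^k_v)'(z)=\omega\}$ is only an analytic set, not the zero set of a polynomial. Nothing you wrote excludes a one-dimensional piece of it, i.e.\ a cycle that stays parabolic over an open set of parameters. The paper makes the same unproved claim about $\mathrm{Par}_q(1)$ in Lemma \ref{lm: holomorphic motion of parameter rays}.

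Here it is easy to supply:
\begin{enumerate}
\item Suppose $v_0\in\partial\mathcal{U}$ and $(v_0,z_0)$ is a non-isolated point of $Y_{k,\omega}$. A one-dimensional local branch through it cannot be vertical, since $f^k_{v_0}\neq\mathrm{id}$.
\item Hence its projection to the $v$-plane covers a neighbourhood of $v_0$, and so meets $\mathcal{U}$.
\item But for $v\in\mathcal{U}$ both singular values $0$ and $-2v$ lie in $\bigcup_n f^{-n}_v(B_v)$, so $f_v$ has no parabolic cycle. This is a contradiction.
\end{enumerate}
So every parabolic parameter on $\partial\mathcal{U}$ comes from an isolated point of one of the countably many sets $Y_{k,\omega}$, and these form a countable set. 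That is all your connectedness argument needs. The claims ``discrete'' and ``period bounded in terms of $q$'' can be dropped.

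\textbf{Smaller repairs.}
\begin{enumerate}
\item Lying in the Julia set excludes attracting and Siegel landing points, but not Cremer points. You need the snail lemma: a periodic internal ray lands at a repelling or parabolic point.
\item $z(v)$ exists only on $\mathcal{N}$, so ``it fails at the center $v_{\mathcal{U}}$'' does not show your equation is nontrivial there. It is cleaner to run the isolation argument on $F(v)=f^{m+l}_v(-2v)-w(v)$, where $2^l\theta$ is periodic and $w(v)$ is the repelling periodic landing point of $R^v_0(2^l\theta)$. Then $w$ is holomorphic near $v_0$ even if the preperiodic part of the orbit meets a critical point $(2k+1)\pi$. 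Also $F\neq0$ on $\mathcal{N}\cap\mathcal{U}\neq\varnothing$, since there $f^{m+l}_v(-2v)\in B_v$ while $w(v)\in J(f_v)$.
\item The case $m=0$ is not literally identical. For $v\in\mathcal{H}_0$ the internal rays of $B_v$ are not globally defined. For periodic $\theta$ and $v\in\mathcal{R}_0(\theta)$, the ray of angle $\theta$ runs into a precritical point a little beyond $-2v$. So ``$R^v_0(\theta)$ lands at $z(v)$ for $v\in\mathcal{N}\cap\mathcal{U}$'' is false there.
\end{enumerate}
Your estimate only uses the ray up to $-2v$, which exists, so the argument survives the third point. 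In fact it then shows that for periodic $\theta$ the repelling alternative cannot occur. Otherwise $g_v(-2v)$ would be a critical point of $f^{q'}_v$ lying inside the disk where $f^{q'}_v$ is univalent.
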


\section{Para-puzzles}\label{sec: para-puzzles}
In this sectioin, we will construct the para-puzzles for the cosine family.

\subsection{Maps on the boundaries of hyperbolic components}\label{ss: maps on the boundaries of hyperbolic components}
First we consider maps on the boundaries of hyperbolic components of type A and C.
\begin{lemma}\label{lm: renormalizable cosine function}
    Suppose that $-2v \notin B_v$, and $\partial B_v$ contains no parabolic periodic points nor critical points $(2k+1) \pi$, $k \in \mathbb{Z}$. Then there are topological disks $U \Subset V$, $m \ge 1$, with $B_v \Subset U$, such that $f^m_v: U \to V$is a polynomial-like mapping of degree $2^m$. 
\end{lemma}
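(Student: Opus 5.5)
We will build a polynomial-like restriction of $f^m_v$ around $B_v$ by thickening $B_v$ slightly and pulling back. First, $-2v \notin B_v$, so $v$ is not of type A. By Lemma \ref{lm: position of critical points of cosine maps}, $B_v$ contains exactly one critical point, namely $0$, and $B_v$ is bounded by Corollary \ref{cor: boundedness of B_v}. Hence $f_v: B_v \to B_v$ is proper of degree $2$, and $\phi_v: B_v \to \mathbb{D}$ is a conformal isomorphism.

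The plan is to construct a Jordan domain $V \supset \overline{B_v}$ and an integer $m$ such that the component $U$ of $f^{-m}_v(V)$ containing $B_v$ satisfies $U \Subset V$. The map $f^m_v: U \to V$ is then proper because $f_v$ has no asymptotic values and $U$ is bounded. Its degree is $2^m$ provided $U$ contains exactly the critical points of $f^m_v$ that lie in $B_v$, that is, $0$ and its preimages inside $B_v$ under $f^j_v|_{B_v}$. For this we need each of $U, f_v(U), \ldots, f^{m-1}_v(U)$ to avoid all critical points other than those in $B_v$, and in particular to avoid the points $(2k+1)\pi$ and the free critical value $-2v$ at the relevant levels.

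To build $V$ we use puzzle-type pieces from \cite{QW25}. Take finitely many preperiodic points on $\partial B_v$, obtained as landing points of internal rays of rational angles by Proposition \ref{prop: landing theorem for internal rays of rational angles}. Choose them so that none of them is parabolic, which the hypothesis allows. These points are repelling, so Theorem \ref{thm: landing theorem} provides dynamic rays landing at them; where $P(f_v)$ may be unbounded, we use the corresponding result for periodic addresses from \cite{RS08,BR20}. The union of the internal rays, the dynamic rays, and an equipotential curve inside $B_v$ (or small disks around the landing points) cuts out a neighborhood of $\overline{B_v}$. This neighborhood is periodic with respect to the ray structure.

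An alternative that avoids rays is to define the thickening directly. Since $\partial B_v$ contains no parabolic points and no critical points, every point of $\partial B_v$ has a neighborhood on which a suitable iterate of $f_v$ expands in a hyperbolic metric. This is the mechanism used in the "thickened Yoccoz puzzle" of Milnor and Roesch \cite{Roe07}. We thicken at the finitely many periodic points where the boundary pieces meet, and then take $V$ to be the thickened piece and $U$ its pullback under $f^m_v$, with $m$ the common period of the chosen boundary points. Compact containment $U \Subset V$ then follows because the pullback pinches strictly inside near the repelling points, and elsewhere by the equipotential level decreasing under pullback.

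The main obstacle is the degree count. We must rule out that the thickened neighborhood swallows a critical point $(2k+1)\pi$ or the value $-2v$ at some intermediate level. The hypothesis that $(2k+1)\pi \notin \partial B_v$ gives a positive distance from $\overline{B_v}$ to every such critical point, and only finitely many of them are near $B_v$ since $B_v$ is bounded. Therefore sufficiently thin thickenings avoid them. Similarly, $-2v$ is either at positive distance from $\overline{B_v}$ or, if $-2v \in \partial B_v$, it is not a critical point, so only finitely many preimages need care. We would choose the thickness and $m$ last, after fixing the ray structure, to make all these finitely many avoidance conditions hold simultaneously.
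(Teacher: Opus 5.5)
Your overall scheme (a thin neighborhood $V$ of $\overline{B_v}$, pulled back by $f^m_v$, with a Riemann--Hurwitz count) has the right shape. The genuine gap is the compact containment $U\Subset V$, which you treat as routine while calling the degree count the main obstacle.

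Since $V\supset\overline{B_v}$, its boundary stays away from $\partial B_v$. So $U\Subset V$ requires that the inverse branches of $f^m_v$ along \emph{all} of $\partial B_v$ shrink a neighborhood of fixed size. Linearization at finitely many repelling landing points says nothing about the rest of $\partial B_v$, and decreasing potential only controls the far ends of dynamic rays.

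Your alternative asserts that each point of $\partial B_v$ has a neighborhood on which an iterate expands a hyperbolic metric, ``since $\partial B_v$ contains no parabolic points and no critical points''. That does not follow by itself. The hypotheses allow $f^j_v(-2v)\in\partial B_v$ for some $j\ge 1$; this happens, for instance, at boundary points of type C components where $f^m_v(-2v)\in\partial B_v$. They also allow the critical orbit to accumulate on $\partial B_v$. In either case the hyperbolic metric of $\mathbb{C}\setminus P(f_v)$ degenerates on $\partial B_v$.

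The missing ingredient is a Ma\~n\'e-type theorem for entire functions. The paper invokes Bergweiler--Morosawa \cite{BM02} to get that $f_v$ is semi-hyperbolic on $\partial B_v$. Pullbacks of small disks centered on $\partial B_v$ then have uniformly bounded degree and shrink uniformly. The paper then builds $U,V$ as in \cite{QRWY15}, Lemma 4.2. With that input you take $V$ a thin neighborhood of $\overline{B_v}$ and $m$ large. Uniform shrinking gives $U\Subset V$. It also keeps every $f^j_v(U)$, $0\le j<m$, close to $\overline{B_v}$ and hence away from the points $(2k+1)\pi$, which is what yields degree $2^m$.

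Without this input, your degree argument has its quantifiers in the wrong order. The sets $f^j_v(U)$ are pullback components of $V$, and thinness of $V$ controls them at best for a fixed $m$ as the thickness tends to $0$. But $U\Subset V$ forces $m$ to depend on the thickness, so ``choosing the thickness and $m$ last'' does not close the argument.

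The ray-based construction is also not coherent as written:
\begin{itemize}
\item Curves made of internal rays and an equipotential inside $B_v$ cannot bound a neighborhood of $\overline{B_v}$.
\item For a union of puzzle pieces around $\partial B_v$, the pullback typically shares boundary ray segments with the original, since rays landing at points of period dividing $m$ are mapped into themselves by $f^m_v$. Thickening only at the landing points therefore does not give compact containment.
\item Theorem \ref{thm: landing theorem} needs $P(f_v)$ bounded, which the hypotheses do not guarantee (the orbit of $-2v$ may escape). Your fallback to \cite{RS08,BR20} does not cover that case.
\end{itemize}
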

\begin{proof}
    If $\partial B_v$ contains no parabolic periodic points nor critical points $(2k+1)\pi$, $k \in \mathbb{Z}$, then $f_v$ is semi-hyperbolic on $\partial B_v$ (\cite{BM02}). The construction of $U, V$ is similar to \cite{QRWY15}, Lemma 4.2.     
\end{proof}

\begin{lemma}\label{lm: maps on the boundary of H_0}
    Suppose that $v \in \partial \mathcal{H}_0$, $v \neq 0$. Then $\partial B_v$ contains either a parabolic cycle or the free critical value $-2v$.
\end{lemma}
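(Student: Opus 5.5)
We argue by contradiction. Fix $v \in \partial \mathcal{H}_0$ with $v \neq 0$, and suppose $\partial B_v$ contains neither a parabolic cycle nor $-2v$.

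\textbf{Step 1: basic facts about $v$.} Since $v \notin \mathcal{H}_0$ and $\mathcal{H}_0$ is open, we have $-2v \notin B_v$. By Lemma \ref{lm: position of critical points of cosine maps}, $B_v$ contains only the critical point $0$, so $\phi_v: B_v \to \mathbb{D}$ is a conformal isomorphism.

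\textbf{Step 2: no odd critical point on $\partial B_v$.} Since $f_v((2k+1)\pi)=-2v$ and $f_v(\partial B_v)=\partial B_v$, a critical point $(2k+1)\pi \in \partial B_v$ would force $-2v \in \partial B_v$, which we have excluded. Hence the hypotheses of Lemma \ref{lm: renormalizable cosine function} hold. We obtain disks $B_v \Subset U \Subset V$ and $m\ge 1$ such that $f^m_v: U \to V$ is polynomial-like of degree $2^m$. The crucial structural point is that the only critical point of $f_v$ in $U$ is $0$ (and its iterates), so $-2v$ does not lie in $\overline{U}$. Indeed, $-2v \notin \overline{B_v}$, and $U$ can be taken in a thin neighbourhood of $\overline{B_v}$ given by the semi-hyperbolic construction of \cite{QRWY15}.

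\textbf{Step 3: stability in $v$.} Polynomial-like maps persist under perturbation. For $v'$ near $v$, the same disks, slightly shrunk, give a polynomial-like map $f^m_{v'}: U' \to V$ whose filled set contains $\overline{B_{v'}}$, and $-2v' \notin U'$ because $\mathrm{dist}(-2v, \overline{U})>0$. On the other hand, $v \in \partial\mathcal{H}_0$ gives $v'\in\mathcal{H}_0$ arbitrarily close to $v$, and for such $v'$ Lemma \ref{lm: position of critical points of cosine maps} says that $B_{v'}$ contains every $k\pi$. In particular $\pi \in B_{v'} \subset U'$.

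This contradicts the fact that $U'$ is a small neighbourhood of $\overline{B_v}$ not containing $\pi$. (We know $\pi \notin \overline{B_v}$ by Step 2 and Lemma \ref{lm: position of critical points of cosine maps}.) An alternative way to close the argument: $B_{v'}$ is unbounded by Corollary \ref{cor: boundedness of B_v}, yet it is contained in the filled Julia set of a polynomial-like map, which is compact.

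\textbf{Main obstacle.} The hard step is the uniformity in Step 3. We must show that the polynomial-like restriction built at $v$ controls $B_{v'}$ for nearby $v'$, namely that $B_{v'} \subset U'$. I would argue as follows. $B_{v'}$ is a Fatou component of $f_{v'}$ containing $0$ and invariant under $f_{v'}$. The filled Julia set $K'$ of $f^m_{v'}|_{U'}$ contains a neighbourhood of $0$. By continuity of the Böttcher coordinate near the superattracting point, $f^{mn}_{v'}$ pulls a fixed small disk back into a union of components of $K'$. To conclude we need that $B_{v'}$, being connected and consisting of points with orbits converging to $0$, cannot escape $U'$. The delicate point is that $B_{v'}$ is open and its points could a priori leave $U'$ before returning. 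To rule this out I would use the hyperbolic metric on $V\setminus \overline{U'}$ together with the fact that any point of $B_{v'}$ can be joined to $0$ by a curve whose forward images stay in $B_{v'}$. If such a curve exits $U'$, then some iterate of it hits the annulus $V \setminus U'$ in a way incompatible with $f^m_{v'}(\partial U')=\partial V$.
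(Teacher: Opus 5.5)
Your strategy is the paper's. You take the polynomial-like restriction $f_v^m:U\to V$ from Lemma \ref{lm: renormalizable cosine function} and perturb it to $f_{v'}^m:U'\to V$ for $v'$ near $v$. The paper does this by shrinking so that $\overline{V}$ avoids $f_u^{-j}(\{(2k+1)\pi\})$, $0\le j\le m$, for all nearby $u$, which is what your thin-neighbourhood device is meant to achieve. You then derive a contradiction at parameters $v'\in\mathcal{H}_0$. Steps 1 and 2 are fine.

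The proof is not finished, however, because everything rests on $B_{v'}\subset U'$. You first assert this (``whose filled set contains $\overline{B_{v'}}$'') and then flag it as the main obstacle, and the argument you sketch for it does not work. Knowing $f^m_{v'}(\partial U')=\partial V$, and that orbits in $B_{v'}$ converge to $0$, gives no contradiction by itself. Nothing you wrote prevents a point of $B_{v'}$ from leaving $U'$ and returning to $0$ later, since the dynamics of the entire map $f_{v'}$ outside $U'$ is not controlled by the polynomial-like map. Curves joining points of $B_{v'}$ to $0$, or the hyperbolic metric on $V\setminus\overline{U'}$, do not address this.

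The missing idea is that the Julia set of the polynomial-like map lies in $J(f_{v'})$. Let $g=f^m_{v'}|_{U'}$ and let $K'$ be its filled Julia set.
\begin{enumerate}
\item $\partial K'=J(g)$ is the closure of the repelling periodic points of $g$. This is standard for polynomial-like maps via straightening; here $g$ is even hybrid equivalent to $z^{2^m}$, since $0$ is its only critical point.
\item Those points are repelling periodic points of $f_{v'}$, so $\partial K'\subset J(f_{v'})$.
\item Since $0$ is a superattracting fixed point of $g$, we have $0\in\mathrm{int}\,K'$.
\item $B_{v'}$ is a connected subset of $F(f_{v'})$ containing $0$, so it cannot meet $\partial K'$. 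Hence $B_{v'}\subset \mathrm{int}\,K'\subset U'$.
\end{enumerate}
Any of your contradictions now goes through: $\pi\in B_{v'}$ but $\pi\notin U'$; or $B_{v'}$ is unbounded by Corollary \ref{cor: boundedness of B_v}; or $-2v'\in B_{v'}$ while $-2v'\notin U'$. With this step inserted your proof coincides with the paper's. The paper's closing sentence, ``this is impossible for $u\in\mathscr{V}\cap\mathcal{H}_0$'', tacitly uses the same inclusion.

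A minor point: the fact that $0$ is the only critical point of $f_v^m$ in $U$ comes from the degree count, since $0$ has local degree $2^m$. It does not imply $-2v\notin\overline{U}$, because $-2v$ is a critical value, not a critical point. That conclusion needs the thinness of $U$.
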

\begin{proof}
    First note that if $\partial B_v$ contains a parabolic periodic point, then it contains the entire cycle of this parabolic periodic point. Suppose that $v \in \partial \mathcal{H}_0$, $v \neq 0$ such that $\partial B_v$ does not contain a parabolic cycle, and $-2v \notin \partial B_v$. By Lemma \ref{lm: renormalizable cosine function}, there are topological disks $U_v \Subset V_v$, $m \ge 1$, such that $B_v \Subset U_v$, $f^m_v: U_v \to V_v$ is a polynomial-like mapping of degree $2^m$,  and $f^m: U_v \to V_v$ has the unique critical point $0$. We may shrink $U_v$ such that 
    $$
    \overline{V_v} \cap \left(\bigcup_{0 \le j \le m} f^{-j}_v(\{(2k+1) \pi: k \in \mathbb{Z}\}) \right) = \varnothing.
    $$
    By the continiuty of $f^{-j}_v$, $0 \le j \le m$, there is a neoghborhood $\mathscr{V}$ of $v$ such that for all $u \in \mathscr{V}$, 
    $$
    \overline{V_v} \cap \left( \bigcup_{0 \le j \le m} f^{-j}_u(\{(2k+1)\pi: k \in \mathbb{Z}\})\right) = \varnothing.
    $$
    Let $U_u$ is the component of $f^{-m}_u(V_v)$ containing $0$. We may assume that $U_u \Subset V_v$, $u \in \mathscr{V}$. Then for all $u \in \mathscr{V}$, $f^m_u: U_u \to V_v$ is a polynomial-like mapping with the unique critical point $0$. However, this is impossible for $u \in \mathscr{V} \cap \mathcal{H}_0$.
\end{proof}

As for the hyperbolic components of type C, we have the similar result. 
\begin{lemma}\label{lm: maps on the boundary of U}
    Let $\mathcal{U}$ be a component of $\mathcal{H}_m$, $m \ge 1$, $v \in \partial \mathcal{U}$. Then $\partial B_v$ either contains a parabolic cycle or $f^m_v(-2v)$.
\end{lemma}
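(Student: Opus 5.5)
The argument follows the proof of Lemma \ref{lm: maps on the boundary of H_0}, adapted to the capture situation. Fix $v \in \partial \mathcal{U}$. First note that $v \neq 0$: by Lemma \ref{lm: H_0 contains a small disk} a punctured neighborhood of $0$ lies in $\mathcal{H}_0$, and $\mathcal{U}$ is disjoint from $\mathcal{H}_0$. Also $-2v \notin B_v$; otherwise $v$ would lie in the open set $\mathcal{H}_0$, which is disjoint from $\mathcal{U}$ and hence from $\partial\mathcal{U}$. So the hypotheses needed to speak of $\partial B_v$ and to apply Lemma \ref{lm: renormalizable cosine function} are in force.

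Argue by contradiction. Suppose $\partial B_v$ contains no parabolic cycle and $f^m_v(-2v) \notin \partial B_v$. Since $v \in \partial\mathcal{U}$ and $\mathcal{U}$ is open, $v \notin \mathcal{U}$. A continuity argument should then give $f^m_v(-2v) \notin B_v$: if $f^m_v(-2v) \in B_v$, the capture is an open condition near $v$ (the basin $B_u$ moves continuously near $v$ because of the polynomial-like structure obtained below). Then $v$ would lie in some component of $\mathcal{H}_{m'}$ with $m' \le m$, and the nearby parameters of $\mathcal{U}$ would have to belong to this same component, so $v \in \mathcal{U}$, a contradiction. I expect that this can be avoided entirely by using the polynomial-like map directly, as follows.

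We also need that $\partial B_v$ contains no odd critical point $(2k+1)\pi$; otherwise Lemma \ref{lm: renormalizable cosine function} does not apply. If $(2k+1)\pi \in \partial B_v$, then $-2v = f_v((2k+1)\pi) \in \partial B_v$ and $f^m_v(-2v) \in \partial B_v$ by invariance, which we have excluded. So Lemma \ref{lm: renormalizable cosine function} gives topological disks $B_v \Subset U_v \Subset V_v$ and an integer $n$ such that $f^n_v: U_v \to V_v$ is polynomial-like with the unique critical point $0$. The filled Julia set of this map is $\overline{B_v}$, since it is a quasiconformal copy of $z^{2^n}$. Now shrink $V_v$ as in the proof of Lemma \ref{lm: maps on the boundary of H_0}, so that $\overline{V_v}$ avoids $f^m_v(-2v)$ and its first few iterates, and also avoids $\bigcup_{0\le j\le n} f^{-j}_v(\{(2k+1)\pi\})$. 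The main obstacle is to make $f^m_v(-2v) \notin \overline{V_v}$ compatible with the polynomial-like structure. Since $f^m_v(-2v)$ lies at positive distance from $\overline{B_v}$, we can take $V_v$ to be a thin neighborhood of $\overline{B_v}$, for example a sublevel of the pulled-back Green function of the straightening, and keep $U_v \Subset V_v$.

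By continuity there is a neighborhood $\mathscr{V}$ of $v$ such that for every $u \in \mathscr{V}$ the following hold. First, $f^n_u: U_u \to V_v$ is polynomial-like with the unique critical point $0$, where $U_u$ is the component of $f^{-n}_u(V_v)$ containing $0$. Second, $f^m_u(-2u) \notin \overline{V_v}$. The filled Julia set of $f^n_u|_{U_u}$ contains $\overline{B_u}$, so $B_u \subset V_v$. Hence $f^m_u(-2u) \notin B_u$ for all $u \in \mathscr{V}$. But $\mathscr{V} \cap \mathcal{U} \neq \varnothing$, and every $u \in \mathcal{U}$ satisfies $f^m_u(-2u) \in B_u$. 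This contradiction completes the plan.
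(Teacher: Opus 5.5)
Your proof is correct and follows the paper's approach. The paper proves this lemma only by saying it is ``similar'' to Lemma \ref{lm: maps on the boundary of H_0}. You carry out that adaptation: the polynomial-like map $f^n_u:U_u\to V_v$ with unique critical point $0$ persists near $v$, and you add the ingredient the type-C case needs, namely choosing $\overline{V_v}$ to miss $f^m_v(-2v)$, so that $f^m_u(-2u)\notin B_u$ for all $u$ near $v$.

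Two small corrections. First, the fact $f^m_v(-2v)\notin B_v$ is genuinely used in your main argument, so it cannot be ``avoided''; it does follow at once, since a hyperbolic component containing $v$ would be an open set meeting $\mathcal{U}$, hence equal to $\mathcal{U}$. Second, drop the claim that $\overline{V_v}$ can also avoid the later iterates of $f^m_v(-2v)$: these may lie on $\partial B_v$, and the claim is never used.
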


At the end of this section, we study the dynamics of $f_v$ with $v \in i\mathbb{R} \cup \mathbb{R}$.
\begin{lemma}\label{lm: maps on the imaginary axis}
    Let $v \in i\mathbb{R}$. Then there exists $y_0>0$ such that the following trichotomy holds.
    \begin{enumerate}
        \item for $0<|v|<y_0$, $v \in \mathcal{H}_0$, so that $f_v$ is hyperbolic;
        \item for $|v|=y_0$, $f_v$ is critically finite. More precisely, $2v$ is a repelling fixed point of $f_v$, and $f_v(-2v)=2v$;
        \item for $|v|>y_0$, the orbit of $-2v$ escapes to $\infty$. In other words, $-2v \in I(f_v)$. 
    \end{enumerate}
\end{lemma}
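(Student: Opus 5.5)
Write $v=iy$ with $y\in\mathbb{R}\setminus\{0\}$. Since $f_{-v}(z)=-f_v(-z)$ up to the conjugacy $z\mapsto -z$ (indeed $f_{-v}(-z)=-v(\cos z-1)=-f_v(z)$), the cases $y>0$ and $y<0$ are conjugate, so I assume $y>0$.

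\textbf{Real dynamics on the imaginary axis.} For $z=it$ with $t\in\mathbb{R}$,
\[
f_v(it)=iy(\cosh t-1)\in i\mathbb{R}_{\ge 0}.
\]
So $f_v$ maps the imaginary axis into itself, and the dynamics of $-2v=-2iy$ is governed by the real even map
\[
g_y(t)=y(\cosh t-1),
\]
starting from $t_0=-2y$. By evenness, $g_y(-2y)=g_y(2y)=y(\cosh 2y-1)$. The map $g_y$ is increasing on $[0,\infty)$ and convex, with a superattracting fixed point at $0$. Let $q(y)>0$ be its positive fixed point, given by $\cosh q-1=q/y$; it exists and is unique by convexity, since $g_y(t)/t=y(\cosh t-1)/t$ increases from $0$ to $\infty$. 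On $(0,q)$ all orbits converge to $0$; points beyond $q$ escape monotonically to $\infty$, and $q$ itself is repelling because $g_y'(q)>1$ by convexity. The orbit of $-2v$ therefore converges to $0$ (so $-2v\in B_v$ and $v\in\mathcal{H}_0$), lands on $q$, or escapes, according as $2y<q(y)$, $2y=q(y)$, or $2y>q(y)$.

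\textbf{Monotonicity and the threshold $y_0$.} The condition $2y\lessgtr q(y)$ is equivalent to $g_y(2y)\lessgtr 2y$, that is, to
\[
y(\cosh 2y-1)\lessgtr 2y \iff \cosh 2y \lessgtr 3.
\]
This gives an explicit threshold $y_0=\tfrac12\operatorname{arccosh}3$, and monotonicity is automatic. At $y=y_0$ we get $f_v(-2v)=2v$, and $2v$ is a fixed point, since $f_v(2v)=f_v(-2v)=2v$ by evenness. It is repelling because
\[
|f_v'(2v)|=|v\sin 2v|=y_0\sinh 2y_0=y_0\sqrt8\approx 1.246>1.
\]
So $f_v$ is critically finite, with postcritical set $\{0,2v\}$. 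For $y>y_0$, $t_n\to\infty$ along the imaginary axis, hence $|f^n_v(-2v)|\to\infty$, which is item 3.

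\textbf{Membership in $\mathcal{H}_0$ for small $|v|$.} For $y<y_0$ the critical value is attracted to $0$ through iterates staying in $[0,q)$ on the imaginary axis. It remains to check that it lies in the immediate basin of the same component $\mathcal{H}_0$. Since the segment $i(0,q(y))$ is connected, contains points near $0$, and is forward invariant, it lies in $B_v$; hence $-2v\in B_v$. The set $\{iy:0<y<y_0\}$ is connected and consists of type A parameters; it meets the punctured disk of radius $1/5$ from Lemma \ref{lm: H_0 contains a small disk}, so it lies in $\mathcal{H}_0$ (which is also the unique type A component by Proposition \ref{prop: Phi_0 is a covering map of degree 2}). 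The only potential obstacle is the repelling check at $y_0$, which the numerical estimate above settles.
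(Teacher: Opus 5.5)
Your argument is correct and is the same as the paper's: restrict $f_{iy}$ to the imaginary axis, where it acts as $t\mapsto y(\cosh t-1)$, and read the threshold off $\cosh 2y_0=3$, i.e.\ $y_0=\log(1+\sqrt2)$; you also supply details the paper skips, namely why the orbit actually converges to $0$, why $-2v\in B_v$ and $v\in\mathcal{H}_0$, and why $2v$ is repelling. Two cosmetic slips: $y_0\sqrt8\approx 2.49$, not $1.246$ (the inequality $>1$ also follows from your convexity remark, since $f_v'(2v)=g_{y_0}'(2y_0)$), and $-2v$ lies on $i(-q,0)$ rather than $i(0,q)$, so use the evenness of $f_v$ to get $i(-q,q)\subset B_v$.
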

\begin{proof}
    Since the parameter space of $f_v$ is symmetric about the origin, we may assume that $\mathrm{Im} \; v>0$. For $y>0$, consider the equation $2iy=f_{iy}(2iy)$. This equation has the unique solution $y_0=\log(\sqrt{2}+1)$ on $\mathbb{R}_+$. This implies that for the parameter $v_0=i\log(\sqrt{2}-1)$, $-2v_0$ is mapped to a fixed point $2v_0$ of $f_{v_0}$. 

    Note that for $v \in i\mathbb{R}$, $f_v$ maps the imaginary axis to itself. For $0<|v|<y_0$, $|f_v(-2v)|<|-2v|$. As a result, $f^n_v(-2v) \to 0$ as $n \to \infty$, i.e. $v \in \mathcal{H}_0$. Similarly, for $|v|>v_0$, $f^n_v(-2v) \to +\infty$ as $n \to \infty$, i.e. $-2v \in I(f_{v})$. 
\end{proof}

\begin{corollary}\label{cor: closure of hyperbolic components of type C are disjoint from the real axis}
    Let $\mathcal{U}$ be a hyperbolic component of type C. Then $\overline{\mathcal{U}} \cap i\mathbb{R}=\varnothing$. 
\end{corollary}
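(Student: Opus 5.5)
We have to show that the closure of a type C component $\mathcal{U}$ does not meet the imaginary axis. The starting point is Lemma \ref{lm: maps on the imaginary axis}, which splits $i\mathbb{R}\setminus\{0\}$ into three pieces. For $0<|v|<y_0$ we have $v\in\mathcal{H}_0$, and for $|v|>y_0$ we have $v\in\mathcal{I}$. The remaining pieces are the two points $\pm iy_0$ and $v=0$. So $i\mathbb{R}\subset \mathcal{H}_0\cup\{0\}\cup\mathcal{I}\cup\{\pm iy_0\}$, and we treat each piece in turn.

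\textbf{The part in $\mathcal{H}_0\cup\{0\}$.} Hyperbolic components are disjoint open sets. Hence $\mathcal{U}\cap\mathcal{H}_0=\varnothing$, and since $\mathcal{H}_0$ is open this gives $\overline{\mathcal{U}}\cap\mathcal{H}_0=\varnothing$. The point $0$ lies in the interior of $\mathcal{H}_0\cup\{0\}$ by Lemma \ref{lm: H_0 contains a small disk}, so $0\notin\overline{\mathcal{U}}$.

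\textbf{The escaping part.} Let $v=iy$ with $|y|>y_0$, so $-2v\in I(f_v)$. On the imaginary axis $f_v(it)=v(\cosh t-1)$, and the orbit of $-2v$ tends monotonically to $\infty$ along $i\mathbb{R}$. We show that escape persists in a neighbourhood of $v$. Take $N$ with $|f_v^N(-2v)|$ very large and lying near the imaginary axis. Near $i\mathbb{R}$ we have $|f_u(z)|\ge |u|(e^{|\mathrm{Im}\, z|}/2-2)$, and $\mathrm{Im}$ grows while $\mathrm{Re}$ stays controlled. So there is a region $\{|\mathrm{Im}\, z|>M,\ |\mathrm{Re}\, z|<\delta\}$ which is forward invariant under $f_u$ for all $u$ near $v$, with $|f^n_u(z)|\to\infty$ there. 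By continuity, $f_u^N(-2u)$ lies in this region for $u$ near $v$. Such $u$ are not in $\mathcal{U}$, because there $f^n_u(-2u)\to 0$. Hence $v\notin\overline{\mathcal{U}}$. Alternatively, we can note that the segment $\{iy: y>y_0\}$ is the parameter ray $G_{\underline{s}}$ with the appropriate constant address. Points on a parameter ray have escaping critical value, and an open neighbourhood escapes by the ray construction of Proposition \ref{prop: parameter rays}.

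\textbf{The critically finite point $v_0=\pm iy_0$.} This is the main obstacle. Here $f_{v_0}(-2v_0)=2v_0$ is a repelling fixed point. It is not parabolic, and $-2v_0$ is not in $\partial B_{v_0}$, so Lemma \ref{lm: maps on the boundary of U} has to be applied carefully. Suppose $v_0\in\partial\mathcal{U}$. That lemma says $\partial B_{v_0}$ contains $f^m_{v_0}(-2v_0)=2v_0$ for $m\ge1$, since there is no parabolic cycle. So we must show $2v_0\notin\partial B_{v_0}$. The basin $B_{v_0}$ is symmetric under $z\mapsto -z$, because $f_{v_0}$ is even. The imaginary segment $(-2v_0,2v_0)$ lies in $B_{v_0}\cup\{\pm 2v_0\}$: points on it with modulus less than $2y_0$ are attracted to $0$ by the monotonicity in the proof of Lemma \ref{lm: maps on the imaginary axis}, while $f_{v_0}$ fixes $2v_0$ and sends $-2v_0$ to $2v_0$. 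This would put both $\pm 2v_0$ in $\partial B_{v_0}$, with $-2v_0\in\partial B_{v_0}$. But then the only alternative left is $v_0\in\partial\mathcal{H}_0$, consistent with Lemma \ref{lm: maps on the boundary of H_0}. We therefore derive the contradiction differently. If $v_0\in\partial\mathcal{U}$, then $f_{v_0}$ is the landing point of parameter internal rays of both $\mathcal{U}$ and $\mathcal{H}_0$. We would then use the local rigidity at a Misiurewicz parameter: near $v_0$, the repelling fixed point $2v_0$ moves holomorphically, and a transversality or Branner–Hubbard type argument shows that only one hyperbolic component of type A or C can touch a critically finite parameter whose critical value is in $\partial B_v$ and not preperiodic into $B_v$ at level $m\ge1$. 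Concretely, the internal ray landing at $-2v_0$ in $B_{v_0}$ is unique (the angle corresponding to the imaginary direction). Proposition \ref{prop: landing theorem for parameter internal rays} then forces any type C component landing at $v_0$ to have $f^m_{v_0}(-2v_0)=2v_0\in\partial B_{v_0}$ with $f^{m-1}_{v_0}(-2v_0)\in \partial(B_{v_0}+2k\pi)$ for some $k\neq0$ (Lemma \ref{lm: k_v is constant}). However, $f^{m-1}_{v_0}(-2v_0)\in\{\pm 2v_0\}\subset\partial B_{v_0}$, and $\partial B_{v_0}\cap\partial(B_{v_0}+2k\pi)$ cannot contain $\pm2v_0$, since the translate is disjoint and bounded away near the imaginary axis by Lemma \ref{lm: position of critical points of cosine maps}. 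This contradiction gives $v_0\notin\overline{\mathcal{U}}$, which completes the plan.
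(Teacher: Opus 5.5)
\textbf{The escaping part of the axis.} This is where the proposal has a genuine gap. The strip $\{|\mathrm{Im}\,z|>M,\ |\mathrm{Re}\,z|<\delta\}$ is not forward invariant, not even for $u=v=iy$. For $z=x+it$ one computes $\mathrm{Re}\,f_v(z)=y\sin x\sinh t$, so a small real part gets multiplied by roughly $|f_v(z)|$ instead of staying controlled.

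Worse, the conclusion this step is meant to deliver is false: it is not true that all parameters near $v$ have escaping critical value. Your alternative via parameter rays fails for the same reason, since rays are curves and do not fill a neighbourhood. To see this, put $g_n(u)=f^n_u(-2u)$. For $y>0$ one has $g_n(v)=iY_n$ with $Y_n\to+\infty$, $g_1'(v)>0$, and $g_{n+1}'(v)=(\cosh Y_n-1)+y\sinh(Y_n)\,g_n'(v)$, so $g_n'(v)\to+\infty$.

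Now suppose a disc around $v$ consisted of escaping parameters. The $g_n$ omit small discs about $0$ and $2\pi$, so Montel's theorem, together with $|f_u(w)|\le|u|(e^{|\mathrm{Im}\,w|}+1)$, gives $|\mathrm{Im}\,g_n|\to\infty$ uniformly on a smaller disc $D'$. Since $|\nabla\,\mathrm{Re}\,g_n(v)|=|g_n'(v)|\to\infty$, the harmonic functions $\mathrm{Re}\,g_n$ oscillate by more than $2\pi$ on $D'$. Because $g_{n+1}(u)=\frac{u}{2}e^{\mp i g_n(u)}(1+o(1))$, some $g_{n+1}(u)$ with $u\in D'$ is then real, and $|g_{n+2}(u)|\le 2|u|$, a contradiction.

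What you actually need is only $v\notin\partial\mathcal{U}$ for the fixed component. This is what the paper's first sentence (``the only possibility is $v_0\in\partial\mathcal{U}$'') tacitly relies on, and it follows from Lemma \ref{lm: maps on the boundary of U}:
\begin{itemize}
\item Since $-2v$ escapes, $v\notin\mathcal{H}_0$, so $B_v$ is bounded by Corollary \ref{cor: boundedness of B_v}.
\item $\partial B_v$ is forward invariant, so its points have bounded orbits; hence $f^m_v(-2v)\notin\partial B_v$.
\item $f_v$ has no parabolic cycle, because its only singular values are the superattracting fixed point $0$ and the escaping point $-2v$.
\end{itemize}

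\textbf{The critically finite points $\pm iy_0$.} Your ``concretely'' argument is in substance the paper's. The paper gets $-2v_0\in\partial B_{v_0}$ from Lemma \ref{lm: maps on the boundary of H_0}, since $v_0\in\partial\mathcal{H}_0$. It then contradicts the assertion, justified only by ``continuity of $\partial B_v$'', that $m\ge1$ is the least $j$ with $f^j_{v_0}(-2v_0)\in\partial B_{v_0}$. You use the same continuity to place $f^{m-1}_{v_0}(-2v_0)\in\{\pm2v_0\}$ on $\partial(B_{v_0}+2k\pi)$, which is a slightly sharper form because only an explicit translate has to be excluded. Neither you nor the paper justifies the passage to the limit $u\to v_0$ with $u\in\mathcal{U}$, since Lemma k\_v-constant holds only inside $\mathcal{U}$.

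You should delete three things:
\begin{itemize}
\item the opening claim $-2v_0\notin\partial B_{v_0}$, which is false, as you yourself then show;
\item the detour through parameter internal rays landing at $v_0$, which is not available at this stage and is unnecessary, because Lemma \ref{lm: maps on the boundary of U} already gives $2v_0\in\partial B_{v_0}$;
\item the sentence about rigidity, transversality and Branner--Hubbard, which is not an argument.
\end{itemize}

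Finally, the disjointness of $\partial(B_{v_0}+2k\pi)$ from $\pm2v_0$ does not follow from Lemma \ref{lm: position of critical points of cosine maps}. Argue instead that $f_{v_0}$ maps the lines $\mathrm{Re}\,z=\pm\pi$ onto $\{-sv_0:s\ge2\}$, which is $-2v_0$ or escapes, so these lines lie in $J(f_{v_0})$. Hence $B_{v_0}\subset\{|\mathrm{Re}\,z|<\pi\}$, and $\overline{B_{v_0}}+2k\pi$ misses $i\mathbb{R}$ for $k\neq0$.
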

\begin{proof}
    By Lemma \ref{lm: maps on the imaginary axis}, the only possibility is that $v_0 \in \partial \mathcal{U}$. Moreover, $\overline{\mathcal{U}}$ cannot intersect with the upper and lower half plane at the same time. Suppose that $\mathcal{U} \subset \mathcal{H}_m$, $m \ge 1$. By Lemma \ref{lm: maps on the boundary of U} and the continuity of $\partial B_v$ with respect to $v$ (similar to the proof of \ref{lm: k_v is constant}), $m$ is the smallest integer such that $f^j_{v_0}(-2v_0 ) \in \partial B_{v_0}$. However, since $v_0 \in \partial \mathcal{H}_0$, by Lemma \ref{lm: maps on the boundary of H_0}, $-2v_0 \in \partial B_{v_0}$, a contradiction.  
\end{proof}

\begin{corollary}\label{cor: local connectivity of the boundary of H_0 at v_0}
    $\partial \mathcal{H}_0$ is locally connected at $\pm v_0$. 
\end{corollary}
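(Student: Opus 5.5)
Our plan is to exploit the explicit real-analytic structure on the imaginary axis given by Lemma \ref{lm: maps on the imaginary axis}, together with the Jordan-curve-type description near $v_0$ obtained from the fact that $f_{v_0}$ is critically finite with $-2v_0$ strictly preperiodic. By the symmetry $v\mapsto -v$ (and $v\mapsto \bar v$) of the parameter plane, it suffices to treat $v_0=i y_0$ with $y_0=\log(\sqrt2+1)$.

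\textbf{Step 1: the point $v_0$ is a landing point of parameter rays.} Since $-2v_0$ is strictly preperiodic, $P(f_{v_0})=\{0,-2v_0,2v_0\}$ is bounded. By Theorem \ref{thm: landing theorem}, the repelling fixed point $2v_0$ is the landing point of finitely many periodic dynamic rays, and pulling back, $-2v_0$ is the landing point of finitely many strictly preperiodic dynamic rays $g^{v_0}_{\underline{s}_1},\dots,g^{v_0}_{\underline{s}_q}$. One of them is the ray lying on the imaginary axis above $-2v_0$ (i.e.\ below, in the direction of $-i\infty$), since by Lemma \ref{lm: maps on the imaginary axis}(3) the segment $\{-2iy: y>y_0\}$ consists of escaping critical values. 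By Proposition \ref{prop: converse landing theorem for eventually periodic parameters}, exactly the parameter rays $G_{\underline{s}_1},\dots,G_{\underline{s}_q}$ land at $v_0$, and one of them is $\{iy: y>y_0\}$.

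\textbf{Step 2: the internal side.} We show that the parameter internal ray of $\mathcal{H}_0$ on the imaginary axis, $\{iy: 0<y<y_0\}$, lands at $v_0$ (it does by Lemma \ref{lm: maps on the imaginary axis}(1)–(2)), and that $v_0$ is the landing point of exactly one internal ray of $\mathcal{H}_0$ from the upper side, namely of angle $\theta_0$ with $R^{v_0}_0(\theta_0)$ landing at $-2v_0$ (Proposition \ref{prop: landing theorem for parameter internal rays}). Since $B_{v_0}$ is a Jordan domain (Corollary \ref{cor: local connectivity of B_v}) and $-2v_0\in\partial B_{v_0}$ is not a critical point, only one internal ray lands at $-2v_0$; hence the internal angle is unique.

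\textbf{Step 3: shrinking neighbourhoods.} Take a dyadic-type family of internal angles $\theta_n^\pm\to\theta_0$ whose parameter internal rays in $\mathcal{H}_0$ land at strictly preperiodic parameters $v_n^\pm$, and pair each with parameter rays $G_{\underline{t}_n^\pm}$ landing at the same points (Proposition \ref{prop: converse landing theorem for eventually periodic parameters}, using that the dynamical internal and external rays co-land on $\partial B_v$). The curves $\mathcal{R}_0(\theta_n^\pm)\cup G_{\underline{t}_n^\pm}\cup\{v_n^\pm\}$ together with the imaginary axis pieces cut out regions $W_n\ni v_0$. Intersecting with a small disk, $W_n\cap\partial\mathcal{H}_0$ is connected (it is the part of $\partial\mathcal H_0$ between the two landing points), so it suffices to show $\bigcap_n \overline{W_n}\cap\partial\mathcal H_0=\{v_0\}$.

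The main obstacle is this last shrinking statement. We would argue by contradiction: if a nondegenerate continuum $K\subset\partial\mathcal H_0$ lies in all $\overline{W_n}$, then for every $v\in K$ the point $-2v\in\partial B_v$ (Lemma \ref{lm: maps on the boundary of H_0}, parabolic case being excluded near $v_0$ since $f_{v_0}$ is repelling on $\partial B_{v_0}$) is the landing point of the internal ray of angle $\theta_0$ by continuity of the dynamical rays under the holomorphic motion of the postcritically-finite-adjacent structure, hence $\Phi_0$ extends with constant boundary value $e^{2\pi i\theta_0}$ on $K$; by the F. and M. Riesz / Privalov uniqueness theorem this is impossible for a nonconstant bounded holomorphic map. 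Equivalently, one may use the phase–parameter transfer near the semi-hyperbolic parameter $v_0$ (Lemma \ref{lm: renormalizable cosine function}-type polynomial-like restriction) to conclude that the para-puzzle pieces shrink to $v_0$.
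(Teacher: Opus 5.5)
The gap is in the shrinking step you flag as the main obstacle, and the contradiction you propose there does not exist.

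\textbf{Why the uniqueness theorems give nothing.} By Proposition~\ref{prop: Phi_0 is a covering map of degree 2} and Lemma~\ref{lm: maps on the real axis}, $\Phi_0$ maps $\mathcal{H}_0\cap\{\mathrm{Im}\,v>0\}$ conformally onto the slit disk $\mathbb{D}^*\setminus(0,1)$. So $\Phi_0^{-1}$ is essentially a Riemann map. Suppose $\Phi_0$ has the constant boundary value $e^{2\pi i\theta_0}$ along $K$. This says exactly that $K$ lies in the impression of the single prime end of angle $\theta_0$. The only angle at which $\Phi_0^{-1}$ can then have a radial limit in $K$ is $\theta_0$ itself. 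Hence $K$ has harmonic measure zero, and neither Privalov's theorem nor the F.\ and M.\ Riesz theorem applies. Nondegenerate impressions of exactly this kind occur for comb-like simply connected domains. So nothing on the circle side can exclude $K$; the exclusion must come from the dynamics at $v_0$.

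\textbf{The inputs to that step are also not in order.}
\begin{enumerate}
\item Parabolic parameters are not excluded near $v_0$. Stability of the repelling fixed point $2v_0$ controls only that cycle, while the parabolic landing points of the periodic rays $\mathcal{R}_0(\theta)$, $\theta\to\theta_0$, should accumulate at $v_0$. All you may use is that parabolic parameters form a countable set.
\item The claim that $-2v$ is the landing point of $R^v_0(\theta_0)$ for every non-parabolic $v\in K$ is the whole phase--parameter correspondence. You need the dynamical ray pairs of angles $\theta_n^{\pm}$, together with all their forward images, to move holomorphically over $W_n$ with $-2v$ staying between them, as in Lemmas~\ref{lm: holomorphic motion of parameter rays}--\ref{lm: holomorphic motion of para-graphs} and Corollary~\ref{cor: points on the para-graph}. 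This in turn requires choosing $\theta_n^\pm$ whose forward orbits avoid $[\theta_n^-,\theta_n^+]$.
\item ``The part of $\partial\mathcal{H}_0$ between the two landing points'' presupposes an arc structure. What is true is that, away from $0$, $\overline{W_n}\cap\partial\mathcal{H}_0$ is the cluster set of $\Phi_0^{-1}$ over $[\theta_n^-,\theta_n^+]$, hence a continuum.
\end{enumerate}

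\textbf{How to close the argument.} Use the identity theorem in the parameter plane, exploiting that $v_0$ is Misiurewicz.
\begin{enumerate}
\item Since $f_{v_0}'(-2v_0)=v_0\sin 2v_0\neq 0$, the point $-2v_0$ is a simple preimage of the repelling fixed point $2v_0$. It therefore has a holomorphic continuation $z_1(v)$ on a disk $N\ni v_0$.
\item For $v\in N\setminus\mathcal{H}_0$, $z_1(v)$ is the landing point of $R^v_0(\theta_0)$: apply Lemma~\ref{lm: stability of rays and the repelling landing points} to $R^v_0(2\theta_0)$ and pull back once.
\item Set $F(v)=-2v-z_1(v)$. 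It is holomorphic on $N$ and not identically zero, since for $v\in N\cap\mathcal{H}_0$ we have $-2v\in B_v$ while $z_1(v)\in J(f_v)$.
\item Granting the phase--parameter step, suppose the limiting continuum $K\ni v_0$ were nondegenerate. Boundary bumping gives a nondegenerate subcontinuum inside a smaller closed disk, on which $F$ vanishes off a countable set. This forces $F\equiv 0$, a contradiction.
\end{enumerate}

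\textbf{Comparison with the paper.} The paper uses no ray pairs. It localizes with $\mathcal{L}_0(\epsilon)$, the closures of $\Phi_0$-preimages of small boxes at the angle of the internal ray on the imaginary axis. It then concludes by Lemma~\ref{lm: maps on the imaginary axis} once the limiting continuum is placed on the imaginary axis. The paper treats that containment as evident; the repaired version of your Step~3 is what justifies it.

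\textbf{Two smaller points.}
\begin{enumerate}
\item In Step~1, the dynamic ray along the imaginary axis must be justified in the dynamical plane of $f_{v_0}$, not via the escaping critical values $-2iy$ of other maps. From $f_{v_0}(\pm it)=v_0(\cosh t-1)$, the points $\pm it$ with $t>2y_0$ escape.
\item Your fallback through Lemma~\ref{lm: renormalizable cosine function} is unavailable. We have $-2v_0\in\partial B_{v_0}=f_{v_0}(\partial B_{v_0})$, and every preimage of $-2v_0$ is a critical point $(2k+1)\pi$, so $\partial B_{v_0}$ contains free critical points. The transfer argument that does apply at $v_0$ is the para-annulus estimate of Lemma~\ref{lm: non-degenerated para-annuli}, as in Corollary~\ref{cor: local connectivity at non-renormalizable parameters}.
\end{enumerate}
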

\begin{proof}
    Recall that there is a degree-$2$ covering map $\Phi_0: \mathcal{H}_0 \to \mathbb{D}^*$ defined by \eqref{eq: Phi_0}. It is easy to see that $\Phi_0((0,v_0))=(0,i)$. By the symmetry of $\mathcal{H}_0$, $\Phi_0((-v_0, 0))=(0,i)$. For $\epsilon >0$ sufficiently small, define 
    $$\mathcal{L}_0(\epsilon)=\overline{\Phi^{-1}_0(\{re^{i\theta}: 1-\epsilon<r<1, |\theta-\pi/2|<\epsilon\})}, $$
    which are the closed connected neighborhoods of $\pm v_0$. Let 
    $$\mathcal{L}_0=\bigcap_{n \ge 1} \mathcal{L}_0(\frac{1}{n}), $$
    and $\mathcal{L}^{\pm}_0$ be the component of $\mathcal{L}_0$ containing $\pm v_0$ respectively. Then it will be sufficient to show that $\mathcal{L}^{\pm}_0$ are singeltons. 

    Obviously, $\mathcal{L}^{\pm}_0 \subset \partial \mathcal{H}_0 \cap \mathbb{R}$. By Lemma \ref{lm: maps on the imaginary axis}, $\mathcal{L}^{\pm}_0$ consists of single points. 
\end{proof}

\begin{lemma}\label{lm: maps on the real axis}
    There exists $v_1 >0$, such that $f_{v_1}$ has a parabolic fixed point $z_0$, $f'_{v_1}(z_0)=1$. Moreover, $\mathcal{H}_0 \cap \mathbb{R}=(-v_1, 0) \cup (0,v_1)$
\end{lemma}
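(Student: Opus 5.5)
We study the real family directly. For $v>0$, $f_v$ maps $\mathbb{R}$ into $[-2v,0]$, is even, and on $[-\pi,0]$ the map $x\mapsto v(\cos x-1)$ is increasing from $-2v$ to $0$. All real dynamics therefore reduce to the interval map $f_v:[-2v,0]\to[-2v,0]$ (when $2v\le\pi$) or to the map restricted to $(-\infty,0]$. By the symmetry $f_{-v}(z)=-f_v(-z)$, the negative axis behaves like the positive one, so we treat $v>0$.

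\textbf{Existence of $v_1$.} We look for a fixed point $x<0$ with multiplier $1$:
\begin{equation*}
v(\cos x-1)=x,\qquad -v\sin x=1 .
\end{equation*}
Eliminating $v$ gives $(\cos x-1)/\sin x=-x$, that is, $\tan(x/2)=x$, with $x\in(-\pi,0)$ so that $\sin x<0$ and $v>0$. The function $\psi(x)=\tan(x/2)-x$ on $(-\pi,0)$ satisfies $\psi(0^-)=0$, $\psi'(x)=\tfrac12\sec^2(x/2)-1$, and $\psi\to-\infty$ as $x\to-\pi^+$. It increases near $-\pi$, then decreases to $0$, so it has exactly one zero $x_0\in(-\pi,0)$. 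Set $v_1=-1/\sin x_0>0$. Then $z_0=x_0$ is a parabolic fixed point of $f_{v_1}$ with $f_{v_1}'(z_0)=1$.

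\textbf{Describing $\mathcal{H}_0\cap\mathbb{R}$.} For $0<v<v_1$ we show that $-2v$ is attracted to $0$ through a monotonicity argument. Consider $g_v(x)=f_v(x)-x$ on $[-\pi,0]$. A negative fixed point solves $v=x/(\cos x-1)=:h(x)$. We check that on $(-\pi,0)$ the function $h$ decreases and then increases, with minimum value $v_1$ attained at $x_0$; equivalently $h'(x_0)=0$ is exactly $\tan(x_0/2)=x_0$. Hence for $0<v<v_1$ there is no fixed point of $f_v$ in $(-\pi,0)$.

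For $x\in[-2v,0)$ with $2v\le\pi$ we have $f_v(x)>x$, because $g_v(0^-)>0$ near $0$ (the fixed point $0$ is superattracting) and $g_v$ has no zero. We verify that $2v_1<\pi$. Numerically $x_0\approx-2.33$ and $v_1\approx 1.38$, so this holds, and it can be checked rigorously. Then $f_v^n(-2v)$ increases monotonically to a fixed point in $[-2v,0]$, which can only be $0$. So $v\in\mathcal{H}_0$, because the segment $[-2v,0]$ joins $-2v$ to $0$ inside the basin and the basin is therefore $B_v$. Connectedness of $(0,v_1)$ together with Lemma \ref{lm: H_0 contains a small disk} places $(0,v_1)$ in the component $\mathcal{H}_0$.

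For $v\ge v_1$, $f_v$ has a real fixed point in $[-2v,0)$. Since $f_v(-2v)>-2v$ and $f_v$ is increasing on $[-\pi,0]$, the orbit of $-2v$ increases to the smallest fixed point $x^*\le x_0$-region and never reaches $0$. For $v=v_1$ that fixed point is parabolic, so $v_1\notin\mathcal{H}$. For $v>v_1$ the orbit converges to an attracting fixed point (the left one of two) or stays in a nonescaping real region that does not meet $B_v$. Either way $-2v\notin B_v$, so $v\notin\mathcal{H}_0$. The case $2v>\pi$ also needs attention: the orbit may leave $[-\pi,0]$, but then either $f_v$ has a fixed point in $(-\pi,0)$ (true for $v>v_1$ by the range of $h$) or we use evenness and $2\pi$-periodicity to return to this interval. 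The negative axis follows by symmetry.

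\textbf{Main obstacle.} The delicate step is excluding $v>v_1$ cleanly. The plan is to show that $[-2v,x^*]$ is forward invariant and separated from $0$ by the repelling fixed point. If that point is to lie on $\partial B_v$, we can invoke Lemma \ref{lm: position of critical points of cosine maps} to exclude the orbit from $B_v$.
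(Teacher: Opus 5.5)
\textbf{There is a genuine gap: ruling out real $v>\pi/2$ from $\mathcal{H}_0$.} The earlier parts are correct. Your existence argument for $v_1$ (the zero of $\tan(x/2)-x$ in $(-\pi,0)$, with $v_1$ the minimum of $x/(\cos x-1)$ on $(-2\pi,0)$) works. So does your proof that $(0,v_1)\subset\mathcal{H}_0$ by monotone convergence on $[-2v,0]$, and the same monotonicity handles $v\in[v_1,\pi/2]$, where $-2v\ge-\pi$.

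For $v>\pi/2$ the claims you make are false. Take $v=\pi$: then $f_\pi(-2\pi)=0$. For real $v$ near $\pi$, the point $f_v(-2v)=v(\cos 2v-1)$ lies in $(r_v,0]\subset B_v$, where $r_v\in(x_0,0)$ is the repelling fixed point. So $[-2v,x^*]$ is not forward invariant, and the orbit of $-2v$ does enter $B_v$. These $v$ lie in $\mathcal{H}_0$ or in a capture component, and the whole point is to exclude the former. What must be shown is only that $-2v$ itself is not in $B_v$. For $v=\pi$ this means the component containing the critical point $-2\pi$, namely $B_v-2\pi$, is distinct from $B_v$. That is a question about planar connectivity of Fatou components, which the interval map cannot decide. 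Invoking Lemma~\ref{lm: position of critical points of cosine maps} does not settle it either, because its dichotomy is conditioned on whether $v$ is of type A, which is exactly what you are trying to decide.

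\textbf{The missing ingredient is symmetry plus simple connectivity.} The paper argues in the parameter plane. $\mathcal{H}_0$ is invariant under $v\mapsto\bar v$, $\mathcal{H}_0\cup\{0\}$ is simply connected (Lemma~\ref{lm: hyperbolic components are simply connected}), and $v_1\notin\mathcal{H}_0$, which you have already shown. The cleanest way to run it is this:
\begin{enumerate}
\item Suppose some $w>v_1$ lay in $\mathcal{H}_0$. Take a path $\alpha\subset\mathcal{H}_0$ from a point of $(0,v_1)$ to $w$, followed by $\bar\alpha$ reversed; this is a loop in $\mathcal{H}_0$.
\item Along $\alpha$ the argument of $\alpha(t)-v_1$ changes by an odd multiple of $\pi$, and conjugation doubles this, so the loop winds an odd number of times about $v_1$.
\item Simple connectivity then forces $v_1\in\mathcal{H}_0\cup\{0\}$, a contradiction.
\end{enumerate}

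An equivalent fix in the dynamical plane, closer to your approach, goes as follows. Suppose real $v\ge v_1$ satisfies $-2v\in B_v$. Since $B_v$ is conjugation-symmetric for real $v$, join $-2v$ to $0$ by a path in $B_v$ and close it with the reversed conjugate path. Every point of $(-2v,0)$ then lies either on this loop or in a bounded complementary component of it. The maximum modulus principle gives $f_v^n\to0$ on all of $(-2v,0)$. This contradicts the fixed point $x_0$ (when $v=v_1$) or $r_v$ (when $v>v_1$) lying in that interval; note that $x_0=v_1(\cos x_0-1)>-2v_1$.
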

\begin{proof}
    By the symmetry of $\mathcal{H}_0$, we may assume $v>0$. Consider the equations
    \begin{equation}\label{eq: f_v(x)=x, f'_v(x)=1}
        \begin{cases}
            v(\cos x-1) = x \\
            -v \sin x =1
        \end{cases}.
    \end{equation}
    
    Let $z_0 <0$ be the solution to $\tan (z/2)=z$ such that $|z_0|$ is the smallest among all the solutions, and $v_1=-1/\sin z_0$. Then $(v_1, z_0)$ is a solution to \eqref{eq: f_v(x)=x, f'_v(x)=1}. It is easy to see that $\pi/2<|z_0|<\pi$ so that $\sin^2 v_0 \le v^2_0 < 1$.Thus for $0<v<v_1$, $|f_v(-2v)|<|-2v|$ and $f^n_v(-2v) \to 0$ as $n \to \infty$. So $(0,v_1) \subset \mathcal{H}_0 \cap \mathbb{R}$. Obviously, $v_1 \in \partial \mathcal{H}_0$. Finally, we show that $(v_1, +\infty)$ cannot intersect with $\mathcal{H}_0$. Since $\mathcal{H}_0$ is doubly connected, $\partial \mathcal{H}_0$ has no bounded components other than $\{0\}$. Therefore, there is a Jordan arc $\gamma \subset \mathbb{C} \setminus \mathcal{H}_0$ in the upper half plane, connecting $v_1$ to $\infty$. By the symmetry of $\mathcal{H}_0$, the conjugacy $\bar{\gamma}$ of $\gamma$ is also a Jordan arc connecting $v_1$ to $\infty$. Hence $\gamma \cup \bar{\gamma}$ is a Jordan curve seperating $\mathcal{H}_0$ from $(v_1, +\infty)$. By the connectedness of $\mathcal{H}_0$, $\mathcal{H}_0 \cap \mathbb{R}_+=(0,v_1)$.  
\end{proof}

\subsection{Construction of para-puzzles}\label{ss: construction of para-puzzles}
In this section, we will construct para-puzzles for the cosine family. 

Recall that $g^v_{\underline{s}}$ is a dynamic ray with the external address $\underline{s}$. $B_v$ is the immediate basin of $0$, $\phi_v: U_v \to D(0,r_v)$ is the B\"ottcher mapping defined in a neighborhood of the superattracting fixed point $0$, satisfying $\phi_v \circ f^v(z)=\phi_v(z)^2$. If $v \notin \mathcal{H}_0$, then $\phi_v$ is well-defined on the entire $B_v$, and the internal rays $R^v_0(\theta)$ are well-defined for $\theta \in \mathbb{R}/\mathbb{Z}$. 

By the Implicit Function Theorem, the internal ray (or the dynamic ray resp.) together with its repelling landing point is stable under small perturbations of parameters. 

\begin{lemma}\label{lm: stability of rays and the repelling landing points}
    Suppose that $v_0 \notin \mathcal{H}_0$ such that $R^v_0(\theta)$ (or $g^v_{\underline{s}}$ resp.) is well-defined and lands at a repelling periodic point $z_0$, whose period is $k \ge 1$. Then the ray $R^v_0(\theta)$ (or $g^v_{\underline{s}}$ resp.) together with $z_0$ is stable in the following sense:

    There is a neighborhood $\mathcal{N}$ of $v_0$ and an analytic function $z(v)$ on $\mathcal{N}$ such that for all $v \in \mathcal{N}$, 
    \begin{enumerate}
        \item $z(v)$ is a repelling periodic point of period $k$, and $z(v_0)=z_0$;
        \item $R^v_0(\theta)$ (or $g^v_{\underline{s}}$ resp.) is well-defined;
        \item $R^v_0(\theta)$ (or $g^v_{\underline{s}}$ resp.) lands at $z(v)$.
    \end{enumerate}
\end{lemma}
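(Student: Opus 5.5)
First, the repelling periodic point will be followed by the Implicit Function Theorem. Set $F(v,z)=f^k_v(z)-z$. At $(v_0,z_0)$ we have $\partial_z F=(f^k_{v_0})'(z_0)-1\neq 0$, because $|(f^k_{v_0})'(z_0)|>1$. This gives an analytic $z(v)$ on a neighborhood $\mathcal{N}_1$ of $v_0$ with $f^k_v(z(v))=z(v)$ and $z(v_0)=z_0$. Shrinking $\mathcal{N}_1$ keeps $|(f^k_v)'(z(v))|>1$ by continuity, and keeps the exact period equal to $k$, since the points $f^j_v(z(v))$, $0\le j<k$, stay distinct. This proves item 1. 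Next, linearize: by K\"onigs' theorem there is a local inverse branch $g_v$ of $f^k_v$ near $z(v)$, fixing $z(v)$, which is a uniform contraction on a small disk $D=D(z_0,\delta)$. The branch depends analytically on $v$ and maps $\overline{D}$ into $D(z_0,\delta/2)$ for all $v\in\mathcal{N}_1$.

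Next I would handle item 2 (well-definedness) and continuity of the rays. For dynamic rays, $g^v_{\underline{s}}$ is defined for every $v\in\mathbb{C}^*$ by the limit of \eqref{eq: g^v_n,s}. Because $z_0$ is periodic, $\underline{s}$ is periodic, say of period $k$ (replacing $k$ by a multiple if necessary). For large $t$ the convergence is locally uniform in $v$, and on compact $t$-intervals the ray moves continuously in $v$ as long as it avoids the postcritical obstruction.

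Internal rays need more care. They require $-2v\notin B_v$, i.e.\ $v\notin\mathcal{H}_0$, and $\mathcal{H}_0$ is open, so nearby parameters might lie in it. The hypothesis says only $v_0\notin\mathcal{H}_0$. If $v_0\in\partial\mathcal{H}_0$, I would instead use the portion of the internal ray near $0$, defined via the local B\"ottcher coordinate, which always exists. So my plan is to define $R^v_0(\theta)$ by pullback: take a finite initial piece $\gamma_v=\phi_v^{-1}(\{re^{2\pi i\theta}:r\le r_1\})$, with $r_1$ small, inside the domain of the local B\"ottcher map, which moves analytically in $v$. The ray is then the union of the images of this piece under the appropriate branches of $f^{-k}_v$.

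The landing argument, item 3, uses the standard "fundamental segment" technique. Since $R^{v_0}_0(\theta)$ lands at $z_0$ and $\theta$ is periodic of period $k$ under doubling, choose a point $w_0$ on the ray inside $D$. Let $\ell_{v_0}$ be the arc of the ray from $w_0$ to $f^k_{v_0}(w_0)$. This arc is compact, avoids the critical orbit, and lies at positive distance from the postcritical set, so it deforms continuously to an arc $\ell_v$ of $R^v_0(\theta)$ for $v$ near $v_0$. The same holds for the finite initial part of the ray, from $0$ (or from $+\infty$ for $g^v_{\underline{s}}$) up to $f^k_{v}(w_v)$, which is obtained by finitely many univalent pullbacks of the B\"ottcher segment or a tail of the ray. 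The rest of the ray is then $\bigcup_{n\ge1}g_v^n(\ell_v)\subset D$. Since $g_v$ contracts uniformly, these pieces shrink to $z(v)$, so the ray lands at $z(v)$ and is a well-defined curve. Thus it suffices to take $\mathcal{N}$ small enough that all of these finitely many perturbations are valid.

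The main obstacle is ensuring that $\ell_v$ and the finite initial segment really belong to the ray for $v\ne v_0$, i.e.\ that no critical value hits them during the pullbacks. For internal rays near $\partial\mathcal{H}_0$ this means $-2v$ must not land on the finitely many preimages involved, and it is where the hypothesis $v_0\notin\mathcal{H}_0$ is used. I would argue by compactness: the compact set $K_{v_0}$ is the initial segment together with $\ell_{v_0}$ and its preimages up to level $k$. The free critical value $-2v_0$ is disjoint from $\bigcup_{j\le k}f^{j}_{v_0}(K_{v_0})$. I expect this to follow because a critical value on the ray would split it, contradicting the assumed landing. The disjointness persists in a neighborhood, so the univalent branches move holomorphically and the ray lifts coherently.
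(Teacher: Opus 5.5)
Your proposal is correct and is essentially the standard argument that the paper compresses into the single phrase ``by the Implicit Function Theorem'': you follow $z(v)$ by the IFT, build the finite part of the ray by finitely many univalent pullbacks that avoid $-2v$, and get the tail from a fundamental segment contracted by the inverse branch of $f^k_v$ fixing $z(v)$. Your pullback definition of $R^v_0(\theta)$ for $v\in\mathcal{H}_0$ is genuinely needed when $v_0\in\partial\mathcal{H}_0$, since the paper's definition requires $-2v\notin B_v$. A few points to tighten:
\begin{itemize}
\item The disjointness you only ``expect'' is immediate for internal rays. Since $v_0\notin\mathcal{H}_0$ and $\mathcal{H}_0$ is the only type-A component, $-2v_0\notin B_{v_0}$, while all arcs involved lie in $B_{v_0}$.
\item Positive distance from the whole postcritical set is neither true in general nor needed.
\item For dynamic rays, the disjointness is just the well-definedness hypothesis together with locally uniform control of the tails for large $t$.
\item Periodicity of $\theta$ or $\underline{s}$ is asserted rather than proved. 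This is harmless, because the lemma is only applied to periodic rays.
\end{itemize}
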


Let $\mathcal{U}$ be a component of $\hat{\mathcal{H}}$, $v_0 \in \partial \mathcal{U}$. In the dynamical plane of $f_{v_0}$, there is an $f_{v_0}$-invariant graph $I^{v_0}_0(\theta_0)$ consists of internal rays $R^{v_0}_0(\theta_j) \subset B_{v_0}$ and dynamic rays $g^{v_0}_{\underline{s}_j} \subset I(f_{v_0})$, $j=0, \ldots, q-1$. The angles $\theta_j=2^j/(2^q-1)$ such that $R^{v_0}_{0}(\theta_j)$ land at repelling periodic points of period $q$. $\underline{s}_j$, $j=0, \ldots, q-1$ are the external addresses such that $g^{v_0}_{\underline{s}_j}$ lands at the same point as $R^{v_0}_0(\theta_j)$, so that $g^{v_0}_{\underline{s}_j}$ are also of period $q$. And from the invariant graph $I^{v_0}_0(\theta)$, we can define a puzzle system $\{P^{v_0}_n(z)\}_{n \ge 0}$, satisfying 
\begin{enumerate}
    \item every $P^{v_0}_n(z)$, $n \ge 1$ contains at most one critical point of $f_{v_0}$;
    \item $f_{v_0}(P^{v_0}_n(z))=P^{v_0}_{n-1}(f_{v_0}(z))$, $n \ge 1$;
    \item two puzzle pieces $P^{v_0}_n(z_1)$, $P^{v_0}_n(z_2)$ are either disjoint or equal.
\end{enumerate}
One can refer to \cite{QW25} for more details. 

Let $\mathcal{E}_0(t)=\Phi^{-1}_0(t e^{2\pi i [0,1)})$ and $\mathcal{E}_{\mathcal{U}}(t)=\Phi^{-1}_{\mathcal{U}}(te^{2\pi i [0,1)})$, where $\mathcal{U}$ is a hyperbolic component of type C. Let $\mathcal{E}_n$ be the collection of $(\mathcal{U}, t)$, satisfying that 
\begin{enumerate}
    \item $\mathcal{U}$ is a component of $\mathcal{H}_m$, $0 \le m \le n$;
    \item $t^{2^{n-m}}=t_0$. 
\end{enumerate}
Let $\mathcal{X}_n$ be the component of $\mathbb{C} \setminus \left(\bigcup_{(\mathcal{U}, t) \in \mathcal{E}_n} \mathcal{E}_{\mathcal{U}}(t)\right)$ containing $v_0$. 

Now define a graph $\mathcal{I}_0(\theta_0)$ in the parameter plane by 
$$\bigcup_{j=0}^{q-1} \overline{\mathcal{R}_0(\theta_j) \cup G_{\underline{s}_j}}. $$
For $n \ge 1$, let $\mathcal{D}_n$ be the collection of $(\mathcal{U}, \theta)$, where $\mathcal{U}$ is a component of $\mathcal{H}_m$, $m=0, \ldots, n$, $2^{n-m}\theta \in \{\theta_0, \ldots, \theta_{q-1}\}$, and let $\mathcal{S}_n=\{\underline{s} \in \Sigma: \sigma^n(\underline{s})= \underline{s}_j, j=0, \ldots, q-1\}$. Define the parameter graph of level $n$ as 
$$\mathcal{I}_n(\theta_0)=\left(\bigcup_{(\mathcal{U}, \theta) \in \mathcal{D}_n} \overline{\mathcal{R}_{\mathcal{U}}(\theta)}\right) \cup \left(\bigcup_{\underline{s} \in \mathcal{S}_n} G_{\underline{s}}\right). $$
The components of $(\mathbb{C} \setminus \mathcal{I}_n(\theta_0)) \cap \mathcal{X}_n$ are called the para-puzzle pieces of depth $n$. For $v  \notin \bigcup_{n \ge 0} \mathcal{I}_n(\theta_0)$, denote by $\mathcal{P}_n(v)$ to be the para-puzzle piece of depth $n$ containing $v$. It is easy to see that $\mathcal{I}_n(\theta_0) \subset \mathcal{I}_{n+1}(\theta_0)$, thus $\mathcal{P}_{n+1}(v) \subset \mathcal{P}_n(v)$. If $v_0 \in \partial \mathcal{U}$ is given, for the sake of simplicity, write $P^{v_0}_n=P^{v_0}_n(-2v_0)$, $I^{v_0}_n=I^{v_0}_n(\theta_0)$, $\mathcal{P}_n=\mathcal{P}_n(v_0)$, and $\mathcal{I}_n=\mathcal{I}_n(\theta_0)$.

\subsection{Holomorphic motion of para-puzzle pieces}\label{ss: holomorphic motion}
\begin{definition}\label{def: holomorphic motion}
    Let $\Lambda \subset \mathbb{C}$ be a domain, $\lambda_0 \in \Lambda$, $X$ be a subset of $\hat{\mathbb{C}}$. A holomorphic motion of $X$ parameterized by $\Lambda$ with the base point $\lambda_0$ is a map $h: \Lambda \times X \to \hat{\mathbb{C}}$, satisfying 
    \begin{enumerate}
        \item for every $x \in X$, $h(\lambda_0, x)=x$;
        \item for every $\lambda \in \Lambda$, $h^{\lambda}=h(\lambda, \cdot): X \to \hat{\mathbb{C}}$ is injective;
        \item for every $x \in X$, $h_x=h(\cdot, x): \Lambda \to \hat{\mathbb{C}}$ is analytic. 
    \end{enumerate} 
\end{definition}

\begin{lemma}\label{lm: holomorphic motion of parameter rays}
    Let $v_0$, $\mathcal{U}$ be as defined in \ref{ss: construction of para-puzzles}. Let $\Omega$ be the set of $v$ such that $g^v_{\sigma^j(\underline{s})}$, $j \ge 0$ are well-defined and land at repelling periodic points. Then 
    \begin{enumerate}
        \item $\Omega$ is non-empty and open;
        \item there is a holomorphic motion of $\bigcup_{j \ge 0} g^{v_0}{\sigma^j(\underline{s}_0)}$ parrameterized by $v \in \Omega$;
        \item $\partial \Omega= \bigcup_{j \ge 0} \overline{G_{\sigma^j(\underline{s}_0)}}$, which contains no isolated points. 
    \end{enumerate} 
\end{lemma}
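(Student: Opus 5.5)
We prove the three items in order. The main tool is Lemma \ref{lm: stability of rays and the repelling landing points}, combined with the parameter rays of Proposition \ref{prop: parameter rays}. Since $\underline{s}_0$ is periodic of period $q$, the set $\{\sigma^j(\underline{s}_0)\}_{j\ge 0}$ consists of only the $q$ addresses $\underline{s}_0,\ldots,\underline{s}_{q-1}$. The union $\bigcup_{j\ge0} g^{v}_{\sigma^j(\underline{s}_0)}$ is therefore a finite union of rays, and $\Omega$ is defined by finitely many conditions.

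\textbf{(1)} Nonemptiness holds because $v_0\in\Omega$: by construction of the invariant graph $I^{v_0}_0(\theta_0)$, the rays $g^{v_0}_{\underline{s}_j}$ are well defined and land at repelling points of period $q$. Openness follows from Lemma \ref{lm: stability of rays and the repelling landing points}. For each $v\in\Omega$ and each $j$, it gives a neighborhood $\mathcal{N}_j$ of $v$ on which $g^{u}_{\underline{s}_j}$ is well defined and lands at an analytically continued repelling point $z_j(u)$. The intersection $\bigcap_j\mathcal{N}_j$ is a neighborhood of $v$ contained in $\Omega$.

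\textbf{(2)} We build the motion through the uniformizing parametrization. For $v\in\Omega$ and $t>t_{\underline{s}_j}$, set $h(v,g^{v_0}_{\underline{s}_j}(t))=g^{v}_{\underline{s}_j}(t)$.
\begin{itemize}
\item Holomorphic dependence on $v$: first check it for large $t$. There the ray is the uniform limit of $L_{j_0,k_0}\circ\cdots\circ L_{j_n,k_n}\circ F^n(t)$, and each inverse branch $L_{j,k}$ depends holomorphically on $v$ as long as the point stays off $\Gamma$. Then propagate down to all $t$ by the functional equation $f_v(g^v_{\underline s}(t))=g^v_{\sigma(\underline s)}(F(t))$ and analytic continuation of inverse branches. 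The continuation stays analytic as long as the ray avoids the critical values $0,-2v$, which holds on $\Omega$ because the rays land at repelling points and so do not bifurcate.
\item Injectivity: distinct rays are disjoint and each ray is injective.
\item Extension to the landing points: setting $h(v,z_j(v_0))=z_j(v)$ is holomorphic by the Implicit Function Theorem.
\end{itemize}
If a global motion over $\Omega$ is needed, we also have to rule out a collision with the critical value when $\Omega$ is not simply connected. This is exactly where item (3) enters: on $\Omega$ we have $-2v\notin g^v_{\underline{s}_j}$, since otherwise $v\in G_{\underline{s}_j}$ by Proposition \ref{prop: parameter rays}.

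\textbf{(3)} This is the main obstacle. For the inclusion $\partial\Omega\subset\bigcup_j\overline{G_{\sigma^j(\underline{s}_0)}}$, take $v\in\partial\Omega$ and suppose $v\notin\overline{G_{\underline{s}_j}}$ for all $j$. Then near $v$ the critical value never lies on these rays, so the motion of (2) extends across a neighborhood of $v$. By the $\lambda$-lemma the extended rays still land, and their landing points stay periodic of period $q$. We must rule out that they become parabolic. A parabolic landing point would make $v$ a landing point of some periodic parameter ray by Proposition \ref{prop: converse landing theorem for parabolic parameters}, and that ray must be one of the $G_{\underline{s}_j}$. So $v\in\Omega$, a contradiction.

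Conversely, if $v\in G_{\underline{s}_j}$, then $-2v$ lies on $g^v_{\underline{s}_j}$. Consequently the preimage ray with address in $\sigma^{-1}$ of the cycle breaks at the critical point, so $v\notin\Omega$. Points just off $G_{\underline{s}_j}$ do belong to $\Omega$ by a local stability argument, giving $G_{\underline{s}_j}\subset\partial\Omega$, and closedness of $\partial\Omega$ gives the closure. Finally, $\partial\Omega$ has no isolated points because each $\overline{G_{\underline{s}_j}}$ is a curve (the ray together with its landing point from Proposition \ref{prop: landing theorem for parameter rays}), so every point of it is a limit of other points of it.
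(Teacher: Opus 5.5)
Your items (1) and (2) agree with the paper. The worry in (2) about a non-simply-connected $\Omega$ is unnecessary: $g^v_{\underline s}(t)$ is defined for each $v$ individually, so there is no monodromy. Also, $-2v\notin g^v_{\underline s_j}$ on $\Omega$ is part of the definition of $\Omega$, not something item (3) supplies.

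\textbf{Forward inclusion (correct, different route).} For $\partial\Omega\subset\bigcup_j\overline{G_{\sigma^j(\underline s_0)}}$ you argue differently from the paper. The paper first shows that $\partial\Omega$ has no isolated points. On $G_{\underline s}$ this is because the parameter ray enters the punctured neighborhood; at a parabolic parameter it uses the minimum modulus principle for the multiplier. It then uses discreteness of $\mathrm{Par}_q(1)$ to absorb parabolic boundary points into the closure of the rays.

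You instead run the $\lambda$-lemma on a disk missing $\bigcup_k\overline{G_{\underline s_k}}$, with base point in $\Omega$. This gives landing of the cycle rays at $v$ even when the critical orbit is unbounded, where Theorem~\ref{thm: landing theorem} is unavailable. You then exclude a parabolic landing point via Proposition~\ref{prop: converse landing theorem for parabolic parameters}. You should state one step explicitly: the parabolic cycle is unique, so pushing forward by some $f_v^i$ makes one of the $g^v_{\underline s_k}$ land at the characteristic point. Your route trades the paper's elementary argument for a heavier black box. The paper's argument, however, needs the landing point followed holomorphically through a multiplier-$1$ parameter, where the implicit function theorem does not apply directly.

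\textbf{Reverse inclusion (genuine gap).} The sentence ``points just off $G_{\underline s_j}$ do belong to $\Omega$ by a local stability argument'' has nothing behind it. Lemma~\ref{lm: stability of rays and the repelling landing points} perturbs from a parameter at which the rays already land at repelling points. At $v^*\in G_{\underline s_j}$, every ray of the cycle is broken, as you yourself observed, so there is nothing to perturb from.

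What your first inclusion actually yields is weaker: each component of $\mathbb{C}^*\setminus\bigcup_k\overline{G_{\underline s_k}}$ lies either in $\Omega$ or outside $\overline{\Omega}$. To get $G_{\underline s_j}\subset\partial\Omega$, you must show that each component adjacent to $G_{\underline s_j}$ contains a point of $\Omega$. A parameter with bounded postcritical set and no parabolic cycle would do, such as a hyperbolic center or the landing point of a strictly preperiodic parameter ray lying in that component. Placing such parameters requires knowledge of how the parameter rays are arranged, which you have not supplied. The paper's own proof does not establish this inclusion either; it proves only $\subset$ together with the absence of isolated points.

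\textbf{No isolated points (repairable).} Your argument for this derives it from the equality, so it inherits the gap. It can be obtained without it. Suppose $v_1$ were isolated in $\partial\Omega$. Then a punctured disk $V\setminus\{v_1\}$ is connected, misses $\partial\Omega$ and meets $\Omega$, so it lies in $\Omega$. Your first inclusion gives $v_1\in\overline{G_{\underline s_k}}$ for some $k$, so $G_{\underline s_k}$ meets $V\setminus\{v_1\}$. This contradicts $G_{\underline s_k}\cap\Omega=\varnothing$, which you did prove. This is essentially the first case of the paper's argument.
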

\begin{proof}
    1. Obviously, $v_0 \in \Omega$. By Lemma \ref{lm: stability of rays and the repelling landing points}, $\Omega$ is open. 

    2. Define $h: \Omega \times \overline{g^{v_0}_{\underline{s}_0}} \to \mathbb{C}$ as $h(v, g^{v_0}_{\underline{s}_0}(t))=g^v_{\underline{s}_0}(t)$. It is clear that $h^{v_0}=\mathrm{id}$. $h^{v}$ is injective otherwise the dynamic ray $g^v_{\underline{s}_0}$ will intersect either itself or its landing point, which are both impossible. For every $z=g^{v_0}_{\underline{s}_0}(t)$ with $t>0$ fixed, $h_z$ is analytic. $h$ extends analytically to the landing point $z_0(v)$ by Lemma \ref{lm: stability of rays and the repelling landing points}. 

    3. Let $v_1 \in \partial \Omega$. First we show that $\partial \Omega$ contains no isolated points. Otherwise, let $v_1$ be such a point. Then there is a neighborhood $V$ of $v_1$, such that $V \setminus \{v_1\} \subset \Omega$. For $v_1$, there is some $\underline{s} \in \{\sigma^j(\underline{s}_0): j \ge 0\}$, either $-2v_1 \in g^{v_1}_{\underline{s}}$, or it lands at a parabolic periodic points. If $-2v_1 \in g^{v_1}_{\underline{s}}$, then $v_1 \in G_{\underline{s}}$ in the parameter plane. Thus for $v \in V \cap G_{\underline{s}}$, $-2v \in g^{v}_{\underline{s}}$, contradicting that $g^v_{\underline{s}}$ is well-defined for $v \in V \setminus \{v_1\}$. If $g^{v_1}_{\underline{s}}$ lands at a parabolic periodic point $z(v_1)$ of period $p \ge 1$, then the multiplier $(f^p_{v_1}(z(v_1)))'$ of $z(v_1)$ defines an analytic function in $V$. It attains its minimal modulus at $v_1$ since $z(v)$ is repelling for all $v \in V$ except $v_1$, contradicting to the Maximum Modulus Principle. 

    Again let $v_1 \in \partial \Omega$. For $v_1$, there is some $\underline{s} \in \{\sigma^j(\underline{s}_0): j \ge 0\}$, either $-2v_1 \in g^{v_1}_{\underline{s}}$, or it lands at a parabolic periodic points. Thus either $v_1 \in G_{\underline{s}}$, or it belongs to the set $\mathrm{Par}_{q}(1)=\{v \in \mathbb{C}: \text{there exists } z_v \in \mathbb{C}, f^q_v(z_v)=z_v, (f^q_v)'(z_v)=1\}$, which is discrete in $\mathbb{C}$. Since $\partial \Omega$ contains no isolated points, $\mathrm{Par}_{q}(1) \cap \partial \Omega$ is contained in $\overline{\bigcup_{j \ge 0} G_{\sigma^j(\underline{s}_0)}}$. 
\end{proof}

Similarly, we have the following 
\begin{lemma}\label{lm: holomorphic motion of para-internal rays}
    Let $v_0$, $\mathcal{U}$ be as defined in \ref{ss: construction of para-puzzles}. Let $\Omega'$ be the set of $v$ such that $R^v_{0}(2^j \theta_0)$, $j \ge 0$ are well-defined and land at repelling periodic points. Then 
    \begin{enumerate}
        \item $\Omega'$ is non-empty and open;
        \item there is a holomorphic motion of $\bigcup_{j \ge 0} R^{v_0}_{0}(2^j \theta_0)$ parrameterized by $v \in \Omega'$;
        \item $\partial \Omega'=\bigcup_{j \ge 0} \overline{R^v_{0}(2^j \theta_0)}$, which contains no isolated points. 
    \end{enumerate} 
\end{lemma}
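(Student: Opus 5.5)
We mirror the proof of Lemma \ref{lm: holomorphic motion of parameter rays}, with internal rays in $B_v$ and the Böttcher coordinate $\phi_v$ playing the role of dynamic rays and their parametrization $g^v_{\underline{s}}$. The part of the statement that depends on $v$ is the ray $R^{v}_0(2^j\theta_0)$ for $v$ on the parameter side; so in item 3 we read the set as the closures of the parameter rays $\mathcal{R}_{\mathcal{U}'}(\theta)$ with $2^{n-m}\theta\in\{\theta_0,\ldots,\theta_{q-1}\}$, together with the parabolic parameters lying on them. These are exactly the parameters at which the free critical orbit meets one of the internal rays $R^v_0(2^j\theta_0)$.

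\textbf{Items 1 and 2.} First, $v_0\in\Omega'$ by construction of the graph $I^{v_0}_0(\theta_0)$, and $\Omega'$ is open by Lemma \ref{lm: stability of rays and the repelling landing points}. Since there are only $q$ distinct angles $2^j\theta_0$, only finitely many rays are involved, so the neighbourhoods given by that lemma can be intersected.

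For the motion, we define
$$h\bigl(v,\phi_{v_0}^{-1}(re^{2\pi i 2^j\theta_0})\bigr)=\phi_v^{-1}(re^{2\pi i 2^j\theta_0}),$$
and send the landing point $z_j(v_0)$ to $z_j(v)$.

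1. Well-definedness on $\Omega'$: the free critical point must not enter $B_v$, and we must check that $\phi_v^{-1}$ is defined along the whole ray. This holds because the ray is well defined and, by hypothesis, the orbit of $-2v$ does not hit it.
2. Injectivity: $\phi_v^{-1}$ is injective, and distinct rays land at distinct points. Landing points of distinct rays coincide only if they coincide for $v_0$, since the landing points are repelling and move analytically.
3. Analyticity in $v$: this follows from the joint analyticity of the Böttcher map, as already used in Proposition \ref{prop: Phi_0 is a covering map of degree 2}, extended along the ray by the functional equation $\phi_v(f_v(z))=\phi_v(z)^2$ and pullback. The landing point is analytic by the implicit function theorem.

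\textbf{Item 3, no isolated boundary points.} Let $v_1\in\partial\Omega'$. Then either the free critical orbit lands on some $R^{v_1}_0(2^j\theta_0)$, so that $\phi_{v_1}$ fails to extend along that ray, or the landing point $z(v_1)$ is parabolic.

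Suppose $v_1$ were isolated, with a punctured neighbourhood $V\setminus\{v_1\}\subset\Omega'$.
1. In the parabolic case, the multiplier of the periodic point continued from the repelling points $z(v)$ is analytic on $V$. It has modulus $>1$ off $v_1$ and modulus $1$ at $v_1$, which contradicts the minimum modulus principle because it does not vanish.
2. In the critical-hitting case, $v_1$ lies on a parameter internal ray $\mathcal{R}_{\mathcal{U}'}(\theta)$. Nearby parameters on that same ray also have the critical orbit on the dynamical ray, so they are not in $\Omega'$, again a contradiction.

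\textbf{Item 3, description of $\partial\Omega'$.} Every boundary point is either on such a parameter ray or in $\mathrm{Par}_q(1)$, and $\mathrm{Par}_q(1)$ is discrete. Since $\partial\Omega'$ has no isolated points, every parabolic point of $\partial\Omega'$ is an accumulation point of the rays, which gives the inclusion $\subset$. For the converse, parameters on the rays violate the definition of $\Omega'$, and parameters in their closure are limits of such parameters.

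\textbf{Main obstacle.} The delicate point is the analytic dependence and well-definedness of internal rays outside $\mathcal{H}_0$. The Böttcher map is only globally defined on $B_v$ when $-2v\notin B_v$, and the rays must avoid preimages of the critical value. We handle this by pulling back from a small Böttcher disk. The pullback is uniquely and analytically continuable exactly as long as no critical value of the relevant iterate lies on the image ray, and this failure is precisely the parameter-ray condition that defines the boundary.
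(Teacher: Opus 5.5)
Your items 1 and 2 and the parabolic case of item 3 follow the paper's argument. The paper only says ``similarly'', meaning: transpose the proof of Lemma \ref{lm: holomorphic motion of parameter rays}. The gap is that you identify the obstruction set in item 3 incorrectly, and with it the step ``nearby parameters on that same ray \ldots\ are not in $\Omega'$'' fails.

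The rays $R^v_0(2^j\theta_0)$ live in $B_v$. They fail to exist only if the continuation of $\phi_v^{-1}$ along them runs into a critical point of $f_v$ other than $0$, which must then lie in $B_v$. By Lemma \ref{lm: position of critical points of cosine maps}, this happens only for $v\in\mathcal{H}_0$. Now take $v$ in a type-C component $\mathcal{U}'$:
\begin{itemize}
\item $\phi_v:B_v\to\mathbb{D}$ is a global conformal isomorphism, so every internal ray of $B_v$ exists;
\item $B_v$ is a Jordan domain;
\item $f_v$ is hyperbolic, so the periodic rays land at repelling points.
\end{itemize}
Hence $\mathcal{U}'\subset\Omega'$, and $\mathcal{R}_{\mathcal{U}'}(\theta)$ lies in the interior of $\Omega'$, not on its boundary. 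The critical orbit meeting $R^v_0(\theta_j)$ at a time $m\ge 1$ only obstructs the pullbacks of these rays into other components of $f_v^{-m}(B_v)$. That is what the deeper graphs $\mathcal{I}_n$ and Lemma \ref{lm: holomorphic motion of para-graphs} deal with, not $I^{v_0}_0$.

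The same confusion appears in two other places:
\begin{itemize}
\item In item 2, there is no hypothesis that the orbit of $-2v$ avoids the rays, and free critical points may lie in $B_v$. What is required is only that no critical point other than $0$ lies on the rays.
\item In your last paragraph, a critical value $f^k_v(-2v)$, $k\ge 1$, of an iterate lying on a ray does not obstruct the pullback by $f_v|_{B_v}$.
\end{itemize}
Even inside $\mathcal{H}_0$ your set is too large: rays $\mathcal{R}_0(\theta)$ with $\theta$ strictly preperiodic cause no crash.

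Here is the correct computation. For $v\in\mathcal{H}_0$, write $\Phi_0(v)=re^{2\pi i\beta}$. Then $-2v$ lies in the domain of $\phi_v$ and $f_v(\pm\pi)=-2v$. So $\pm\pi$ sit at the two B\"ottcher angles $\alpha$ with $2\alpha\equiv\beta$. The ray of angle $\alpha$ crashes, at $\pm\pi$ or at a preimage of them, if and only if $2^{k+1}\alpha\equiv\beta$ for some $k\ge 0$. For $\alpha$ in the cycle $\{\theta_0,\ldots,\theta_{q-1}\}$, this means $\beta\in\{\theta_0,\ldots,\theta_{q-1}\}$, that is, $v\in\pm\mathcal{R}_0(\theta_j)$.

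So item 3 must be read as $\partial\Omega'=\bigcup_{j\ge 0}\overline{\mathcal{R}_0(2^j\theta_0)}$, together with the images under $v\mapsto -v$. This is the exact analogue of $\bigcup_j\overline{G_{\sigma^j(\underline{s}_0)}}$. It is also what Corollary \ref{cor: holomorphic motion of para-graph of depth 0} needs: $\mathcal{P}_0$ is cut out only by the $\mathcal{R}_0(\theta_j)$ and the $G_{\underline{s}_j}$. Your $\partial\Omega'$ would instead cut through type-C components inside $\mathcal{P}_0$.

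With the corrected set, your isolated-point argument works. For $v$ on $\mathcal{R}_0(\theta_k)$ near $v_1$, we have $-2v\in R^v_0(\theta_k)$. Hence one of $\pm\pi$ lies on $R^v_0(\theta_{k-1})$ (indices mod $q$), and $v\notin\Omega'$.

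Finally, for the inclusion $\supset$ you must also show that these ray points are limits of points of $\Omega'$. Showing that they lie outside $\Omega'$ is not enough.
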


By Lemma \ref{lm: holomorphic motion of parameter rays} and \ref{lm: holomorphic motion of para-internal rays}, we have 
\begin{corollary}\label{cor: holomorphic motion of para-graph of depth 0}
    $\mathcal{P}_0$ is contained in the component of $\Omega \cap \Omega'$ containing $v_0$. Thus there is a holomorphic motion of $I^{v_0}_0$ parameterized by $\mathcal{P}_0$.  
\end{corollary}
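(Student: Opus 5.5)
The plan is to deduce the corollary from Lemmas \ref{lm: holomorphic motion of parameter rays} and \ref{lm: holomorphic motion of para-internal rays}, applied to every address $\underline{s}_j$ and every angle $\theta_j$ of the graph $I^{v_0}_0$. First I will show $\mathcal{P}_0 \subset \Omega \cap \Omega'$. Since $\underline{s}_j = \sigma^j(\underline{s}_0)$ and $\theta_j = 2^j\theta_0$, Lemma \ref{lm: holomorphic motion of parameter rays}(3) gives $\partial\Omega = \bigcup_j \overline{G_{\underline{s}_j}}$. Likewise, reading Lemma \ref{lm: holomorphic motion of para-internal rays}(3) in the parameter plane, $\partial\Omega'$ is contained in $\bigcup_j \overline{\mathcal{R}_0(\theta_j)}$ together with the relevant boundary points of $\mathcal{X}_0$. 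Both boundaries therefore lie in $\mathcal{I}_0 \cup \partial\mathcal{X}_0$.

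Now $\mathcal{P}_0$ is a connected component of $(\mathbb{C}\setminus\mathcal{I}_0)\cap\mathcal{X}_0$. It is connected, contains $v_0$ (or has $v_0$ in its closure, with the same argument applied to a nearby base point), and is disjoint from $\partial\Omega \cup \partial\Omega'$. Since $v_0 \in \Omega\cap\Omega'$ by part (1) of both lemmas, connectedness forces $\mathcal{P}_0$ to lie in the component of $\Omega\cap\Omega'$ containing $v_0$. Concretely, if $\mathcal{P}_0$ met $\mathbb{C}\setminus\Omega$, a path in $\mathcal{P}_0$ from $v_0$ would cross $\partial\Omega \subset \mathcal{I}_0$, which is a contradiction; the same holds for $\Omega'$.

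Second, I will build the holomorphic motion. On $\mathcal{P}_0$, define $h(v,\cdot)$ on each ray of $I^{v_0}_0$ as follows: send $g^{v_0}_{\underline{s}_j}(t)\mapsto g^v_{\underline{s}_j}(t)$ and $\phi_{v_0}^{-1}(re^{2\pi i\theta_j})\mapsto \phi_v^{-1}(re^{2\pi i\theta_j})$. Send the common landing points to their analytic continuations $z_j(v)$ given by Lemma \ref{lm: stability of rays and the repelling landing points}, and fix $0$. Analyticity in $v$ holds pointwise, by Lemma \ref{lm: holomorphic motion of parameter rays}(2), the analogous part (2) for internal rays, and analytic dependence of the Böttcher map on $v$ (valid because $-2v \notin B_v$ for $v$ off $\mathcal{H}_0$). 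The base-point condition $h^{v_0} = \mathrm{id}$ is immediate.

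The main obstacle is injectivity across the different pieces of the graph. I must rule out that, for some $v\in\mathcal{P}_0$, a dynamic ray meets an internal ray, two landing points collide, or an internal and an external ray with matching indices stop sharing a landing point. For disjointness, dynamic rays lie in $I(f_v)$, internal rays lie in $B_v$, and rays with distinct addresses or angles are disjoint. Landing points are distinct repelling periodic points of period $q$, and the analytic functions $z_j(v)$ stay distinct: by the argument in Lemma \ref{lm: holomorphic motion of parameter rays}(3), a collision would force a multiplier-one parameter, hence a point of $\partial\Omega$. The landing pattern, where $R^v_0(\theta_j)$ and $g^v_{\underline{s}_j}$ land together, persists on the connected set $\mathcal{P}_0$ because the set of $v$ where it holds is open by Lemma \ref{lm: stability of rays and the repelling landing points}. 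It is also relatively closed, since the two landing points depend continuously on $v$ throughout $\Omega\cap\Omega'$. This gives the holomorphic motion of $I^{v_0}_0$ over $\mathcal{P}_0$.
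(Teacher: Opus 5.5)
Your proposal is correct and follows the paper's route. The paper deduces the corollary directly from Lemmas \ref{lm: holomorphic motion of parameter rays} and \ref{lm: holomorphic motion of para-internal rays}: $\partial\Omega\cup\partial\Omega'$ lies in the para-graph $\mathcal{I}_0$, so the connected piece $\mathcal{P}_0$ stays in the component of $\Omega\cap\Omega'$ through $v_0$, and the two motions glue. Your checks of injectivity and of the persistence of the common landing points make explicit what the paper leaves implicit.

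One small correction: $\mathcal{P}_0$ always meets $\mathcal{H}_0$, namely the part outside the equipotential and between the bounding internal rays. There $-2v\in B_v$, so the analytic motion of the internal rays must come from part (2) of Lemma \ref{lm: holomorphic motion of para-internal rays}, i.e. rays obtained by pull-back, which do not crash since $v\in\Omega'$. It cannot come from the claim that $-2v\notin B_v$.
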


\begin{corollary}\label{cor: points on the para-graph}
    For $n \ge 1$, $v \in \mathcal{I}_n \cap \mathcal{P}_{0}$ if and only if $-2v \in I^{v}_n$. 
\end{corollary}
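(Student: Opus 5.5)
We prove the two inclusions separately, using the structure of $\mathcal{I}_n$ as the union of closures of parameter internal rays $\mathcal{R}_{\mathcal{U}}(\theta)$ with $(\mathcal{U},\theta)\in\mathcal{D}_n$ and parameter rays $G_{\underline{s}}$ with $\underline{s}\in\mathcal{S}_n$. In the same way, the dynamical graph $I^v_n$ is $f_v^{-n}(I^v_0)$. It consists of the components of $f_v^{-m}(B_v)$, $m\le n$, cut by internal rays whose angles satisfy $2^{n-m}\theta\in\{\theta_0,\dots,\theta_{q-1}\}$ after pulling back by the B\"ottcher coordinates. It also contains the dynamic rays $g^v_{\underline{s}}$ with $\underline{s}\in\mathcal{S}_n$. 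By Corollary \ref{cor: holomorphic motion of para-graph of depth 0}, for every $v\in\mathcal{P}_0$ the graph $I^v_0$ moves holomorphically. Its landing points stay repelling, and its rays are well defined. Pulling back, $I^v_n$ is well defined for all $v\in\mathcal{P}_0$.

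\textbf{Parameter rays.} By Proposition \ref{prop: parameter rays}, $v\in G_{\underline{s}}$ if and only if $-2v\in g^v_{\underline{s}}$. This gives the equivalence for points of $\mathcal{I}_n$ lying on the open rays $G_{\underline{s}}$, $\underline{s}\in\mathcal{S}_n$. For the landing points we argue as follows. Within $\mathcal{P}_0$, a landing point of $G_{\underline{s}}$ cannot be parabolic, since all relevant landing points are repelling there. So by Proposition \ref{prop: landing theorem for parameter rays}, it is a parameter with $-2v$ strictly preperiodic. Proposition \ref{prop: converse landing theorem for eventually periodic parameters} then shows that $g^v_{\underline{s}}$ lands at $-2v$. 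Hence $-2v\in\overline{g^v_{\underline{s}}}\subset I^v_n$. The converse also follows from Proposition \ref{prop: converse landing theorem for eventually periodic parameters}.

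\textbf{Internal rays.} If $v\in\mathcal{R}_{\mathcal{U}}(\theta)$ with $\mathcal{U}\subset\mathcal{H}_m$, then by definition $\phi_v(f^m_v(-2v))=te^{2\pi i\theta}$. So $f^m_v(-2v)\in R^v_0(\theta)$. Pulling back along the orbit, $-2v$ lies in the component of $f_v^{-m}(R^v_0(\theta))$ contained in $I^v_n$, because $2^{n-m}\theta$ is one of the $\theta_j$. Conversely, suppose $-2v$ lies on such a preimage of an internal ray. Then $f^m_v(-2v)\in B_v$ with a specified angle, so $v$ belongs to a type C component (or to $\mathcal{H}_0$ when $m=0$) and lies on the corresponding $\mathcal{R}_{\mathcal{U}}(\theta)$. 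For the closures, landing points of $\mathcal{R}_{\mathcal{U}}(\theta)$ are handled by Proposition \ref{prop: landing theorem for parameter internal rays}: the parabolic alternative is again excluded in $\mathcal{P}_0$. What remains is the case where $R^v_0(\theta)$ lands at $f^m_v(-2v)$, which gives a landing point of $I^v_n$. The converse uses the stability in Lemma \ref{lm: stability of rays and the repelling landing points} and a continuity argument, which produces an internal ray of the component approaching $v$.

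\textbf{Main obstacle.} The delicate point is the converse at landing points. Suppose $-2v$ equals a common landing point of $I^v_n$. We must show that $v$ is actually the landing point of the corresponding parameter ray or internal ray, and not merely a nearby parameter. We plan to use Proposition \ref{prop: converse landing theorem for eventually periodic parameters} for dynamic rays. For internal rays, we plan to use the holomorphic motion of $I^v_n$ over $\mathcal{P}_0$ together with the conformal parameterization $\Phi_{\mathcal{U}}$, showing that the parameter internal ray converges to $v$. Rigidity of critically preperiodic parameters gives uniqueness here: such parameters form a discrete set.
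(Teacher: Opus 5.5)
Your proposal is correct and follows the route the paper intends. The paper gives no separate proof: it presents the statement as an immediate consequence of Lemmas \ref{lm: holomorphic motion of parameter rays}, \ref{lm: holomorphic motion of para-internal rays} and Corollary \ref{cor: holomorphic motion of para-graph of depth 0}, which make $I^v_n$ defined for $v\in\mathcal{P}_0$, and then reads off membership in $\mathcal{I}_n$ from Proposition \ref{prop: parameter rays} and the definitions of $\Phi_{\mathcal{U}}$, $\Phi_0$. That is exactly your treatment of the open rays; the paper does not discuss landing points at all.

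The step you flag as the main obstacle is not needed. Suppose $-2v$ is a landing point in $I^v_n$. It is strictly preperiodic: it maps to a repelling periodic point, and it cannot be periodic, since every preimage of $-2v$ is a critical point. For the same reason no $f^k_v(-2v)$ with $0\le k<n$ is critical, because each $(2l+1)\pi$ maps to $-2v$ and each $2l\pi$ maps to $0$. So $f^n_v$ is a local homeomorphism at $-2v$, and it pulls back the common landing of $R^v_0(\theta_j)$ and $g^v_{\underline{s}_j}$. This makes $-2v$ also the landing point of some $g^v_{\underline{s}}$ with $\underline{s}\in\mathcal{S}_n$. Proposition \ref{prop: converse landing theorem for eventually periodic parameters} then puts $v$ at the landing point of $G_{\underline{s}}$. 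That point lies in $\mathcal{I}_n$ provided $\mathcal{I}_n$ contains the landing points of its parameter rays, as $\mathcal{I}_0$ does explicitly and as the proof of Lemma \ref{lm: transfer from dynamical plane to parameter plane} assumes.
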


The holomorphic motion in Corollary \ref{cor: holomorphic motion of para-graph of depth 0} can be lifted to para-graphs of higher depths.
\begin{lemma}\label{lm: holomorphic motion of para-graphs}
    Let $\mathcal{U}$ be a component of $\hat{\mathcal{H}}$, $v_0 \in \partial \mathcal{U}$, and $n \ge 0$. Let $\mathcal{P}_n$, $I^{v_0}_n$ be defined as in \S \ref{ss: construction of para-puzzles}. Then there is a holomorphic motion
    \begin{equation}\label{eq: H_n}
        H_n: \mathcal{P}_n \times I^{v_0}_{n+1} \to \mathbb{C}
    \end{equation}
    satisfying:
    \begin{enumerate}
        \item $H^{v}_n(I^{v_0}_{n+1})=I^v_{n+1}$;
        \item $H_{n}$ and $H_{n-1}$ coincide on $\mathcal{P}_n \times I^{v_0}_n$;
        \item for $z \in I^{v_0}_{n+1}$ and $v \in \mathcal{P}_n$, $H^v_{n-1} \circ f_{v_0}(z)=f_v \circ H^{v}_{n}(z)$. 
    \end{enumerate}  
\end{lemma}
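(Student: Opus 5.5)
We argue by induction on $n$, lifting the motion of the previous level through the dynamics. For $n=0$ we first need a motion of $I^{v_0}_0$ parameterized by $\mathcal{P}_0$; this is Corollary \ref{cor: holomorphic motion of para-graph of depth 0}. Denote it by $H_{-1}$, and set $H^v_{-1}(z)=\phi_v^{-1}\circ\phi_{v_0}(z)$ on internal rays and $g^v_{\underline{s}}(t)$ on dynamic rays. The inductive step then treats every $n\ge 0$ uniformly, given a motion $H_{n-1}:\mathcal{P}_{n-1}\times I^{v_0}_n\to\mathbb{C}$ with properties 1–3. By definition $I^{v_0}_{n+1}=f_{v_0}^{-1}(I^{v_0}_n)$ (together with $I^{v_0}_0$). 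Also $\mathcal{P}_n\subset\mathcal{P}_{n-1}$, so $H_{n-1}$ restricts to $\mathcal{P}_n$.

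The key observation is that for $v\in\mathcal{P}_n$ the critical value $-2v$ does not lie on $I^v_n$. Indeed, by Corollary \ref{cor: points on the para-graph}, $v\in\mathcal{I}_n\cap\mathcal{P}_0$ iff $-2v\in I^v_n$, and $\mathcal{P}_n$ is disjoint from $\mathcal{I}_n$. The other critical value $0$ lies on $I^v_n$ only as the common endpoint of the internal rays. At $0$ we use the local structure of $f_v$ near the critical points $2k\pi$. There $f_v$ is a double cover, and the lift is given explicitly by the Böttcher coordinate, $\phi_v^{-1}(\pm\sqrt{\phi_v\circ f_v})$, which varies holomorphically in $v$. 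Away from the critical values, $f_v:\mathbb{C}\setminus f_v^{-1}(\{0,-2v\})\to\mathbb{C}\setminus\{0,-2v\}$ is a covering. Moreover $f_v$ has no asymptotic values, so the inverse branches $L_{j,k}$ (with respect to a slit avoiding $-2v$) are well-defined.

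Now fix $z\in I^{v_0}_{n+1}$ with $w=f_{v_0}(z)\in I^{v_0}_n$. Consider the analytic path $v\mapsto H^v_{n-1}(w)$ on $\mathcal{P}_n$. It avoids $-2v$, and avoids $0$ unless $w$ is the fixed point $0$. Since $\mathcal{P}_n$ is simply connected (Jordan-type region cut out by rays), this path lifts uniquely under the family of coverings $f_v$ to a holomorphic map $v\mapsto H^v_n(z)$ with $H^{v_0}_n(z)=z$. The lift exists by the holomorphic implicit function theorem, since $f_v'\neq 0$ at the lifted points. It is global on $\mathcal{P}_n$ by the monodromy theorem applied to the covering $\{(v,\zeta): f_v(\zeta)\notin\{0,-2v\}\}\to\{(v,\zeta'):\zeta'\notin\{0,-2v\}\}$. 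The functional equation $f_v\circ H^v_n=H^v_{n-1}\circ f_{v_0}$ then holds by construction.

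Injectivity of $H^v_n$ needs two cases. If two distinct points have distinct images under $f_{v_0}$, then their lifts have distinct images by injectivity of $H^v_{n-1}$. If they lie in the same fibre, then they are distinct preimages, and by uniqueness of lifts their motions never collide: a collision would occur at a critical point, i.e. the image would be a critical value, which was excluded above.

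Coincidence with $H_{n-1}$ on $I^{v_0}_n\subset I^{v_0}_{n+1}$ follows from uniqueness of lifts, because $H_{n-1}$ also satisfies the functional equation with $H_{n-2}$, and $H_{n-2}=H_{n-1}$ on $I^{v_0}_{n-1}$ by induction. The image statement $H^v_n(I^{v_0}_{n+1})=I^v_{n+1}$ holds because lifts of rays are rays with the corresponding addresses and angles (labels are preserved by continuity), and because the lifted set is all of $f_v^{-1}(I^v_n)$. For the latter, the degree over each point is constant (infinite but countable with labels $(j,k)$), and the labelling is preserved.

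The main obstacle is the lift near the critical points $2k\pi$ and the unbounded ends of the dynamic rays, i.e. ensuring the lifted motion stays in $\mathbb{C}$ and remains continuous in $z$ at the landing points. We plan to handle the ends via the uniform convergence in \eqref{eq: g^v_n,s}, which is locally uniform in $v$. The critical points we handle via the explicit Böttcher lift $\phi_v^{-1}(\pm\sqrt{\,\cdot\,})$ described above.
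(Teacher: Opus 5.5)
Your proposal is correct and takes the same route as the paper: induction on $n$, lifting the previous motion through $f_v$, which works because for $v\in\mathcal{P}_n$ the free critical value $-2v$ does not lie on $I^v_n$ (Corollary \ref{cor: points on the para-graph}). You spell out what the paper's two-line proof leaves implicit — the covering/monodromy argument for a holomorphic lift, the separate treatment of the critical value $0$, injectivity, and agreement with $H_{n-1}$ via uniqueness of lifts — and your indexing ($H_n$ on $\mathcal{P}_n\times I^{v_0}_{n+1}$) matches the statement more faithfully than the paper's proof, which shifts the index by one.
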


\begin{proof}
    Let $H_0: \mathcal{P}_0 \times I^{v_0}_0 \to \mathbb{C}$ be the holomorphic motion defined in Corollary \ref{cor: holomorphic motion of para-graph of depth 0}. We define $H_n$ by induction on $n \ge 0$. Suppose that $H_n: \mathcal{P}_n \times I^{v_0}_n \to \mathbb{C}$ is defined. For $v \in \mathcal{P}_{n+1}$, consider the following diagram:
    \[
    \xymatrix{
    I^{v_0}_{n+1} \ar@{-->}[r]^{H^v_{n+1}} \ar@{->}[d]_{f_{v_0}} & I^v_{n+1} \ar@{->}[d]^{f_v} \\
    I^{v_0}_n \ar@{->}[r]^{H^v_n}              & I^v_n
    }
    \]
    Since $v \in \mathcal{P}_{n+1}$, $I^v_{n+1}$ does not touch the critical points. Thus $H^v_n$ can be lifted to a homeomorphism $H^v_{n+1}$. It is easy to check that 1-3 in Lemma \ref{lm: holomorphic motion of para-graphs} are satisfied. 
\end{proof}

\subsection{transfer from the dynamical plane to the parameter plane}\label{ss: transfer from the dynamical plane to the parameter plane}
Let $H_n$ be the holomorphic motion in Lemma \ref{lm: holomorphic motion of para-graphs}. It connects the parameter plane with the dynamical plane of $f_{v_0}$. 
\begin{lemma}\label{lm: transfer from dynamical plane to parameter plane}
    For $n \ge 0$, there is a homeomorphism 
    \begin{equation}\label{eq: h_n}
        \begin{split}
            h_n: \mathcal{P}_n \cap \mathcal{I}_{n+1} & \to P^{v_0}_n \cap I^{v_0}_{n+1} \\
            v & \mapsto (H^v_n)^{-1}(-2v)
        \end{split}
    \end{equation} 
\end{lemma}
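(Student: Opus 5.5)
We need to show that $h_n$ is well defined, maps into $P^{v_0}_n\cap I^{v_0}_{n+1}$, is bijective, and is continuous with continuous inverse. The strategy is the standard Douady--Hubbard/Roesch transfer: use the holomorphic motion $H_n$ of Lemma \ref{lm: holomorphic motion of para-graphs} to compare the dynamical graph $I^v_{n+1}$ with the fixed graph $I^{v_0}_{n+1}$. Then exploit the fact that the motion of the critical value $v\mapsto -2v$ crosses each moving piece of the graph exactly once.

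\textbf{Well-definedness.} Take $v\in\mathcal{P}_n\cap\mathcal{I}_{n+1}$. By Corollary \ref{cor: points on the para-graph} we have $-2v\in I^v_{n+1}$. Since $H^v_n$ maps $I^{v_0}_{n+1}$ bijectively onto $I^v_{n+1}$, the point $(H^v_n)^{-1}(-2v)$ is well defined and lies in $I^{v_0}_{n+1}$. To see that it lies in $P^{v_0}_n$, argue as follows. Because $v\in\mathcal{P}_n$, the critical value never meets $I^v_n$ along $\mathcal{P}_n$, by Corollary \ref{cor: points on the para-graph} applied at depth $n$. Extend $H^v_n$ to a motion of the whole plane by the $\lambda$-lemma (Słodkowski). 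Then the puzzle piece of $f_v$ containing $-2v$ varies continuously with $v\in\mathcal{P}_n$, which is connected. Its label therefore equals that at $v_0$, namely $P^{v_0}_n=P^{v_0}_n(-2v_0)$. Hence $-2v\in H^v_n(P^{v_0}_n)$, and so $h_n(v)\in P^{v_0}_n\cap I^{v_0}_{n+1}$.

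\textbf{Continuity.} This follows because the map $(v,z)\mapsto H_n(v,z)$ is jointly continuous: holomorphic motions are jointly continuous by the $\lambda$-lemma.

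\textbf{Bijectivity.} Fix $z\in P^{v_0}_n\cap I^{v_0}_{n+1}$; we must show there is exactly one $v\in\mathcal{P}_n$ with $H^v_n(z)=-2v$.

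Injectivity. Each arc of $I^{v_0}_{n+1}\cap P^{v_0}_n$ is either a piece of a dynamic ray $g^{v_0}_{\underline{s}}$ with $\underline{s}\in\mathcal{S}_{n+1}$, a piece of an internal ray of a preimage of $B_{v_0}$, or a landing point. We may take $z=g^{v_0}_{\underline{s}}(t)$; by construction $H^v_n(z)=g^v_{\underline{s}}(t)$, and the internal-ray case is analogous using Böttcher coordinates and the maps $\Phi_{\mathcal{U}}$. Then $H^v_n(z)=-2v$ means exactly $v=G_{\underline{s}}(t)$, which is unique by Proposition \ref{prop: parameter rays}; for internal rays, uniqueness comes from the injectivity of $\Phi_{\mathcal{U}}$ in Proposition \ref{prop: Phi_U is a conformal isomorphism}. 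Landing points are handled by continuity of the landing points (Lemma \ref{lm: stability of rays and the repelling landing points}) together with Propositions \ref{prop: converse landing theorem for eventually periodic parameters} and \ref{prop: converse landing theorem for parabolic parameters}.

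Surjectivity. We need to know that the parameter ray or internal ray actually passes through $\mathcal{P}_n$ at the right time $t$. This is the main obstacle.

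To settle it, use a degree/argument-principle count. Let $w(v)=(H^v_n)^{-1}(-2v)$ extended to all of $\mathcal{P}_n$ via the extended motion. Then $w:\mathcal{P}_n\to P^{v_0}_n$ is continuous, and it is proper: as $v\to\partial\mathcal{P}_n\subset\mathcal{I}_n\cup\partial\mathcal{X}_n$, the critical value tends to $I^v_n$ or to the equipotential, so $w(v)\to\partial P^{v_0}_n$.

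Compute the degree of $w$ by comparing with the model $v\mapsto -2v$, which moves the critical value transversally with respect to the motion. The holomorphic dependence gives that $v\mapsto H^v_n(z)+2v$ has isolated zeros of positive local index. So $w$ is a proper map of positive degree, and hence surjective.

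Combined with injectivity, this shows $h_n$ is a continuous bijection. Properness gives that $h_n$ is closed on compact sub-pieces, so it is a homeomorphism onto its image.
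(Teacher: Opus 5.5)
Your well-definedness and injectivity steps follow the paper's route: you match $G_{\underline s}(t)$ with $g^{v_0}_{\underline s}(t)$, use B\"ottcher coordinates on internal rays, and use landing theorems at endpoints. Your surjectivity step, however, does not go through as written. It rests entirely on $w(v)=(\tilde H^v_n)^{-1}(-2v)$ being \emph{proper} from $\mathcal P_n$ to $P^{v_0}_n$. The local-index bookkeeping after that is standard, but properness is asserted, not proved, and there are two obstructions.

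\textbf{Behaviour at infinity.} $\mathcal P_n$ and $P^{v_0}_n$ are unbounded: they are cut out by rays tending to $\infty$, and $f_v$ has no equipotential at $\infty$. So you must show that $w(v)$ leaves compact subsets of $P^{v_0}_n$ as $v\to\infty$ inside $\mathcal P_n$. Your sentence only treats $\partial\mathcal P_n\subset\mathcal I_n\cup\partial\mathcal X_n$. The Slodkowski extension gives no information off the graph, since the general bound on its dilatation degenerates as $v$ leaves compact subsets of $\mathcal P_n$.

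\textbf{Behaviour at finite boundary points.} Even at $v^*\in\partial\mathcal P_n$, knowing that ``$-2v$ tends to $I^{v}_n$'' does not give ``$w(v)\to\partial P^{v_0}_n$''. The motion is parametrized only by the open piece and need not extend to $v^*$. Points of $\partial\mathcal P_n$ may lie on $\mathcal I_{n-1}$, where $H_{n-1}$ is not defined either. Quasiconformal maps with unbounded dilatation can push a compact subset of the piece arbitrarily close to its boundary, so you would need equicontinuity of $(\tilde H^v_n)^{-1}$ up to $\partial\mathcal P_n$, which is not available.

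In the paper the logic runs the other way. This lemma, proved combinatorially, supplies the boundary correspondence used in the argument-principle count of Lemma \ref{lm: non-degenerated para-annuli}, and even there only on the bounded truncated pieces $\hat{\mathcal P}_n$. So your degree count presupposes the boundary behaviour the lemma is meant to establish. Your final ``homeomorphism onto its image'' step relies on the same unproved properness.

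The paper uses no degree theory here; it produces preimages directly from the labels. For $z=g^{v_0}_{\underline s}(t)$ it takes $v=G_{\underline s}(t)$. For $z$ on an internal ray $R_{v_0}(\theta)$, it follows the dynamic ray $g^{v_0}_{\underline s}$ of $I^{v_0}_{n+1}$ that lands together with $R_{v_0}(\theta)$, takes the landing point $v_1$ of $G_{\underline s}$, and uses the parameter internal ray of $\mathcal I_{n+1}$ landing at $v_1$.

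You are right that one must still know $G_{\underline s}(t)\in\mathcal P_n$, which the paper does not spell out. That point is combinatorial, not a matter for a degree count. A ray $G_{\underline s}$ with $\underline s\in\mathcal S_{n+1}\setminus\mathcal S_n$ is disjoint from $\mathcal I_n$, so it lies in a single complementary component, identified through the ordering of addresses.

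There is also a smaller omission in your injectivity step. Injectivity of $\Phi_{\mathcal U}$ only separates parameters in the same component, and $\Phi_0$ is two-to-one. You still have to show that two parameters on internal rays with the same image lie in the same $\mathcal U$ with the same internal coordinate. The paper does this via the first-entry time $m$ and the integer $k$ of Lemma \ref{lm: k_v is constant}. When $m=0$, it excludes $v_2=-v_1$ because both parameters lie in $\mathcal P_n$.
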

\begin{proof}
    Let $v \in \mathcal{P}_n(v_0) \cap \mathcal{I}_{n+1}$. By Corollary \ref{cor: points on the para-graph}, $-2v \in I^v_{n+1}$. Thus $h_n(v)$ is well-defined and belongs to $I^{v_0}_{n+1}$. For $v_0$, by the definition, $-2v_0 \in P^{v_0}_n$. Thus $-2v$ belongs to the puzzle piece bounded by $H^v_n(\partial P^{v_0}_n)$. Since $-2v$ and $I^v_{n+1}$ are disjoint, $h_n(-2v) \in P^{v_0}_n$. 
    
    \textbf{Injectivity of $h_n$}: 
    
    First suppose that $v=G_{\underline{s}}(t)$, $t \ge 0$. Then $-2v=g^v_{\underline{s}}(t)$, and $h_n(v)=g^{v_0}_{\underline{s}}(t)$. Thus $h_n$ is injective on the parameter rays.

    Then consider parameter internal rays. Suppose that $v_1=\mathcal{R}_{\mathcal{U}_1}(\theta_1)(t_1)$, $v_2=\mathcal{R}_{\mathcal{U}_2}(\theta_2)(t_2)$, and $h_n(v_1)=h_n(v_2)=z_0$. Suppose that $\mathcal{U}_j$ is a component of $\mathcal{H}_{m_j}$, $j=1,2$. Then $f^{m_j}_{v_j}(-2v_j)=R_{v_j}(\theta_j)(t_j)$, $j=1,2$, where $R_{v}(\theta)$ is the internal ray in $B_v$. For $j=1,2$, $f^{m_j}_{v_j}(-2v_j)$ is the first point in the orbit of $-2v_j$ entering the immediate basin $B_{v_j}$. Passing to the dynamical plane of $f_{v_0}$ by the holomorphic motion $H^{v_j}_n$, $f^{m_j}_{v_0}(z_0)$ is the first point in the orbit of $z_0$ entering $B_{v_0}$. Thus $m_1=m_2=m$, $t_1=t_2$.

    If $m=0$, then $v_2=\pm v_1$. Since they belong to the same puzzle piece $\mathcal{P}_n$, $v_1=v_2$. 

    If $m \ge 1$, then $f^{m-1}_{v_j}(-2v_j) \in B_{v_j}+2k_j\pi$, where $k_j \in \mathbb{Z}$ depends on $\mathcal{U}_j$ by Lemma \ref{lm: k_v is constant}. Again by the holomorphic motion $H^{v_j}_n$, $k_1=k_2$. So $k_1=k_2=k$ and $\mathcal{U}_1=\mathcal{U}_2=\mathcal{U}$. By Proposition \ref{prop: Phi_U is a conformal isomorphism}, $\Phi_{\mathcal{U}}: \mathcal{U} \to \mathbb{D}$ is a conformal isomorphism, $v_1=\mathcal{R}_{\mathcal{U}}(\theta)(t)=v_2$. 

    Finally, suppose that $v_1$ and $v_2$ are the landing points of either parameter rays or internal rays. 

    If $v_1$ and $v_2$ are the landing points of parameter rays $G_{\underline{s}_1}$ and $G_{\underline{s}_2}$ respectively, then $g^{v_0}_{\underline{s}_1}$ and $g^{v_0}_{\underline{s}_2}$ land at a common point. By the construction of $I^{v_0}_n$, $\underline{s}_1=\underline{s}_2$ so that $v_1=v_2$. The similar argument works if $v_1$ and $v_2$ are the landing points of parameter internal rays. 
    
    Now suppose that $v_1$ is the landing point of $\mathcal{R}_{\mathcal{U}}(\theta)$, $v_2$ is the landing point of $G_{\underline{s}}$. Then $R_{v_0}(\theta)$ and $g^{v_0}_{\underline{s}}$ land at a common point. Pushing forward to $v_1$-plane by $H^{v_1}_n$, $R_{v_1}(\theta)$ and $g^{v_1}_{\underline{s}}$ land at a common point, which is exactly $-2v_1$. So $v_1$ is the landing point of $G_{\underline{s}}$, i.e. $v_1=v_2$. 
    
    \textbf{Surjectivity of $h_n$}:

    For $z=g^{v_0}_{\underline{s}}(t)$, let $v=G_{\underline{s}}(t)$. Then $-2v =g^v_{\underline{s}}(t)$ so that $h_n(v)=z$. 

    If $z=R_{v_0}(\theta)(t)$, $0 \le t \le 1$, then $R_{v_0}(\theta)$ lands at some point $z_1$. And there is a unique dynamic ray $g^{v_0}_{\underline{s}}$ in $I^{v_0}_{n+1}$ landing at $z_1$. Consider $G_{\underline{s}}$. It lands at some $v_1$ by Proposition \ref{prop: landing theorem for parameter rays}. By the construction of $\mathcal{I}_{n+1}$, there is a unique $\mathcal{R}_{\mathcal{U}}(\theta')$ also landing at $v_1$. Thus $\overline{G_{\underline{s}}\cup \mathcal{R}_{\mathcal{U}}(\theta')}$ is mapped to $\overline{g^{v_0}_{\underline{s}} \cup R_{v_0}(\theta)}$, and there is $v \in \mathcal{R}_{\mathcal{U}}(\theta')$ such that $h_n(v)=z$. 

    Continuity of $h_n$ and $h^{-1}_n$ is obvious.
\end{proof}

\begin{remark}\label{rmk: extending to euqipotentials}
    The homeomorphism $h_n$ can be easily extended the equipotentials $\mathcal{E}_{\mathcal{U}}(t)$, $(\mathcal{U},t) \in \mathcal{E}_{n+1}$, where $\mathcal{E}_{n+1}$ is defined in \S \ref{ss: construction of para-puzzles}. 
\end{remark}

Let $\mathcal{U}$ be a component of $\hat{\mathcal{H}}$ and $v_0 \in \partial \mathcal{U}$. In \cite{QW25}, the authors modified the unbounded puzzle pieces $P^{v_0}_{n}$ to bounded ones $\hat{P}^{v_0}_n$, $n \ge 1$. Moreover, the following holds. 
\begin{lemma}\label{lm: non-degenerated annuli}
    Let $\mathcal{U}$ be a component of $\hat{\mathcal{H}}$ and $v_0 \in \partial \mathcal{U}$. Then there exists a sequence of puzzle pieces $\{\hat{P}^{v_0}_{n_k}\}_{k \ge 0}$, such that 
    \begin{enumerate}
        \item $-2v_0 \in \hat{P}^{v_0}_{n_k}$;
        \item let $A^{v_0}_{n_k}=\hat{P}^{v_0}_{n_k} \setminus \overline{\hat{P}^{v_0}_{n_k+1}}$, then $f^{n_k-n_0}_{v_0}: A^{v_0}_{n_k} \to A^{v_0}_{n_0}$ is a covering map;
        \item either $\sum_{k \ge 0} \mathrm{mod} \; A^{v_0}_{n_k} =\infty$ so that $\bigcup_{n \ge 1} \overline{\hat{P}^{v_0}_{n}}=\{-2v_0\}$, or there is $p \ge 1$ such that $n_{k+1}=n_k+p$ and $f^p_{v_0}: \hat{P}^{v_0}_{n_{k+1}} \to \hat{P}^{v_0}_{n_k}$ is a renormalization of $f_{v_0}$, where $p$ is called the period of the renormalization. 
    \end{enumerate}
\end{lemma}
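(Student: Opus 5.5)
The plan is to run the Branner--Hubbard--Yoccoz tableau argument in the dynamical plane of $f_{v_0}$, using the bounded modified pieces $\hat{P}^{v_0}_n$ from \cite{QW25}.

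\textbf{Setup.} Since $v_0\in\partial\mathcal{U}$, the critical value $-2v_0$ is not in $B_{v_0}$. Its orbit also never lands on the graphs $I^{v_0}_n$; otherwise $v_0$ would lie on the parameter graph by Corollary \ref{cor: points on the para-graph}, which is excluded by the choice of $\theta_0$ and by Lemmas \ref{lm: maps on the boundary of H_0} and \ref{lm: maps on the boundary of U}. Hence the nest $\hat{P}^{v_0}_n=\hat{P}^{v_0}_n(-2v_0)$ is defined for every $n$. The only critical points outside $B_{v_0}$ are the odd multiples $(2k+1)\pi$, all mapping to $-2v_0$. By Lemma \ref{lm: position of critical points of cosine maps}, each piece meets at most one of them. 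So I would track the critical nest through the critical values: write $\hat{P}^{v_0}_{n+1}(c)$ for the piece around a critical point $c$ mapping onto $\hat{P}^{v_0}_n$.

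\textbf{Annuli and covering property.} I would form the tableau of the critical orbit, exactly as for unicritical polynomials. Call a depth $n$ \emph{critical} if some iterate $f^j_{v_0}(-2v_0)$ returns to the critical piece $\hat{P}^{v_0}_{n-j}(c)$ of the appropriate level. The markers $n_k$ are then chosen by the Yoccoz first-return rule. Start from a base depth $n_0$ for which the annulus $A^{v_0}_{n_0}=\hat{P}^{v_0}_{n_0}\setminus\overline{\hat{P}^{v_0}_{n_0+1}}$ is non-degenerate; this needs the boundary to separate, which is what the bounded modification in \cite{QW25} provides. Define $n_{k+1}$ inductively from the first return of the critical orbit to the nest. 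Pulling back $A^{v_0}_{n_k}$ along the orbit gives a map onto $A^{v_0}_{n_0}$ that meets no critical point in the annulus, by the tableau rules (semi-critical and critical positions). It is therefore a covering. This yields items 1 and 2, with $\mathrm{mod}\,A^{v_0}_{n_k}=\mathrm{mod}\,A^{v_0}_{n_0}/d_k$ for some degree $d_k\in\{1,2\}$ at each critical step.

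\textbf{Dichotomy.} This step is the main obstacle. The first case is that the critical orbit is non-recurrent, or recurrent but not persistently recurrent. Then the Yoccoz combinatorial estimate gives a divergent sum $\sum_k 1/k$, hence $\sum \mathrm{mod}=\infty$, and the Grötzsch inequality shrinks the nest to $\{-2v_0\}$. The second case is persistent recurrence. Then, as in the polynomial case, the tableau becomes eventually periodic with period $p$: $f^p_{v_0}(\hat{P}^{v_0}_{n+p})=\hat{P}^{v_0}_n$ with $\hat{P}^{v_0}_{n+p}\Subset\hat{P}^{v_0}_n$ and degree two. This is a quadratic-like map, i.e.\ a renormalization.

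The delicate points are the transcendental features. Infinitely many critical points are present, but each piece contains at most one, so degrees along pullbacks stay bounded by $2$. Unbounded pieces are handled by the modified bounded pieces from \cite{QW25}; I would check that the pullbacks of $\hat{P}^{v_0}_n$ remain bounded and are compactly nested. Once these are in place, the combinatorial part of the argument is copied from the quadratic case.
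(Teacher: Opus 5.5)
The paper does not prove this lemma; it quotes it from \cite{QW25}, whose route is the puzzle/tableau argument you also choose. Your sketch, however, does not establish item 3. It fails exactly at the step you call the main obstacle.

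\textbf{The markers cannot give divergence.} Suppose $n_{k+1}=n_k+j$, where $j$ is the first time the orbit of $-2v_0$ returns to $\hat{P}^{v_0}_{n_k+1}$ (this is what item 2 requires). A piece contains some $(2l+1)\pi$ exactly when its image contains $-2v_0$. Hence along $\hat{P}^{v_0}_{n_{k+1}}, f_{v_0}(\hat{P}^{v_0}_{n_{k+1}}),\dots$ only the last piece contains a critical point. So $f^j_{v_0}:A^{v_0}_{n_{k+1}}\to A^{v_0}_{n_k}$ is a degree-two covering, and $\mathrm{mod}\,A^{v_0}_{n_k}=2^{-k}\,\mathrm{mod}\,A^{v_0}_{n_0}$, which is a convergent series. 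In general the degree of $f^{n_k-n_0}_{v_0}$ on $A^{v_0}_{n_k}$ is $2^{j_k}$, with $j_k$ the number of critical passages, not some $d_k\in\{1,2\}$. If it were bounded by $2$, the sum would diverge for every $v_0$. The renormalizable alternative, where $f^{kp}_{v_0}:A^{v_0}_{n_k}\to A^{v_0}_{n_0}$ has degree $2^k$, could then never occur.

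\textbf{The non-recurrent case is not covered.} When the orbit of $-2v_0$ is non-recurrent (e.g.\ $-2v_0$ strictly preperiodic), it never returns to deep pieces of its own nest, so your markers do not exist. The standard argument instead pulls back annuli surrounding points of $\omega(-2v_0)$. You would still have to explain how this yields item 2, in which every $A^{v_0}_{n_k}$ covers the single annulus $A^{v_0}_{n_0}$ around $-2v_0$.

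\textbf{The dichotomy is mis-stated.} Persistent recurrence does not force the tableau to be periodic. Non-renormalizable persistently recurrent combinatorics already exist for quadratic polynomials (Fibonacci type). In Yoccoz's theorem the renormalizable alternative is precisely that the tableau is periodic, which is strictly stronger. The persistently recurrent, non-periodic case is the one that needs Yoccoz's children-counting argument:
\begin{itemize}
\item the annuli must run over all descendants of a critical annulus, each covering $A^{v_0}_{n_0}$ with degree $2^{g}$, where $g$ is its generation;
\item one must show that non-periodicity produces enough annuli with at least two children to make $\sum_k 2^{-g_k}$ diverge.
\end{itemize}
Your outline sends this case to the renormalizable branch. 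It attributes the divergence estimate to the non-persistent cases, which are the easy ones.

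Two smaller slips. First, Lemma \ref{lm: position of critical points of cosine maps} concerns Fatou components, not puzzle pieces; the one-critical-point property comes from the puzzle construction in \cite{QW25}. Second, the critical points $2k\pi$, $k\neq 0$, are not in $B_{v_0}$ but in $B_{v_0}+2k\pi$. They are absent from pieces of depth $\ge 1$ only because of how the graphs and equipotentials are built.
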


For $v \in \mathcal{P}_n$, let $t_0>0$ be as in \S \ref{ss: construction of para-puzzles}. Define $E^v_n$ be the collection of $(U,t)$, where $U$ is a component of $f^{-m}_{v}(B_v)$, $0 \le m \le n$, $t^{2^{n-m}}=t_0$. Since $v \in \mathcal{P}_n$, the equipotential $E^v_{B_{v}}(t)$ in $B_v$ can be defined for $t \le t_0^{1/2^n}$. And it can be puled back to the components $U$ of $f^{-m}_v(B_v)$, $0 \le m \le n$. Define 
$$D^{v}_n = \bigcup_{(U,t) \in E^v_n} E^v_U(t).$$
Then similar to Lemma \ref{lm: holomorphic motion of para-graphs}, there is a holomorphic motion $H_n: \mathcal{P}_n \times D^{v_0}_n \to \mathbb{C}$, with $H^v_n(D^{v_0}_n)=D^v_n$. Combining this with Lemma \ref{lm: holomorphic motion of para-graphs}, we obtain a holomorphic motion $H_n: \mathcal{P}_n \times (\partial P^{v_0}_n \cup \partial P^{v_0}_{n+1}) \to \mathbb{C}$. 

Let $n \ge 1$. By Slodkowski's Theorem, $H_n$ extends to a holomorphic motion $\tilde{H}_n: \mathcal{P}_n \times \hat{\mathbb{C}} \to \hat{\mathbb{C}}$. In particular, the puzzle pieces $\hat{P}^{v_0}_n$ moves holomorphically. Define $\hat{P}^v_{n}=H^v_n(\hat{P}^{v_0}_n)$, $A^v_n=\hat{P}^v_n \setminus \overline{\hat{P}^v_{n+1}}$. $A^v_n$ is indeed an annulus since $H^v_n$ is injective. 

This allows us to define a `cut-off' of para-puzzle pieces. $\{v \in \mathcal{P}_n: -2v \in \hat{P}^v_n\}$ is non-empty since it contains $v_0$. Define 
$\hat{\mathcal{P}}_n$ to be the component of $\{v \in \mathcal{P}_n: -2v \in \hat{P}^v_n\}$ containing $v_0$, and $\mathcal{A}_n=\hat{\mathcal{P}}_n \setminus \overline{\hat{\mathcal{P}}_{n+1}}$. We do not know whether $\mathcal{A}_n$ are non-degenerated annuli yet. But Lemma \ref{lm: non-degenerated para-annuli} ensures this. 

\begin{lemma}\label{lm: non-degenerated para-annuli}
    $\mathcal{A}_{n_k}$, $k \ge 0$ are non-degenerated annuli, which surround $v_0$. Moreover, there exists $K \ge 1$, such that 
    $$\frac{1}{K} \mathrm{mod} \; A^{v_0}_{n_k} \le \mathrm{mod} \; \mathcal{A}_{n_k} \le K \mathrm{mod} \; A^{v_0}_{n_k}$$
    for all $k \ge 1$. 
\end{lemma}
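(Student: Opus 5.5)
This is the standard Roesch/Yoccoz transfer argument: move the dynamical annuli into the parameter plane using the holomorphic motion $\tilde H_n$ and the map $v\mapsto(\tilde H^v_n)^{-1}(-2v)$, which extends $h_n$ of Lemma \ref{lm: transfer from dynamical plane to parameter plane}.

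\emph{Step 1: a proper map of degree one.} Fix $n=n_k$. On $\hat{\mathcal{P}}_n$ define
$$\chi_n(v)=(\tilde H^v_n)^{-1}(-2v),$$
where $\tilde H_n$ is the Slodkowski extension of the motion of $\partial P^{v_0}_n\cup\partial P^{v_0}_{n+1}$ together with the equipotentials $D^{v_0}_n$. By the definition of $\hat{\mathcal{P}}_n$, the map $\chi_n$ sends $\hat{\mathcal{P}}_n$ into $\hat P^{v_0}_n$.

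I will show $\chi_n:\hat{\mathcal{P}}_n\to\hat P^{v_0}_n$ is proper. The boundary of $\hat{\mathcal{P}}_n$ consists of two kinds of points:
\begin{itemize}
\item points of the para-graph $\mathcal{I}_n$ or of the parameter equipotentials, where $\chi_n$ agrees with $h_{n-1}$ (Lemma \ref{lm: transfer from dynamical plane to parameter plane} and Remark \ref{rmk: extending to euqipotentials});
\item points where $-2v$ lies on $\partial\hat P^v_n=\tilde H^v_n(\partial\hat P^{v_0}_n)$.
\end{itemize}
In both cases $\chi_n(v)\in\partial\hat P^{v_0}_n$. Boundedness of $\hat{\mathcal{P}}_n$ needs care, since the para-puzzle pieces are cut out by parameter rays going to $\infty$. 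I would use the cut-off of $\hat P^{v_0}_n$ from \cite{QW25}, together with Lemma \ref{lm: diameter of B_v tends to 0} and the asymptotics of the rays $G_{\underline{s}}$, to get $\hat{\mathcal{P}}_n\Subset\mathcal{P}_n$.

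Since $h_{n-1}$ is a homeomorphism from the boundary graph onto $\partial\hat P^{v_0}_n$, the restriction $\chi_n|_{\partial\hat{\mathcal{P}}_n}$ winds once around each point of $\hat P^{v_0}_n$. An argument-principle count then shows $\chi_n$ has degree one. Knowing $-2v$ is analytic, I can compute this degree as the number of solutions of $-2v=\tilde H^v_n(z)$. The motion is only continuous in $z$ but holomorphic in $v$, so this count is legitimate for each fixed $z$: it is the winding number of $v\mapsto -2v-\tilde H^v_n(z)$ around $\partial\hat{\mathcal{P}}_n$, which is positive and equals one.

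\emph{Step 2: topology of $\mathcal{A}_n$.} Step 1 applies to both $n$ and $n+1$, and $\hat{\mathcal{P}}_{n+1}=\chi_n^{-1}(\hat P^{v_0}_{n+1})$, because $\tilde H_n$ and $\tilde H_{n+1}$ agree on $\partial\hat P^{v_0}_{n+1}$. Hence $\chi_n$ is a homeomorphism from $\mathcal{A}_n$ onto $A^{v_0}_n$. It follows that $\mathcal{A}_n$ is an annulus around $v_0$, non-degenerate whenever $A^{v_0}_n$ is. For $n=n_k$, $A^{v_0}_{n_k}$ covers $A^{v_0}_{n_0}$ by Lemma \ref{lm: non-degenerated annuli}, so it is non-degenerate.

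\emph{Step 3: moduli comparison, the main obstacle.} By the $\lambda$-lemma, $z\mapsto\tilde H^v_n(z)$ is $K_v$-quasiconformal with $K_v=\frac{1+\rho}{1-\rho}$, where $\rho$ is the hyperbolic distance from $v_0$ to $v$ in $\mathcal{P}_n$. Hence $\chi_n$ is quasiconformal on $\mathcal{A}_n$ with a controlled dilatation (Roesch's argument, \cite{Roe07}).

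The difficulty is that $K$ must not depend on $k$, while the domains $\mathcal{P}_{n_k}$ shrink. My plan is to avoid the depth-$n_k$ motion and use only the motion over the fixed piece $\mathcal{P}_{n_0}$. Lemma \ref{lm: non-degenerated annuli} gives a covering $f^{n_k-n_0}_{v_0}:A^{v_0}_{n_k}\to A^{v_0}_{n_0}$, and similarly $f^{n_k-n_0}_v:A^v_{n_k}\to A^v_{n_0}$. Since the dynamic annuli are pullbacks, the moduli are related through the covering degree. Lifting the motion of $A^{v_0}_{n_0}$ yields the motion of $A^{v_0}_{n_k}$ with the same dilatation, because lifts by holomorphic coverings do not increase dilatation. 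In addition, $\hat{\mathcal{P}}_{n_1}$ is compactly contained in $\mathcal{P}_{n_0}$, so $\rho\le\rho_0<1$ there.

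Then $\chi_{n_k}$ restricted to $\mathcal{A}_{n_k}$ is $K$-quasiconformal onto $A^{v_0}_{n_k}$ with $K=\frac{1+\rho_0}{1-\rho_0}$ uniform in $k$, and the two-sided modulus inequality follows. In the renormalizable case $n_{k+1}=n_k+p$, I would apply the same argument using the iterate $f^p$ and the fixed motion at the first level.
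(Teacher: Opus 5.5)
Your proposal is essentially the paper's proof. Both use the same three ingredients:
\begin{itemize}
\item the transfer map $v\mapsto(\tilde H^v_n)^{-1}(-2v)$;
\item its agreement on the boundary with the homeomorphism of Lemma \ref{lm: transfer from dynamical plane to parameter plane}, plus an argument-principle count, to get a homeomorphism $\mathcal{A}_n\to A^{v_0}_n$;
\item a $k$-independent dilatation bound, because deeper motions are lifts of the base-level one and $\hat{\mathcal{P}}_{n_k}$ stays compactly inside $\mathcal{P}_{n_0}$.
\end{itemize}
Your explicit winding count for $v\mapsto -2v-\tilde H^v_n(z)$, and your choice to lift the already extended motion of $A^{v_0}_{n_0}$ (rather than asserting, as the paper does, that each Slodkowski extension keeps the base dilatation), are more careful versions of the same steps. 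The lifting does still need $f^{n_k-n_0}_v\colon A^v_{n_k}\to A^v_{n_0}$ to be a covering for $v\neq v_0$, which is not automatic for the Slodkowski-defined pieces; the paper itself cautions about this in \S\ref{ss: renormalizable parameters on the boundaries of hyperbolic components}.
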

\begin{proof}
    Let $n \in \{n_k: k \ge 0\}$. Define 
    \begin{equation}\label{eq: tilde h_n}
        \begin{split}
            \tilde{h}_n: \mathcal{A}_n \cup \partial \hat{\mathcal{P}}_{n+1} & \to A^{v_0}_n \cup \partial \hat{P}^{v_0}_{n+1} \\
            v & \mapsto (\tilde{H}^v_n)^{-1}(-2v)
        \end{split}.
    \end{equation}

    For $v \in \partial \mathcal{P}_{n+1}$, in the dynamical plane of $f_v$, $-2v \in \partial \hat{P}^v_{n+1}$. Thus $\tilde{h}_n(v)=(\tilde{H}^v_n)^{-1}(-2v) \in \partial \hat{P}^{v_0}_{n+1}$. For $v \in \mathcal{A}_n$, $-2v \in \hat{P}^v_n \setminus \overline{\hat{P}^v_{n+1}}$. Thus $\tilde{h}_n(v) \in \hat{P}^{v_0}_{n} \setminus \overline{\hat{P}^{v_0}_{n+1}}=A^{v_0}_{n}$. Therefore, $\tilde{h}_n$ is well-defined and sends $\mathcal{A}_n \cup \partial \hat{\mathcal{P}}_{n+1}$ to $A^{v_0}_n \cup \partial \hat{P}^{v_0}_{n+1}$. It is easy to see that $\tilde{h}_n$ extends $h_n$, where $h_n: \mathcal{P}_n \cap \mathcal{I}_{n+1} \to P^{v_0}_n \cap I^{v_0}_{n+1}$ is defined by \eqref{eq: h_n}. 

    By $\lambda$-lemma (\cite{MSS83}), $H^v_n$ is a $K_n(v)$-quasiconformal mapping, where the dilatation $K_n(v)$ depends on $v \in \mathcal{P}_n$. By \cite{DH85}, IV. 3. Lemma (also \cite{Roe00}, Proposition 3.2), the transfer map $\tilde{h}_n$ is $K_n$-quasi-regular, with $K_n \le \sup\{\tilde{K}_n(v): v \in \hat{\mathcal{P}}_n\}$, where $\tilde{K}_n(v)$ is the dilatation of $\tilde{H}^v_n$. Note that $\tilde{h}_n$ is a proper map. Note that $v \in \mathcal{I}_{n}$ if and only if $\tilde{h}_n(v) \in I^{v_0}_{n}$. Thus $\tilde{h}_n(v)$ describes $\partial \hat{P}^{v_0}_n$ once as $v$ describes $\partial \hat{\mathcal{P}}_n$ once, since $\tilde{h}_n$ coincides with $h_n$ on $\partial \mathcal{I}_{n+1}$, which is a homeomorphism by Lemma \ref{lm: transfer from dynamical plane to parameter plane}. By the Argument Principle, $\tilde{h}_n: \mathcal{A}_n \cup \partial \hat{\mathcal{P}}_{n+1} \to A^{v_0}_n \cup \partial \hat{P}^{v_0}_{n+1}$ is also a homeomorphism, and $\mathcal{A}_n=(\tilde{h}_n)^{-1}(A^{v_0}_n)$ is an annulus. $\mathcal{A}_n$, $n \ge n_0$ surrounds $v_0$ since $v_0$ belongs to every $\hat{\mathcal{P}}_n$.

    Finally, we prove that $K_n$ is bounded from above. By its definition, $H^v_n$ is obtained by lifting $H^v_{0}$. Thus $K_n(v)=K_0(v)$. The extension by Slodkowski's Theorem does not change the dilatation. Hence $\tilde{K}_n(v)=K_n(v)=K_0(v)$. Moreover, for $k \ge 1$, $\hat{\mathcal{P}}_{n_k} \Subset \hat{\mathcal{P}}_{n_0+1} \subset \mathcal{P}_{n_0}$. Therefore, $K_n \le \sup\{K_n(v): v \in \hat{\mathcal{P}_n}\} \le \sup\{K_{n_0+1}(v): v \in \hat{\mathcal{P}}_{n_0+1}\}=:K<+\infty$. This gives that for $k \ge 1$, the dilatation of $\tilde{h}_n: \mathcal{A}_n \to A_n$ is bounded from above. Thus  
    $$\frac{1}{K} \mathrm{mod} \; A^{v_0}_{n_k} \le \mathrm{mod} \; \mathcal{A}_{n_k} \le K \mathrm{mod} \; A^{v_0}_{n_k}.$$
\end{proof}

\begin{corollary}\label{cor: local connectivity at non-renormalizable parameters}
    Let $\mathcal{U}$ be a component of $\hat{\mathcal{H}}$, $v_0 \in \partial \mathcal{U}$. Suppose that $f_{v_0}$ is not renormalizable. Then $\partial \mathcal{U}$ is locally connected at $v_0$. 
\end{corollary}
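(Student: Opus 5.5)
Non-renormalizability of $f_{v_0}$ excludes the second alternative in part 3 of Lemma \ref{lm: non-degenerated annuli}. We therefore obtain a sequence of dynamical annuli $A^{v_0}_{n_k}$ with $\sum_k \mathrm{mod}\, A^{v_0}_{n_k}=\infty$. By Lemma \ref{lm: non-degenerated para-annuli}, the para-annuli $\mathcal{A}_{n_k}$ are non-degenerate, surround $v_0$, and satisfy $\mathrm{mod}\,\mathcal{A}_{n_k}\ge K^{-1}\mathrm{mod}\, A^{v_0}_{n_k}$ for $k\ge 1$. Hence $\sum_k \mathrm{mod}\,\mathcal{A}_{n_k}=\infty$. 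The annuli are nested: $\mathcal{A}_{n_k}\subset \hat{\mathcal{P}}_{n_k}\setminus\overline{\hat{\mathcal{P}}_{n_k+1}}$, and $\hat{\mathcal{P}}_{n_{k+1}}\subset\hat{\mathcal{P}}_{n_k+1}$. Since they are disjoint and each separates $v_0$ from $\infty$, Grötzsch's inequality gives
$$\bigcap_{k\ge 0}\overline{\hat{\mathcal{P}}_{n_k}}=\{v_0\},$$
so $\mathrm{diam}\,\hat{\mathcal{P}}_{n_k}\to 0$.

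It then suffices to show that each $\hat{\mathcal{P}}_{n}$ meets $\mathcal{U}$ (equivalently, meets $\overline{\mathcal{U}}$) in a connected set. Then $\overline{\mathcal{U}}\cap\hat{\mathcal{P}}_{n_k}$ forms a basis of connected neighborhoods of $v_0$ in $\overline{\mathcal{U}}$, and this transfers to $\partial\mathcal{U}$ as usual. Here I use the structure of the para-graph. The boundary of $\mathcal{P}_n$ consists of pieces of parameter rays $G_{\underline{s}}$, closures of parameter internal rays $\overline{\mathcal{R}_{\mathcal{U}'}(\theta)}$, and equipotentials $\mathcal{E}_{\mathcal{U}'}(t)$. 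The hyperbolic component $\mathcal{U}$ itself is cut only by its own internal rays and equipotential appearing in $\mathcal{I}_n$ and $\mathcal{E}_n$. Thus $\mathcal{P}_n\cap\mathcal{U}$ is the region of $\mathcal{U}$ between two consecutive internal rays of angles in the finite set $\{\theta:2^{n-m}\theta\in\{\theta_j\}\}$, outside the equipotential of level $t_0^{1/2^{n-m}}$. Under $\Phi_{\mathcal{U}}$ (or $\Phi_0$ on the upper half when $\mathcal{U}=\mathcal{H}_0$) this region is a sector-annulus, hence connected. For the cut-off $\hat{\mathcal{P}}_n$, I would argue through $\tilde h_n$. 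On the parameter side, $\hat{\mathcal{P}}_n\cap\mathcal{U}$ corresponds to $\hat P^{v_0}_n\cap f^{-m}_{v_0}(B_{v_0})$ near $-2v_0$. In the dynamical plane, $B_{v_0}$ (or its preimage component) meets $\hat P^{v_0}_n$ in a sector-like region, because $\partial B_{v_0}$ is a Jordan curve (Corollary \ref{cor: local connectivity of B_v}, or local connectivity coming from the puzzle shrinking itself). I would check that the modification to $\hat P^{v_0}_n$ made in \cite{QW25} only cuts far away from $B_{v_0}$, so that connectedness is preserved.

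I also need to verify that $v_0\in\partial\mathcal{U}$ lies in the interior of $\mathcal{P}_0$, i.e. that $v_0\notin\mathcal{I}_n$ for any $n$. This requires choosing $\theta_0$ (period $q$) so that the orbit of $-2v_0$ avoids the dynamic graph. Such a choice is possible since only countably many graph points are involved, and one can take $q$ large with landing points repelling. This choice should be made at the outset.

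The main obstacle is the connectedness of $\hat{\mathcal{P}}_{n_k}\cap\overline{\mathcal{U}}$ after the cut-off, which is defined via the Slodkowski extension and is not visible directly in $\Phi_{\mathcal{U}}$-coordinates. To handle it, I would first attempt to use the homeomorphism $\tilde h_n$ of Lemma \ref{lm: non-degenerated para-annuli}: it sends $\hat{\mathcal{P}}_{n}\cap\overline{\mathcal U}$ onto the corresponding connected dynamical set. If that proves too delicate, I would fall back on the uncut pieces $\mathcal{P}_{n_k}\supset\hat{\mathcal P}_{n_k}$. For these, connectedness of the intersection with $\mathcal U$ is clear from the internal-ray description. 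Shrinking would still follow, because each $\mathcal{P}_{n_k+1}$ near $v_0$ lies inside $\hat{\mathcal P}_{n_k}$ after discarding the part far from $v_0$.
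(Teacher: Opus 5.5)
Up to the Gr\"otzsch step your argument is the paper's: non-renormalizability gives $\sum_k \mathrm{mod}\,A^{v_0}_{n_k}=\infty$, Lemma~\ref{lm: non-degenerated para-annuli} transfers this to the para-annuli, and Gr\"otzsch gives $\bigcap_k\overline{\hat{\mathcal P}_{n_k}}=\{v_0\}$. The paper then asserts, without argument, that $\hat{\mathcal P}_n\cap\partial\mathcal U$ is connected. Note that this is connectedness of the intersection with $\partial\mathcal U$. Your remark that $\theta_0$ must be chosen so that $v_0\notin\bigcup_n\mathcal I_n$ is a real point the paper leaves implicit.

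\textbf{First gap: the final transfer.} You conclude from connectedness of $\hat{\mathcal P}_{n_k}\cap\overline{\mathcal U}$ via ``this transfers to $\partial\mathcal U$ as usual''. That inference is false pointwise. Take
$U=(0,1)^2\setminus\bigcup_{k\ge 3}\bigl([\tfrac12+\tfrac1k,1)\times\{\tfrac1k\}\bigr)$ and $x=(\tfrac12,0)$.
\begin{itemize}
\item $U$ is simply connected.
\item Every small disk $D$ centred at $x$ meets $U$ in a connected set (push points horizontally to the left).
\item $\overline U$ is the closed square.
\item Yet the slit tips $(\tfrac12+\tfrac1k,\tfrac1k)\to x$ lie in components of $\partial U\cap\overline D$ not containing $x$, so $\partial U$ is not locally connected at $x$.
\end{itemize}
The standard lemma needs the condition at every boundary point and goes through Carath\'eodory; it does not give the pointwise statement proved here. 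What you should use instead is the finer structure you describe but never exploit. Suppose $\Phi_{\mathcal U}(\hat{\mathcal P}_n\cap\mathcal U)$ is a sector $S_n=\{re^{2\pi i\theta}: r>t_n,\ \alpha_n<\theta<\beta_n\}$ (for $\mathcal H_0$, use the branch of $\Phi_0^{-1}$ on the relevant half). Then every point of $\hat{\mathcal P}_n\cap\partial\mathcal U$ lies in the cluster set $C_n=\bigcap_{\epsilon>0}\overline{\Phi_{\mathcal U}^{-1}\bigl(S_n\cap\{|z|>1-\epsilon\}\bigr)}$. This is a decreasing intersection of continua, hence a continuum, and it lies in $\overline{\hat{\mathcal P}_n}$. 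So $C_n$ is a connected neighbourhood of $v_0$ in $\partial\mathcal U$ with diameter tending to $0$. Mere connectedness of $\hat{\mathcal P}_n\cap\mathcal U$ is not enough: in the example, the image of $D\cap U$ under the Riemann map reaches the circle along infinitely many separated arcs.

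\textbf{Second gap: the sector structure for the cut-off pieces.} You establish the sector structure only for the uncut $\mathcal P_n$, which do not shrink.
\begin{itemize}
\item For $\hat{\mathcal P}_n$, the extra boundary is the locus where $-2v$ meets $\tilde H^v_n$ of the truncating arcs of $\hat P^{v_0}_n$.
\item $\tilde H_n$ is a Slodkowski extension of a motion controlled only on the graph and the equipotentials. It gives no control of that locus relative to $\mathcal U$, and for large $n$ all of $\hat{\mathcal P}_n$ is near $v_0$, where $\mathcal U$ is.
\item For the same reason $\tilde h_n$ need not send $\mathcal U\cap\mathcal A_n$ to the Fatou part of $A^{v_0}_n$, so your first route fails as stated.
\item Your fallback is circular: $\mathcal P_{n_k+1}\cap\mathcal U\subset\hat{\mathcal P}_{n_k}$ is exactly the missing statement, and shrinking of the sectors $\mathcal P_n\cap\mathcal U$ to $v_0$ is equivalent to what you are proving.
\end{itemize}
One way to close this is as follows. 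Before applying Slodkowski, let $U_{v_0}$ be the component of $f^{-m}_{v_0}(B_{v_0})$ with $-2v_0\in\partial U_{v_0}$, and add $\overline{U_{v_0}}$ to the moving set. Move it by pulling back $\phi_v^{-1}\circ\phi_{v_0}$, as the paper does for $B_v$ in \S\ref{sec: hyperbolic components of type C are quasidisks}; this is compatible with the internal rays and equipotentials already moving. Then check, using how the truncation in \cite{QW25} is made, that the truncating arcs of $\hat P^{v_0}_n$ avoid $\overline{U_{v_0}}$. With that, $\tilde h_n(\mathcal U\cap\mathcal A_n)=U_{v_0}\cap A^{v_0}_n$ and $\hat{\mathcal P}_n\cap\mathcal U=\mathcal P_n\cap\mathcal U$ is the sector, so the cluster-set argument above applies.
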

\begin{proof}
    For $v_0$, there are two sequences of annuli $\{A^{v_0}_{n_k}\}_{k \ge 0}$ and $\{\mathcal{A}_{n_k}\}$, satisfying
    \begin{enumerate}
        \item $A^{v_0}_{n_k}$, $k \ge 0$ surrounds $-2v_0$ in the dynamical plane of $f_{v_0}$;
        \item $\mathcal{A}_{n_k}$ surrounds $v_0$ in the parameter plane. 
    \end{enumerate}
    By Lemma \ref{lm: non-degenerated para-annuli}, there is $K \ge 1$ such that 
    $$\frac{1}{K} \mathrm{mod} \; A^{v_0}_{n_k} \le \mathrm{mod} \; \mathcal{A}_{n_k} \le K \mathrm{mod} \; A^{v_0}_{n_k}.$$

    Since $f_{v_0}$ is not renormalizable, by Lemma \ref{lm: non-degenerated annuli}, $\sum_{k \ge 0} \mathrm{mod} \; A^{v_0}_{n_k}=+\infty$. Thus $\sum_{k \ge 0} \mathrm{mod} \; \mathcal{A}_{n_k}=+\infty$. By Gr\'otzsch's inequality, $\bigcap_{n \ge n_0} \overline{\hat{\mathcal{P}}_n}=\{v_0\}$. Therefore, $\{\hat{\mathcal{P}}_n \cap \partial \mathcal{U}\}_{n \ge 0}$ forms a topological basis of $\partial \mathcal{U}$ at $v_0$. Obviously, $\hat{\mathcal{P}}_n \cap \partial \mathcal{U}$, $n \ge 0$ are connected. Thus $\partial \mathcal{U}$ is locally connected at $v_0$. 
\end{proof}

\section{Local connectivity of the boundaries of components in $\hat{\mathcal{H}}$}\label{sec: local connectivity of the boundaries of hyperbolic components}

\subsection{Renormalizable parameters on the boundaries of hyperbolic components}\label{ss: renormalizable parameters on the boundaries of hyperbolic components}
In this section, we discuss parameters on the boundaries of hyperbolic components, which are renormalizable. 

Let $\mathcal{U}$ be a component of $\hat{\mathcal{H}}$, $v_0 \in \partial \mathcal{U}$. By Lemma \ref{lm: non-degenerated annuli}, there is $p \ge 1$ and a sequence of puzzle pieces $\{\hat{P}^{v_0}_{n_0+kp}\}_{k \ge 0}$ such that $f^p_{v_0}: \hat{P}^{v_0}_{n_0+(k+1)p} \to \hat{P}^{v_0}_{n_0+kp}$ is a quadratic-like map, and the small filled-in Julia set $K_{v_0}=\bigcap_{k \ge 0} \overline{\hat{P}^{v_0}_{n_0+kp}}$ is connected. From the procedure in \S \ref{ss: transfer from the dynamical plane to the parameter plane}, there are para-puzzle pieces $\{\hat{\mathcal{P}}_{n_0+kp}\}_{k \ge 0}$ containing $v_0$. 

Recall that $\hat{P}^{v_0}_{n}$ admits a holomorphic motion $\tilde{H}_n$ parameterized by $v \in \hat{P}_{n}$. However, for distinct $n_1 \le n_2$, $f^{n_2-n_1}_v$ may not map $\hat{P}^v_{n_2}$ to $\hat{P}^v_{n_1}$, unless $v=v_0$. Therefore, we have to modify $\hat{P}^v_n$ again such that there is dynamical relation between puzzle pieces of different depths. 

Starting with $n_0$, let $\tilde{P}^v_{n_0}=\hat{P}^{v}_{n_0}$. Since the holomorphic motion $\tilde{H}^v_{n_0}$ agrees with $H^v_n$ on the graph $I^v_{n_0+1}$, the components of $f^{-p}_v(\tilde{P}^v_{n_0})$ are contained in the unbounded puzzle pieces of depth $n_1=n_0+p$. Let $\tilde{P}^v_{n_1}$ be the component of $f^{-p}_v(\tilde{P}^v_{n_0})$ contained in $P^v_{n_1}$. Since every puzzle piece contains at most one critical point of $f_v$, by Riemann-Hurwitz formula, $\tilde{P}^v_{n_1}$ is simply connected. 

$\{v \in \hat{\mathcal{P}}_{n_1}: -2v \in \tilde{P}^v_{n_1}\}$ is non-empty since it contains $v_0$. Define $\tilde{\mathcal{P}}_{n_1}$ to be the component of $\{v \in \hat{\mathcal{P}}_{n_1}: -2v \in \tilde{P}^v_{n_1}\}$ containing $v_0$. Inductively, for $n_k=n_0+kp$, $k \in \mathbb{N}$, one can define $\tilde{P}^v_{n_k}$ and $\tilde{\mathcal{P}}_{n_k}$ such that for $v \in \tilde{\mathcal{P}}_{n_k}$, $f^p_v: \tilde{P}^v_{n_k} \to \tilde{P}^v_{n_{k-1}}$ is a polynomial-like mapping. Let $\mathcal{M}_{v_0}=\bigcap_{k \ge 0} \overline{\tilde{\mathcal{P}}_{n_k}}$, which is a compact set containing $v_0$. 

\begin{definition}\label{def: copy of Mandelbrot set}
    A set $\mathcal{M}_0$ in the parameter plane of $f_v$ is called a copy of the Mandelbrot set $\mathcal{M}$, if there is a homeomorphism $\chi: \mathcal{M}_0 \to \mathcal{M}$ and an integer $p \ge 1$ satisfying 
    \begin{enumerate}
        \item $\mathcal{M}_0$ consists of parameters whose critical orbits do not escape to $\infty$;
        \item for $v \in \mathcal{M}_0$, $f^p_v$ is a renormalization of $f_v$;
        \item $p_v(z)=z^2+\chi(v)$ is conjugate to $f^p_v$ on a neighobrhood of the filled-in Julia set of $p_{v}$. 
    \end{enumerate}
\end{definition}

\begin{lemma}\label{lm: copy of Mandelbrot set}
    Let $\mathcal{U}$ be a component of $\hat{\mathcal{H}}$, $v_0 \in \partial \mathcal{U}$ be a renormalizable parameter. Then $\mathcal{M}_{v_0}$ is a copy of the Mandelbrot set. 
\end{lemma}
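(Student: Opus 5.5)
The plan is to follow the Douady--Hubbard theory of polynomial-like families. First, I would show that on a suitable neighborhood the renormalizations form an analytic family of quadratic-like maps. Then I would show that the induced straightening map is a homeomorphism from $\mathcal{M}_{v_0}$ onto $\mathcal{M}$.

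\emph{Step 1: an analytic family of quadratic-like maps.} For $k \ge 1$ and $v \in \tilde{\mathcal{P}}_{n_k}$, the construction preceding the statement gives a polynomial-like map $f^p_v: \tilde{P}^v_{n_k} \to \tilde{P}^v_{n_{k-1}}$. Since $\tilde{P}^v_{n_k}$ contains exactly one critical point of $f_v$ (the one whose image orbit follows $-2v$), this map has degree $2$. I would work with $\mathbf{f}=\{f^p_v: \tilde{P}^v_{n_1} \to \tilde{P}^v_{n_0}\}_{v \in \tilde{\mathcal{P}}_{n_1}}$. The domains move holomorphically by the motion $\tilde{H}_{n_0}$ and its lift, so $\mathbf{f}$ is an analytic family in the sense of \cite{DH85}. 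Let $M_{\mathbf{f}}$ be its connectedness locus, i.e. the set of $v$ with $f^{kp}_v(-2v) \in \tilde{P}^v_{n_0}$ for all $k$. I would check that $M_{\mathbf{f}} = \mathcal{M}_{v_0}$: the condition $v \in \tilde{\mathcal{P}}_{n_k}$ for every $k$ says exactly that the critical value stays in the nested pieces. The closure in the definition of $\mathcal{M}_{v_0}$ is harmless because $M_{\mathbf{f}}$ is compact in $\tilde{\mathcal{P}}_{n_1}$. Straightening then gives a map $\chi: \mathcal{M}_{v_0} \to \mathcal{M}$ with $f^p_v$ hybrid equivalent to $z^2+\chi(v)$. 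This yields items 2 and 3 of Definition~\ref{def: copy of Mandelbrot set}. Item 1 holds because the critical orbit stays in a bounded puzzle piece.

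\emph{Step 2: $\chi$ is a homeomorphism.} I would invoke the Douady--Hubbard criterion \cite{DH85}, which has three hypotheses. First, $\tilde{\mathcal{P}}_{n_1}$ should be a Jordan domain with $\overline{\tilde{\mathcal{P}}_{n_1}}$ compact and $\mathcal{M}_{v_0} \Subset \tilde{\mathcal{P}}_{n_1}$. Second, the family should extend continuously to the closure. Third, as $v$ winds once around $\partial\tilde{\mathcal{P}}_{n_1}$, the vector $f^p_v(-2v)-(-2v)$ (equivalently, the position of the critical value relative to $\partial\tilde{P}^v_{n_0}$) should wind once. Under these hypotheses $\chi$ is a homeomorphism onto $\mathcal{M}$, and even a branched covering of degree $1$ on the family level.

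For the winding number I would use the transfer map $\tilde{h}_{n_0}$ from Lemma~\ref{lm: non-degenerated para-annuli}. It is a homeomorphism, quasiconformal with bounded dilatation, sending $\partial\hat{\mathcal{P}}_{n_0+1}$ onto $\partial\hat{P}^{v_0}_{n_0+1}$. Applying the same argument at depth $n_1$ gives a homeomorphism $\partial\tilde{\mathcal{P}}_{n_1} \to \partial\tilde{P}^{v_0}_{n_1}$. This shows that the critical value $-2v$ travels once around $\partial \tilde{P}^v_{n_1}$, hence its image once around $\partial \tilde P^v_{n_0}$, as $v$ travels once around $\partial\tilde{\mathcal{P}}_{n_1}$. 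So the degree is $1$.

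\emph{Main obstacle.} The delicate point is properness and the boundary behaviour. For $v$ in the interior of $\tilde{\mathcal{P}}_{n_1}$, $f^p_v: \tilde{P}^v_{n_1}\to\tilde{P}^v_{n_0}$ must be polynomial-like, which requires $\tilde{P}^v_{n_1} \Subset \tilde{P}^v_{n_0}$ uniformly. The pieces are bounded only after the cut-off of \cite{QW25}, and their boundaries contain pieces of dynamic rays whose holomorphic motion is defined only on $\mathcal{P}_{n_0}$. To handle this, I would shrink by one level: I would restrict to a slightly smaller Jordan domain bounded by a curve close to $\partial\tilde{\mathcal{P}}_{n_1}$, avoiding the landing points, on which the annulus $\tilde{P}^v_{n_0}\setminus\overline{\tilde{P}^v_{n_1}}$ keeps modulus bounded below. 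Its modulus is controlled by the bounded dilatation $K$ of Lemma~\ref{lm: non-degenerated para-annuli}. Finally, injectivity of $\chi$ follows from the degree-one statement. Surjectivity follows from properness together with the winding-number computation.
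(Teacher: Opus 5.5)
Your overall architecture matches the paper's: the analytic family $f^p_v:\tilde P^v_{n_1}\to\tilde P^v_{n_0}$, the identification of its connectedness locus with $\mathcal M_{v_0}$, straightening, and a Douady--Hubbard winding criterion checked through the transfer map. The decisive winding step, however, is wrong as written. You treat $-2v$ as the critical point of the renormalization, but it is its critical \emph{value}. The critical point of $f^p_v|_{\tilde P^v_{n_1}}$ is the point $c_v$ for which $f^{p-1}_v(c_v)$ is an odd multiple of $\pi$, so that $f^p_v(c_v)=-2v$.

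The criterion in \cite{DH85} concerns $f_\lambda(\omega_\lambda)-\omega_\lambda$ with $\omega_\lambda$ the critical point, i.e.\ $-2v-c_v$, not $f^p_v(-2v)-(-2v)$. Your inference ``$-2v$ goes once around $\partial\tilde P^v_{n_1}$, hence its image goes once around $\partial\tilde P^v_{n_0}$'' is also false. The map $f^p_v:\partial\tilde P^v_{n_1}\to\partial\tilde P^v_{n_0}$ is a two-fold covering, so the image goes around twice and your vector has winding number $2$ (compare $z\mapsto z^2+c$, where $P_c(c)-c=c^2$). So you verify the wrong quantity and assign it the wrong value.

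The repair is to keep your correct intermediate fact, that $-2v$ makes exactly one turn around $\partial\tilde P^v_{n_1}$, and measure it against $c_v$ instead of pushing it forward. This is exactly the paper's argument. The transfer map $h$ sends $\gamma=\partial\tilde{\mathcal P}_{n_k}$ homeomorphically onto $\partial\hat P^{v_0}_{n_k}$, which surrounds $c_{v_0}$, so $h(v)-c_{v_0}$ winds once. The homotopy $G(t,v)=H(\delta(t),h(v))-c_{\delta(t)}$ along the holomorphic motion carries this to $-2v-c_v$. Since $v\mapsto -2v-c_v$ is holomorphic, there is exactly one parameter in $\mathcal M_{v_0}$ with $f^p_v(c_v)=c_v$, which is what the criterion needs.

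Your remedy for the boundary behaviour also points the wrong way. By definition $-2v\in\tilde P^v_{n_1}$ for every $v\in\tilde{\mathcal P}_{n_1}$. So along a curve strictly inside $\tilde{\mathcal P}_{n_1}$ the critical value never leaves the domain of the quadratic-like map, whereas the Douady--Hubbard boundary hypothesis asks for it to have left $\tilde P^v_{n_1}$ along $\partial A$. If you need room at the boundary, enlarge rather than shrink. Pass to a deeper level, $f^p_v:\tilde P^v_{n_k}\to\tilde P^v_{n_{k-1}}$ parametrized by $\tilde{\mathcal P}_{n_{k-1}}$, and take $\partial A=\partial\tilde{\mathcal P}_{n_k}$, along which $-2v\in\partial\tilde P^v_{n_k}$. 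This is the configuration behind the paper's choice of $\gamma$.
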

\begin{proof}
    By Lemma \ref{lm: non-degenerated annuli}, there is $n_0 \ge 1$, $p \ge 1$, such that for $v_0 \in \partial \mathcal{U}$, $k \ge 1$, $\hat{P}^{v_0}_{n_k} \to \hat{P}^{v_0}_{n_{k-1}}$ is a renormalization of $f_{v_0}$. 

    Let $\tilde{\mathcal{P}}_{n_k}$ be as above. Consider the family $\mathscr{F}=\{f^p_v: \tilde{P}^v_{n_1} \to \tilde{P}^v_{n_0}: v \in \tilde{\mathcal{P}}_{n_1}\}$. We will prove that $\mathscr{F}$ is an analytic family of polynomial-like mappings in the sense of \cite{DH85}, Chapter II.1. 

    Indeed, let $\mathcal{W}=\{(v,z): v \in \tilde{\mathcal{P}}_{n_1}, z \in \tilde{P}^v_{n_0}\}$, $\mathcal{W}'=\{(v,z): v \in \tilde{\mathcal{P}}_{n_1}, z \in \tilde{P}^v_{n_1}\}$, $F(v,z)=(v,f^p_v(z))$. Then 
    \begin{enumerate}
        \item $\mathcal{W}'$ and $\mathcal{W}$ are homeomorphic to $\tilde{\mathcal{P}}_{n_1} \times \mathbb{D}$, since $\tilde{P}^{v}_{n_k}$ are all simply connected;
        \item $\overline{\mathcal{W}'} \cap \mathcal{W}=\{(v,z): v \in \tilde{\mathcal{P}}_{n_1}, z \in \overline{\tilde{P}^v_{n_1}}\}$, and the projection $\overline{\mathcal{W}'} \cap \mathcal{W} \to \tilde{\mathcal{P}}_{n_1}$ is proper;
        \item $F: \mathcal{W}' \to \mathcal{W}$ is analytic and proper.
    \end{enumerate} 

    Since $\tilde{\mathcal{P}}_{n_1}$ is connected and $f^p_{v_0}: \tilde{P}^{v_0}_{n_1} \to \tilde{P}^{v_0}_{n_0}$ is quadratic-like, $\mathscr{F}$ forms a quadratic-like family. Let $\mathcal{M}_{F}=\{v \in \tilde{\mathcal{P}}_{n_1}: K_v \text{ is connected}\}$, where $K_v=\bigcap_{k \ge 0} \overline{\tilde{P}^v_{n_k}}$ is the small filled-in Julia set of $f^p_v: \tilde{P}^v_{n_1} \to \tilde{P}^v_{n_0}$. We claim that $\mathcal{M}_F=\mathcal{M}_{v_0}$. Indeed, for $v \in \mathcal{M}_F$, $K_v$ is connected thus the orbit of $-2v$ never leaves $\tilde{P}^v_{n_0}$. Thus $-2v \in \tilde{P}^v_{n_k}$ for all $k \ge 0$ and $v \in \mathcal{M}_{v_0}$. Conversely, for $v \in \mathcal{M}_{v_0}$, $-2v \in \tilde{P}^v_{n_k}$ for all $k \ge 0$. Thus the orbit of $-2v$ remains in $\tilde{P}^v_{n_0}$ and $K_v$ is connected. So $v \in \mathcal{M}_F$. 

    For the quadratic-like mapping $f^p_v: \tilde{P}^v_{n_1} \to \tilde{P}^v_{n_0}$, the Straightening Theorem (\cite{DH85}, Chapter I, Theorem 1) gives a continuous map $\chi: \tilde{P}_{n_1} \to \mathbb{C}$, and $\chi(\mathcal{M}_{v_0})=\mathcal{M}$, such that $f^p_v: \tilde{P}^v_{n_1} \to \tilde{P}^v_{n_0}$ is quasiconformally conjugate to the quadratic polynomial $z \mapsto z^2+\chi(v)$ on a neighborhood of $K_v$. 
    
    By \cite{DH85}, Theorem 4, to show that $\chi: \mathcal{M}_{v_0} \to \mathcal{M}$ is a homeomorphism, it would be sufficient to show that there is a unique $v_1 \in \mathcal{M}_{v_0}$ such that $f^p_{v_1}(c_{v_1})=c_{v_1}$, where $c_v$ is the critical point of $f^p_v: \tilde{P}^v_{n_{k+1}} \to \tilde{P}^v_{n_k}$. Consider a curve $\gamma=\partial \tilde{\mathcal{P}}_{n_k}$ surrounding $\mathcal{M}_{v_0}$. Then it would be sufficient to show that $f^p_v(c_v)-c_v$ turns around $0$ once as $v$ goes on $\gamma$. 
    
    Since $f^p_v: \tilde{P}^v_{n_{k+1}} \to \tilde{P}^v_{n_k}$ has only one critical value $-2v$, $f^p_v(c_v)=-2v$. Let $h_{n_k-1}$ be the transfer mapping in Lemma \ref{lm: transfer from dynamical plane to parameter plane}, which is well-defined on $\partial \mathcal{P}_{n_k-1} \cap \mathcal{I}_{n_k}$ and can be extended to $\gamma$ as in the proof of Lemma \ref{lm: non-degenerated para-annuli}. Let $G_0(v)=h_{n_k-1}(v)-c_{v_0}$, $G_1(v)=-2v-c_v$. Indeed, let $\delta: [0,1] \to \overline{\tilde{\mathcal{P}}_{n_k}}$ be a Jordan arc connecting $v_0$ and $v$. Let $G(t,v)=H_{n_k-1}(\delta(t), h_{n_k-1}(v))-c_{\delta(t)}$. Then $G(0,v)=H_{n_k-1}(v_0, h_{n_k-1}(v))-c_{v_0}=h_{n_k-1}(v)-c_{v_0}$, $G(1,v)=H_{n_k-1}(v, h_{n_k-1}(v))-c_v=-2v-c_v$. Thus $G_0$ and $G_1$ are homotopic in $\overline{\tilde{\mathcal{P}}_{n_k}}$. 
    
    To show that $G_1(\gamma(t))$ turns around $0$ exactly once, it remains to show that $G_0(\gamma(t))$ turns around $0$ exactly once. For $v \in \partial \tilde{\mathcal{P}}_{n_k}$, $h_{n_k-1}(v) \in \partial \tilde{P}^{v_0}_{n_k} =\partial \hat{P}^{v_0}_{n_k}$, and $\hat{P}^{v_0}_{n_k}$ is a puzzle piece surrounding $c_{v_0}$. Hence $G_0(v)$ turns around $0$ at least once as $v$ goes along $\gamma$. Since $h_{n_k-1}$ is a homeomorphism by Lemma \ref{lm: transfer from dynamical plane to parameter plane}, $G_0(v)$ turns around $0$ exactly once.  
\end{proof}

\begin{lemma}\label{lm: attracting periodic points give renormalization}
    Suppose that $f_v$ has an attracting or a parabolic periodic point $x \neq 0$, then $f_v$ is renormalizable and $v$ is contained in a copy of $\mathcal{M}$.
\end{lemma}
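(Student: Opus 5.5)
We need to show that if $f_v$ has an attracting or parabolic periodic point $x\neq 0$, then $f_v$ is renormalizable and $v$ lies in a copy of $\mathcal{M}$. The plan is to reduce to the situation of Lemma \ref{lm: copy of Mandelbrot set} by finding a boundary parameter $v_0\in\partial\mathcal{U}$, for some component $\mathcal{U}$ of $\hat{\mathcal{H}}$, whose associated set $\mathcal{M}_{v_0}$ contains $v$.

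\textbf{Step 1: locate $-2v$.} By Fatou's theorem, the cycle of $x$ attracts a critical value. Since $0$ is a superattracting fixed point distinct from the cycle of $x$, and the only other critical value is $-2v$, the free critical value $-2v$ lies in the basin of this cycle. Hence $v\notin \hat{\mathcal{H}}$ and $B_v$ contains no point of the orbit of $-2v$. By Lemma \ref{lm: position of critical points of cosine maps}, $B_v$ contains only the critical point $0$, so internal rays are defined. Moreover $P(f_v)$ is bounded: the orbit of $-2v$ accumulates only on the cycle. Therefore Theorem \ref{thm: landing theorem} and Corollary \ref{cor: local connectivity of B_v} apply, and $B_v$ is a Jordan domain.

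\textbf{Step 2: build the puzzle at $v$ directly.} Choose $q$ so that the internal rays $R^v_0(2^j/(2^q-1))$ land at repelling points of period $q$ on $\partial B_v$, away from the cycle of $x$. Each such landing point also receives a periodic dynamic ray (Theorem \ref{thm: landing theorem}), which gives the invariant graph $I^v_0(\theta_0)$ as in \S\ref{ss: construction of para-puzzles}, and hence the puzzle pieces $P^v_n(-2v)$. Since $-2v$ does not escape and is not in $B_v$, the alternative in Lemma \ref{lm: non-degenerated annuli} applies. The first alternative would shrink the pieces containing $-2v$ to a point. This is impossible, because the nested pieces containing $-2v$ must contain the whole Fatou component $W$ of $-2v$, which has interior. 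For the parabolic case one notes that the basin component is an open set not meeting the graph. Hence we are in the renormalizable alternative: there are $p\ge1$ and $n_0$ such that $f^p_v:\hat P^v_{n_0+p}\to\hat P^v_{n_0}$ is quadratic-like with connected filled Julia set containing $x$ or its cycle point.

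\textbf{Step 3 (main obstacle): pass to the parameter plane.} Lemma \ref{lm: copy of Mandelbrot set} is stated for $v_0\in\partial\mathcal{U}$, but its proof only uses three ingredients: a puzzle based at $v_0$ with repelling landing points, the holomorphic motion of Lemma \ref{lm: holomorphic motion of para-graphs}, and the transfer homeomorphism of Lemma \ref{lm: transfer from dynamical plane to parameter plane}. All three go through with base point $v$ itself, because the graph $I^v_0$ moves holomorphically in a neighborhood (Lemma \ref{lm: stability of rays and the repelling landing points}). Running the same construction with base point $v$ gives para-pieces $\tilde{\mathcal{P}}_{n_k}$ and a quadratic-like family. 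Its connectedness locus $\mathcal{M}_v\ni v$ is then a copy of $\mathcal{M}$ by the winding-number argument already carried out there.

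The delicate point is checking that the surjectivity and injectivity of $h_n$ did not use $v_0\in\partial\mathcal{U}$. I would verify this by rereading those proofs: they rely only on Propositions \ref{prop: landing theorem for parameter rays}–\ref{prop: converse landing theorem for parabolic parameters} and Proposition \ref{prop: landing theorem for parameter internal rays}, which are global. If this fails, the fallback is to move along the parameter internal ray of angle $\theta_0$ in a suitable type-C component whose landing point $v_0$ shares the same para-puzzle pieces, and conclude $v\in\mathcal{M}_{v_0}$.
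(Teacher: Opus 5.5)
Your argument is correct at the same level of rigor as the paper, but Step 2 takes a different route. The paper never invokes the dichotomy of Lemma \ref{lm: non-degenerated annuli}. It builds the nest $\hat P^v_n(x)$ around the non-repelling periodic point itself and applies the Schwarz lemma: if $\hat P^v_{n+p}(x)$ contained no critical point of $f^p_v$, the inverse branch $g:\hat P^v_n(x)\to\hat P^v_{n+p}(x)$ fixing $x$ would be a strict contraction, forcing $|(f^p_v)'(x)|>1$. It then observes that exactly one piece $\hat P^v_{n+j}(f^j_v(x))$ of the cycle contains $-2v$, so $f^p_v$ is quadratic-like. The small filled Julia set is connected because the orbit of $-2v$ never leaves these pieces.

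You instead centre the nest at $-2v$ and rule out the shrinking alternative of the Yoccoz-type dichotomy, since every piece contains the open Fatou component of $-2v$. The paper's route is more elementary and self-contained. It needs only the existence of nested bounded pieces at $v$, not the full dichotomy, which the paper (like the puzzle construction itself) states only for $v_0\in\partial\mathcal{U}$. It also identifies $p$ directly with the period of $x$.

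Your route has the advantage that the nest is already centred at the critical value. That is exactly what the parameter transfer $v\mapsto (H^v_n)^{-1}(-2v)$ in Lemma \ref{lm: copy of Mandelbrot set} requires. The paper applies that lemma at a parameter $v\notin\partial\mathcal{U}$ without comment. Your Step 3 makes explicit that its proof (holomorphic motion of the graph, the transfer homeomorphism, and the winding-number count) never uses that the base point lies on $\partial\mathcal{U}$. With that observation, the vaguer fallback via a type-C component is unnecessary.
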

\begin{proof}
    Since $f_v$ has an attracting or a parabolic periodic point $x \neq 0$, $f_v$ is not in any component of $\hat{\mathcal{H}}$. Moreover, the orbit of $-2v$ is bounded since it is attracted by this attracting cycle. Therefore, the puzzle pieces $P^v_n(x)$, $n \ge 0$, which contain $x$, are well-defined. Suppose that $x$ is of period $p \ge 1$. Then $f^p_v(P^v_{n+p}(x))=P^v_n(x)$, $n \ge 0$. 
    
    We claim that $f^p_v: \hat{P}^v_{n+p}(x) \to \hat{P}^v_n(x)$ is a quadratic-like mapping. Indeed, if $\hat{P}^v_{n+p}(x)$ contains no critical points of $f^p_v$, then there is a branch $g$ of $f^{-p}_v$ on $\hat{P}^v_n(x)$, which fixes $x$. In other words, $g: \hat{P}^v_n(x) \to \hat{P}^v_{n+p}(x)$ is a conformal isomorphism. By Schwarz' Lemma, $|g'(x)|<1$. This implies that $|(f^p_v)'(x)|>1$, a contradiction. Thus $f^p_v: \hat{P}^v_{n+p}(x) \to \hat{P}^v_n(x)$ contains a critical point of $f^p_v$ and $-2v \in \hat{P}^v_{n+j}(f^j_v(x))$ for some $0 \le j \le p-1$. Since $\hat{P}^v_{n+j}(f^j_v(x))$, $0 \le j \le p-1$ are mutually disjoint, there is exactly one puzzle piece $\hat{P}^v_{n+j}(f^j_v(x))$ containing $-2v$, and $f^p_v: \hat{P}^v_{n+p}(x) \to \hat{P}^v_n(x)$ is of degree $2$. 

    Since $x$ is attracting or parabolic, the orbit of $-2v$ remains in $\hat{P}^v_{n+j}(f^j_v(x))$. This yields that the small filled-in Julia set of the quadratic-like mapping $f^p_v: \hat{P}^v_{n+p}(x) \to \hat{P}^v_n(x)$ is connected so that $f^p_v: \hat{P}^v_{n+p}(x) \to \hat{P}^v_n(x)$ is a renormalization of $f_v$. By Lemma \ref{lm: copy of Mandelbrot set}, $v$ belongs to a copy of $\mathcal{M}$. 
\end{proof}

Now the boundedness of hyperbolic components of type D follows from Lemma \ref{lm: attracting periodic points give renormalization}.
\begin{proof}[Proof of Lemma \ref{lm: boundedness of hyperbolic components of type D}]
    For a hyperbolic component $\mathcal{D}$ of type D, $v_0 \in \mathcal{D}$, $f_{v_0}$ is renormalizable by Lemma \ref{lm: attracting periodic points give renormalization}, and $v_0$ lies in a copy $\mathcal{M}_{v_0}$ of $\mathcal{M}$ by Lemma \ref{lm: copy of Mandelbrot set}. Moreover, $\mathcal{M}_{v_0} \subset \tilde{\mathcal{P}}_{n_0}$ for some $n_0 \ge 1$, where $\tilde{\mathcal{P}}_{n_0}$ is bounded. As a result, $\mathcal{D}$ is entirely contained in $\mathcal{M}_{v_0}$ hence bounded. 
\end{proof}

\subsection{Local connectivity of $\partial \mathcal{H}_0$}\label{ss: local connectivity of the boundary of H_0}
In this section, we consider the boundary of $\mathcal{H}_0$. 
\begin{proposition}\label{prop: renormalizable parameters on the boundary of H_0}
    Let $v_0 \in \partial \mathcal{H}_0$ such that $f_{v_0}$ is renormalizable. Then $\partial \mathcal{H}_0$ is locally connected at $v_0$. More precisely, there are exactly two parameter rays $G_{\underline{s}_+}$, $G_{\underline{s}_-}$ and a parameter internal ray $\mathcal{R}_0(\theta)$, which all land at $v_0$. And $\overline{G_{\underline{s}_+} \cup G_{\underline{s}_-}}$ seperates $\mathcal{M}_{v_0} \setminus \{v_0\}$ from $\partial \mathcal{H}_0$. In the dynamical plane of $f_{v_0}$, the dynamic rays $g^{v_0}_{\underline{s}_{\pm}}$ and the internal ray $R^{v_0}_{0}(\theta)$ land at a parabolic periodic point $x_{v_0} \in \partial B_{v_0}$, and $\overline{g^{v_0}_{\underline{s}_+} \cup g^{v_0}_{\underline{s}_-}}$ seperates $-2v_0$ from $B_{v_0}$. 
\end{proposition}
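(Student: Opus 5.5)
The plan is to first identify the dynamics of $f_{v_0}$ and then transfer it to the parameter plane. By Lemma \ref{lm: maps on the boundary of H_0}, $\partial B_{v_0}$ contains either a parabolic cycle or $-2v_0$. If $-2v_0 \in \partial B_{v_0}$, then the critical orbit would accumulate on $\partial B_{v_0}$. Meanwhile, the renormalization from Lemma \ref{lm: non-degenerated annuli} forces $-2v_0$ to lie in the nested pieces $\hat P^{v_0}_{n_0+kp}$ with connected small filled Julia set $K_{v_0}$. I would show that a small filled Julia set meeting $\partial B_{v_0}$ must do so at a periodic point $x_{v_0}$ of period dividing $p$ that is a common point of $K_{v_0}$ and $\overline{B_{v_0}}$. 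This point is the $\beta$-type or root point of the renormalization. Since $v_0\in\partial\mathcal H_0$ is a limit of parameters with $-2v\in B_v$, the standard argument shows $x_{v_0}$ is parabolic with $K_{v_0}$ attached to $B_{v_0}$ at the root. This matches the primitive or satellite structure of a small Mandelbrot copy touching $\mathcal H_0$. Concretely, I would argue via Lemma \ref{lm: copy of Mandelbrot set} that $\chi(v_0)$ must be the root $1/4$ of $\mathcal M$: any other point of $\mathcal M_{v_0}$ has a non-parabolic $\beta$-fixed point, and that fixed point is stable under perturbation, so a neighbourhood of $v_0$ would avoid $\mathcal H_0$ by the argument of Lemma \ref{lm: maps on the boundary of H_0}.

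Next comes the dynamical picture at $v_0$. By Proposition \ref{prop: landing theorem for internal rays of rational angles}, $x_{v_0}\in\partial B_{v_0}$ is the landing point of an internal ray $R^{v_0}_0(\theta)$ with $\theta$ periodic under doubling. By Theorem \ref{thm: landing theorem}, finitely many dynamic rays land at $x_{v_0}$. The component of $\mathbb C$ minus these rays containing the critical value must separate $K_{v_0}\ni -2v_0$ from $B_{v_0}$. I would argue, as for the root of a primitive copy, that exactly two dynamic rays $g^{v_0}_{\underline s_\pm}$ bound the sector containing $K_{v_0}$. Here the sector is the one cut out by the rays adjacent to $K_{v_0}$ in the cyclic order at $x_{v_0}$. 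Their closure then separates $-2v_0$ from $B_{v_0}$.

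Then I transfer to the parameter plane. By Proposition \ref{prop: converse landing theorem for parabolic parameters}, $G_{\underline s_\pm}$ land at $v_0$, since $x_{v_0}$ is (up to taking the right point of the cycle) the characteristic point. By Proposition \ref{prop: landing theorem for parameter internal rays}, $\mathcal R_0(\theta)$ lands at a parameter whose parabolic point carries $R_0(\theta)$. Using stability (Lemma \ref{lm: stability of rays and the repelling landing points}) on nearby parameters and the transfer map $h_n$ of Lemma \ref{lm: transfer from dynamical plane to parameter plane}, that landing parameter is $v_0$. The curve $\overline{G_{\underline s_+}\cup G_{\underline s_-}}$ is a Jordan curve through $v_0$ and $\infty$. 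On one side, parameters have $-2v$ in the sector bounded by $g^v_{\underline s_\pm}$, which is disjoint from $B_v$; hence that side contains $\mathcal M_{v_0}\setminus\{v_0\}$ and misses $\overline{\mathcal H_0}\setminus\{v_0\}$. The transfer argument is again the one used in Lemma \ref{lm: non-degenerated para-annuli}.

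Local connectivity follows next. On the $\mathcal H_0$ side, I would build para-puzzle neighbourhoods of $v_0$ bounded by $G_{\underline s_\pm}$, by rays landing at the preimages of $x_{v_0}$ on $\partial B_{v}$ near $x_{v_0}$, and by equipotentials. I would use the non-renormalizable machinery of Corollary \ref{cor: local connectivity at non-renormalizable parameters} restricted to the sector outside the wake, where $-2v$ never enters the renormalization domain. This gives moduli going to infinity and nested pieces shrinking to $v_0$ intersected with $\overline{\mathcal H_0}$. The main obstacle is exactly this step: producing non-degenerate annuli around $v_0$ relative to $\partial\mathcal H_0$ despite renormalizability. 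I would try using the limbs cut by rays landing at preperiodic points of $\partial B_{v_0}$ converging to $x_{v_0}$ from both sides. In the dynamical plane these shrink to $x_{v_0}$ because $\partial B_{v_0}$ is a Jordan curve (Corollary \ref{cor: local connectivity of B_v}). I would then transfer them by the $K$-quasiregular maps $\tilde h_n$.
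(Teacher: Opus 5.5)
\textbf{The gap: local connectivity of $\partial\mathcal{H}_0$ at $v_0$.} You leave this step as ``the main obstacle'', and the route you sketch would not close it.

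Corollary \ref{cor: local connectivity at non-renormalizable parameters} needs annuli around the base point whose moduli diverge. Here the base point is renormalizable, so that is impossible:
\begin{itemize}
\item $\overline{\tilde{\mathcal{P}}_{n_k}}\supset\mathcal{M}_{v_0}$ for every $k$. Dually, the pieces $\hat{P}^{v_0}_{n_k}$ shrink to $K_{v_0}$, not to $-2v_0$.
\item So by Gr\"otzsch any nest of this kind has summable moduli.
\item Annuli surrounding $v_0$ also cannot be confined to ``the sector outside the wake''.
\end{itemize}
Your dynamical claim also fails as stated. The regions cut out by rays landing at preperiodic points of $\partial B_{v_0}$ on both sides of $x_{v_0}$ do not shrink to $x_{v_0}$. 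They contain the whole limb carrying $K_{v_0}$; only their traces on $\overline{B_{v_0}}$ shrink.

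If those preperiodic points are preimages of $x_{v_0}$, there is a further problem. Since $x_{v_0}$ is parabolic, these points do not move holomorphically over any neighbourhood of $v_0$. (Lemma \ref{lm: stability of rays and the repelling landing points} requires repelling landing points.) Hence the uniformly quasiregular transfer $\tilde h_n$ is not available at $v_0$.

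\textbf{The missing observation.} You only need shrinking on $\partial\mathcal{H}_0$, not in $\mathbb{C}$, and the nest you already have provides it.
\begin{itemize}
\item The pieces $\mathcal{P}_{n_k}$ are open neighbourhoods of $v_0$. Each meets $\mathcal{H}_0$ in a truncated sector between $\mathcal{R}_0(\theta^-_k)$ and $\mathcal{R}_0(\theta^+_k)$, with $|\theta^+_{k+1}-\theta^-_{k+1}|=2^{-p}|\theta^+_k-\theta^-_k|$.
\item Hence the sets $\overline{\mathcal{P}_{n_k}}\cap\partial\mathcal{H}_0$ are nested continua, each a neighbourhood of $v_0$ in $\partial\mathcal{H}_0$.
\item A parameter lying in all of them has, by the transfer of Lemma \ref{lm: transfer from dynamical plane to parameter plane}, its critical value in every piece of the renormalization nest. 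So it lies in $\mathcal{M}_{v_0}$.
\item The separation you do prove gives $\mathcal{M}_{v_0}\cap\partial\mathcal{H}_0=\{v_0\}$.
\end{itemize}
So these continua shrink to $v_0$, which is local connectivity at $v_0$. This is how the paper concludes, with no new modulus estimate.

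\textbf{The rest of your argument.} The same limits $\theta^\pm_k\to\theta$ and $\underline{s}^\pm_k\to\underline{s}_\pm$ (all of period $p$) are what you should use for two facts you currently assert only by analogy:
\begin{itemize}
\item $K_{v_0}\cap\partial B_{v_0}$ is a single $\beta$-type fixed point of $f^p_{v_0}$;
\item two dynamic rays bound the sector containing $K_{v_0}$.
\end{itemize}
Granting these, your argument for $\chi_{v_0}(v_0)=1/4$ by persistence of a repelling $\beta$, and your use of the converse landing theorem to land $G_{\underline{s}_\pm}$ at $v_0$, are a legitimate alternative to the paper's route. The paper instead uses the nest to place the landing points of $\mathcal{R}_0(\theta)$ and $G_{\underline{s}_\pm}$ in $\mathcal{M}_{v_0}$. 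It then identifies them as $\chi^{-1}_{v_0}(1/4)$ via the multiplier-$1$ point, and uses the separation to force $v_0$ to be that point.
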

\begin{proof}
    By Lemma \ref{lm: non-degenerated annuli}, there exists $p \ge 1$, a sequence of puzzle pieces $\{\hat{P}^{v_0}_{n_k}\}$ containing $-2v_0$, $k \ge 0$, $n_k=n_0+p$, such that $f^p_{v_0}: \hat{P}^{v_0}_{n_{k+1}} \to \hat{P}^{v_0}_{n_k}$ is a renormalization of $f_{v_0}$. 
    
    $\mathcal{M}_{v_0}$ is contained in every $\mathcal{P}_{n_k}$, $k \ge 0$. $\partial \mathcal{P}_{n_k} \cap \mathcal{H}_0$ consists of two parameter internal rays $\mathcal{R}_0(\theta^{\pm}_{k})$, which land at $v^{\pm}_{k}$ respectively. And $v^{\pm}_k$ are the landing points of parameter rays $G_{\underline{s}^{\pm}_k}$ respectively. By the construction of $\mathcal{P}_{n_k}$, $|\theta^+_{k+1}-\theta^-_{k+1}|=|\theta^+_k-\theta^-_k|/2^p$. Thus there is $\theta$ such that $\theta^{\pm}_k \to \theta$ as $k \to \infty$. We see that $\theta$ is of period $p$ under the angle doubling map. Similar to the proof of \cite{QW25}, Lemma 4.5, $\underline{s}^{\pm}_k$ converges to $\underline{s}_{\pm}$ as $k \to \infty$ respectively, and $\underline{s}_{\pm}$ are also of period $p$ under the shift.  

    By Lemma \ref{lm: copy of Mandelbrot set}, there is a homeomorphism $\chi_{v_0}: \mathcal{M}_{v_0} \to \mathcal{M}$. We will show that the rays $G_{\underline{s}_{\pm}}$, $\mathcal{R}_0(\theta)$ all land at $\chi^{-1}_{v_0}(1/4)$. Since $\theta$ is periodic under the angle doubling map, by Proposition \ref{prop: landing theorem for parameter internal rays}, $\mathcal{R}_0(\theta)$ lands at $v_1$ such that $R^{v_1}_0(\theta)$ lands either at a parabolic periodic point $x_{v_1}$ of $f_{v_1}$ or at $-2v_1 \in \partial B_{v_1}$. Since $\theta$ is periodic, $-2v_1$ will be fixed by $f^p_{v_1}$ if $R^{v_1}_0(\theta)$ lands at $-2v_1$. However, this implies that $f^{p-1}_{v_1}(-2v_1)$ is a critical point $2k\pi$ for some $k \in \mathbb{Z} \setminus \{0\}$ and $2k\pi$ is a superattracting periodic point of $f_{v_1}$, contradicting that $v_1 \in \partial \mathcal{H}_0$. Thus $R^{v_1}_0(\theta)$ lands at a parabolic periodic point $x_{v_1}$. Since $\theta$ is in every interval $(\theta^-_k, \theta^+_k)$, $v_1 \in \mathcal{M}_{v_0}$. The multiplier of $x_{v_1}$ is $1$ since $R^{v_1}_0(\theta)$ is fixed by $f^p_{v_1}$. Thus $v_1=\chi^{-1}_{v_0}(1/4)$.

    Similarly, by Proposition \ref{prop: landing theorem for parameter rays}, $G_{\underline{s}_+}$ lands at some $v_2$ , such that $g^{v_2}_{\underline{s}_+}$ lands at a parabolic periodic point $x_{v_2}$. Recall that $\tilde{\mathcal{P}}_{n_k}=\{v \in \hat{\mathcal{P}}_{n_k}: -2v \in \tilde{P}^v_{n_k}\}$. Therefore, for $v \in G_{\underline{s}^{\pm}_{k}}$, $-2v \in g^{v}_{\underline{s}^{\pm}_k} \subset \partial P^v_{\underline{s}^{\pm}_k}$. Thus the landing points $v^{\pm}_k$ of $G_{\underline{s}^{\pm}_k}$ belong to $\partial \tilde{\mathcal{P}}_{n_l}$, $0 \le l \le k$. Since $v_2$ is the limit of $v^+_k$, $v_2 \in \mathcal{M}_{v_0}$. Again, since $g^{v_2}_{\underline{s}_+}$ is fixed by $f^p_{v_2}$, the multiplier of $x_{v_2}$ is $1$, i.e. $v_2=\chi^{-1}_{v_0}(1/4)=v_1$. The same argument holds for the parameter ray $G_{\underline{s}_-}$. As a result, the rays $G_{\underline{s}_{\pm}}$, and $\mathcal{R}_0(\theta)$ all land at $\chi^{-1}_{v_0}(1/4)$. 

    Since $\mathcal{M}_{v_0}$ is contained in the closure of the component of $\mathbb{C} \setminus \overline{G_{\underline{s}_+} \cup G_{\underline{s}_-}}$, the only possible intersecting point of $\mathcal{M}_{v_0}$ and $\partial \mathcal{H}_0$ is $v_1$. Thus $v_1=v_0$, and $\partial \mathcal{H}_0$ is locally connected at $v_0$. 

    The results in the dynamical plane of $f_{v_0}$ follow from \cite{QW25}, Lemma 4.3.
\end{proof}

From the proof of Proposition \ref{prop: renormalizable parameters on the boundary of H_0}, we obtain the following
\begin{corollary}\label{cor: every renormalizable parameter on the boundary of H_0 is a cusp}
    Let $v_0 \in \partial \mathcal{H}_0$ be renormalizable, $\mathcal{M}_{v_0}$ be the copy of $\mathcal{M}$ containing $v_0$, and $\chi_{v_0}: \mathcal{M}_{v_0} \to \mathcal{M}$ be the homeomorphism. Then $v_0=\chi^{-1}_{v_0}(1/4)$, which is a cusp of $\mathcal{M}_{v_0}$. 
\end{corollary}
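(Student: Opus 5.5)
The claim follows from the proof of Proposition \ref{prop: renormalizable parameters on the boundary of H_0}, so I would simply retrace that argument and isolate what it shows about the location of $v_0$ inside $\mathcal{M}_{v_0}$.

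\emph{Step 1: the copy and the rays.} Since $v_0\in\partial\mathcal{H}_0$ is renormalizable, Lemma \ref{lm: non-degenerated annuli} gives a period $p$ and nested pieces $\hat{P}^{v_0}_{n_k}$ on which $f^p_{v_0}$ is quadratic-like. Lemma \ref{lm: copy of Mandelbrot set} then gives the copy $\mathcal{M}_{v_0}$ together with the straightening homeomorphism $\chi_{v_0}$. The para-puzzle boundaries $\partial\mathcal{P}_{n_k}\cap\mathcal{H}_0$ consist of internal rays $\mathcal{R}_0(\theta_k^\pm)$, and these meet the parameter rays $G_{\underline{s}_k^\pm}$ at common landing points. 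The angle gaps shrink by the factor $2^{-p}$ at each step, so $\theta_k^\pm\to\theta$ with $\theta$ of period $p$ under doubling. The addresses likewise converge, $\underline{s}_k^\pm\to\underline{s}_\pm$, and these limits have period $p$ under the shift.

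\emph{Step 2: identify the landing point $v_1$ of $\mathcal{R}_0(\theta)$.} By Proposition \ref{prop: landing theorem for parameter internal rays}, the ray $\mathcal{R}_0(\theta)$ lands at some $v_1$. Then either $R^{v_1}_0(\theta)$ lands at $-2v_1$, or it lands at a parabolic point. The first case would force $-2v_1$ to be $f^p$-periodic through a critical point $2k\pi$. That makes $2k\pi$ superattracting, which contradicts $v_1\in\partial\mathcal{H}_0$. So the landing point is parabolic. Since $\theta$ lies in every interval $(\theta_k^-,\theta_k^+)$, we get $v_1\in\mathcal{M}_{v_0}$. Because $R^{v_1}_0(\theta)$ is fixed by $f^p_{v_1}$, the parabolic point has multiplier $1$ for the renormalization. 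Under straightening this makes $\chi_{v_0}(v_1)$ the unique parameter $c\in\mathcal{M}$ with a parabolic fixed point of multiplier $1$, namely $c=1/4$. The same reasoning applied to $G_{\underline{s}_\pm}$, via Proposition \ref{prop: landing theorem for parameter rays}, shows that these rays also land at $\chi_{v_0}^{-1}(1/4)$.

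\emph{Step 3: $v_0=v_1$.} The curve $\overline{G_{\underline{s}_+}\cup G_{\underline{s}_-}}$ separates $\mathcal{M}_{v_0}\setminus\{v_1\}$ from $\mathcal{H}_0$. Hence $v_1$ is the only point of $\mathcal{M}_{v_0}$ that can lie on $\partial\mathcal{H}_0$. Since $v_0\in\mathcal{M}_{v_0}\cap\partial\mathcal{H}_0$, it follows that $v_0=v_1=\chi_{v_0}^{-1}(1/4)$. Finally, $1/4$ is the cusp of the main cardioid of $\mathcal{M}$, and $\chi_{v_0}$ is a homeomorphism, so $v_0$ is the corresponding cusp of $\mathcal{M}_{v_0}$.

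\emph{Main obstacle.} The delicate point is the multiplier identification in Step 2. One must check that the parabolic cycle of $f_{v_1}$ corresponds under straightening to a fixed point of $z^2+c$ (not a point of higher period) with multiplier exactly $1$. This rests on the landing ray being invariant under $f^p$ (not merely periodic) and on hybrid conjugacies preserving parabolic multipliers. I would cite the standard fact that quasiconformal conjugacies preserve multipliers of non-repelling cycles, and that a parabolic fixed point with a fixed landing ray has multiplier $1$.
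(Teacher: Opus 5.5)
Your proposal is correct and follows the paper's own route. The corollary is read off from the proof of Proposition~\ref{prop: renormalizable parameters on the boundary of H_0}: the common landing point $v_1$ of $\mathcal{R}_0(\theta)$ and $G_{\underline{s}_\pm}$ is identified as the multiplier-one parabolic parameter $\chi^{-1}_{v_0}(1/4)$, and the separation by $\overline{G_{\underline{s}_+}\cup G_{\underline{s}_-}}$ then forces $v_0=v_1$. One cosmetic slip, inherited from the paper, does not affect the contradiction: when you exclude the case that $R^{v_1}_0(\theta)$ lands at $-2v_1$, the periodic critical point $f^{p-1}_{v_1}(-2v_1)$ is an odd multiple $(2k+1)\pi$ (a preimage of $-2v_1$), not $2k\pi$.
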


\begin{corollary}\label{cor: H_0 is a Jordan domain}
    $\mathcal{H}_0 \cup \{0\}$ is a Jordan domain.
\end{corollary}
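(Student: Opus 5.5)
Our plan is to show that $\partial(\mathcal{H}_0\cup\{0\})=\partial\mathcal{H}_0\setminus\{0\}$ is locally connected and then check that it is a Jordan curve. Local connectivity is obtained pointwise. At a non-renormalizable $v_0\in\partial\mathcal{H}_0\setminus\{0\}$ we apply Corollary \ref{cor: local connectivity at non-renormalizable parameters} with $\mathcal{U}=\mathcal{H}_0$. At a renormalizable $v_0$ we apply Proposition \ref{prop: renormalizable parameters on the boundary of H_0}. The points $\pm v_0$ on the imaginary axis are already handled by Corollary \ref{cor: local connectivity of the boundary of H_0 at v_0}. Hence $\partial\mathcal{H}_0\setminus\{0\}$ is locally connected at every point.

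By Lemma \ref{lm: hyperbolic components are simply connected} and Lemma \ref{lm: boundedness of H_0}, $W:=\mathcal{H}_0\cup\{0\}$ is a bounded simply connected domain. By Proposition \ref{prop: Phi_0 is a covering map of degree 2}, $\Phi_0$ extends to a proper degree-$2$ branched covering $W\to\mathbb{D}$ with a single critical point at $0$. Consequently there is a conformal map $\psi:\mathbb{D}\to W$ with $\psi(0)=0$ and $\Phi_0\circ\psi(w)=w^2$. Since $\partial W$ is locally connected, Carathéodory's theorem extends $\psi$ continuously to $\overline{\mathbb{D}}$. In particular every parameter internal ray $\mathcal{R}_0(\theta)$ lands, and the landing point depends continuously on $\theta$.

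It remains to prove injectivity of $\psi$ on $\partial\mathbb{D}$: no two distinct internal rays of $W$ may land at the same boundary point. Suppose $\psi(e^{2\pi i\alpha})=\psi(e^{2\pi i\beta})=v_0$ with $\alpha\neq\beta$. The closed curve formed by the two rays together with $v_0$ then separates the plane. Its bounded complementary component $\Delta$ meets $\partial W$ in an arc of landing points, and since rays land densely on that arc, $\Delta\cap\partial W$ is not just $\{v_0\}$. We treat the two cases for $v_0$ separately.

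\emph{Renormalizable $v_0$.} Proposition \ref{prop: renormalizable parameters on the boundary of H_0} says that exactly one internal ray $\mathcal{R}_0(\theta)$ lands at $v_0$. The statement is formulated in the upper half-plane, and the symmetric statement holds in the lower one, so a coincidence of two landing points is excluded.

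\emph{Non-renormalizable $v_0$.} Here the nested para-puzzle pieces $\hat{\mathcal{P}}_n$ shrink to $v_0$. Each $\hat{\mathcal{P}}_n\cap\mathcal{H}_0$ is bounded by internal rays whose angle interval has length of order $2^{-n}$. For large $n$ this interval cannot contain both $\alpha$ and $\beta$. On the other hand, $\Delta$ contains boundary points other than $v_0$, and these boundary points cannot all lie inside the shrinking pieces. Together these facts contradict $v_0$ being the common landing point.

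We expect this non-renormalizable injectivity step to be the main obstacle. The cleanest route is probably to transfer the question to the dynamical plane through Proposition \ref{prop: landing theorem for parameter internal rays}. Two parameter internal rays landing at $v_0$ would give two dynamical internal rays $R^{v_0}_0(\alpha)$ and $R^{v_0}_0(\beta)$ both landing at $-2v_0$. By Corollary \ref{cor: local connectivity of B_v}, or by the puzzle structure, $\partial B_{v_0}$ is a Jordan curve, so two distinct internal rays cannot share a landing point, which is the desired contradiction. The parabolic case is treated analogously.

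With injectivity established, $\psi:\overline{\mathbb{D}}\to\overline{W}$ is a homeomorphism. Hence $\partial W$ is a Jordan curve and $\mathcal{H}_0\cup\{0\}$ is a Jordan domain.
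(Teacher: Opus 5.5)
Your reduction is sound: local connectivity everywhere, then the Carathéodory extension of $\psi$ (with $\Phi_0\circ\psi(w)=w^2$), then injectivity of $\psi$ on $\partial\mathbb D$. It makes explicit what the paper compresses into ``the Maximum Modulus Principle implies that it is a Jordan curve''. The gap is the injectivity step, which what you cite does not support.

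\emph{Renormalizable case.} Proposition \ref{prop: renormalizable parameters on the boundary of H_0} only asserts, and its proof only shows, that $\mathcal R_0(\theta)$ lands at $v_0$. Nothing in it excludes a second internal ray landing there. Uniqueness appears only later, in Proposition \ref{prop: dichotomy of parameters on the boundary of H_0}. There it is inferred from $\Phi_0$ being a homeomorphism on the half-components of $\mathcal H_0\setminus\mathbb R$, and that inference already needs the Jordan property you are proving.

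\emph{Non-renormalizable case.} You never explain why $\alpha$ and $\beta$ must lie in the angular interval of $\hat{\mathcal P}_n\cap\mathcal H_0$, which is the whole point. The remark about boundary points of $\Delta$ ``not all lying inside the shrinking pieces'' yields no contradiction. Your ``cleanest route'' also fails as stated. Proposition \ref{prop: landing theorem for parameter internal rays} covers rational angles only, while a pair $\alpha\neq\beta$ with $\psi(e^{2\pi i\alpha})=\psi(e^{2\pi i\beta})$ may well be irrational. To see that the dynamical internal ray of such an angle lands at $-2v_0$, you need the para-puzzle nesting, as in Case 1 of Proposition \ref{prop: dichotomy of parameters on the boundary of H_0]}, and that nesting already gives the conclusion directly.

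The missing argument works uniformly in both cases:
\begin{enumerate}
\item The point $v_0$ lies in the open piece $\mathcal P_n(v_0)$. Hence the tail of any internal ray landing at $v_0$ lies in $\mathcal P_n(v_0)$, beyond the equipotential.
\item Internal rays of $\mathcal H_0$ are pairwise disjoint and disjoint from parameter rays. So this tail cannot cross the graph $\mathcal I_n$.
\item Each graph internal ray is continued to $\infty$ by a parameter ray. So the tail stays in a single sector of $\mathcal H_0$ between two consecutive graph rays, whose $\Phi_0$-angles differ by at most $2^{-n}$. In the renormalizable case these are the rays $\mathcal R_0(\theta^\pm_k)$ with $\theta^\pm_k\to\theta$ from the proof of Proposition \ref{prop: renormalizable parameters on the boundary of H_0}.
\item Letting $n\to\infty$ determines the $\Phi_0$-angle $2\alpha \bmod 1$ of every internal ray landing at $v_0$.
\item The remaining possibility $\beta=\alpha+\tfrac12$ comes from the double cover. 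It is excluded because $\Phi_0(-v)=\Phi_0(v)$ forces $\psi(-w)=-\psi(w)$, and $v_0\neq 0$.
\end{enumerate}
With this in place your proof is complete. It then handles the no-pinching step, and the uniqueness of the landing internal ray needed later, through the para-puzzle structure rather than the paper's unelaborated maximum-modulus remark.
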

\begin{proof}
    By Lemma \ref{lm: hyperbolic components are simply connected}, $\mathcal{H}_0 \cup \{0\}$ is simply connected. By Corollary \ref{cor: local connectivity at non-renormalizable parameters} and Proposition \ref{prop: renormalizable parameters on the boundary of H_0}, $\partial \mathcal{H}_0 \cup \{0\}$ is locally connected. The Maximum Modulus Principle implies that it is a Jordan curve. 
\end{proof}

\begin{definition}\label{def: wake}
    Let $v_0 \in \partial \mathcal{H}_0$ be renormalizable. Let $\mathcal{R}_0(\theta)$, $G_{\underline{s}_{\pm}}$ be the rays in Proposition \ref{prop: renormalizable parameters on the boundary of H_0}. The component of $\mathbb{C} \setminus \overline{G_{\underline{s}_+} \cup \underline{s}_-}$ not containing $\mathcal{H}_0$ is called the wake of $v_0$, denoted by $W(v_0)$. 
\end{definition}

By Proposition \ref{prop: renormalizable parameters on the boundary of H_0}, $\mathcal{M}_{v_0} \setminus \{v_0\} \subset W(v_0)$. 

\begin{lemma}\label{lm: common landing point for parameter in a wake}
    For $v \in W(v_0)$, the rays $R^{v}_0(\theta)$ and $g^v_{\underline{s}_{\pm}}$ land at the same point. 
\end{lemma}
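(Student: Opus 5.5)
This is a stability argument: the configuration of three rays landing together at $v_0$ is carried over to every $v\in W(v_0)$ by a holomorphic motion, and the only way it could break is ruled out by showing the obstructing parameters lie on the wake boundary.

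\textbf{Step 1 (the base point).} By Proposition \ref{prop: renormalizable parameters on the boundary of H_0}, the rays $g^{v_0}_{\underline{s}_\pm}$ and $R^{v_0}_0(\theta)$ land at the parabolic point $x_{v_0}$, and all three are fixed by $f^p_{v_0}$.

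\textbf{Step 2 (parameters in the wake).} Let $\Omega_W\subset W(v_0)$ be the set of $v$ with the following properties:
\begin{enumerate}
\item the rays $g^v_{\sigma^j(\underline{s}_\pm)}$ and $R^v_0(2^j\theta)$, $0\le j<p$, are well defined;
\item each of these rays lands at a repelling periodic point.
\end{enumerate}
By Lemma \ref{lm: stability of rays and the repelling landing points}, $\Omega_W$ is open. On it the landing points $z_\pm(v)$ and $z_\theta(v)$ are analytic. Hence the set where they coincide is both open and closed in each component of $\Omega_W$, by the implicit function theorem and continuity. So within any component of $\Omega_W$, the three rays land together at one point or never do.

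\textbf{Step 3 (the wake is the right set).} I would show that $\Omega_W=W(v_0)$ and that $W(v_0)$ is connected. Connectedness holds because $W(v_0)$ is a complementary component of the Jordan arc $\overline{G_{\underline{s}_+}\cup G_{\underline{s}_-}}$ through $v_0$ and $\infty$. For the equality, I would adapt the proof of Lemma \ref{lm: holomorphic motion of parameter rays}(3): the boundary of the set where the rays are defined and land repellingly is contained in $\bigcup_j\overline{G_{\sigma^j(\underline{s}_\pm)}}$, together with the analogous internal-ray obstructions (the critical value lying on $R^v_0(2^j\theta)$, or on one of its iterated preimages).

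The main obstacle is to check that no such obstruction occurs inside $W(v_0)$. For the internal rays, the critical value $-2v$ lies in $B_v$ only when $v\in\mathcal{H}_0$. But $\mathcal{H}_0$ is disjoint from $W(v_0)$ by Proposition \ref{prop: renormalizable parameters on the boundary of H_0}, which shows that the two parameter rays separate $\mathcal{M}_{v_0}\setminus\{v_0\}$ from $\partial\mathcal{H}_0$.

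For the dynamic rays, the parameter rays $G_{\sigma^j(\underline{s}_\pm)}$ with $j\ne0$ have landing points on the parabolic orbit. I would argue, using the ordering of external addresses from the puzzle construction in Proposition \ref{prop: renormalizable parameters on the boundary of H_0}, that they lie outside $W(v_0)$, or at least do not meet it. Parabolic parameters of ray period $p$ are discrete; at such a parameter the landing point merely becomes parabolic, and continuity of landing points carries the coincidence across it.

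\textbf{Step 4 (propagation).} Take $v_1$ near $v_0$ inside $W(v_0)\cap\mathcal{M}_{v_0}$, for example in the primitive component attached at the cusp. By perturbing the parabolic point $x_{v_0}$, which splits into repelling points, I would show that for such $v_1$ the three rays still land at a common repelling point. The key input is that in the dynamical plane of $f_{v_0}$ the rays $g^{v_0}_{\underline{s}_\pm}$ separate $-2v_0$ from $B_{v_0}$, so the rays stay together on the side of the wake. Connectedness of $W(v_0)$ together with Step 2 then propagates the coincidence to every $v\in W(v_0)$.
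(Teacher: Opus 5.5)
Your Step 2 is sound, but there is a genuine gap at Step 4. You never exhibit a parameter in $W(v_0)$ at which $R^{v}_0(\theta)$ and $g^{v}_{\underline{s}_\pm}$ land together. Without such a seed, the open--closed argument is equally consistent with the rays never landing together anywhere in $W(v_0)$. The parabolic perturbation you sketch does not supply this seed, for two reasons.

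\begin{itemize}
\item Lemma \ref{lm: stability of rays and the repelling landing points} only controls rays with \emph{repelling} landing points. Nothing available gives continuity of landing points at the parabolic parameter $v_0$.
\item Even granting that the three landing points tend to $x_{v_0}$ as $v_1\to v_0$, they need not be equal. The point $x_{v_0}$ bifurcates into several periodic points; for $v_1$ in the component of $\mathcal{M}_{v_0}$ attached at the cusp, these are an attracting and a repelling one, not ``repelling points''. Which of them each ray picks is exactly the content of the lemma.
\end{itemize}

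The separation of $-2v_0$ from $B_{v_0}$ by $\overline{g^{v_0}_{\underline{s}_+}\cup g^{v_0}_{\underline{s}_-}}$ concerns $f_{v_0}$ only. Saying ``the rays stay together on the side of the wake'' restates the conclusion rather than proving it.

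The paper gets its seed from the puzzle structure instead. The boundaries of $P^v_{n_k}$ move holomorphically over para-puzzle pieces containing $\mathcal{M}_{v_0}$. Their boundary rays $R^v_0(\theta^\pm_k)$, $g^v_{\underline{s}^\pm_k}$ land at repelling preperiodic points and converge to $R^v_0(\theta)$, $g^v_{\underline{s}_\pm}$. Passing to the limit gives a common landing point for $v\in\mathcal{M}_{v_0}$, and $\mathcal{M}_{v_0}\setminus\{v_0\}\subset W(v_0)$. Within your own framework, a natural seed is a parameter in the component of $\mathcal{M}_{v_0}$ corresponding to the main cardioid. Once you show that the landing points of the three $f^p_v$-invariant rays lie in the small filled Julia set $K_v$, they must be fixed points of the quadratic-like map other than the attracting $\alpha$-point, hence all equal to $\beta_v$.

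The paper then propagates by holomorphic motion rather than an open--closed argument. Since $-2v$ avoids the rays for $v\in W(v_0)$, the three open rays move holomorphically over $W(v_0)$. The $\lambda$-lemma extends this motion injectively and continuously to their closures, carrying the common landing point along. This makes your bookkeeping of repelling landing points unnecessary. It also removes the need for your appeal to continuity across parabolic parameters, which in any case contradicts your claim $\Omega_W=W(v_0)$.

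The obstacle you isolate in Step 3 is real, and your proposal leaves it unproved. For $g^v_{\underline{s}_\pm}$ not to break, you need $G_{\sigma^j(\underline{s}_\pm)}\cap W(v_0)=\varnothing$ for $0<j<p$ as well; for $j=0$ this is immediate from the definition of $W(v_0)$. The cases $0<j<p$ require an analogue of Milnor's characteristic-arc property, namely that $\sigma^j(\underline{s}_\pm)$ does not lie between $\underline{s}_-$ and $\underline{s}_+$. They also require matching the order of addresses with the position of parameter rays relative to the wake. The paper's sentence that $-2v$ never meets the rays uses this tacitly.
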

\begin{proof}
    By Lemma \ref{lm: holomorphic motion of para-graphs}, $\partial P^v_{n}$ moves holomorphically for $v \in \mathcal{M}_{v_0}$. Passing to the limit, $\overline{R^v_0(\theta) \cup g^v_{\underline{s}_+} \cup g^v_{\underline{s}_-}}$ moves holomorphically for $v \in \mathcal{M}_{v_0}$. Thus the rays $R^{v}_0(\theta)$ and $g^v_{\underline{s}_{\pm}}$ land at the same point for $v \in \mathcal{M}_{v_0}$. 
    
    For $v \in W(v_0)$, the critical value $-2v$ never meets the rays $R^{v}_0(\theta)$ and $g^v_{\underline{s}_{\pm}}$. Thus $\overline{R^v_0(\theta) \cup g^v_{\underline{s}_+} \cup g^v_{\underline{s}_-}}$ moves holomorphically for $v \in W(v_0)$. Therefore, the rays $R^{v}_0(\theta)$ and $g^v_{\underline{s}_{\pm}}$ land at the same point for $v \in W(v_0)$.
\end{proof}

At the end of this section, we can describe the boundary of $\mathcal{H}_0$. 

\begin{proposition}\label{prop: dichotomy of parameters on the boundary of H_0}
    Let $v_0 \in \partial \mathcal{H}_0$ and $v_0 \neq 0$. Then there is a unique parameter internal ray $\mathcal{R}_0(\theta)$ landing at $v_0$. Moreover, the following dichotomy holds.
    \begin{enumerate}
        \item either $f_{v_0}$ is not renormalizable, and it is the landing point of a unique parameter ray $G_{\underline{s}}$. In this case, in the dynamical plane of $f_{v_0}$, $R^{v_0}_0(\theta)$ and $g^{v_0}_{\underline{s}}$ both land at $-2v_0$, and they are the only internal ray and dynamic ray landing at $v_0$;
        \item or $f_{v_0}$ is renormalizable, and there are exactly two parameter rays $G_{\underline{s}_{\pm}}$ landing at $v_0$. In this case, $v_0$ is the cusp of $\mathcal{M}_{v_0}$. In the dynamical plane of $f_{v_0}$, $R^{v_0}_{\theta}$ and $g^{v_{0}}_{\underline{s}_{\pm}}$ all land at a parabolic periodic point $x_{v_0}$ of $f_{v_0}$, and no other internal rays or dynamic rays landing at $x_{v_{0}}$.
    \end{enumerate}
\end{proposition}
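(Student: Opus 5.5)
We split according to whether $f_{v_0}$ is renormalizable, after first producing the internal ray. By Corollary \ref{cor: H_0 is a Jordan domain}, $\mathcal{H}_0 \cup \{0\}$ is a Jordan domain. The map $\Phi_0$ extends, via the Riemann map and Carath\'eodory's theorem, to a degree-two boundary map $\partial\mathcal{H}_0 \to \partial\mathbb{D}$. Restricted to the upper half plane (our normalization of the internal rays), it is injective. Hence every $v_0 \in \partial\mathcal{H}_0\setminus\{0\}$ is the landing point of exactly one $\mathcal{R}_0(\theta)$, and distinct angles give distinct landing points. This gives existence and uniqueness of the parameter internal ray.

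\textbf{Renormalizable case.} Proposition \ref{prop: renormalizable parameters on the boundary of H_0} and Corollary \ref{cor: every renormalizable parameter on the boundary of H_0 is a cusp} already give that $v_0$ is the cusp $\chi^{-1}_{v_0}(1/4)$ of $\mathcal{M}_{v_0}$. They also give that $G_{\underline{s}_\pm}$ and $\mathcal{R}_0(\theta)$ land at $v_0$, and that the corresponding dynamical rays land at the parabolic point $x_{v_0}$, which is the characteristic point. By Proposition \ref{prop: converse landing theorem for parabolic parameters}, the parameter rays landing at $v_0$ correspond exactly to the dynamic rays landing at $x_{v_0}$. So it suffices to show that only $g^{v_0}_{\underline{s}_\pm}$ and $R^{v_0}_0(\theta)$ land at $x_{v_0}$.

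The plan for this step is as follows. The rays landing at $x_{v_0}$ are finitely many (Theorem \ref{thm: landing theorem}) and are permuted cyclically by $f^p_{v_0}$. Among them, the internal rays are permuted by angle doubling on a single periodic orbit. The sector bounded by $g^{v_0}_{\underline{s}_+}\cup g^{v_0}_{\underline{s}_-}$ that contains $-2v_0$ contains the small filled Julia set. Its complementary sector contains $B_{v_0}$. A second internal ray landing at $x_{v_0}$ would force a second parabolic petal attached to $x_{v_0}$ inside $B_{v_0}$'s side, contradicting that the parabolic cycle has a single cycle of petals, since only the free critical point is available. A third dynamic ray would similarly split the sector containing $-2v_0$ and separate the critical value from the parabolic basin. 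We invoke \cite{QW25}, Lemma 4.3 for the dynamical-plane statement, as in the proof of Proposition \ref{prop: renormalizable parameters on the boundary of H_0}.

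\textbf{Non-renormalizable case.} By Lemma \ref{lm: maps on the boundary of H_0}, $\partial B_{v_0}$ contains either a parabolic cycle or $-2v_0$. A parabolic cycle would make $v_0$ renormalizable by Lemma \ref{lm: attracting periodic points give renormalization}, so $-2v_0\in\partial B_{v_0}$. By Proposition \ref{prop: landing theorem for parameter internal rays}, $R^{v_0}_0(\theta)$ lands at $-2v_0$. By Corollary \ref{cor: local connectivity of B_v}, $B_{v_0}$ is a Jordan domain, provided we check that $P(f_{v_0})$ is bounded; this follows because the orbit of $-2v_0$ lies in $\partial B_{v_0}$, which is compact. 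Since $B_{v_0}$ is Jordan, $R^{v_0}_0(\theta)$ is the only internal ray landing at $-2v_0$.

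For the dynamic ray, we use the non-degenerate annuli of Lemma \ref{lm: non-degenerated annuli}, which shrink to $-2v_0$. Each puzzle piece $P^{v_0}_n$ has a boundary that meets $\partial B_{v_0}$ in two points. We take the outer boundary rays of the nested pieces and pass to the limit of the external addresses, as in the argument of Proposition \ref{prop: renormalizable parameters on the boundary of H_0} using \cite{QW25}, Lemma 4.5. This should produce an address $\underline{s}$ such that $g^{v_0}_{\underline{s}}$ lands at $-2v_0$. Uniqueness holds because two rays landing at $-2v_0$ would bound a sector avoiding $B_{v_0}$. That sector would be nested inside all the $P^{v_0}_n$, contradicting the fact that their diameters shrink to zero. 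On the parameter side, the para-puzzles $\hat{\mathcal{P}}_n$ shrink to $v_0$ by Corollary \ref{cor: local connectivity at non-renormalizable parameters}. Their bounding parameter rays $G_{\underline{s}^\pm_n}$ then have addresses converging to $\underline{s}$, so $G_{\underline{s}}$ lands at $v_0$. If $\underline{s}$ is preperiodic, this also follows directly from Proposition \ref{prop: converse landing theorem for eventually periodic parameters}. Uniqueness of $G_{\underline{s}}$ comes from the transfer homeomorphism $h_n$ of Lemma \ref{lm: transfer from dynamical plane to parameter plane}: two parameter rays landing at $v_0$ would be carried to two dynamic rays landing at $-2v_0$.

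\textbf{Main obstacle.} The hardest step is the non-renormalizable case when $\underline{s}$ is not eventually periodic. There the landing of $g^{v_0}_{\underline{s}}$ is not covered by Theorem \ref{thm: landing theorem} and must be extracted from the shrinking of puzzle pieces. We would try to bound the tails of the rays inside the thin annuli $A^{v_0}_{n_k}$.
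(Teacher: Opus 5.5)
Two steps fail as written.

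\textbf{Non-renormalizable case.} You conclude that $R^{v_0}_0(\theta)$ lands at $-2v_0$ from Proposition \ref{prop: landing theorem for parameter internal rays}, but that proposition only covers $\theta\in\mathbb{Q}/\mathbb{Z}$. Every boundary point with irrational $\theta$ is non-renormalizable, since renormalizable ones are cusps with periodic angle, so irrational angles occur precisely in this case. You correctly show that $-2v_0\in\partial B_{v_0}$ (via Lemmas \ref{lm: maps on the boundary of H_0} and \ref{lm: attracting periodic points give renormalization}) and that $B_{v_0}$ is Jordan. These facts only give \emph{some} internal ray $R^{v_0}_0(\theta')$ landing at $-2v_0$. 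The identification $\theta'=\theta$ is the real content, and the paper obtains it by transfer. Since $\mathcal{R}_0(\theta)$ lands at $v_0$, it enters every $\mathcal{P}_{n_k}$. By Lemma \ref{lm: transfer from dynamical plane to parameter plane}, the internal rays bounding $\mathcal{P}_{n_k}\cap\mathcal{H}_0$ and $P^{v_0}_{n_k}\cap B_{v_0}$ carry the same angles, so $R^{v_0}_0(\theta)$ enters every $P^{v_0}_{n_k}$. Then $\bigcap_k\overline{\hat P^{v_0}_{n_k}}=\{-2v_0\}$ forces it to land at $-2v_0$.

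Your uniqueness argument for the dynamic ray also fails. The sector between two rays landing at $-2v_0$ is unbounded and may be cut by graph rays landing on preimages of $B_{v_0}$, so it does not sit inside the shrinking bounded pieces $\hat P^{v_0}_n$. In general, shrinking puzzles alone never exclude several rays landing at one point. The paper instead uses that, because $-2v_0\in\partial B_{v_0}$, any two half-strips are separated by closures of Fatou components. Hence each dynamic ray stays in the single half-strip $P_{s_0}$, and all rays landing at $-2v_0$ have the same address. Finally, $h_n$ is defined only on the para-graph $\mathcal{P}_n\cap\mathcal{I}_{n+1}$, so it cannot transport an arbitrary parameter ray landing at $v_0$.

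\textbf{Renormalizable case.} Your reduction through Proposition \ref{prop: converse landing theorem for parabolic parameters} is legitimate; it even shows that $x_{v_0}$ is the characteristic point. But it puts all the weight on the claim that only $g^{v_0}_{\underline{s}_\pm}$ and $R^{v_0}_0(\theta)$ land at $x_{v_0}$, and your argument for that claim is not valid. Since $x_{v_0}$ is the characteristic point, the basin component containing $-2v_0$ is attached at $x_{v_0}$. So an extra ray in the sector of $-2v_0$ never separates the critical value from the petal. The cut-off subsector contains neither, and excluding it needs an expansion argument, like the angular-width doubling used for quadratic orbit portraits. That argument has no ready analogue for external addresses. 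Extra rays between $R^{v_0}_0(\theta)$ and $g^{v_0}_{\underline{s}_\pm}$ are not addressed at all. For the internal ray you need no petal count: $B_{v_0}$ is Jordan because $P(f_{v_0})$ is bounded. Citing \cite{QW25}, Lemma 4.3 does not close the gap, since the paper invokes that lemma only for the landing and separation statements.

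The paper argues the other way round, in the parameter plane. Suppose some $G_{\underline{s}'}$ with $\underline{s}_-\prec\underline{s}'\prec\underline{s}_+$ also landed at $v_0$. Then $\overline{G_{\underline{s}_-}\cup G_{\underline{s}'}}$ would separate $\mathcal{M}_{v_0}\setminus\{v_0\}$ from $G_{\underline{s}_+}$. Yet a periodic ray $G_{\underline{t}}$ with $\underline{s}'\prec\underline{t}\prec\underline{s}_+$ enters every $\tilde{\mathcal{P}}_{n_k}$ and so lands in $\mathcal{M}_{v_0}$, a contradiction. The dynamical count then follows from the converse landing theorem.
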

\begin{proof}
    By Proposition \ref{prop: Phi_0 is a covering map of degree 2}, $\Phi_0$ maps both components of $\mathcal{H}_0 \setminus \mathbb{R}$ to $\mathbb{D}\setminus [0,1)$ homeomorphically. Thus every $v \in \partial \mathcal{H}_0 \setminus \mathbb{R}$ is the landing point of exactly one parameter internal ray $\mathcal{R}_0(\theta)$. For $v \in \partial \mathcal{H}_0 \cap \mathbb{R}$, $v$ is the landing point of $\mathcal{R}_0(0)$. It cannot be the landing point of other parameter internal rays since $\mathcal{R}_0(\theta)$ lands at $\partial \mathcal{H}_0 \setminus \mathbb{R}$ if $\theta \neq 0$. 

    Now let $v_0 \in \partial \mathcal{H}_0 \setminus \{0\}$. There are the following two cases. 

    \textbf{Case 1.} $f_{v_0}$ is not renormalizable. By Lemma \ref{lm: non-degenerated annuli}, there is a sequence $\{\hat{P}^{v_0}_{n_k}\}_{k \ge 0}$ of puzzle pieces such that $\bigcap_{k \ge 0} \overline{\hat{P}^{v_0}_{n_k}}=\{-2v_0\}$, and $\partial \mathcal{P}_{n_k}$ and $\partial P^{v_0}_{n_k}$ are homeomorphic by Lemma \ref{lm: transfer from dynamical plane to parameter plane}. Since $\mathcal{R}_0(\theta)$ lands at $v_0$, it enters all $\mathcal{P}_{n_k}$. Thus $R^{v_0}_0(\theta)$ enters all $P^{v_0}_{n_k}$. Since $\bigcap_{k \ge 0} \overline{\hat{P}^{v_0}_{n_k}}=\{-2v_0\}$, $R^{v_0}_0(\theta)$ lands at $-2v_0$. 

    Since $-2v_0 \in \partial B_{v_0}$, the curve $\Gamma=[-2v,0] \cup \mathbb{R}_{\ge 0}$ only intersects $J(f_{v_0})$ at $(2k+1)\pi$, $k \ge 0$ and $-2v_0$. Hence every two half strips are seperated by the closure of countably many Fatou components, and every dynamic ray $g_{\underline{s}}$ is entirely contained in $P_{s_0}$ provided $\underline{s}=(s_0, s_1, \ldots)$. Therefore, every dynamic ray landing at $-2v_0$ has the same external address. This implies that there is exactly one dynamic ray $g^{v_0}_{\underline{s}}$ landing at $-2v_0$. So $G_{\underline{s}}$ is the unique parameter ray landing at $v_0$. 

    \textbf{Case 2.} $f_{v_0}$ is renormalizable. By Proposition \ref{prop: renormalizable parameters on the boundary of H_0}, $v_0$ is the cusp of $\mathcal{M}_{v_0}$ and is the common landing point of $\mathcal{R}_0(\theta)$ and $G_{\underline{s}_{\pm}}$. And in the dynamical plane of $f_{v_0}$, $R^{v_0}_0(\theta)$ and $g^{v_0}_{\underline{s}_{\pm}}$ all land at a common parabolic periodic point $x_{v_0}$. It remains to show that $G_{\underline{s}_{\pm}}$ are the only parameter rays landing at $v_0$. 

    Suppose by contradiction that $G_{\underline{s}'}$ also lands at $v_0$. Then it enters all $\tilde{\mathcal{P}}_{n_k}$. So we may assume that $\underline{s}_- \prec \underline{s}' \prec \underline{s}_+$, where the order of external addresses is defined in \cite{QW25}. Therefore, $\mathcal{M}_{v_0}$ is entirely contained in one of the components of $\mathbb{C} \setminus \overline{G_{\underline{s}_-} \cup G_{\underline{s}'} \cup G_{\underline{s}_+}}$. We may assume that $\mathcal{M}_{v_0} \setminus \{v_0\}$ and $G_{\underline{s}_+}$ are in the different component of $\mathbb{C} \setminus \overline{G_{\underline{s}_-} \cup G_{\underline{s}'}}$. Then $g^{v_0}_{\underline{s}'}$ also lands at $x_{v_0}$. Consider a repelling periodic point $K_{v_0}$ which is the landing point of a periodic dynamic ray $g^{v_0}_{\underline{t}}$ and $\underline{s}' \prec \underline{t} \prec \underline{s}_+$. Then $G_{\underline{t}}$ lands at a point $v' \in \mathcal{M}_{v_0}$ since it enters every $\tilde{\mathcal{P}_{n_k}}$. However, this is impossible since $\mathcal{M}_{v_0} \setminus \{v_0\}$ is seperated from $G_{\underline{s}_+}$ by $\overline{G_{\underline{s}_-} \cup G_{\underline{s}'}}$. 
\end{proof}

\subsection{Local connectivity of the boundaries of hyperbolic components of type C}\label{ss: local connectivity of the boundaries of hyperbolic components of type C}
In this section, we prove the local connectivity of the boundaries of hyperbolic components of type C. 
\begin{proposition}\label{prop: renormalizable parameters on the boundaries of components of type C}
    Let $\mathcal{U}$ be a component of $\mathcal{H}_m$, $m \ge 1$, $v_0 \in \partial \mathcal{U}$ be renormalizable, with period of renormalization $p$. Then $\partial \mathcal{U}$ is locally connected at $v_0$. More precisely, the following statements are true:
    \begin{enumerate}
        \item $\mathcal{M}_{v_0} \cap \partial \mathcal{H}_0=\chi^{-1}_{v_0}(1/4)=:v_1$, which is the cusp of $\mathcal{M}_{v_0}$;
        \item there are exactly two parameter rays $G_{\underline{s}^{\pm}}$ and a parameter internal ray $\mathcal{R}_{\mathcal{U}}(\theta)$ landing at $v_0$, where $\underline{t}^{\pm}=\sigma^m(\underline{s}^{\pm})$ and $\theta$ are of period $p$, and $\underline{s}^{\pm}$, $\theta$ are from Proposition \ref{prop: renormalizable parameters on the boundary of H_0};
        \item $m$ is a multiple of $p$;
        \item $\overline{G_{\underline{s}^+} \cup G_{\underline{s}^-}}$ seperates $\partial \mathcal{U}$ from $\mathcal{M}_{v_0} \setminus \{v_0\}$.
    \end{enumerate} 
\end{proposition}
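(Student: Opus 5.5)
The plan is to follow the proof of Proposition \ref{prop: renormalizable parameters on the boundary of H_0}, using the dynamical relation between $f_{v_0}$ and the critical value orbit to push the type-C situation back to the type-A situation at the cusp $v_1$.

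\textbf{Step 1 (structure of the renormalization).} By Lemma \ref{lm: non-degenerated annuli} there are $n_0$ and $p$ such that $f^p_{v_0}:\hat P^{v_0}_{n_{k+1}}\to\hat P^{v_0}_{n_k}$ is a renormalization with connected small filled Julia set $K_{v_0}\ni -2v_0$. Lemma \ref{lm: copy of Mandelbrot set} then gives the copy $\mathcal M_{v_0}$ with straightening $\chi_{v_0}$.

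By Lemma \ref{lm: maps on the boundary of U}, either $f^m_{v_0}(-2v_0)\in\partial B_{v_0}$ or $\partial B_{v_0}$ carries a parabolic cycle. I will show that $K_{v_0}$ touches $\partial B_{v_0}$ only at the $\beta$-fixed point of the renormalization. The argument is as in \cite{QW25}, Lemma 4.3: the puzzle pieces $\hat P^{v_0}_{n_k}$ are cut by the graph built from $I^{v_0}_0$, so $K_{v_0}\cap\overline{B_{v_0}+2k\pi}$ consists of at most one point. That point is fixed by the return map, hence is $\beta$ or a parabolic point.

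Since $f^{m-1}_{v_0}(-2v_0)\in \partial(B_{v_0}+2k\pi)$ and the orbit of $-2v_0$ stays in the cycle of little Julia sets, I conclude the following. The piece containing $f^m_{v_0}(-2v_0)$ is the one touching $\partial B_{v_0}$, at a point $x$ of period $p$. Because $-2v_0$ lies in the piece of level $n_0$ returning after $p$ steps, $m$ must be a multiple of $p$; this is item 3. The point $x$ is the landing point of $R^{v_0}_0(\theta)$ and of $g^{v_0}_{\underline t^{\pm}}$ with $\theta$ and $\underline t^\pm$ of period $p$. Pulling these rays back by $f^m_{v_0}$ along the orbit gives the dynamic rays $g^{v_0}_{\underline s^\pm}$ with $\sigma^m(\underline s^\pm)=\underline t^\pm$, together with an internal ray in $B_{v_0}+2k\pi$ landing at the preimage point.

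\textbf{Step 2 (transfer to parameter plane).} As in the proof of Proposition \ref{prop: renormalizable parameters on the boundary of H_0}, I will take the boundary pieces $\partial\mathcal P_{n_k}\cap\mathcal U$. These consist of two parameter internal rays $\mathcal R_{\mathcal U}(\theta_k^\pm)$ landing at the landing points of parameter rays $G_{\underline s_k^\pm}$. The transfer homeomorphism $h_n$ of Lemma \ref{lm: transfer from dynamical plane to parameter plane} halves the angular width by $2^p$ at each step, so $\theta_k^\pm\to\theta$ and $\underline s_k^\pm\to\underline s^\pm$.

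Using Propositions \ref{prop: landing theorem for parameter rays} and \ref{prop: landing theorem for parameter internal rays}, the limiting rays land in $\mathcal M_{v_0}$ at a parameter where a parabolic point of multiplier one appears. By the same case analysis, the preperiodic alternative would force a superattracting cycle. That parameter is therefore the root of $\mathcal M_{v_0}$ corresponding to the appropriate parabolic parameter; I expect it to be $v_0$ itself. Then $\overline{G_{\underline s^+}\cup G_{\underline s^-}}$ separates $\mathcal M_{v_0}\setminus\{v_0\}$ from $\partial\mathcal U$, which gives items 2 and 4. Local connectivity of $\partial\mathcal U$ at $v_0$ follows as in Corollary \ref{cor: H_0 is a Jordan domain}: the pieces bounded by $\mathcal R_{\mathcal U}(\theta_k^\pm)$, the $G_{\underline s_k^\pm}$ and equipotentials, intersected with $\partial\mathcal U$, form a connected neighborhood basis.

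\textbf{Step 3 (item 1).} To show $\mathcal M_{v_0}\cap\partial\mathcal H_0=\{\chi_{v_0}^{-1}(1/4)\}$, I will apply Proposition \ref{prop: renormalizable parameters on the boundary of H_0} and Corollary \ref{cor: every renormalizable parameter on the boundary of H_0 is a cusp}. Any point of $\mathcal M_{v_0}\cap\partial\mathcal H_0$ is renormalizable with the same copy, so it is the cusp $v_1$. Existence comes from the parabolic parameter at the landing point of $\mathcal R_0(\theta)$, with $\theta$ given by the period-$p$ angle from Step 1 read at level $0$.

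\textbf{Main obstacle.} The hardest step is Step 1: proving that the landing parameter of the limiting rays is exactly $v_0$ and not some other point of $\mathcal M_{v_0}$. This reduces to identifying which point of $K_{v_0}$ meets $\partial(B_{v_0}+2k\pi)$ and ruling out several contact points. I would try the order-of-addresses argument from Case 2 of Proposition \ref{prop: dichotomy of parameters on the boundary of H_0}: a third ray strictly between $\underline s^-$ and $\underline s^+$ would produce a periodic ray whose parameter ray lands in $\mathcal M_{v_0}$ on the wrong side, which is a contradiction.
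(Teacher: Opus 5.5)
There is a genuine gap in Step 2, at the point where the proposition is decided: you identify $v_0$ as the wrong kind of parameter.

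The addresses $\underline s^\pm$ are \emph{strictly preperiodic}. Only $\sigma^m(\underline s^\pm)=\underline t^\pm$ is periodic, and the rays $g^{v_0}_{\underline s^\pm}$ land on the boundary of the strictly preperiodic component $U$ of $f^{-m}_{v_0}(B_{v_0})$. So by Proposition~\ref{prop: landing theorem for parameter rays}(2), the rays $G_{\underline s^\pm}$ land at parameters with strictly preperiodic critical value, never at parabolic ones.

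``The same case analysis'' from Proposition~\ref{prop: renormalizable parameters on the boundary of H_0} does not transfer either. There $m=0$, and $R^v_0(\theta)$ landing at $-2v$ made $-2v$ periodic, which produced a superattracting cycle. For $m\ge 1$, Proposition~\ref{prop: landing theorem for parameter internal rays} has $R^v_0(\theta)$ landing at $f^m_v(-2v)$. That only makes $-2v$ strictly preperiodic, which is no contradiction; it is precisely what happens at $v_0$.

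Your conclusion also contradicts your own Step 1. The parameter of $\mathcal M_{v_0}$ where $f^p$ has a multiplier-one parabolic fixed point is the cusp $\chi^{-1}_{v_0}(1/4)=v_1\in\partial\mathcal H_0$. At $v_1$ the critical value $-2v_1$ lies in a parabolic basin, so $f^m_{v_1}(-2v_1)\notin\partial B_{v_1}$. Step 1, however, pulls rays back from $f^m_{v_0}(-2v_0)\in\partial B_{v_0}$. In fact $v_0\neq v_1$: $v_0$ is the Misiurewicz parameter of $\mathcal M_{v_0}$ with $f^m_{v_0}(-2v_0)=x$.

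What is missing is the mechanism that pins down this parameter. Let $v_\pm$ be the landing points of $G_{\underline s^\pm}$.
\begin{enumerate}
\item Since $v_+\in W(v_1)$, Lemma~\ref{lm: common landing point for parameter in a wake} shows that $R^{v_+}_0(\theta)$, $g^{v_+}_{\underline t^+}$ and $g^{v_+}_{\underline t^-}$ co-land at the $p$-periodic point $f^m_{v_+}(-2v_+)$.
\item Pulling back by $f^m_{v_+}$, both $g^{v_+}_{\underline s^+}$ and $g^{v_+}_{\underline s^-}$ land at $-2v_+$.
\item Proposition~\ref{prop: converse landing theorem for eventually periodic parameters} then gives $v_+=v_-$.
\end{enumerate}
Only after this does your order-of-addresses argument work. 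It shows that $\mathcal R_{\mathcal U}(\theta)$ also lands at $v_+$. It also shows that no post-critically finite parameter of $\mathcal M_{v_0}$ lies in the component of $\mathbb C\setminus\overline{G_{\underline s^+}\cup G_{\underline s^-}}$ containing $\mathcal U$. Hence $\mathcal M_{v_0}\cap\partial\mathcal U=\{v_+\}$ and $v_0=v_+$. Your neighbourhood-basis argument for local connectivity needs exactly this, because the para-puzzle pieces shrink to $\mathcal M_{v_0}$, not to $\{v_0\}$.

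Finally, your derivation of item 3 is a non sequitur. Every little Julia set $f^j_{v_0}(K_{v_0})$ of the cycle touches $\partial B_{v_0}$ (at $f^j_{v_0}(x)$), so knowing which piece touches $\partial B_{v_0}$ locates nothing. What forces $p\mid m$ is a degree argument. The step from the piece containing $f^{m-1}_{v_0}(-2v_0)$ cannot be injective: otherwise the preimage of the contact point would be the previous contact point, which lies in $\partial B_{v_0}$, contradicting the minimality of $m$. So that piece contains an odd critical point $(2i+1)\pi$, and its image is the piece containing $-2v_0$.
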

\begin{proof}
    Let $\mathcal{U}$, $v_0$ be as above. Since $v_0$ is renormalizable, by Lemma \ref{lm: non-degenerated annuli}, there are puzzle pieces $\hat{P}^{v_0}_{n_k}$, $k \ge 0$, such that $f^p_{v_0}: \hat{P}^{v_0}_{n_{k+1}} \to \hat{P}^{v_0}_{n_k}$ is a quadratic-like mapping. And by Lemma \ref{lm: copy of Mandelbrot set}, there are para-puzzle pieces $\tilde{\mathcal{P}}_{n_k}$, $k \ge 0$ such that $\mathcal{M}_{v_0}= \bigcap_{k \ge 0} \overline{\tilde{\mathcal{P}}_{n_k}}$ is a copy of $\mathcal{M}$. By the definition of para-puzzle pieces, $\partial \mathcal{P}_{n} \cap \mathcal{U}$ consists of twp parameter internal rays $\mathcal{R}_{\mathcal{U}}(\theta^{\pm}_n)$, landing at $v^{\pm}_n$ respectively. And $v^{\pm}_n$ are the landing points of parameter rays $G_{\underline{s}^{\pm}_n}$ respectively, which are also on $\partial \mathcal{P}_{n}$. Then the dynamic rays $g^{v_0}_{\underline{s}^{\pm}_n}$ are on $\partial P^{v_0}_{n}$, landing at the same point with some preimages of $f^{-m}_{v_0}(R^{v_0}_0(\theta^{\pm}_n))$. 

    First we show that $\tilde{\mathcal{P}}_{n_k} \cap \mathcal{H}_0 \neq \varnothing$ so that $\mathcal{M}_{v_0} \cap \partial \mathcal{H}_0 \neq \varnothing$. Indeed, consider $P^{v_0}_{l}$, $l \ge m+1$. Since $\mathcal{U}$ is of depth $m$, $f^m_{v_0}(P^{v_0}_l)$ is a puzzle piece intersecting $B_{v_0}$. Thus for $kp \ge m$, $n_l \ge m$, $P^{v_0}_{n_l}=f^{kp}_{v_0}(P^{v_0}_{n_{l+k}})$ is a puzzle piece intersecting $B_{v_0}$. By the homeomorphism in Proposition \ref{lm: transfer from dynamical plane to parameter plane}, $\tilde{P}_{n_k} \cap \mathcal{H}_0 \neq \varnothing$. Therefore, $\mathcal{P}_{n_k} \cap \mathcal{H}_0$ contains two parameter internal rays $\mathcal{R}_0(\eta^{\pm}{n_k})$, which land at the same points with parameter rays $G_{\underline{t}^{\pm}_{n_k}}$ respectively. 

    Then we show that $\mathcal{M}_{v_0} \cap \partial \mathcal{H}_0$ at a single point $v_1=\chi^{-1}_{v_0}(1/4)$. Suppose that $\mathcal{M}_{v_0}$ intersects $\partial \mathcal{H}_0$ at two points $v_1$, $v_2$. Then by Proposition \ref{prop: renormalizable parameters on the boundary of H_0}, $v_i=\chi^{-1}_{v_i}(1/4)$, $i=1,2$. Since $v_1, v_2 \in \mathcal{M}_{v_0}$, $\chi_{v_1}=\chi_{v_0}=\chi_{v_2}$. Since $\chi_{v_0}$ is a homeomorphism, $v_1=v_2$. 
    
    Sicne the image of $R^{v_0}_0(\eta^{\pm}_{n_k})$ under $f^m_{v_0}$ are contained in $\partial P^{v_0}_{n_k-m}(f^m_{v_0}(-2v_0)) \cap B_{v_0}$, which consists of $R^{v_0}_0(\theta^{\pm}_{n_k-m})$. Since $f_{v_0}|_{B_{v_0}}$ doubles the angles of internal rays in $B_{v_0}$, $2^m\eta^{\pm}_{n_k}=\theta^{\pm}_{n_k-m}$. Similarly, $f^m_{v_0}$ maps $g^{v_{0}}_{\underline{s}^{\pm}_{n_k}}$ and $g^{v_0}_{\underline{t}^{\pm}_{n_k}}$ to the dynamic rays landing at the same points as $R^{v_0}_0(\theta^{\pm}_{n_k-m})$. Hence $\sigma^m(\underline{t}^{\pm}_{n_k})=\sigma^m(\underline{s}^{\pm}_{n_k})$. 
    
    We show that $P^{v_0}_{n_k}(f^m_{v_0}(-2v_0))=P^{v_0}_{n_k}$ so that $m$ is a multiple of $p$. Since $\mathcal{U}$ is of depth $m$, the component $U$ of $f^{-m}_{v_0}(B_{v_0})$, on whose boundary $g^{v_0}_{\underline{s}^{\pm}_{n_k+m}}$ land, satisfyies $f^m_{v_0}(U)=B_{v_0}$, and $f^j_{v_0}(U) \neq B_{v_0}$, $0 \le j \le m-1$. As a result, $\sigma^j(\underline{s}^{\pm}_{n_k+m})$ never land on $\partial B_{v_0}$. Hence $f^{m-1}_{v_0}: \hat{P}^{v_0}_{n_k+m} \to \hat{P}^{v_0}_{n_k+1}(f^{m-1}_{v_0}(-2v_0))$ is a conformal isomorphism. $f_{v_0}: \hat{P}^{v_0}_{n_k+1}(f^{m-1}_{v_0}(-2v_0)) \to \hat{P}^{v_0}_{n_k}(f^m_{v_0}(-2v_0))$ is a branched covering of degree $2$ since it is not a homeomorphism on $\partial \hat{P}^{v_0}_{n_k+1}(f^{m-1}_{v_0}(-2v_0))$ and every puzzle piece contains at most one critical point of $f_{v_0}$. This implies that $\hat{P}^{v_0}_{n_k+1}(f^{m-1}_{v_0}(-2v_0))$ contains a critical point of $f_{v_0}$ and $-2v_0 \in \hat{P}^{v_0}_{n_k}(f^m_{v_0}(-2v_0))$. Thus $P^{v_0}_{n_k}(f^m_{v_0}(-2v_0))=P^{v_0}_{n_k}$ and $m$ is a multiple of $p$. Meanwhile, this also gives that $\eta_{n_k}=\theta_{n_k}$ and $\underline{t}^{\pm}_{n_k}=\sigma^m(\underline{t}^{\pm}_{n_k+m})=\sigma^m(\underline{s}^{\pm}_{n_k+m})$. 

    Again, using the homeomorphism in Lemma \ref{lm: transfer from dynamical plane to parameter plane}, $\mathcal{R}_0(\theta^{pm}_{n_k})$ and $G_{\underline{t}^{\pm}_{n_k}}$ are on $\partial \mathcal{P}_{n_k}$. Since $\mathcal{P}_{n_k}(v_1)=\mathcal{P}_{n_k}$, by Proposition \ref{prop: renormalizable parameters on the boundary of H_0}, there is an angle $\theta$ and two external addresses $\underline{t}^{\pm}$, such that $\theta^{\pm}_{n_k} \to \theta$ and $\underline{t}^{\pm}_{n_k} \to \underline{t}^{\pm}$ as $k \to \infty$. Moreover, $\theta$ is of period $p$ under the angle doubling map and $\underline{t}^{\pm}$ are of period $p$ under the one-side shift $\sigma$. And $\mathcal{R}_0(\theta)$, $G_{\underline{t}^{\pm}}$ all land at $v_1$. 

    Consider the dynamic rays $g^{v_0}_{\underline{s}^{\pm}_{n_k}}$ landing on $\partial U$. They form two monotone sequences of external addresses. Thus there are $\underline{s}^{\pm}$ to which $\underline{s}^{\pm}_{n_k}$ converge as $k \to \infty$ respectively. 

    Finally, we show that $\mathcal{R}_{\mathcal{U}}(\theta)$ and $G_{\underline{s}^{\pm}}$ all land at $v_0$, and $\overline{G_{\underline{s}^+} \cup G_{\underline{s}^-}}$ seperates $\partial \mathcal{U}$ from $\mathcal{M}_{v_0} \setminus \{v_0\}$. Since $\theta$ is rational, $\mathcal{R}_{\mathcal{U}}(\theta)$ lands at some point $v_{\theta} \in \partial \mathcal{U}$. Similarly, $G_{\underline{s}^{\pm}}$ land at $v_{\pm} \in \partial \mathcal{U}$ respectively. 
    
    Since $\underline{s}^{\pm}$ are strictly preperiodic, $v_{\pm}$ are post-critically finite by Proposition \ref{prop: landing theorem for parameter rays}. Consider the dynamical plane of $f_{v_+}$. $f^m_{v_+}(-2v_+)$ is a fixed point of $f^p_{v_+}$, which is the landing point of $R^{v_+}_0(\theta)$ and $g^{v_+}_{\underline{t}^{\pm}}$ since $v_+ \in W(v_1)$ and $R^{v_1}_0(\theta)$ and $g^{v_1}_{\underline{t}^{\pm}}$ land at the same point by Proposition \ref{prop: renormalizable parameters on the boundary of H_0}. Pulling back to $-2v_+$ by $f^m_{v_+}$, $g^{v_+}_{\underline{s}^+}$, $g^{v_+}_{\underline{s}^{\pm}}$ both land at $-2v_+$. By Proposition \ref{prop: converse landing theorem for eventually periodic parameters}, $v_+$ is the landing point of $G_{\underline{s}^{\pm}}$, i.e. $v_+=v_-$. 

    Then we consider the parameter $v_{\theta}$. It is either post-critically finite or parabolic by Proposition \ref{prop: landing theorem for parameter internal rays}. By Proposition \ref{prop: converse landing theorem for eventually periodic parameters} and \ref{prop: converse landing theorem for parabolic parameters}, there is at least one parameter ray $G_{\underline{s}'}$ landing at $v_{\theta}$. If $v_{\theta} \neq v_+$, we may assume that $\underline{s}^- \prec \underline{s}'$. Since $v_{\theta}$ is the landing point of $\mathcal{R}_{\mathcal{U}}(\theta)$, it belongs to every para-puzzle piece $\tilde{\mathcal{P}}_{n_k}$. Thus $G_{\underline{s}'}$ enters every $\tilde{\mathcal{P}}_{n_k}$, and $\underline{s}^- \prec \underline{s}' \prec \underline{s}^-_{n_k}$ for every $k \ge 0$. Since $\underline{s}^-_{n_k} \to \underline{s}^-$ as $k \to \infty$, $\underline{s}'=\underline{s}^-$, contradicting that $v_{\theta} \neq v_+$. 
    
    Let $S$ be the component of $\mathbb{C} \setminus \overline{G_{\underline{s}^+ \cup G_{\underline{s}^-}}}$ containing $\mathcal{U}$. If $\mathcal{M}_{v_0} \cap S \neq \varnothing$, then we may choose a post-critically finite parameter $v' \in \mathcal{M}_{v_0} \cap S$. It is the landing point of some $G_{\underline{s}''}$. As above, $\underline{s}''$ must be equal to $\underline{s}^+$ or $\underline{s}^-$, contradicting that $v' \in S$ while $v_+ \in \partial S$. Thus $\mathcal{M}_{v_0} \cap \partial \mathcal{U}=\{v_+\}$. Since $v_0$ is also in $\mathcal{M}_{v_0} \cap \partial \mathcal{U}$, $v_0=v_+$. Therefore, $\overline{G_{\underline{s}^+} \cup G_{\underline{s}^-}}$ seperates $\partial \mathcal{U}$ from $\mathcal{M}_{v_0} \setminus \{v_0\}$, and $v_0$ is the landing point of $\mathcal{R}_{\mathcal{U}(\theta)}$ and $G_{\underline{s}^{\pm}}$.
\end{proof}

For the same reason as Corollary \ref{cor: H_0 is a Jordan domain}, hyperbolic components of type C are Jordan domains. 
\begin{corollary}\label{cor: hyperbolic components of type C are Jordan domains}
    Let $\mathcal{U}$ be a component of $\mathcal{H}_m$, $m \ge 1$. Then $\mathcal{U}$ is a Jordan domain.
\end{corollary}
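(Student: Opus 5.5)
The plan is to mirror the proof of Corollary \ref{cor: H_0 is a Jordan domain}, with $\Phi_{\mathcal{U}}$ in place of $\Phi_0$.

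\textbf{Step 1: the basic topology of $\mathcal{U}$.} By Lemma \ref{lm: hyperbolic components are simply connected}, $\mathcal{U}$ is simply connected. By Lemma \ref{lm: boundness of components of type C}, it is bounded. Proposition \ref{prop: Phi_U is a conformal isomorphism} gives a conformal isomorphism $\Phi_{\mathcal{U}}:\mathcal{U}\to\mathbb{D}$. Its inverse $\Psi=\Phi_{\mathcal{U}}^{-1}:\mathbb{D}\to\mathcal{U}$ is then a Riemann map onto a bounded simply connected domain.

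\textbf{Step 2: local connectivity of $\partial\mathcal{U}$.} Fix $v_0\in\partial\mathcal{U}$. If $f_{v_0}$ is not renormalizable, Corollary \ref{cor: local connectivity at non-renormalizable parameters} gives local connectivity at $v_0$. If $f_{v_0}$ is renormalizable, Proposition \ref{prop: renormalizable parameters on the boundaries of components of type C} gives it. Hence $\partial\mathcal{U}$ is locally connected. By Carathéodory's theorem, $\Psi$ then extends continuously to a surjection $\overline{\mathbb{D}}\to\overline{\mathcal{U}}$.

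\textbf{Step 3: injectivity of the boundary extension.} This is the main point. Suppose $\Psi(e^{2\pi i\alpha})=\Psi(e^{2\pi i\beta})=v^*$ with $\alpha\neq\beta$. Then the closures of the two parameter internal rays $\mathcal{R}_{\mathcal{U}}(\alpha)$ and $\mathcal{R}_{\mathcal{U}}(\beta)$, together with the center $v_{\mathcal{U}}$, form a Jordan curve $\gamma$. The domain bounded by $\gamma$ contains a full open sector of internal rays of $\mathcal{U}$, say those with angles in $(\alpha,\beta)$. All of these rays must land at $v^*$: their landing points lie in $\partial\mathcal{U}$ and inside $\overline{\mathrm{int}\,\gamma}$, and $\partial\mathcal{U}\cap \overline{\mathrm{int}\,\gamma}=\{v^*\}$.

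Now use the maximum modulus principle as in Lemma \ref{lm: hyperbolic components are simply connected}. Consider the analytic function $v\mapsto \phi_v(f^m_v(-2v))$. I would rather use the boundary behaviour of $v\mapsto f^m_v(-2v)$ directly. The function
$$w(v)=f^m_v(-2v)-\phi_v^{-1}\bigl(\Phi_{\mathcal{U}}(v)\bigr)$$
vanishes identically on $\mathcal{U}$, so this formulation is not helpful. The cleaner route is the standard one: a univalent map whose boundary values are constant ($v^*$) on an arc of $\partial\mathbb{D}$ must be constant, by the Riesz uniqueness theorem (or Schwarz reflection). Applying this to $\Psi$ on the arc $\{e^{2\pi i t}: t\in(\alpha,\beta)\}$ contradicts the injectivity of $\Psi$. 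This is exactly the role of "the Maximum Modulus Principle" invoked in Corollary \ref{cor: H_0 is a Jordan domain}.

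\textbf{Step 4: conclusion.} $\Psi:\overline{\mathbb{D}}\to\overline{\mathcal{U}}$ is a continuous bijection from a compact space to a Hausdorff space, hence a homeomorphism. Therefore $\partial\mathcal{U}=\Psi(\partial\mathbb{D})$ is a Jordan curve and $\mathcal{U}$ is a Jordan domain.

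The only potential gap is Step 3, specifically the claim that $\partial\mathcal{U}\cap\overline{\mathrm{int}\,\gamma}=\{v^*\}$. I would justify it by observing that $\mathrm{int}\,\gamma$ meets $\mathcal{U}$ exactly in the open sector of internal rays with angles in $(\alpha,\beta)$. Any other point of $\partial\mathcal{U}$ inside $\gamma$ would then be a limit of points of $\mathcal{U}$ lying in that sector, so it is the Carathéodory image of an angle in $[\alpha,\beta]$. The remaining task there is to check that those angles are all mapped to $v^*$, which follows from the connectedness of prime-end impressions once local connectivity is known.
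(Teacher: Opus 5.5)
Your Steps 1, 2 and 4 follow the paper's route: simple connectivity, then local connectivity at non-renormalizable and renormalizable boundary parameters, then an upgrade to a Jordan curve. Step 3, however, has a genuine gap. The claim $\partial\mathcal{U}\cap\overline{\mathrm{int}\,\gamma}=\{v^*\}$ does not follow from local connectivity or from prime-end theory, and your justification is circular. You reduce it to showing that every angle in $[\alpha,\beta]$ is sent to $v^*$ by $\Psi$, but that is exactly what Step 3 is supposed to establish, and Riesz's theorem then shows it is impossible. Local connectivity only says that each prime-end impression is a single point; it says nothing about two different angles having different images.

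In fact Step 3 uses nothing about $\mathcal{U}$ except that it is a bounded simply connected domain with locally connected boundary. It would therefore prove that every such domain is a Jordan domain, which is false. Take $\mathbb{D}\setminus[1/2,1)$. The two internal rays from $0$ landing at $3/4$ from above and from below bound a region containing the boundary segment $[1/2,3/4)$. The rays in the sector between them land on that segment, not at $3/4$. Riesz's uniqueness theorem is only the final step; it is not what the paper means by ``the Maximum Modulus Principle''.

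What is missing is an argument specific to the parameter plane: no part of $\mathbb{C}\setminus\mathcal{U}$ other than $v^*$ can be enclosed by a curve $\gamma\subset\mathcal{U}\cup\{v^*\}$. This is the content of the paper's appeal to the Maximum Modulus Principle, used as in Lemma \ref{lm: hyperbolic components are simply connected}. The idea is as follows.

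\begin{itemize}
\item Bound the analytic functions $v\mapsto f^N_v(-2v)$ on $\gamma$ uniformly in $N$. This requires controlling $B_v$ for $v\in\gamma$ near $v^*$.
\item The maximum principle then makes them a uniformly bounded, hence normal, family on $\mathrm{int}\,\gamma$.
\item They converge to $0$ on the sector $\mathrm{int}\,\gamma\cap\mathcal{U}$, so by Vitali they converge to $0$ on all of $\mathrm{int}\,\gamma$.
\item Hence every parameter in $\mathrm{int}\,\gamma$ lies in $\hat{\mathcal{H}}$, so by connectedness $\mathrm{int}\,\gamma\subset\mathcal{U}$, and only then does Riesz give the contradiction.
\end{itemize}

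Alternatively, you can argue dynamically. Suppose $\mathcal{R}_{\mathcal{U}}(\alpha)$ and $\mathcal{R}_{\mathcal{U}}(\beta)$ both land at $v^*$. The para-puzzle transfer (for non-renormalizable $v^*$), or Proposition \ref{prop: renormalizable parameters on the boundaries of components of type C} (in the renormalizable case), forces $R^{v^*}_0(\alpha)$ and $R^{v^*}_0(\beta)$ to land at the same point $f^m_{v^*}(-2v^*)$. This contradicts that $\partial B_{v^*}$ is a Jordan curve (Corollary \ref{cor: local connectivity of B_v}). Without some such input, Step 3 does not go through.
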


Now Theorem \ref{thm: boundaries of hyperbolic components are Jordan curves} follows from Proposition \ref{prop: parameterization of hyperbolic components of type D}, Corollary \ref{cor: H_0 is a Jordan domain} and \ref{cor: hyperbolic components of type C are Jordan domains}.

\begin{corollary}\label{cor: dynamics of parameters on the boundaries of hyperbolic components of type C}
    Let $\mathcal{U}$ be a component of $\mathcal{H}_m$, $m \ge 1$, $v \in \partial \mathcal{U}$. Then $v$ is the landing point of a unique parameter internal ray $\mathcal{R}_{\mathcal{U}}(\theta)$, so that the internal ray $R^{v}_0(\theta)$ lands at $f^m_v(-2v) \in \partial B_{v}$. 
\end{corollary}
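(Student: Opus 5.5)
We split the argument into two parts: uniqueness of the landing internal ray, and the dynamical statement that $R^v_0(\theta)$ lands at $f^m_v(-2v)$.

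\textbf{Uniqueness.} By Corollary \ref{cor: hyperbolic components of type C are Jordan domains}, $\mathcal{U}$ is a Jordan domain. By Proposition \ref{prop: Phi_U is a conformal isomorphism}, $\Phi_{\mathcal{U}}:\mathcal{U}\to\mathbb{D}$ is a conformal isomorphism. Carath\'eodory's theorem therefore extends $\Phi_{\mathcal{U}}$ to a homeomorphism $\overline{\mathcal{U}}\to\overline{\mathbb{D}}$. Consequently each parameter internal ray $\mathcal{R}_{\mathcal{U}}(\theta)=\Phi_{\mathcal{U}}^{-1}((0,1)e^{2\pi i\theta})$ lands, at the point $\Phi_{\mathcal{U}}^{-1}(e^{2\pi i\theta})$. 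Moreover, every $v\in\partial\mathcal{U}$ is the landing point of exactly one such ray, namely the one with $e^{2\pi i\theta}=\Phi_{\mathcal{U}}(v)$.

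\textbf{Dynamics.} We treat two cases, following the dichotomy used for $\partial\mathcal{H}_0$.

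\emph{Renormalizable $v$.} Here Proposition \ref{prop: renormalizable parameters on the boundaries of components of type C} already identifies the internal ray as $\mathcal{R}_{\mathcal{U}}(\theta)$ with $\theta$ periodic of period $p$. It also places $v$ in the wake $W(v_1)$ and shows, as in its proof for $v_+=v_0$, that $f^m_v(-2v)$ is the common landing point of $R^v_0(\theta)$ and $g^v_{\underline{t}^\pm}$. So this case is immediate.

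\emph{Non-renormalizable $v$.} We use the para-puzzle $\mathcal{P}_n=\mathcal{P}_n(v)$ and the transfer homeomorphisms $h_n$ of Lemma \ref{lm: transfer from dynamical plane to parameter plane}. The steps are:
\begin{enumerate}
\item Since $\mathcal{R}_{\mathcal{U}}(\theta)$ lands at $v$, it meets every $\hat{\mathcal{P}}_{n_k}$.
\item By Lemma \ref{lm: transfer from dynamical plane to parameter plane} and Corollary \ref{cor: points on the para-graph}, for parameters $w\in\mathcal{R}_{\mathcal{U}}(\theta)\cap\hat{\mathcal{P}}_{n_k}$ the point $-2w$ lies on the pulled-back internal ray in the component $U$ of $f_w^{-m}(B_w)$. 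Transporting by $H_n$, the component $U$ of $f^{-m}_v(B_v)$ with $-2v\in\partial U$ has its internal ray (the $f^m$-preimage of $R^v_0(\theta)$) entering every $\hat P^v_{n_k}$.
\item By Lemma \ref{lm: non-degenerated annuli}, non-renormalizability gives $\bigcap_k\overline{\hat P^v_{n_k}}=\{-2v\}$. Hence that pulled-back ray lands at $-2v$.
\item Applying $f^m_v$, which maps the ray onto $R^v_0(\theta)$, we conclude that $R^v_0(\theta)$ lands at $f^m_v(-2v)$.
\end{enumerate}
Finally, Lemma \ref{lm: maps on the boundary of U} gives $f^m_v(-2v)\in\partial B_v$ once a parabolic cycle on $\partial B_v$ is excluded. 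A parabolic parameter on $\partial B_v$ would be renormalizable by Lemma \ref{lm: attracting periodic points give renormalization}, so it falls into the first case.

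\textbf{Main obstacle.} The delicate step is step 2: checking that the pulled-back ray genuinely enters the puzzle pieces around $-2v$. One must verify that the holomorphic motion $H_n$ carries the internal rays of $U$ for nearby parameters $w$ continuously to those of $v$, and that the angle $\theta$ is preserved as $w\to v$ along $\mathcal{R}_{\mathcal{U}}(\theta)$. I would handle this via Remark \ref{rmk: extending to euqipotentials} and the continuity of $\phi_w$ in $w$ on compact subsets of $B_w$. The conclusion is then the nested-intersection argument, exactly as in Case 1 of Proposition \ref{prop: dichotomy of parameters on the boundary of H_0}.
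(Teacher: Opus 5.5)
Your uniqueness step is the paper's own argument: Corollary~\ref{cor: hyperbolic components of type C are Jordan domains} plus Carath\'eodory applied to $\Phi_{\mathcal{U}}$. Your proof of the dynamical claim, however, takes a genuinely different route.

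The paper settles the dynamical claim in one line, ``by the definition of $\mathcal{R}_{\mathcal{U}}(\theta)$''. For $w$ on the ray, $f^m_w(-2w)=\phi_w^{-1}(te^{2\pi i\theta})\in R^w_0(\theta)$, and one lets $t\to 1$. You instead split into two cases:
\begin{itemize}
\item the renormalizable case, which you read off from the proof of Proposition~\ref{prop: renormalizable parameters on the boundaries of components of type C};
\item the non-renormalizable case, where you transplant the nested-puzzle argument of Case~1 of Proposition~\ref{prop: dichotomy of parameters on the boundary of H_0} to the component $U$ of $f^{-m}_v(B_v)$ and then push forward by $f^m_v$.
\end{itemize}

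The paper's limit is short, but it tacitly needs the B\"ottcher coordinates to converge up to $\partial B_w$ as $w\to v$. The natural justification is the $\lambda$-lemma for $\phi_w^{-1}\circ\phi_v$ over a neighbourhood of $v$, and that neighbourhood must avoid $\mathcal{H}_0$. But $\overline{\mathcal{U}}\cap\overline{\mathcal{H}_0}=\varnothing$ is only proved later, in Lemma~\ref{lm: hyperbolic components of type C have disjoint closures}, using this very corollary. Your route gets landing from $\bigcap_k\overline{\hat P^v_{n_k}}=\{-2v\}$ and needs no boundary continuity. The price is the full puzzle machinery, a case split, and reliance on Proposition~\ref{prop: renormalizable parameters on the boundaries of components of type C}.

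The step you flag as the main obstacle is settled combinatorially, not by continuity of $\phi_w$. $P^w_n$ meets $U_w$ in a sector cut out by an equipotential and by graph internal rays of fixed angles $\alpha^\pm$; $H_n$ carries these rays to the rays of the same angles in $U_v$. So $-2w\in P^w_n$ forces the pulled-back angle of $-2w$ into $(\alpha^-,\alpha^+)$. The tail of the corresponding ray of $U_v$ then lies in $P^v_n$ for every $n$. Your closing appeal to Lemma~\ref{lm: maps on the boundary of U} is unnecessary, since the landing point of an internal ray of $B_v$ lies in $\partial B_v$ automatically.

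One caveat applies equally to the paper. Since $f^{m-1}_v(-2v)\in B_v+2k\pi$ and $f_v$ is two-to-one there, $\Phi_{\mathcal{U}}=\psi^2$ with $\psi(v)=\phi_v(f^{m-1}_v(-2v)-2k\pi)$. Hence $\Phi_{\mathcal{U}}$ has a double zero at the center, and Carath\'eodory should be applied to $\psi$ rather than to $\Phi_{\mathcal{U}}$. The angle $\theta$ attached to each $v\in\partial\mathcal{U}$ is still unique, so the conclusion is unaffected.
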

\begin{proof}
    By Corollary \ref{cor: hyperbolic components of type C are Jordan domains}, $\Phi_{\mathcal{U}}: \mathcal{U} \to \mathbb{D}$ extends to a homeomorphism $\Phi_{\mathcal{U}}: \overline{\mathcal{U}} \to \overline{\mathbb{D}}$. Thus every $v \in \partial \mathcal{U}$ is the landing point of a unique parameter internal ray $\mathcal{R}_{\mathcal{U}}(\theta)$. By the definition of $\mathcal{R}_{\mathcal{U}}(\theta)$, $f^m_v(-2v)$ is the landing point of $R^v_0(\theta)$. 
\end{proof}

\begin{corollary}\label{cor: at most one renormalizable parameter on the boundary of hyperbolic components of type C}
    Let $\mathcal{U}$ be a component of $\mathcal{H}_m$, $m \ge 1$. Then there is at most one $v_0 \in \partial \mathcal{U}$ which is renormalizable. And $v_0$ must be post-critically finite.
\end{corollary}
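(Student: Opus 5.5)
The plan is to combine Proposition \ref{prop: renormalizable parameters on the boundaries of components of type C} with the homeomorphic boundary extension of $\Phi_{\mathcal{U}}$ from Corollary \ref{cor: hyperbolic components of type C are Jordan domains}. Let $v_0 \in \partial \mathcal{U}$ be renormalizable with period $p$. By Proposition \ref{prop: renormalizable parameters on the boundaries of components of type C}, $v_0$ is the landing point of $\mathcal{R}_{\mathcal{U}}(\theta)$ and of $G_{\underline{s}^{\pm}}$. Here $\theta$ is the angle, of period $p$ under doubling, attached to the cusp $v_1=\chi^{-1}_{v_0}(1/4)$ of $\mathcal{M}_{v_0}$ by Proposition \ref{prop: renormalizable parameters on the boundary of H_0}. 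The addresses $\underline{s}^{\pm}$ are strictly preperiodic, since $\sigma^m(\underline{s}^{\pm})=\underline{t}^{\pm}$ is periodic and $\underline{s}^{\pm} \neq \underline{t}^{\pm}$, the latter rays landing at $v_1 \neq v_0$.

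\textbf{Post-critical finiteness.} Since $\underline{s}^{+}$ is strictly preperiodic, Proposition \ref{prop: landing theorem for parameter rays}(2) shows that $-2v_0$ is strictly preperiodic for $f_{v_0}$. This is consistent with the description in the proof of Proposition \ref{prop: renormalizable parameters on the boundaries of components of type C}: $f^m_{v_0}(-2v_0)$ is the common landing point of $R^{v_0}_0(\theta)$ and $g^{v_0}_{\underline{t}^{\pm}}$, fixed by $f^p_{v_0}$. Hence the free critical orbit is finite. The other critical orbit is the fixed point $0$ together with the critical points $2k\pi$, which map to $0$, and the odd multiples $(2k+1)\pi$, which map to $-2v_0$. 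So $f_{v_0}$ is post-critically finite.

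\textbf{Uniqueness.} Suppose $v_0, v_0' \in \partial \mathcal{U}$ are both renormalizable. Their internal angles $\theta, \theta'$ with respect to $\Phi_{\mathcal{U}}$ are each the angle of the cusp of the corresponding copy $\mathcal{M}_{v_0}$, $\mathcal{M}_{v_0'}$, viewed as an internal angle of $\mathcal{H}_0$. The key is to pin these angles down intrinsically from $\mathcal{U}$. By item 3 of Proposition \ref{prop: renormalizable parameters on the boundaries of components of type C}, $p \mid m$. Moreover, the angle $\theta$ satisfies $2^m\theta=\theta$, and $v_1=\mathcal{M}_{v_0}\cap\partial\mathcal{H}_0$ is determined by the puzzle piece $P^{v_0}_{n_k}(f^m_{v_0}(-2v_0))$ containing $-2v_0$. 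I would argue via the para-puzzle that both $\mathcal{M}_{v_0}$ and $\mathcal{M}_{v_0'}$ lie in the wake $W(v_1)$ determined by $\mathcal{U}$. Indeed, $\mathcal{U}$ lies in the closure of the wake, $\overline{G_{\underline{t}^+}\cup G_{\underline{t}^-}}$ bounds it, and $\mathcal{U}$ is connected and disjoint from these rays. This forces $v_1=v_1'$, hence $\mathcal{M}_{v_0}=\mathcal{M}_{v_0'}$. Then item 4 of Proposition \ref{prop: renormalizable parameters on the boundaries of components of type C}, applied to $v_0$, says $\overline{G_{\underline{s}^+}\cup G_{\underline{s}^-}}$ separates $\partial\mathcal{U}$ from $\mathcal{M}_{v_0}\setminus\{v_0\}$. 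Since $v_0' \in \mathcal{M}_{v_0}\cap\partial\mathcal{U}$, we get $v_0'=v_0$.

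\textbf{Main obstacle.} The main obstacle is the step $v_1=v_1'$: showing that a single type C component cannot touch two distinct Mandelbrot copies whose cusps lie on different wakes of $\mathcal{H}_0$. My first attempt is a combinatorial one. I would note that $\theta$ and $\theta'$ are both internal angles of $\Phi_{\mathcal{U}}$, hence both determine points of $\partial\mathcal{U}$. Pushing forward by $f^m$ maps $\partial\mathcal{U}$ into $\partial\mathcal{H}_0$-type dynamics, and the separation by the ray pairs $G_{\underline{t}^{\pm}}$ and $G_{\underline{t}'^{\pm}}$ would put $\mathcal{U}$ in two disjoint wakes unless $v_1=v_1'$.
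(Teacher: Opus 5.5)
\textbf{The uniqueness argument has a gap.} Your post-critical finiteness argument is correct and matches what the paper extracts from the proof of Proposition~\ref{prop: renormalizable parameters on the boundaries of components of type C}. The observation that $f^m_{v_0}(-2v_0)$ is fixed by $f^p_{v_0}$ is enough on its own. Note that $\underline{s}^{\pm}\neq\underline{t}^{\pm}$ alone does not give strict preperiodicity, since $\underline{s}^{\pm}$ could a priori be another point of the cycle of $\underline{t}^{\pm}$.

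The gap is exactly at the step you flag, and it has three parts:
\begin{enumerate}
\item The sentence meant to show $\mathcal{U}\subset W(v_1)$ takes ``$\mathcal{U}$ lies in the closure of the wake'' as its own premise. Connectedness of $\mathcal{U}$ and disjointness from $G_{\underline{t}^{\pm}}$ only place $\mathcal{U}$ in \emph{some} component of $\mathbb{C}\setminus\overline{G_{\underline{t}^+}\cup G_{\underline{t}^-}}$, not in $W(v_1)$.
\item Disjointness of wakes at distinct points of $\partial\mathcal{H}_0$ is asserted, not proved.
\item The implication $v_1=v_1'\Rightarrow\mathcal{M}_{v_0}=\mathcal{M}_{v_0'}$, i.e.\ that a copy is determined by its cusp, is never established.
\end{enumerate}
The ``combinatorial'' fallback of pushing $\partial\mathcal{U}$ forward by $f^m$ is not an argument.

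\textbf{The missing ingredient.} What you need is a planar separation argument. It uses only that $\overline{\mathcal{H}_0}$, $\overline{\mathcal{U}}$ and the copies consist of non-escaping parameters, so they never meet a parameter ray.
\begin{enumerate}
\item Lemma~\ref{lm: hyperbolic components of type C have disjoint closures} gives $v_0\notin\overline{\mathcal{H}_0}$, hence $v_1\neq v_0$. Its proof does not use this corollary, so there is no circularity.
\item Suppose $v_0'\neq v_0$. By item 4 at $v_0$, the curve $\overline{G_{\underline{s}^+}\cup G_{\underline{s}^-}}$ has $v_0'\in\partial\mathcal{U}\setminus\{v_0\}$ on one side and $v_1\in\mathcal{M}_{v_0}\setminus\{v_0\}$ on the other.
\item $\overline{\mathcal{H}_0}$ is connected, misses this curve, and contains $v_1$. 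So all of $\overline{\mathcal{H}_0}$ lies on the $v_1$ side, in particular the cusp $v_1'$ of $\mathcal{M}_{v_0'}$.
\item The connected non-escaping set $\mathcal{M}_{v_0'}$ contains both $v_0'$ and $v_1'$, so it must meet the curve. The only non-escaping point of that curve is $v_0$.
\item Hence $v_0\in\mathcal{M}_{v_0'}\cap\partial\mathcal{U}$ with $v_0\neq v_0'$, which contradicts item 4 at $v_0'$.
\end{enumerate}
This bypasses wakes and the identification of copies altogether.

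\textbf{Repairing your wake route instead.}
\begin{itemize}
\item The remark after Definition~\ref{def: wake} gives $v_0\in\mathcal{M}_{v_0}\setminus\{v_1\}\subset W(v_1)$, and connectedness of $\mathcal{U}$ then gives $\mathcal{U}\subset W(v_1)$.
\item Wakes at distinct points of $\partial\mathcal{H}_0$ are disjoint. This holds because $\overline{\mathcal{H}_0}$ is a closed Jordan disk (Corollary~\ref{cor: H_0 is a Jordan domain}) meeting each ray pair only at its landing point, and distinct parameter rays are disjoint.
\item Once $v_1=v_1'$, you can skip identifying the copies. Item 2 and the uniqueness of the internal ray of $\mathcal{H}_0$ at $v_1$ (Proposition~\ref{prop: dichotomy of parameters on the boundary of H_0}) give $\theta=\theta'$. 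Then the single ray $\mathcal{R}_{\mathcal{U}}(\theta)$ would land at both $v_0$ and $v_0'$.
\end{itemize}

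\textbf{Comparison with the paper.} The paper uses the same mechanism in a different form. It closes a Jordan curve through $\mathcal{H}_0$, $\mathcal{M}_{v_0}$, $\mathcal{U}$ and $\mathcal{M}_{v_0'}$, and observes that one of the rays $G_{\underline{s}^{\pm}}$ would then be trapped in the bounded domain it encloses. The connectedness version above needs only connectedness of the copies, not arcs inside them.
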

\begin{proof}
    Suppose by contradiction that $v_1 \neq v_2 \in \partial \mathcal{U}$ are both renormalizable. Then there is a Jordan curve $\gamma$ in $\mathcal{H}_0 \cup \mathcal{M}_{v_1} \cup \mathcal{M}_{v_2} \cup \mathcal{U}$ which bounds a bounded domain $\Omega$. Consider the two parameter rays $G_{\underline{s}^{\pm}}$ landing at $v_1$. The curve $\overline{G_{\underline{s}^+} \cup G_{\underline{s}^-}}$ seperates $\partial \mathcal{U}$ from $\mathcal{M}_{v_1} \setminus \{v_1\}$ by Proposition \ref{prop: renormalizable parameters on the boundaries of components of type C}. Thus one of $G_{\underline{s}^{\pm}}$ is contained in $\Omega$, which is a bounded domain. This is impossible. 

    From the proof of Proposition \ref{prop: renormalizable parameters on the boundaries of components of type C}, we see that the renormalizable parameter $v_0 \in \partial \mathcal{U}$ must be post-critically finite.
\end{proof}

\section{Hyperbolic components of type C are quasidisks}\label{sec: hyperbolic components of type C are quasidisks}
In this section, we prove that hyperbolic components of type C are quasidisks.
\begin{lemma}\label{lm: hyperbolic components of type C have disjoint closures}
    Let $\mathcal{U}_1, \mathcal{U}_2$ be two components of $\hat{\mathcal{H}}$. Then $\overline{\mathcal{U}_1} \cap \overline{\mathcal{U}_2}=\varnothing$. 
\end{lemma}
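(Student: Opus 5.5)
We argue by contradiction: suppose $v_0 \in \overline{\mathcal{U}_1} \cap \overline{\mathcal{U}_2}$ with $\mathcal{U}_1 \neq \mathcal{U}_2$. Since distinct hyperbolic components are disjoint open sets, $v_0 \in \partial \mathcal{U}_1 \cap \partial \mathcal{U}_2$. Let $\mathcal{U}_i$ be a component of $\mathcal{H}_{m_i}$, $m_i \ge 0$, with the convention $\mathcal{H}_0$ for $m_i = 0$.

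First we determine $m_1$ and $m_2$. By Lemma \ref{lm: maps on the boundary of H_0} and Lemma \ref{lm: maps on the boundary of U}, $\partial B_{v_0}$ contains either a parabolic cycle or $f^{m_i}_{v_0}(-2v_0)$. We then argue as in Corollary \ref{cor: closure of hyperbolic components of type C are disjoint from the real axis}: in the non-parabolic case, continuity of $\partial B_v$ in $v$ shows that $m_i$ is the least $j$ with $f^j_{v_0}(-2v_0) \in \partial B_{v_0}$. This forces $m_1 = m_2 =: m$. In the parabolic case, $f_{v_0}$ is renormalizable by Lemma \ref{lm: attracting periodic points give renormalization}, and this case is handled below.

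\textbf{Non-renormalizable case.} By Corollary \ref{cor: dynamics of parameters on the boundaries of hyperbolic components of type C} (and Proposition \ref{prop: dichotomy of parameters on the boundary of H_0} when $m = 0$), $v_0$ is the landing point of internal rays $\mathcal{R}_{\mathcal{U}_1}(\theta_1)$ and $\mathcal{R}_{\mathcal{U}_2}(\theta_2)$. In the dynamical plane, $R^{v_0}_0(\theta_i)$ lands at $f^m_{v_0}(-2v_0)$. Moreover, $f^{m-1}_{v_0}(-2v_0) \in \partial(B_{v_0} + 2k_i\pi)$, where $k_i$ is the constant of Lemma \ref{lm: k_v is constant}.

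We would use the para-puzzles of \S\ref{ss: construction of para-puzzles} around $v_0$. By Corollary \ref{cor: local connectivity at non-renormalizable parameters}, $\bigcap_n \overline{\hat{\mathcal{P}}_n} = \{v_0\}$, so both $\mathcal{U}_1$ and $\mathcal{U}_2$ meet every $\hat{\mathcal{P}}_n$. Transfer this to the dynamical plane via the homeomorphisms $h_n$ of Lemma \ref{lm: transfer from dynamical plane to parameter plane}. The parameter internal rays of $\mathcal{U}_1$ and $\mathcal{U}_2$ on $\partial\mathcal{P}_n$ correspond to internal rays in two components of $f^{-m}_{v_0}(B_{v_0})$, both of whose closures contain $-2v_0$.

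Lemma \ref{lm: position of critical points of cosine maps} and the local-connectivity Corollary \ref{cor: local connectivity of B_v} show that $\partial B_{v_0}$ is a Jordan curve. Pulling back along the orbit, two distinct preimage components can share a boundary point only at a point whose forward orbit hits a critical point $(2k+1)\pi$ on $\partial B_{v_0}$. The injectivity argument in Lemma \ref{lm: transfer from dynamical plane to parameter plane} then identifies $k_1 = k_2$ and hence $\mathcal{U}_1 = \mathcal{U}_2$, which is a contradiction.

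The remaining subcase is $-2v_0$ being a common boundary point of two preimage components, i.e.\ its orbit passing through a critical point $(2k+1)\pi$ on $\partial B$. This is excluded as follows. Since $f_{v_0}$ is semi-hyperbolic there, Lemma \ref{lm: renormalizable cosine function} fails. In this subcase $f^{j}_{v_0}(-2v_0) = (2k+1)\pi$, so $f^{j+1}_{v_0}(-2v_0) = -2v_0$ is periodic and lies on $\partial B_{v_0}$ or a preimage. That makes $-2v_0$ both periodic and landing on $\partial B$ with first entrance time $m$, which contradicts strict preperiodicity of the landing point of a parameter internal ray, as in the argument inside Proposition \ref{prop: renormalizable parameters on the boundary of H_0}.

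\textbf{Renormalizable case.} By Proposition \ref{prop: dichotomy of parameters on the boundary of H_0} and Proposition \ref{prop: renormalizable parameters on the boundaries of components of type C}, $v_0$ is the landing point of exactly two parameter rays $G_{\underline{s}^\pm}$. The curve $\overline{G_{\underline{s}^+} \cup G_{\underline{s}^-}}$ separates $\mathcal{M}_{v_0} \setminus \{v_0\}$ from $\partial\mathcal{U}_i$ for each $i$, so both $\mathcal{U}_1$ and $\mathcal{U}_2$ lie in the same complementary component $S$. Near $v_0$ inside $S$ there is only one "access" between the two rays. Both $\mathcal{U}_i$ contain an internal ray landing at $v_0$ inside $S$, and the two rays $G_{\underline{s}^\pm}$ are the only parameter rays landing at $v_0$.

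Suppose $\mathcal{U}_1 \neq \mathcal{U}_2$. We join the two internal rays into a Jordan curve through $v_0$. This curve bounds a region that must contain a parameter ray landing at $v_0$: take preperiodic parameters on $\partial\mathcal{U}_i$ close to $v_0$, together with their rays, and use ray ordering as in the last step of Proposition \ref{prop: renormalizable parameters on the boundaries of components of type C}. But the rays landing at $v_0$ are only $G_{\underline{s}^\pm}$, so this is a contradiction.

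\textbf{Main obstacle.} The main difficulty is the non-renormalizable case: rigorously showing that $-2v_0$ cannot lie on the common boundary of two distinct components of $\bigcup_m f^{-m}_{v_0}(B_{v_0})$. I would handle this via the puzzle structure and the fact that every puzzle piece contains at most one critical point.
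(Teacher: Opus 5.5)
Your renormalizable case is a genuine gap. Concatenating $\overline{\mathcal{R}_{\mathcal{U}_1}(\theta_1)}$ and $\overline{\mathcal{R}_{\mathcal{U}_2}(\theta_2)}$ only gives an arc from one center through $v_0$ to the other. You never say how to close it into a Jordan curve, and the closing path would itself have to avoid parameter rays for the bounded region to trap anything. The two claims carrying the argument are asserted, not proved: that there is ``only one access'' to $v_0$ inside $S$, and that the enclosed region ``must contain a parameter ray landing at $v_0$''. Invoking ``ray ordering'' does not supply them. You also merge two different ray pairs into a single pair $G_{\underline{s}^{\pm}}$. The pair from Proposition \ref{prop: renormalizable parameters on the boundary of H_0} has periodic addresses and lands at a parabolic cusp. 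The pair from Proposition \ref{prop: renormalizable parameters on the boundaries of components of type C} has strictly preperiodic addresses and lands at a post-critically finite, non-parabolic parameter.

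What you are missing, and what the paper uses, is that no renormalizable/non-renormalizable split is needed. Since type A is unique, at least one component, say $\mathcal{U}$, is of type C. Corollary \ref{cor: dynamics of parameters on the boundaries of hyperbolic components of type C} then gives $f^m_{v_0}(-2v_0)\in\partial B_{v_0}\subset J(f_{v_0})$. Hence $-2v_0\in J(f_{v_0})$, and $f_{v_0}$ has no parabolic cycle, because its basin would need a singular value and the only ones are $0$ and $-2v_0$. So your parabolic case is empty.

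If the other component is $\mathcal{H}_0$, Lemma \ref{lm: maps on the boundary of H_0} forces $-2v_0\in\partial B_{v_0}$. This contradicts the first entry time $m\ge 1$ seen from $\mathcal{U}$.

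If both are of type C, your own pull-back argument works pointwise at $v_0$ and never uses non-renormalizability. That argument is: the first entry times agree, and two distinct preimage components of $B_{v_0}$ can meet along the orbit of $-2v_0$ only at a critical point. The paper runs it at the single point $f^{m-1}_{v_0}(-2v_0)\in\partial(B_{v_0}+2k_1\pi)\cap\partial(B_{v_0}+2k_2\pi)$, which comes straight from continuity as in Lemma \ref{lm: k_v is constant}. So the para-puzzles, Corollary \ref{cor: local connectivity at non-renormalizable parameters} and the transfer maps $h_n$ in your non-renormalizable case are unnecessary as well.

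Finally, the contradiction in your critical-point subcase should not be credited to ``strict preperiodicity of the landing point of a parameter internal ray''. Proposition \ref{prop: landing theorem for parameter internal rays} only covers rational angles, and $\theta_i$ need not be rational. The correct reason is that $f^j_{v_0}(-2v_0)=(2k+1)\pi$ makes this critical point periodic, hence superattracting and in the Fatou set, although it lies in $J(f_{v_0})$. Equivalently, $-2v_0$ periodic together with $f_{v_0}(\partial B_{v_0})\subset\partial B_{v_0}$ would put $-2v_0$ on $\partial B_{v_0}$. The remark about semi-hyperbolicity and Lemma \ref{lm: renormalizable cosine function} plays no role.
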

\begin{proof}
    If one of $\mathcal{U}_i$, $i=1,2$ is $\mathcal{H}_0$, we may assume that $\mathcal{U}_2=\mathcal{H}_0$. Let $v \in \overline{\mathcal{U}_1} \cap \overline{\mathcal{H}_0}$. In the view of $\mathcal{U}_1$, by Corollary \ref{cor: dynamics of parameters on the boundaries of hyperbolic components of type C}, $v$ is the landing point of a parameter internal ray $\mathcal{R}_{\mathcal{U}_1}(\theta)$ so that $f^{m}_v(-2v) \in \partial B_v$ is the landing point of $R^v_0(\theta)$, and $-2v \notin \partial B_v$. Moreover, $f_v$ has no parabolic periodic points since the critical value $-2v$ belongs to $J(f_v)$. However, in the view of $\mathcal{H}_0$, $v$ is the landing point of $\mathcal{R}_0(\theta')$ so that $R^v_0(\theta')$ lands at $-2v$. This implies that $-2v \in \partial B_v$, a contradiction. 

    Now suppose that $\mathcal{U}_1$ and $\mathcal{U}_2$ are the components of $\mathcal{H}_{m_1}$ and $\mathcal{H}_{m_2}$ respectively. Let $v \in \partial \mathcal{U}_1 \cap \mathcal{U}_2$. In the view of $\mathcal{U}_1$, $f^{m_1}_v(-2v) \in \partial B_v$ and $f^j_v(-2v) \notin \partial B_v$ for $0 \le j \le m_1-1$. Similarly, $f^{m_2}_v(-2v) \in \partial B_v$ and $f^j_v(-2v) \notin \partial B_v$ for $0 \le j \le m_2-1$ since $v \in \partial \mathcal{U}_2$. Thus $m_1=m_2=m$. 

    By Lemma \ref{lm: k_v is constant}, there are intergers $k_1, k_2 \in \mathbb{N}$ such that $f^{m-1}_v(-2v) \in \partial (B_v+2k_1\pi) \cap \partial (B_v+2k_2\pi)$ since $v$ is a common boundary point of $\mathcal{U}_1$ and $\mathcal{U}_2$. But $B_v+2k_1\pi$ and $B_v+2k_2\pi$ are both mapped to $B_v$ by $f_v$. This implies that $f^{m-1}_v(-2v)$ is a critical point. This is impossible otherwise $f_v$ would have a periodic critical point in $J(f_v)$. 
\end{proof}

\begin{lemma}\label{lm: B_v is a quasidisk}
    Let $\mathcal{U}$ be a component of $\mathcal{H}_m$, $m \ge 1$, $v_0 \in \mathcal{U}$. Then $B_{v_0}$ is a quasidisk.
\end{lemma}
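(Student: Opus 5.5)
Since $v_0 \in \mathcal{U}\subset\mathcal{H}_m$ with $m\ge1$, the map $f_{v_0}$ is hyperbolic. The free critical value $-2v_0$ lands in $B_{v_0}$ after $m$ steps, and $0$ is superattracting, so the postcritical set $P(f_{v_0})$ is a compact subset of the Fatou set. In particular $P(f_{v_0})\cap \partial B_{v_0}=\varnothing$. The plan is to realize $f_{v_0}$ near $\overline{B_{v_0}}$ as a polynomial-like map, straighten it, and read off the quasidisk property from the model.

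\textbf{Step 1: a polynomial-like restriction.} By Lemma \ref{lm: position of critical points of cosine maps}, $B_{v_0}$ contains only the critical point $0$. By Corollary \ref{cor: boundedness of B_v}, $B_{v_0}$ is bounded, and by Corollary \ref{cor: local connectivity of B_v} (here $P(f_{v_0})$ is bounded) it is a Jordan domain. No point of $\partial B_{v_0}$ is parabolic, since $f_{v_0}$ is hyperbolic. No critical point $(2k+1)\pi$ lies on $\partial B_{v_0}$, because these points are mapped to $-2v_0$, which is in the Fatou set. Lemma \ref{lm: renormalizable cosine function} therefore gives disks $B_{v_0}\Subset U\Subset V$ and some $m'\ge1$ such that $g=f^{m'}_{v_0}:U\to V$ is polynomial-like of degree $2^{m'}$. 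After shrinking $V$ as in the proof of Lemma \ref{lm: maps on the boundary of H_0}, the only critical point of $g$ in $U$ is $0$ (a superattracting fixed point), and $\partial B_{v_0}\subset K(g)$. The orbit of $-2v_0$ may enter $U$, but it does so only inside $B_{v_0}$ or its preimages. What matters is that the only critical point of $g$ is $0$.

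\textbf{Step 2: straightening.} The Straightening Theorem of \cite{DH85} gives a quasiconformal map $\psi$, defined on a neighborhood of $K(g)$, conjugating $g$ to a polynomial $P$ of degree $d=2^{m'}$. Since $g$ has a single critical point, fixed and of full local degree $d$, $P$ is conjugate to $z\mapsto z^d$. Indeed, the critical point of $P$ has local degree $d$, so $P(z)=a(z-c)^d+b$, and the fixed critical point forces $P$ to be affinely conjugate to $z^d$. Then $\psi(B_{v_0})$ is the immediate basin of $0$ for $z^d$, namely the unit disk. Hence $B_{v_0}=\psi^{-1}(\mathbb{D})$ is the image of a round disk under a quasiconformal map of the plane, after extending $\psi$ globally with the usual quasiconformal extension from a neighborhood. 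Therefore it is a quasidisk.

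\textbf{Main obstacle.} The delicate point is Step 1: checking that $g$ has no critical points in $U$ other than $0$. The preimages $f^{-j}_{v_0}((2k+1)\pi)$ for $j<m'$ must avoid $\overline{U}$. I would secure this by choosing $U$ inside a small neighborhood of $\overline{B_{v_0}}$. This works because $\overline{B_{v_0}}$ meets no such preimage: such a preimage on $\partial B_{v_0}$ would put $-2v_0$ on $f^{j+1}_{v_0}(\partial B_{v_0})\subset\partial B_{v_0}\subset J$, a contradiction. If the direct route fails, a fallback is to use hyperbolicity. In that case $f_{v_0}$ is expanding in the hyperbolic metric of $\mathbb{C}\setminus \overline{P(f_{v_0})}$ near $\partial B_{v_0}$. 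One then conjugates $f_{v_0}|_{\overline{B_{v_0}}}$ to $z^2$ on $\overline{\mathbb{D}}$ through the Böttcher map and proves bounded turning of $\partial B_{v_0}$ directly by pulling back a finite cover by Koebe distortion.
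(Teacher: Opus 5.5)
Your proof is correct, but it takes a different route from the paper's. The paper argues in two citations. Since the critical values $0$ and $-2v_0$ lie in different Fatou components, all Fatou components of the hyperbolic map $f_{v_0}$ are bounded by \cite{QW25}, Lemma 2.1. Then \cite{BFR15}, Theorem 1.4, on hyperbolic entire functions with bounded Fatou components, gives directly that $B_{v_0}$ is a quasidisk. You stay inside the paper's own machinery instead. Lemma \ref{lm: renormalizable cosine function} gives a polynomial-like restriction $g=f^{m'}_{v_0}:U\to V$ of degree $2^{m'}$ around $\overline{B_{v_0}}$. Straightening $g$ to $z^{2^{m'}}$ then exhibits $B_{v_0}$ as the quasiconformal preimage of a round disk; in fact $K(g)=\overline{B_{v_0}}$. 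The step you flag as the main obstacle is automatic. By Riemann--Hurwitz, a degree-$2^{m'}$ polynomial-like map between disks has exactly $2^{m'}-1$ critical points counted with multiplicity, and $0$, of local degree $2^{m'}$, already accounts for all of them. So no shrinking is needed, and neither is the fallback via hyperbolic-metric expansion. Likewise, the straightening map in \cite{DH85} is built as a global quasiconformal homeomorphism, so no separate extension step is required. As for trade-offs: the paper's route is shorter and rests on a published general theorem. Yours rests on Lemma \ref{lm: renormalizable cosine function}, whose proof in the paper is only a sketch (semi-hyperbolicity via \cite{BM02} plus a construction ``similar to'' \cite{QRWY15}), so your argument is only as solid as that sketch. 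In exchange, it never uses hyperbolicity beyond that lemma's hypotheses. It would therefore show $B_v$ is a quasidisk whenever $-2v\notin B_v$ and $\partial B_v$ contains no parabolic points and no critical points $(2k+1)\pi$, not only for $v$ in capture components.
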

\begin{proof}
    $f_{v_0}$ is hyperbolic with exactly two critical values. Since the critical values are in the different Fatou components, the Fatou components of $f_{v_0}$ are all bounded by \cite{QW25}, Lemma 2.1. By \cite{BFR15}, Theorem 1.4, $B_{v_0}$ is a quasidisk. 
\end{proof}

\begin{lemma}\label{lm: components of H_m are discrete}
    Let $m \ge 1$. Then the components of $\mathcal{H}_m$ are discrete in $\mathbb{C}$. 
\end{lemma}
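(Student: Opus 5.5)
The plan is to show that the centers of components of $\mathcal{H}_m$ form a discrete subset of $\mathbb{C}^*$, and that no point of $\mathbb{C}$ can be an accumulation point of components. I read ``discrete'' as: every compact subset of $\mathbb{C}$ meets only finitely many components of $\mathcal{H}_m$.

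\textbf{Step 1: the centers lie on the zero set of a single function.} By Proposition \ref{prop: Phi_U is a conformal isomorphism}, each component $\mathcal{U}$ of $\mathcal{H}_m$ has a unique center $v_{\mathcal{U}}$, characterized by $f^m_{v_{\mathcal{U}}}(-2v_{\mathcal{U}})=0$. Consider the entire function of $v$
$$F_m(v)=f^m_v(-2v).$$
Every center is a zero of $F_m$. Since $F_m$ is not identically zero (for small $v$ the orbit tends to $0$ without hitting it), its zeros are isolated in $\mathbb{C}$. Hence the centers are isolated in $\mathbb{C}$, and distinct components have distinct centers.

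\textbf{Step 2: a compact set meets only finitely many components.} Suppose instead that infinitely many distinct components $\mathcal{U}_j$ meet a compact set $K$. By Step 1 the centers $v_{\mathcal{U}_j}$ cannot accumulate inside $K$, so they leave every compact set. The components themselves must still intersect $K$. I would rule this out by a normal-family argument:
\begin{enumerate}
\item Pick $w_j\in\mathcal{U}_j\cap K$ with $w_j\to w$.
\item Use the Koebe distortion applied to $\Phi_{\mathcal{U}_j}^{-1}:\mathbb{D}\to\mathcal{U}_j$.
\end{enumerate}
The functions $\Phi_{\mathcal{U}_j}^{-1}$ take values in a fixed bounded set, because the components are uniformly bounded: Lemma \ref{lm: boundness of components of type C} gives this, and its proof shows that the bound depends only on $m$. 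So they form a normal family on $\mathbb{D}$. A limit would be either a constant or univalent. If it is univalent, the centers $\Phi^{-1}_{\mathcal{U}_j}(0)$ converge, contradicting Step 1. If it is constant, then $\mathcal{U}_j$ shrinks to a point $w$. In that case I would use the continuity of $\phi_v$ and $B_v$ in $v$ near $w$, as in the proof of Lemma \ref{lm: k_v is constant}, together with the fact that $F_m$ has finitely many zeros near $w$, to get a contradiction.

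\textbf{Main obstacle.} The hardest step is the degenerate case where the components shrink. The plan there is as follows. On $\mathcal{U}_j$ the map $\Phi_{\mathcal{U}_j}$ is onto $\mathbb{D}$. Hence $\phi_v(F_m(v))$ takes the value $0$ somewhere in every small neighborhood of $w$. This is exactly the statement that a zero of $F_m$ lies in $\mathcal{U}_j$, so zeros of $F_m$ accumulate at $w$. Since $F_m\not\equiv0$, this contradicts Step 1.
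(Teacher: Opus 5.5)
Step 1 is fine. Step 2, however, rests on a false claim: the components of $\mathcal{H}_m$ are \emph{not} uniformly bounded, and the proof of Lemma \ref{lm: boundness of components of type C} does not give a bound depending only on $m$. That proof bounds one component at a time, because it invokes Lemma \ref{lm: k_v is constant}. The integer $k$ with $f^{m-1}_v(-2v)\in B_v+2k\pi$ is constant on a single component, and the bound obtained depends on this $k$. Concretely, take $|k|$ large and $v=-k\pi$. Then $f_v(-2v)=f_v(2k\pi)=0\in B_v$. On the other hand, $-2v=2k\pi\notin B_v$, because $B_v\subset D(0,8\sqrt2/|v|)$ by Lemma \ref{lm: diameter of B_v tends to 0}. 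So $-k\pi$ is the center of a component of $\mathcal{H}_1$, and these components escape to $\infty$.

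Once the uniform bound is dropped, your normal-family argument yields nothing. The maps $\Phi^{-1}_{\mathcal{U}_j}$ are still normal in the spherical sense, since their images avoid $D(0,1/5)$ by Lemma \ref{lm: H_0 contains a small disk}. But the centers $\Phi^{-1}_{\mathcal{U}_j}(0)$ escape to $\infty$, so the limit is $\equiv\infty$. Meanwhile the points $w_j\in\mathcal{U}_j\cap K$ correspond to $z_j=\Phi_{\mathcal{U}_j}(w_j)$ with $|z_j|\to 1$, and locally uniform convergence on compact subsets of $\mathbb{D}$ says nothing about them. Conversely, if the uniform bound held, the lemma would follow at once from Step 1, since all centers would lie in a compact set. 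Also, a constant limit would not mean that $\mathcal{U}_j$ shrinks to a point. So your constant/univalent dichotomy and the shrinking case are not doing any work.

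The case you actually need to exclude is untreated: infinitely many distinct components meeting $K$ whose centers run off to $\infty$. Excluding it requires information about $\mathcal{H}_m$ near $\infty$, and this is what the paper supplies. It fixes a large $R$ and gets finitely many components inside $\overline{D(0,R)}$ from the zeros of $G_n(v)=f^n_v(-2v)$, exactly as in your Step 1. For components reaching outside $\overline{D(0,R)}$, it uses $\mathrm{diam}\,B_v\to 0$ (Lemma \ref{lm: diameter of B_v tends to 0}) to place $\bigcup_{n\le m}\mathcal{H}_n\setminus\overline{D(0,R)}$ inside the sublevel set $G_m^{-1}(D(0,2\sqrt2/R))$. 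It then controls the components of that sublevel set via the Open Mapping Theorem and the finiteness of components inside $D(0,R+1)$.
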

\begin{proof}
    Fix $R>0$ to be sufficiently large. Define $\mathcal{W}_1$ to be the collection of components of $\bigcup_{n=1}^m \mathcal{H}_n$ which are contained in $\overline{D(0,R)}$, and $\mathcal{W}_2$ to the the collection of components of $\bigcup_{n=1}^m \mathcal{H}_n$, which intersects the complement of $\overline{D(0,R)}$.
    
    First we show that $\mathcal{W}_1$ contains finitely many components. Suppose by contradiction that there are infinitely many components $\mathcal{U}_k$ of $\bigcup_{n=1}^m$ which are contained in $\overline{D(0,R)}$. We may assume that $\mathcal{U}_k$ are all components of $\mathcal{H}_n$ for some $1 \le n \le m$. Let $v_k$ be the center of $\mathcal{U}_k$. Then $v_k$ is the solution to $f^n_{v_k}(-2v_k)=0$. However, this would imply that the analytic function $G_n(v)=f^n_v(-2v)$ has infinitely many zeros in the compact set $\overline{D(0,R)}$, so that $G_n \equiv 0$, a contradiction.
    
    Then we show that the elements of $\mathcal{W}_2$ are discrete. Indeed, let $G_m(v)=f^m_v(-2v)$. For $v \in \left(\bigcup_{n=1}^m \mathcal{H}_n\right) \setminus \overline{D(0,R)}$, $\mathrm{diam} \; B_v \le 2\sqrt{2}/R$ by Lemma \ref{lm: diameter of B_v tends to 0}. Thus $\left(\bigcup_{n=1}^m \mathcal{H}_n\right) \setminus \overline{D(0,R)} \subset G^{-1}_m(D(0,2\sqrt{2}/R))$. By the Open Mapping Theorem, the components of $G^{-1}_m(D(0,2\sqrt{2}/R))$ can only accumulate at $\infty$ or $\partial D(0,R)$. 

    If $\{U_k\}_{k \ge 1}$ is a sequence of components of $G^{-1}_m(D(0,2\sqrt{2}/R))$ accumulating at $\partial D(0,R)$, then there is an integer $K$ such that $U_k \subset D(0,R+1)$ for $k \ge K$. As shown above, there are only finitely many components of $\bigcup_{n=1}^m \mathcal{H}_n$ which are entirely contained in $D(0,R+1)$. Hence $\left(\bigcup_{n=1}^m \mathcal{H}_n\right) \cap U_k$, $k \ge K$, are contained finitely many components of $\bigcup_{n=1}^m \mathcal{H}_n$. Therefore, the elements of $\mathcal{W}_2$ can only accumulate at $\infty$ so that they are discrete.  
\end{proof}

Now let $\mathcal{U}$ be a component of $\mathcal{H}_m$, $m \ge 1$. By Lemma \ref{lm: boundness of components of type C}, there is $R>0$ such that $D(0,R)$ contains $\mathcal{U}$. Let $W$ be a connected subset of $D(0,2R)$ containing $\mathcal{U}$ such that $\overline{W} \cap \overline{\mathcal{V}} =\varnothing$ as long as $\mathcal{V}$ is a component of $\mathcal{H}_n$, $0 \le n \le m$, distinct from $\mathcal{U}$. Such $W$ exists since the components of $\bigcup_{n=0}^m \mathcal{H}_n$ have disjoint closures and are discrete by Lemma \ref{lm: hyperbolic components of type C have disjoint closures} and \ref{lm: components of H_m are discrete}. 

\begin{lemma}\label{lm: holomorphic motion of the immediate basin}
    Let $v_0$ be the center of $\mathcal{U}$. There is a holomorphic motion $\xi: W \times \overline{B_{v_0}} \to \mathbb{C}$.
\end{lemma}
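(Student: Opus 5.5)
The plan is to build the motion first on the open basin $B_{v_0}$ through the B\"ottcher coordinates, and then extend it to the closure by the $\lambda$-lemma. The key input is that for every $v \in W$ the map $\phi_v$ is defined on all of $B_v$ and is a conformal isomorphism onto $\mathbb{D}$. To see this, note that $W$ meets no component of $\mathcal{H}_0$ other than possibly points near $\mathcal{U}$, and $\overline{W}\cap\overline{\mathcal{H}_0}=\varnothing$ by the choice of $W$ (here $\mathcal{H}_0$ is the component with $n=0$, distinct from $\mathcal{U}$). Hence $-2v \notin B_v$ for all $v \in W$. By Lemma \ref{lm: position of critical points of cosine maps}, $B_v$ then contains the single critical point $0$, so $\phi_v$ extends to a conformal map $B_v \to \mathbb{D}$.

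With this in hand, I would define
\[
\xi(v,z)=\phi_v^{-1}\circ\phi_{v_0}(z), \qquad v\in W,\ z\in B_{v_0}.
\]
This map has the three required properties.
\begin{enumerate}
\item At the base point, $\xi(v_0,\cdot)=\mathrm{id}$.
\item Each $\xi(v,\cdot)$ is injective, since it is a composition of conformal isomorphisms.
\item Holomorphy in $v$ holds because $\phi_v^{-1}(w)$ depends analytically on $(v,w)\in W\times\mathbb{D}$. For small $|w|$ this follows from the analytic dependence of the local B\"ottcher map on $v$. For general $w$, I would write $\phi_v^{-1}(w)$ as the branch of $f_v^{-n}\bigl(\phi_v^{-1}(w^{2^n})\bigr)$ lying in $B_v$. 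Since $B_v$ contains no critical point other than $0$ and no critical value other than $0$, this branch varies analytically by the implicit function theorem. I would first check this on a neighbourhood of each $v$ and then conclude by analytic continuation along $W$.
\end{enumerate}
So $\xi$ is a holomorphic motion of $B_{v_0}$ over $W$, and it conjugates $f_{v_0}$ to $f_v$ on the basins.

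Next I would extend $\xi$ to $\overline{B_{v_0}}$ with the $\lambda$-lemma of \cite{MSS83}. The only hypothesis to arrange is that $W$ be simply connected, or at least that we may pass to a simply connected neighbourhood. I would choose $W$ to be a simply connected neighbourhood of $\overline{\mathcal{U}}$, which is possible because $\overline{\mathcal{U}}$ is a closed Jordan disk by Corollary \ref{cor: hyperbolic components of type C are Jordan domains}. In that setting the $\lambda$-lemma extends $\xi$ uniquely to a holomorphic motion of $\overline{B_{v_0}}$, and each $\xi(v,\cdot)$ is quasiconformal. It remains to identify the extension: $\xi(v,\partial B_{v_0})$ should equal $\partial B_v$. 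This follows because the image of the closure is the closure of the image, and $\xi(v,B_{v_0})=B_v$.

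The step I expect to be the main obstacle is the global holomorphic dependence of $\phi_v^{-1}$ on $v$ throughout $W$. This needs a uniform description of the pullbacks, and in particular that no preimage branch degenerates for $v\in W$, including parameters on $\partial\mathcal{U}$ and in type D regions of $W$. I would handle it by observing that the only obstruction is a critical value entering $B_v$, that is $-2v\in B_v$. This case is excluded on $W$ since $W$ avoids $\overline{\mathcal{H}_0}$, so the argument reduces to the local analytic dependence of the B\"ottcher coordinate, which is standard.
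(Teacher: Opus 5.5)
Your proposal is correct and follows the paper's route: define $\xi(v,z)=\phi_v^{-1}\circ\phi_{v_0}(z)$ using the global B\"ottcher coordinate, which exists because $\overline{W}\cap\overline{\mathcal{H}_0}=\varnothing$ forces $-2v\notin B_v$, and then pass to $\overline{B_{v_0}}$ with the $\lambda$-lemma of \cite{MSS83}. One small correction: the extension to the closure does not need $W$ to be simply connected, since that hypothesis only matters for the Slodkowski extension to $\hat{\mathbb{C}}$ used right afterwards, so shrinking $W$ is harmless but not required for this lemma.
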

\begin{proof}
    Since $\overline{W} \cap \overline{\mathcal{H}_0}=\varnothing$, for every $v \in W$, $B_{v}$ is bounded by Corollary \ref{cor: boundedness of B_v}. And the B\"ottcher mapping is well-defined on the entire immediate basin $B_v$. So there is a conformal isomorphism $\phi_v: B_v \to \mathbb{D}$. Define $\xi(v,z)=\phi^{-1}_v \circ \phi_{v_0}(z)$. Then it is easy to verify that $\xi: W \times B_v \to \mathbb{C}$ is a holomorphic motion. By $\lambda$-lemma (\cite{MSS83}), $\xi$ extends to a holomorphic motion $\xi: W \times \overline{B_v} \to \mathbb{C}$. 
\end{proof}

By Slodkowski's Theorem, the holomorphic mothion $\xi$ defined in Lemma \ref{lm: holomorphic motion of the immediate basin} extends to a holomorphic motion $\xi: W \times \hat{\mathbb{C}} \to \hat{\mathbb{C}}$. Define 
\begin{equation}\label{eq: Xi}
    \begin{split}
        \Xi: W & \to \mathbb{C} \\
        v & \mapsto \xi^{-1}_v(f^m_v(-2v))
    \end{split}.
\end{equation}
Choose a domain $V \Subset W$, which contains $\overline{\mathcal{U}}$. Similar to Lemma \ref{lm: non-degenerated para-annuli} (also \cite{WQ19}, Lemma 3.3), there is $K \ge 1$ such that $\Xi$ given by \eqref{eq: Xi} is a $K$-quasi-regular mapping on $V$. 

\begin{proof}[Proof of Theorem \ref{thm: hyperbolic components of type C are quasidisks}]
    We will show that $\Xi: \overline{\mathcal{U}} \to \overline{B_{v_0}}$ is a $K$-quasiconformal mapping. Then $\mathcal{U}$ is a quasidisk since $B_{v_0}$ is a quasidisk. 

    For $v \in \mathcal{U}$, $\Xi(v)=\phi^{-1}_{v_0} \circ \phi_v(f^m_v(-2v))=\phi^{-1}_{v_0} \circ \Phi_{\mathcal{U}}(v) \in B_{v_0}$. Since $\phi^{-1}_{v_0}: \mathbb{D} \to B_{v_0}$ and $\Phi_{\mathcal{U}}: \mathcal{U} \to \mathbb{D}$ are both conformal isomorphisms, $\Xi: \mathcal{U} \to B_{v_0}$ is a conformal isomorphism. Since $\mathcal{U}$ and $B_{v_0}$ are both Jordan domains (Corollary \ref{cor: local connectivity of B_v} and \ref{cor: hyperbolic components of type C are Jordan domains}), $\Xi: \overline{\mathcal{U}} \to \overline{B_{v_0}}$ is a homeomorphism by Carath\'eodory's Theorem. 

    Then we show that $\Xi^{-1}(\overline{B}_{v_0})=\overline{\mathcal{U}}$. Let $v \in \Xi^{-1}(\overline{B}_{v_0})$. Then $\Xi(v)=\phi^{-1}_{v_0} \circ \phi_v(f^m_v(-2v)) \in B_{v_0}$. This implies that $f^m_v(-2v) \in B_v$. By the definition, $v \in \bigcup_{n=0}^m \mathcal{H}_n$. Since $W$ intersects no components of $\bigcup_{n=0}^m \mathcal{H}_n$ other than $\mathcal{U}$, $v \in \mathcal{U}$. Let $v \in W$ with $\Xi(v) \in \partial B_{v_0}$. Then by the Open Mapping Theorem, for every open neighborhood $N$ of $v$, there are $v_1, v_2 \in N$ such that $\Xi(v_1) \in B_{v_0}$, $\Xi(v_2) \notin B_{v_0}$. This implies that $v_1 \in \mathcal{U}$ and $v_2 \notin \mathcal{U}$. Thus $v \in \partial \mathcal{U}$. 

    Finally, similar to the proof of Lemma \ref{lm: copy of Mandelbrot set}, $\Xi$ is bijective on the neighborhood $V$ of $\overline{\mathcal{U}}$ so that $\Xi: V \to \Xi(V)$ is a $K$-quasiconformal mapping. $B_{v_0}$ is a quasidisk by Lemma \ref{lm: B_v is a quasidisk}. Hence $\mathcal{U}=\Xi^{-1}(B_{v_0})$ is a quasidisk. 
\end{proof}

\bibliographystyle{plain}
\bibliography{Hyperbolic-components-of-cosine-family.bib}

@book {DH84,
    AUTHOR = {Douady, A. and Hubbard, J. H.},
     TITLE = {\'Etude dynamique des polyn\^omes complexes. {P}artie {I}},
    SERIES = {Publications Math\'ematiques d'Orsay [Mathematical
              Publications of Orsay]},
    VOLUME = {84-2},
 PUBLISHER = {Universit\'e{} de Paris-Sud, D\'epartement de Math\'ematiques,
              Orsay},
      YEAR = {1984},
     PAGES = {75}
}

@article {GS98,
    AUTHOR = {Graczyk, J. and Smirnov, S.},
     TITLE = {Collet, {E}ckmann and {H}\"older},
   JOURNAL = {Invent. Math.},
  FJOURNAL = {Inventiones Mathematicae},
    VOLUME = {133},
      YEAR = {1998},
    NUMBER = {1},
     PAGES = {69--96}
}

@article {BFR15,
    AUTHOR = {Bergweiler, W. and Fagella, N. and Rempe-Gillen,
              L.},
     TITLE = {Hyperbolic entire functions with bounded {F}atou components},
   JOURNAL = {Comment. Math. Helv.},
  FJOURNAL = {Commentarii Mathematici Helvetici. A Journal of the Swiss
              Mathematical Society},
    VOLUME = {90},
      YEAR = {2015},
    NUMBER = {4},
     PAGES = {799--829}
}

@article {KSS07,
    AUTHOR = {Kozlovski, O. and Shen, W. and van Strien, S.},
     TITLE = {Density of hyperbolicity in dimension one},
   JOURNAL = {Ann. of Math. (2)},
  FJOURNAL = {Annals of Mathematics. Second Series},
    VOLUME = {166},
      YEAR = {2007},
    NUMBER = {1},
     PAGES = {145--182}
}

@article {BH92,
    AUTHOR = {Branner, B. and Hubbard, J. H.},
     TITLE = {The iteration of cubic polynomials. {II}. {P}atterns and
              parapatterns},
   JOURNAL = {Acta Math.},
  FJOURNAL = {Acta Mathematica},
    VOLUME = {169},
      YEAR = {1992},
    NUMBER = {3-4},
     PAGES = {229--325}
}

@article {EL92,
    AUTHOR = {Er\"emenko, A. \`E. and Lyubich, M. Yu.},
     TITLE = {Dynamical properties of some classes of entire functions},
   JOURNAL = {Ann. Inst. Fourier (Grenoble)},
  FJOURNAL = {Universit\'e{} de Grenoble. Annales de l'Institut Fourier},
    VOLUME = {42},
      YEAR = {1992},
    NUMBER = {4},
     PAGES = {989--1020}
}

@inbook{RS08, 
  place={Cambridge}, 
  series={London Mathematical Society Lecture Note Series}, 
  title={Escaping points of the cosine family}, 
  booktitle={Transcendental Dynamics and Complex Analysis}, 
  publisher={Cambridge University Press}, 
  author={Rottenfusser, G. and Schleicher, D.}, 
  year={2008}, 
  pages={396–424}, 
  collection={London Mathematical Society Lecture Note Series}
}

@article {BR20,
    AUTHOR = {Benini, A. and Rempe, L.},
     TITLE = {A landing theorem for entire functions with bounded
              post-singular sets},
   JOURNAL = {Geom. Funct. Anal.},
  FJOURNAL = {Geometric and Functional Analysis},
    VOLUME = {30},
      YEAR = {2020},
    NUMBER = {6},
     PAGES = {1465-1530}
}

@article {DH85,
    AUTHOR = {Douady, A. and Hubbard, J. H.},
     TITLE = {On the dynamics of polynomial-like mappings},
   JOURNAL = {Ann. Sci. \'Ecole Norm. Sup. (4)},
  FJOURNAL = {Annales Scientifiques de l'\'Ecole Normale Sup\'erieure.
              Quatri\`eme S\'erie},
    VOLUME = {18},
      YEAR = {1985},
    NUMBER = {2},
     PAGES = {287--343}
}

@book {McM94,
    AUTHOR = {McMullen, C. T.},
     TITLE = {Complex dynamics and renormalization},
    SERIES = {Annals of Mathematics Studies},
    VOLUME = {135},
 PUBLISHER = {Princeton University Press, Princeton, NJ},
      YEAR = {1994}
}

@book {Why42,
    AUTHOR = {Whyburn, G.},
     TITLE = {Analytic {T}opology},
    SERIES = {American Mathematical Society Colloquium Publications},
    VOLUME = {Vol. 28},
 PUBLISHER = {American Mathematical Society, New York},
      YEAR = {1942},
     PAGES = {x+278}
}

@article {BM02,
    AUTHOR = {Bergweiler, W. and Morosawa, S.},
     TITLE = {Semihyperbolic entire functions},
   JOURNAL = {Nonlinearity},
  FJOURNAL = {Nonlinearity},
    VOLUME = {15},
      YEAR = {2002},
    NUMBER = {5},
     PAGES = {1673-1684}
}

@article {RS09,
    AUTHOR = {Rempe, L. and Schleicher, D.},
     TITLE = {Bifurcations in the space of exponential maps},
   JOURNAL = {Invent. Math.},
  FJOURNAL = {Inventiones Mathematicae},
    VOLUME = {175},
      YEAR = {2009},
    NUMBER = {1},
     PAGES = {103--135}
}

@book {CG93,
    AUTHOR = {Carleson, L. and Gamelin, T. W.},
     TITLE = {Complex dynamics},
    SERIES = {Universitext: Tracts in Mathematics},
 PUBLISHER = {Springer-Verlag, New York},
      YEAR = {1993}
}

@article {Roe07,
    AUTHOR = {Roesch, P.},
     TITLE = {Hyperbolic components of polynomials with a fixed critical
              point of maximal order},
   JOURNAL = {Ann. Sci. \'Ecole Norm. Sup. (4)},
  FJOURNAL = {Annales Scientifiques de l'\'Ecole Normale Sup\'erieure.
              Quatri\`eme S\'erie},
    VOLUME = {40},
      YEAR = {2007},
    NUMBER = {6},
     PAGES = {901-949}
}

@article {Tian11,
    AUTHOR = {Tian, T.},
     TITLE = {Parameter rays for the cosine family},
   JOURNAL = {J. Fudan Univ. Nat. Sci.},
  FJOURNAL = {Fudan University. Journal. Natural Science. Fudan Xuebao.
              Ziran Kexue Ban},
    VOLUME = {50},
      YEAR = {2011},
    NUMBER = {1},
     PAGES = {10-22}
}

@book {Sch99,
    AUTHOR = {Schleicher, D.},
     TITLE = {On the {D}ynamics of {I}terated {E}xponential {M}aps},
      NOTE = {Thesis (Ph.D.)--Technische Universit\"at M\"unchen (Germany)},
 PUBLISHER = {ProQuest LLC, Ann Arbor, MI},
      YEAR = {1999},
     PAGES = {207}
}

@article {MSS83,
    AUTHOR = {Ma\~n\'e, R. and Sad, P. and Sullivan, D.},
     TITLE = {On the dynamics of rational maps},
   JOURNAL = {Ann. Sci. \'Ecole Norm. Sup. (4)},
  FJOURNAL = {Annales Scientifiques de l'\'Ecole Normale Sup\'erieure.
              Quatri\`eme S\'erie},
    VOLUME = {16},
      YEAR = {1983},
    NUMBER = {2},
     PAGES = {193-217}
}

@article {QRWY15,
    AUTHOR = {Qiu, W. and Roesch, P. and Wang, X. and Yin, Y.},
     TITLE = {Hyperbolic components of {M}c{M}ullen maps},
   JOURNAL = {Ann. Sci. \'Ec. Norm. Sup\'er. (4)},
  FJOURNAL = {Annales Scientifiques de l'\'Ecole Normale Sup\'erieure.
              Quatri\`eme S\'erie},
    VOLUME = {48},
      YEAR = {2015},
    NUMBER = {3},
     PAGES = {703-737}
}

@misc{QW25,
      title={Bounded Fatou components of cosine functions}, 
      author={Qiu, W. and Wang, L.},
      year={2025},
      eprint={2511.20010},
      archivePrefix={arXiv},
      primaryClass={math.CV},
      url={https://arxiv.org/abs/2511.20010}
}

@incollection {Roe00,
    AUTHOR = {Roesch, P.},
     TITLE = {Holomorphic motions and puzzles (following {M}. {S}hishikura)},
 BOOKTITLE = {The {M}andelbrot set, theme and variations},
    SERIES = {London Math. Soc. Lecture Note Ser.},
    VOLUME = {274},
     PAGES = {117--131},
 PUBLISHER = {Cambridge Univ. Press, Cambridge},
      YEAR = {2000}
}

@article{WQ19,
  author = {Wang, Y. and Qiu, W.},
  journal = {Sci. Sin. Math.},
  FJOURNAL = {SCIENTIA SINICA Mathematica},
  number = {10},
  title = {Capture components are quasidisks: {C}ubic polynomials with a
critical fixed point (in {C}hinese)},
  volume = {49},
  year = {2019},
  pages = {1431-1438}
}

@incollection {Mil09,
    AUTHOR = {Milnor, J.},
     TITLE = {Cubic polynomial maps with periodic critical orbit. {I}},
 BOOKTITLE = {Complex dynamics},
     PAGES = {333--411},
 PUBLISHER = {A K Peters, Wellesley, MA},
      YEAR = {2009},
      ISBN = {978-1-56881-450-6},
   MRCLASS = {37F10 (37F50)},
  MRNUMBER = {2508263},
MRREVIEWER = {Peter\ Ha\"issinsky},
       DOI = {10.1201/b10617-13},
       URL = {https://doi.org/10.1201/b10617-13},
}

@incollection {Mil12,
    AUTHOR = {Milnor, J.},
     TITLE = {Hyperbolic components},
 BOOKTITLE = {Conformal dynamics and hyperbolic geometry},
    SERIES = {Contemp. Math.},
    VOLUME = {573},
     PAGES = {183--232},
      NOTE = {With an appendix by A. Poirier},
 PUBLISHER = {Amer. Math. Soc., Providence, RI},
      YEAR = {2012},
      ISBN = {978-0-8218-5348-1},
   MRCLASS = {37Fxx},
  MRNUMBER = {2964079},
       DOI = {10.1090/conm/573/11428},
       URL = {https://doi.org/10.1090/conm/573/11428},
}

@article {Ree90,
    AUTHOR = {Rees, M.},
     TITLE = {Components of degree two hyperbolic rational maps},
   JOURNAL = {Invent. Math.},
  FJOURNAL = {Inventiones Mathematicae},
    VOLUME = {100},
      YEAR = {1990},
    NUMBER = {2},
     PAGES = {357--382},
      ISSN = {0020-9910,1432-1297},
   MRCLASS = {58F15 (30D05)},
  MRNUMBER = {1047139},
MRREVIEWER = {Artur\ Oscar\ Lopes},
       DOI = {10.1007/BF01231191},
       URL = {https://doi.org/10.1007/BF01231191},
}

@article {BH88,
    AUTHOR = {Branner, B. and Hubbard, J. H.},
     TITLE = {The iteration of cubic polynomials. {I}. {T}he global topology
              of parameter space},
   JOURNAL = {Acta Math.},
  FJOURNAL = {Acta Mathematica},
    VOLUME = {160},
      YEAR = {1988},
    NUMBER = {3-4},
     PAGES = {143--206},
      ISSN = {0001-5962,1871-2509},
   MRCLASS = {30D05 (58F08)},
  MRNUMBER = {945011},
MRREVIEWER = {I.\ N.\ Baker},
       DOI = {10.1007/BF02392275},
       URL = {https://doi.org/10.1007/BF02392275},
}

@article {Ree92,
    AUTHOR = {Rees, M.},
     TITLE = {A partial description of parameter space of rational maps of
              degree two. {I}},
   JOURNAL = {Acta Math.},
  FJOURNAL = {Acta Mathematica},
    VOLUME = {168},
      YEAR = {1992},
    NUMBER = {1-2},
     PAGES = {11--87},
      ISSN = {0001-5962,1871-2509},
   MRCLASS = {58F23 (30D05)},
  MRNUMBER = {1149864},
MRREVIEWER = {Feliks\ Przytycki},
       DOI = {10.1007/BF02392976},
       URL = {https://doi.org/10.1007/BF02392976},
}

@article {Eps00,
    AUTHOR = {Epstein, A. L.},
     TITLE = {Bounded hyperbolic components of quadratic rational maps},
   JOURNAL = {Ergodic Theory Dynam. Systems},
  FJOURNAL = {Ergodic Theory and Dynamical Systems},
    VOLUME = {20},
      YEAR = {2000},
    NUMBER = {3},
     PAGES = {727--748},
      ISSN = {0143-3857,1469-4417},
   MRCLASS = {37F45 (30D05 35F25 37F10)},
  MRNUMBER = {1764925},
MRREVIEWER = {Hartje\ Kriete},
       DOI = {10.1017/S0143385700000390},
       URL = {https://doi.org/10.1017/S0143385700000390},
}

@article {DFJ02,
    AUTHOR = {Devaney, R. L. and Fagella, N. and Jarque, X.},
     TITLE = {Hyperbolic components of the complex exponential family},
   JOURNAL = {Fund. Math.},
  FJOURNAL = {Fundamenta Mathematicae},
    VOLUME = {174},
      YEAR = {2002},
    NUMBER = {3},
     PAGES = {193--215},
      ISSN = {0016-2736,1730-6329},
   MRCLASS = {37F10 (30D05)},
  MRNUMBER = {1924998},
MRREVIEWER = {Peter\ Ha\"issinsky},
       DOI = {10.4064/fm174-3-1},
       URL = {https://doi.org/10.4064/fm174-3-1},
}

@book {Fau92,
    AUTHOR = {Faught, J. D.},
     TITLE = {Local connectivity in a family of cubic polynomials},
      NOTE = {Thesis (Ph.D.)--Cornell University},
 PUBLISHER = {ProQuest LLC, Ann Arbor, MI},
      YEAR = {1992},
     PAGES = {110},
   MRCLASS = {99-05},
  MRNUMBER = {2688243},
       URL = {http://gateway.proquest.com/openurl?url_ver=Z39.88-2004&rft_val_fmt=info:ofi/fmt:kev:mtx:dissertation&res_dat=xri:pqdiss&rft_dat=xri:pqdiss:9300808},
}

@article {Wan21,
    AUTHOR = {Wang, X.},
     TITLE = {Hyperbolic components and cubic polynomials},
   JOURNAL = {Adv. Math.},
  FJOURNAL = {Advances in Mathematics},
    VOLUME = {379},
      YEAR = {2021},
     PAGES = {Paper No. 107554, 42},
      ISSN = {0001-8708,1090-2082},
   MRCLASS = {37F46 (37F10 37F15)},
  MRNUMBER = {4199274},
MRREVIEWER = {Yan\ Gao},
       DOI = {10.1016/j.aim.2020.107554},
       URL = {https://doi.org/10.1016/j.aim.2020.107554},
}

@article {KK97,
    AUTHOR = {Keen, L. and Kotus, J.},
     TITLE = {Dynamics of the family {$\lambda\tan z$}},
   JOURNAL = {Conform. Geom. Dyn.},
  FJOURNAL = {Conformal Geometry and Dynamics. An Electronic Journal of the
              American Mathematical Society},
    VOLUME = {1},
      YEAR = {1997},
     PAGES = {28--57},
      ISSN = {1088-4173},
   MRCLASS = {58F23 (30D05)},
  MRNUMBER = {1463839},
MRREVIEWER = {I.\ N.\ Baker},
       DOI = {10.1090/S1088-4173-97-00017-9},
       URL = {https://doi.org/10.1090/S1088-4173-97-00017-9},
}

@incollection {Yua12,
    AUTHOR = {Yuan, S.},
     TITLE = {Parameter plane of a family of meromorphic functions with two
              asymptotic values},
 BOOKTITLE = {Conformal dynamics and hyperbolic geometry},
    SERIES = {Contemp. Math.},
    VOLUME = {573},
     PAGES = {245--256},
 PUBLISHER = {Amer. Math. Soc., Providence, RI},
      YEAR = {2012},
      ISBN = {978-0-8218-5348-1},
   MRCLASS = {37F15 (30D05 30F45 32A20 37F10 37F45)},
  MRNUMBER = {2964081},
MRREVIEWER = {Kevin\ M.\ Pilgrim},
       DOI = {10.1090/conm/573/11394},
       URL = {https://doi.org/10.1090/conm/573/11394},
}

@incollection {KY06,
    AUTHOR = {Keen, L. and Yuan, S.},
     TITLE = {Parabolic perturbation of the family {$\lambda\tan z$}},
 BOOKTITLE = {Complex dynamics},
    SERIES = {Contemp. Math.},
    VOLUME = {396},
     PAGES = {115--128},
 PUBLISHER = {Amer. Math. Soc., Providence, RI},
      YEAR = {2006},
      ISBN = {0-8218-3625-0},
   MRCLASS = {37F10 (30D05)},
  MRNUMBER = {2209090},
MRREVIEWER = {Walter\ Bergweiler},
       DOI = {10.1090/conm/396/07398},
       URL = {https://doi.org/10.1090/conm/396/07398},
}

@incollection {KK00,
    AUTHOR = {Keen, L. and Kotus, J.},
     TITLE = {On period doubling phenomena and {S}harkovskii type ordering
              for the family {$\lambda\tan(z)$}},
 BOOKTITLE = {Value distribution theory and complex dynamics ({H}ong {K}ong,
              2000)},
    SERIES = {Contemp. Math.},
    VOLUME = {303},
     PAGES = {51--78},
 PUBLISHER = {Amer. Math. Soc., Providence, RI},
      YEAR = {2002},
      ISBN = {0-8218-2980-7},
   MRCLASS = {37F10 (30D05 37G99)},
  MRNUMBER = {1943527},
MRREVIEWER = {Walter\ Bergweiler},
       DOI = {10.1090/conm/303/05240},
       URL = {https://doi.org/10.1090/conm/303/05240},
}

@article {McM87,
    AUTHOR = {McMullen, C. T.},
     TITLE = {Area and {H}ausdorff dimension of {J}ulia sets of entire
              functions},
   JOURNAL = {Trans. Amer. Math. Soc.},
  FJOURNAL = {Transactions of the American Mathematical Society},
    VOLUME = {300},
      YEAR = {1987},
    NUMBER = {1},
     PAGES = {329--342},
      ISSN = {0002-9947,1088-6850},
   MRCLASS = {30D05 (58F08 58F20)},
  MRNUMBER = {871679},
MRREVIEWER = {I.\ N.\ Baker},
       DOI = {10.2307/2000602},
       URL = {https://doi.org/10.2307/2000602},
}

@article {Lyu97,
    AUTHOR = {Lyubich, M. Yu.},
     TITLE = {Dynamics of quadratic polynomials. {I}, {II}},
   JOURNAL = {Acta Math.},
  FJOURNAL = {Acta Mathematica},
    VOLUME = {178},
      YEAR = {1997},
    NUMBER = {2},
     PAGES = {185--247, 247--297},
      ISSN = {0001-5962,1871-2509},
   MRCLASS = {58F23 (30C10 30D05)},
  MRNUMBER = {1459261},
MRREVIEWER = {Grzegorz\ \'Swi\polhk atek},
       DOI = {10.1007/BF02392694},
       URL = {https://doi.org/10.1007/BF02392694},
}

@article {NP22,
    AUTHOR = {Nie, H. and Pilgrim, K. M.},
     TITLE = {Bounded hyperbolic components of bicritical rational maps},
   JOURNAL = {J. Mod. Dyn.},
  FJOURNAL = {Journal of Modern Dynamics},
    VOLUME = {18},
      YEAR = {2022},
     PAGES = {533--553},
      ISSN = {1930-5311,1930-532X},
   MRCLASS = {37F20 (37F44 37P50)},
  MRNUMBER = {4505953},
MRREVIEWER = {Alfredo\ Poirier},
       DOI = {10.3934/jmd.2022016},
       URL = {https://doi.org/10.3934/jmd.2022016},
}

@article {Fat20,
    AUTHOR = {Fatou, P.},
     TITLE = {Sur les \'equations fonctionnelles},
   JOURNAL = {Bull. Soc. Math. France},
  FJOURNAL = {Bulletin de la Soci\'et\'e{} Math\'ematique de France},
    VOLUME = {48},
      YEAR = {1920},
     PAGES = {33--94},
      ISSN = {0037-9484},
   MRCLASS = {99-04},
  MRNUMBER = {1504792},
       URL = {http://www.numdam.org/item?id=BSMF_1920__48__33_0},
}

\end{document}